\DeclareMathOperator{\re}{{\mathrm{Re}}}
\DeclareMathOperator{\im}{{\mathrm{Im}}}
\def\P{\mathbb{P}}
\def\E{\mathbb{E}}
\def\Z{\mathbb{Z}}
\def\R{\mathbb{R}}
\def\N{\mathbb{N}}
\def\11{{\mathbf{1}}}
\def\Im{\im }
\newcommand{\fd}{{\mathfrak d}}
\newcommand{\bu}{{\bf{u}}}
\newcommand{\bv}{{\bf{v}}}
\newcommand{\bw}{{\bf{w}}}
\newcommand{\sE}{{\mathsf{E}}}
\newcommand{\C}{\mathbb C}
\newcommand{\cB}{\mathcal B}
\newcommand{\cK}{\mathcal K}
\newcommand{\cL}{\mathcal L}
\newcommand{\cM}{\mathcal M}
\newcommand{\cW}{\mathcal W}
\newcommand{\OK}{\mathcal O_{\mathcal K}}
\newcommand{\lenk}{l_{\mathcal K}}
\newcommand{\hell}{{\hat\ell}}
\renewcommand{\bar}{\overline}
\newcommand{\wt}{\widetilde}
\newcommand{\wh}{\widehat}
\newcommand{\al}{\alpha}
\newcommand{\qq}[1]{\langle{#1}\rangle}
\newcommand{\qqq}[1]{\llbracket{#1}\rrbracket}
\newcommand{\LK}{{L\to n}}
\newcommand{\Zn}{\widetilde{\mathbb Z}_n^d}
\newcommand{\ZL}{{\mathbb Z}_L^d}
\newcommand{\Gc}{{\mathring G}}
\newcommand{\fa}{{\mathfrak a}}
\newcommand{\fc}{{\mathfrak c}}
\DeclareMathOperator{\OO}{O}
\DeclareMathOperator{\oo}{o}
\newcommand{\sig}{\sigma}
\newcommand{\bsig}{{\boldsymbol{\sigma}}}
\newcommand{\bchi}{{\boldsymbol{\chi}}}
\newcommand{\cut}{\mathrm{Cut}}
\newcommand{\cutL}{(\mathrm{Cut}_L)}
\newcommand{\cutR}{(\mathrm{Cut}_R)}
\newcommand{\qll}[1]{[\![{#1}]\!]}
\newcommand{\rep}[1]{({#1})}
\DeclareMathOperator{\tr}{Tr}
\DeclareMathOperator{\var}{Var}
\newcommand{\be}{\begin{equation}}
\newcommand{\ee}{\end{equation}}
\newcommand{\ii}{\mathrm{i}}
\newcommand{\dd}{\mathrm{d}}
\newcommand{\e}{{\varepsilon}}
\newcommand{\cal}{\mathcal}
\newcommand{\cor}{\color{red}}
\newcommand{\nc}{\normalcolor}
\newcommand{\fn}{{\mathfrak n}}
\newcommand{\fm}{{\mathfrak m}}
\newcommand{\ba}{{\mathbf{a}}}
\newcommand{\bfb}{{\mathbf{b}}}
\newcommand{\thn}{\vartheta}
\newcommand{\dthn}{\boldsymbol{\Theta}}
\DeclarePairedDelimiter{\@p}{\lparen}{\rparen}
\DeclarePairedDelimiter{\@br}{\lbrack}{\rbrack}
\DeclarePairedDelimiter{\@avg}{\langle}{\rangle}
\DeclarePairedDelimiter{\@bbr}{\llbracket}{\rrbracket}
\DeclarePairedDelimiter{\@abs}{\lvert}{\rvert}
\DeclarePairedDelimiter{\@norm}{\lVert}{\rVert}
\DeclarePairedDelimiter{\@set}{\{}{\}}
\newcommand{\p}{\@ifstar{\@p}{\@p*}}
\newcommand{\br}{\@ifstar{\@br}{\@br*}}
\newcommand{\avg}{\@ifstar{\@avg}{\@avg*}}
\newcommand{\bbr}{\@ifstar{\@bbr}{\@bbr*}}
\newcommand*{\abs}{\@ifstar{\@abs}{\@abs*}}
\newcommand*{\norm}{\@ifstar{\@norm}{\@norm*}}
\newcommand{\set}{\@ifstar{\@set}{\@set*}}
\newcommand{\Sigmagen}{\widehat{\Sigma}}
\newcommand{\suchthat}{\:\middle\vert\:}
\DeclareMathOperator{\TSP}{TSP}
\DeclareMathOperator{\slice}{SLICE}
\theoremstyle{plain} 
\newtheorem{theorem}{Theorem}[section]
\newtheorem*{theorem*}{Theorem}
\newtheorem{lemma}[theorem]{Lemma}
\newtheorem{assumption}[theorem]{Assumption}
\newtheorem*{lemma*}{Lemma}
\newtheorem{corollary}[theorem]{Corollary}
\newtheorem*{corollary*}{Corollary}
\newtheorem*{proposition*}{Proposition}
\newtheorem{claim}[theorem]{Claim}
\newtheorem*{claim*}{Claim}
\newtheorem{definition}[theorem]{Definition}
\newtheorem*{definition*}{Definition}
\theoremstyle{remark}
\newtheorem{example}[theorem]{Example}
\newtheorem*{example*}{Example}
\newtheorem{remark}[theorem]{Remark}
\newtheorem*{remark*}{Remark}
\newtheorem*{remarks*}{Remarks}
\def\@setthanks{\vspace{-\baselineskip}\def\thanks##1{\@par##1\@addpunct.}\thankses}
\numberwithin{equation}{section}
\title{Delocalization of random band matrices at the edge}
\author{Fan Yang$^\star$}
\author{Jun Yin$^\dagger$}
\thanks{$^\star$Yau Mathematical Sciences Center, Tsinghua University, and Beijing Institute of Mathematical Sciences and Applications, \href{mailto:fyangmath@mail.tsinghua.edu.cn}{fyangmath@mail.tsinghua.edu.cn}. 
}
\thanks{$^\dagger$Department of Mathematics, University of California, Los Angeles, \href{mailto:jyin@math.ucla.edu}{jyin@math.ucla.edu}. 
}
\begin{document}

\begin{abstract} 
We consider $N\times N$ Hermitian random band matrices $H=(H_{xy})$, whose entries are centered complex Gaussian random variables. The indices $x,y$ range over the $d$-dimensional discrete torus $(\mathbb Z/L\mathbb Z)^d$, where $d\in \{1,2\}$ and $N=L^d$. The variance profile $S_{xy}=\mathbb E|h_{xy}|^2$ exhibits a banded structure: specifically, $S_{xy}=0$ whenever the distance $|x-y|$ exceeds a band width parameter $W\le L$. Let $W=L^\alpha$ for some exponent $0<\alpha\le 1$. 
We show that as $\alpha$ increases from \smash{$\mathbf 1_{d=1}/2$} to $1-d/6$, the range of energies corresponding to delocalized eigenvectors gradually expands from the bulk toward the entire spectrum. More precisely, we prove that eigenvectors associated with energies $E$ satisfying $2 - |E| \gg N^{-c_{d,\alpha}}$ are delocalized, where the exponent $c_{d,\alpha}$ is given by $c_{d,\alpha} = 2\alpha - 1$ in dimension 1 and $c_{d,\alpha} = \alpha$ in dimension 2.
Furthermore, when $\alpha>1-d/6$, all eigenvectors of $H$ become delocalized. We further establish quantum unique ergodicity for delocalized eigenvectors, as well as a rigidity estimate for the eigenvalues. 
Our findings extend previous results---established in the bulk regime for one-dimensional (1D) and two-dimensional (2D) random band matrices \cite{Band1D,Band2D}---to the entire spectrum, including the spectral edges. They also complement the results of \cite{Sod2010,Band_Edge123}, which concern the edge eigenvalue statistics for 1D and 2D random band matrices.

\end{abstract}

\maketitle

{
\hypersetup{linkcolor=black}
\tableofcontents
}



\section{Introduction}

The $d$-dimensional \emph{random band matrix} (RBM) model \cite{ConJ-Ref1, ConJ-Ref2, fy}, also known as the Wegner orbital model \cite{Wegner1, Wegner2, Wegner3}, describes a broad class of random Hamiltonians on a $d$-dimensional lattice, where random hopping occurs only within a band of width $W$. 
In this paper, we consider an RBM $H = (H_{xy})$ defined on a large $d$-dimensional discrete torus $\ZL := \{1, 2, \ldots, L\}^d$ with $N = L^d$ lattice sites. The entries of $H$ are independent (up to the Hermitian symmetry $H_{xy} = \overline{H}_{yx}$), centered complex Gaussian random variables with a banded variance profile $S_{xy}:=\mathbb{E}|H_{xy}|^2$, which vanishes whenever the distance $|x - y|$ exceeds $W$. 
We normalize the row sums of the variance matrix $S = (S_{xy})$ to be 1, under which the global eigenvalue distribution of $H$ converges weakly to the Wigner semicircle law, supported on the interval $[-2, 2]$ \cite{Wigner}.

The RBM, or Wegner orbital model, can be viewed as a natural interpolation between the celebrated Anderson model \cite{Anderson} and the Wigner ensemble \cite{Wigner}, as $W$ varies. In particular, similar to the Anderson model, RBMs also exhibit a sharp Anderson metal–insulator transition depending on the band width $W$ or the energy level $E$.  
More precisely, numerical simulations \cite{ConJ-Ref1, ConJ-Ref2, ConJ-Ref4, ConJ-Ref6} and non-rigorous arguments based on supersymmetry \cite{fy} suggest that within the \emph{bulk} of the limiting spectrum $[-2, 2]$, the localization length of one-dimensional (1D) RBMs is of order $W^2$. In the two-dimensional (2D) case, the localization length is conjectured to grow exponentially as $\exp(\OO(W^2))$. In particular, as $W$ increases and the localization length exceeds the system size $L$, the RBM undergoes a transition from a localized phase to a delocalized phase. For RBMs in dimensions $d \geq 3$, the localization length of bulk eigenvectors becomes infinite once $W$ exceeds a large constant, indicating complete delocalization in this regime.
At the spectral edges $\pm 2$, a sharp phase transition in the edge eigenvalue statistics of 1D RBMs has been rigorously established in \cite{Sod2010} via an intricate moment method, occurring as $W$ crosses the threshold $L^{5/6}$. 
This approach was later extended to higher dimensions ($2\le d \le 4$), revealing a similar transition at $W = L^{1 - d/6}$ \cite{Band_Edge123}. These results naturally suggest that the corresponding localization–delocalization transition for edge eigenvectors also occurs at $W = L^{1 - d/6}$ for $d \geq 1$.

Note that the critical thresholds for $W$ differ between the bulk and the edge of the spectrum. Therefore, it is natural to conjecture that when $W$ lies between these two thresholds—i.e., when $L^{\mathbf{1}_{d=1}/2} \ll W \ll L^{1 - d/6}$—there exist mobility edges within the spectrum $[-2, 2]$ separating localized and delocalized phases. Specifically, eigenvectors near the spectral edges are expected to be localized, while those corresponding to energies deeper in the bulk are delocalized, undergoing a transition across the mobility edges.


The above conjectures regarding the localization–delocalization transition of RBMs and the existence of mobility edges closely mirror those for the Anderson model (or random Schr{\"o}dinger operators). Heuristically, the RBM with band width $W$ and the Anderson model with disorder strength $\lambda$ (where larger $\lambda$ corresponds to stronger disorder) are believed to exhibit qualitatively similar behavior under the correspondence $\lambda^{-1} \asymp W$.
The localization phenomenon in the 1D Anderson model is by now well understood; see, for example, \cite{GMP, KunzSou, Carmona1982_Duke, Damanik2002}, among many other references. In dimensions $d \geq 2$, Anderson localization was first rigorously established by Fröhlich and Spencer \cite{FroSpen_1983} via a multi-scale analysis. A simpler and influential alternative approach was later introduced by Aizenman and Molchanov \cite{Aizenman1993} using the fractional moment method. 
Since then, numerous significant results have been developed concerning Anderson localization in higher dimensions; see, for instance, \cite{FroSpen_1985, Carmona1987, SimonWolff, Aizenman1994, ASFH2001, Bourgain2005, Germinet2013, DingSmart2020, LiZhang2019}. 
In contrast, our understanding of Anderson delocalization is far more limited. 
To the best of our knowledge, the existence of a delocalized phase (and in particular, of mobility edges) has been rigorously established only for the Anderson model on the Bethe lattice \cite{Bethe_PRL, Bethe_JEMS, Bethe-Anderson}, which may be viewed as an $\infty$-dimensional tree-like graph, and for a block-structured variant of the Anderson model \cite{RBSO, RBSO1D}, where the diagonal potential is replaced by a diagonal block matrix consisting of independent Gaussian blocks.


Compared to the Anderson model, the study of delocalization for RBMs is relatively more tractable, largely due to the significantly higher number of random entries in the matrix---$NW$ for RBMs versus $N$ for the Anderson model. As a result, RBMs have emerged as one of the most prominent models for investigating the Anderson delocalization conjecture. 
There has been significant progress in understanding the localization/delocalization for 1D RBMs  \cite{BaoErd2015,BGP_Band,BouErdYauYin2017,PartII,PartI,CS1_4,CPSS1_4,ErdKno2013,ErdKno2011,delocal,Semicircle,HeMa2018,Wegner,Sch2009,SchMT,Sch3,1Dchara,Sch1,Sch2014,Sch2,Sod2010,Band1D_III,Band1D}, as well as for RBMs in higher dimensions $d \geq 2$ \cite{DL_2D,DisPinSpe2002,Band2D,ErdKno2013,ErdKno2011,delocal, HeMa2018,BandI,BandII,Band1D_III,BandIII,Band_Edge123}. 
Currently, the strongest localization result for 1D RBMs is given in \cite{CS1_4,CPSS1_4}, where localization of bulk eigenvectors was established under the condition $W\ll N^{1/4}$. 
On the delocalization side, bulk eigenvector delocalization 
has been proved in dimension 1 \cite{Band1D} under the sharp condition $W\gg L^{1/2}$; in dimensions 2 \cite{Band2D} and $d\ge 7$ \cite{BandI, BandII, BandIII}, delocalization has been established under the weaker condition $W \geq L^\varepsilon$ for any arbitrarily small constant $\varepsilon > 0$.
While these works have focused primarily on the bulk regime of RBMs, results concerning the edge regime are far more limited. To the best of our knowledge, only the eigenvalue statistics near the spectral edges $\pm 2$ have been analyzed, specifically in \cite{Sod2010, Band_Edge123}. However, rigorous results concerning the localization or delocalization of edge eigenvectors remain absent from the literature.


In this paper, we present the \emph{first rigorous proof establishing the predicted lower bounds on the localization lengths of all eigenvectors across the entire spectrum} for random band matrices, including the vicinity of the spectral edges. Our results provide a definitive and comprehensive answer to the delocalization conjecture for RBMs in 1D and 2D. In particular, this work represents a significant step toward a complete resolution of the long-standing Anderson metal–insulator transition conjecture for one of the most fundamental non-mean-field models in mathematical physics. As a further contribution, we also provide a precise prediction for the locations of the mobility edges in 1D and 2D---a result that is particularly novel, as even such a conjecture had not previously appeared in the literature.

To elaborate, we investigate the delocalization of 1D and 2D RBMs by extending the results of \cite{Band1D, Band2D} to the entire spectrum, including the neighborhood of the spectral edges. On one hand, our findings are consistent with those of \cite{Band1D, Band2D}, confirming the delocalization of bulk eigenvectors under the condition $W \gg L^{\mathbf{1}_{d=1}/2}$. On the other hand, they align with the results of \cite{Sod2010, Band_Edge123}, demonstrating that eigenvectors near the spectral edges are delocalized when $W \gg L^{1 - d/6}$.
Moreover, in the intermediate regime where $W=L^\al$ for a constant exponent \smash{$\mathbf{1}_{d=1}/2 <\al \le 1 - d/6$}, our main result, \Cref{thm:supu}, shows that only those eigenvectors corresponding to energies $E \in [-2, 2]$ satisfying $2 - |E| \gg N^{-c_{d,\al}}$ are delocalized, where the exponent $c_{d,\alpha}$ is given by $c_{d,\alpha} = 2\alpha - 1$ in 1D, and $c_{d,\alpha} = \alpha$ in 2D. This suggests the existence of mobility edges located at $\pm(2 - N^{ - c_{d,\al}})$. To fully establish the existence of such mobility edges, one must also show that eigenvectors associated with energies near the spectral edges---specifically those satisfying $|2 - |E|| \ll N^{ - c_{d,\al}}$---are indeed localized. In fact, we provide a more comprehensive result in \Cref{cor:localizationlength}, which strengthens \Cref{thm:supu} by giving explicit lower bounds on the localization lengths of these (potentially) localized eigenvectors. We believe these lower bounds to be sharp, although establishing matching upper bounds remains a challenging problem.

\subsection{The model and overview of main results}

For definiteness, throughout this paper, we assume that $L=nW$ for some $n,W\in 2\N+1$. Then, we choose the center of the lattice as $0$. However, our results still hold for even $n$ or $W$, as long as we choose a different center for the lattice. Consider a one or two-dimensional lattice in \(\mathbb{Z}^d\), $d\in \{1,2\}$, with $N=L^d$ lattice points, i.e., $\Z_L^d:=\qll{ -(L-1)/2 , (L-1)/2}^2$. 
Hereafter, for any $a,b\in \R$, we denote $\llbracket a, b\rrbracket: = [a,b]\cap \Z$. 
We will view $\Z_L^d$ as a torus and denote by $\rep{x-y}_L$ the representative of $x-y$ in $\ZL$, i.e.,  
\be\label{representativeL}\rep{x-y}_L:= \left((x-y)+L\Z^d\right)\cap \Z_L^d.\ee
Now, we impose a block structure on $\Z_L^d$ with blocks of side length $W$.

\begin{definition}[Block structure] \label{def: BM2}
For $d\in \{1,2\}$, suppose \be\label{eq:LnW} L=nW,\quad N=L^d,
\ee
for some integers $n, W\in 2\N+1$. We divide $\mathbb Z_L^d$ into $n^d$ many blocks of linear size $W$, such that the central one is $\qll{ -(W-1)/2, (W-1)/2}^d$. Given any $x\in \Z_L^d$, denote the block containing $x$ by $[x]$. Denote the lattice of blocks $[x]$ by \smash{$\Zn$}. We will view \smash{$\Zn$} as a torus and let $\rep{[x]-[y]}_n$ denote the representative of $[x]-[y]$ in \smash{$\Zn$}. For convenience, we will regard $[x]$ both as a vertex of the lattice \smash{$\Zn$} and a subset of vertices on the lattice $\Z_L^d$. Given any $\ZL\times \ZL$ matrix $A$, let $A|_{[x][y]}$ denote the $([x],[y])$-th block of $A$, which is a $W^d\times W^d$ matrix. 
\end{definition}

For definiteness, we use the $L^1$-norm in this paper, i.e., $\|x-y\|_L:=\|\rep{x-y}_L\|_1$, which is the (periodic) graph distance on $\ZL$. 
Similarly, we also define the periodic $L^1$-distance $\|\cdot\|_n$ on \smash{$\Zn$}. For simplicity of notations, throughout this paper, we will abbreviate
\begin{align}\label{Japanesebracket} |x-y|\equiv \|x-y\|_L,\quad &\langle x-y \rangle \equiv \|x-y\|_L + W,\quad \text{for} \ \ x,y \in \ZL, \\
\label{Japanesebracket2} |[x]-[y]|\equiv \|[x]-[y]\|_n,\quad &\langle [x]-[y] \rangle \equiv \|[x]-[y]\|_n + 1, \quad \text{for} \ \  x,y \in \Zn.
\end{align}
We use $x\sim y$ to mean that $x$ and $y$ are neighbors on $\ZL$, i.e., $|x-y|=1$. Similarly, $[x]\sim [y]$ means that $[x]$ and $[y]$ are neighbors on \smash{$\Zn$}. 
We now outline the precise assumptions for our model. 

\begin{definition}[Random band matrices] \label{def: BM}
Fix $d\in\{1,2\}$ and a coupling parameter $\lambda$ satisfying $C^{-1}\le \lambda\le C$ for a constant $C>0$. Let $V$ and $\Psi$ both denote $N\times N$ complex Hermitian random block matrices, whose entries are independent Gaussian random variables up to the Hermitian symmetry $V_{xy}=\overline V_{yx}$ and $\Psi_{xy}=\overline \Psi_{yx}$. 
$V$ is a diagonal block matrix consisting of i.i.d.~GUE blocks, that is, the off-diagonal entries (in the diagonal blocks) of $V$ are complex Gaussian random variables:
\be\label{bandcw0}
V_{xy}\sim {\cal N }_{\C}(0, s_{xy})\quad \text{with}\quad  s_{xy}:=W^{-d} {\bf 1}\left( [x] =[y] \right), \quad \text{for} \ \  x\ne y,
\ee
while the diagonal entries of $V$ are real Gaussian random variables distributed as ${\cal N }_{\R}(0, W^{-d})$. $\Psi$ is a random matrix that introduces hoppings between neighboring blocks, where the blocks \smash{$\Psi|_{[x][y]}$} with $[x]\sim [y]$ are i.i.d.~$W^d\times W^d$ complex Ginibre matrices up to the Hermitian symmetry \smash{$\Psi|_{[x][y]} = (\Psi|_{[y][x]})^*$}. In other words, the entries of $\Psi$ are independent complex Gaussian random variables:
\be\label{bandcw1}
\Psi_{xy}\sim {\cal N }_{\C}(0, s'_{xy}),\quad \text{with}\quad  s'_{xy}:=W^{-d} {\bf 1}\left( [x] \sim [y] \right).
\ee
Then, we define a class of random band matrices (or called Wegner orbital models) of the form  
\be\label{eq:WO}
H:=(1+2d\lambda^2)^{-1/2}(\lambda\Psi + V),
\ee
where the normalization $(1+2d\lambda^2)^{-1/2}$ is chosen such that the total variance of the entries of $H$ within each row and column is equal to 1.
\end{definition}

Define the Green's function (or resolvent) of the Hamiltonian $H$ as
\be\label{def_Green}
G(z):=(H-z)^{-1} ,  \quad z\in \C_+.
\ee
Assuming $W\ge L^\delta$ for a constant $\delta>0$, we establish the following results: 
\begin{itemize}
\item {\bf Local law} (\Cref{thm_locallaw,thm_locallaw_out}). 
For all energies $E$ with $|E| \le C$, where $C > 2$ is an arbitrarily large constant, we prove a sharp local law for the Green's function $G(z)$, with $z = E + \ii \eta$, down to an almost optimal local scale $\eta \gg \eta_*(E)$. Here, $\eta_*(E)$ depends both on $E$ and the scale $W$; see \eqref{eq:defeta*} and \eqref{eq:defeta*2} for its precise definition. Outside the support $[-2,2]$ of the semicircle law, the local laws hold down to a smaller scale $\eta \gg \eta_\circ(E)$, where $\eta_\circ(E)$ is defined in \eqref{eq:defeta*3}.


\item {\bf Delocalization} (\Cref{thm:supu}). 
With the local laws, we show that for $W = L^\alpha$ with \smash{$\frac{\mathbf{1}_{d=1}}{2} < \alpha \le 1 - \frac{d}{6}$}, the eigenvectors of $H$ corresponding to energies $E$ satisfying $2 - |E| \gg N^{ - c_{d,\al}}$ are delocalized with high probability. Moreover, in the supercritical regime \smash{$W \gg L^{1-d/6}$}, all eigenvectors of $H$ are delocalized. 

\item {\bf Eigenvalue rigidity} (\Cref{thm:rigidity}). 
As another consequence of the local laws, we show that when \smash{$W \gg L^{1 - d/6}$}, the eigenvalues of $H$ concentrate around their classical locations given by the quantiles of the semicircle law.


\item {\bf Quantum diffusion} (\Cref{thm_diffu}). The evolution of the quantum particle exhibits quantum diffusion on spatial scales $\gg W$ and for times $t \gg 1$.


\item {\bf Quantum unique ergodicity} (\Cref{thm:QUE}). 
As a consequence of quantum diffusion, we prove that in the supercritical regime \smash{$W \gg L^{1 - d/6}$}, with probability $1-\oo(1)$, every bulk eigenvector is nearly flat on all scales $\Omega(W)$.
\end{itemize}
We refer the reader to \Cref{sec:main} below for precise statements of our main results. 

Our results can be readily extended to settings where the entries of $H$ are real Gaussian variables or follow more general Gaussian divisible distributions. In particular, as in \cite{Band1D,Band2D}, we employ a flow argument that evolves both the matrix $H_t$ and the spectral parameter $z_t$ from time $t=0$ to $t=1$. Along this flow, we apply It\^o's formula to derive a system of equations for $G$-loops (as defined in \Cref{Def:G_loop}), which allows us to transfer the $G$-loop estimates to progressively larger times $t$.
In fact, instead of initializing the flow at time $t=0$, we may alternatively start at some later time $t_0=1-\oo(1)$, with an initial matrix $H_{t_0}$ having non-Gaussian entries. In this case, the desired $G$-loop estimates at time $t_0$ can be established directly using standard techniques such as cumulant expansions. The subsequent proof then proceeds as in the current paper, allowing us to obtain the same results for Gaussian divisible RBMs.
We also note that, very recently, the results of \cite{Band1D} were extended in \cite{BandZZ} to 1D RBMs with non-Gaussian entries and more general variance profiles. We believe that our results in the 1D setting can similarly be extended to the non-Gaussian case by employing the improved Green's function comparison technique developed in that work. However, extending the results to the 2D case appears to require new ideas beyond the current methods.
Finally, we note that our results can be further extended to a broader class of random block Schr{\"o}dinger operators and RBMs with coupling parameter $\lambda \ll 1$, as demonstrated for the bulk regime in \cite{RBSO1D}. A detailed investigation of these extensions is left for future work.

\subsection{Difficulties and new ideas}\label{sec:idea}
Compared to the analysis of random band matrices in the bulk regime, extending the associated arguments to the spectral edge presents several fundamental challenges. Some of these difficulties are already known from the study of Wigner matrices near the spectral edge; see, e.g., \cite{ErdYauYin2012Rig,Semicircle}. However, these issues become even more pronounced in the context of RBMs. For instance, the delocalization of RBMs in the bulk regime for high dimensions ($d\ge 7$) has been established through a series of works \cite{BandI,BandII,BandIII}. Yet, the methods developed in those works break down entirely outside the bulk regime. To date---even with the new techniques introduced in this paper---delocalization of edge eigenvectors in these dimensions remains an open problem.

A main obstacle arises from the instability of the self-consistent equations for the diagonal entries of the resolvent near the spectral edge (see equations \eqref{eq:self0} and \eqref{eq:self}). As we will discuss around equation \eqref{eq:self4}, this instability leads to estimates on the diagonal entries of $G$ that are too weak for our purposes. As a result, we cannot extend the local law for $G(z)$ down to the optimal scale in $\im z$, which in turn prevents us from obtaining sharp delocalization estimates for the eigenvectors of $H$. 
In the case of Wigner matrices \cite{ErdYauYin2012Rig,Semicircle}, this instability is resolved by decomposing the vector of diagonal resolvent entries into two components: the projection onto the unstable direction (i.e., the direction $(1,\ldots, 1)^\top\in \C^N$ in our setting), and its orthogonal complement. The projection onto the unstable direction reduces to a scalar quadratic self-consistent equation, which can be solved explicitly. For the orthogonal projection, the corresponding self-consistent equation becomes stable due to the spectral gap between the Perron–Frobenius eigenvalue of the variance matrix (defined in \eqref{eq:SWO}) and the rest of the spectrum.
However, for RBMs with $W\ll L$, this spectral gap is too small---on the order of $W^2/L^2$---and thus the instability persists. In \Cref{subsec_pf_lem_G<T}, we introduce a flow-based argument to estimate the diagonal resolvent entries. This approach avoids the use of self-consistent equations in establishing a weak local law for $G(z)$ and thereby circumvents the instability issue. Nonetheless, we emphasize that the self-consistent equation remains useful when upgrading this weak local law to a strong one later in the analysis.

The second conceptual difficulty lies in identifying the contribution of edge eigenvalues and eigenvectors to the resolvent $G(z)$. Heuristically, the imaginary part of $G(z)$ is primarily influenced by the local eigenvalues and eigenvectors of $H$ near $z$, whereas the real part reflects contributions from the entire spectrum of $H$. However, near the spectral edge, the eigenvalue density is significantly lower than in the bulk. As a consequence, for $z$ close to the spectral edge, $\im G_{xx}(z)$ is typically one order of magnitude smaller than $\re G_{xx}(z)$. 
This imbalance necessitates a more delicate analysis. Unlike in the bulk case, we must carefully track the imaginary parts of the resolvent entries throughout the proof. It is crucial to ensure that there are sufficiently many imaginary parts, which will allow us to derive sufficiently strong estimates and help cancel diverging factors associated with the instability of the self-consistent equations. 


Another fundamental difficulty arises from the problematic behavior of same-colored propagators. Roughly speaking, the propagators \smash{$\Theta^{(\sig_1,\sig_2)}$, $(\sig_1,\sig_2)\in \{+,-\}^2$}, defined in \Cref{def_Theta} below characterize the asymptotic limits of pairs of resolvent entries. In this framework, the opposite-colored propagator \smash{$\Theta^{(+,-)}$} corresponds to $|G_{xy}|^2$, while the same-colored propagators \smash{$\Theta^{(+,+)}$ and $\Theta^{(-,-)}$} correspond to $G_{xy}G_{yx}$ and $G^*_{xy}G^*_{yx}$, respectively. 
In all previous proofs of delocalization for RBMs, a key technical challenge has been handling the slow decay of \smash{$\Theta^{(+,-)}_{xy}$}. In contrast, same-colored propagators \smash{$\Theta^{(+,+)}_{xy}$ and $\Theta^{(-,-)}_{xy}$} have always been harmless in the bulk, where they exhibit exponential decay for $|x-y|$ beyond the local scale $W$. This favorable behavior significantly simplifies the graphical structure of the associated expressions: graphs involving edges representing resolvent entries and propagators can be organized into \emph{molecules}, that is, short-scale structures formed by vertices connected via same-colored propagators, as in \cite{Band1D,BandI,BandII,BandIII}. This simplification allows the analysis to focus primarily on the difficulties posed by the opposite-colored propagator. 
However, near the spectral edges, the situation changes drastically. The decay of the same-colored propagators \smash{$\Theta^{(+,+)}$ and $\Theta^{(-,-)}$} can become significantly slower---sometimes matching the slow decay of opposite-colored ones. It is this issue that invalidates the molecular decomposition and renders the proof strategy in \cite{BandI,BandII,BandIII} ineffective beyond the bulk regime. 

In our proof, the slow decay of the same-colored propagators introduces two main issues: weaker upper bounds for primitive loops, and poor control over the $(L^\infty\to L^\infty)$-norm of the evolution kernels involving same-colored propagators. Both challenges necessitate a more refined and delicate analysis of primitive loops near the spectral edge, which we carry out in detail in \Cref{sec:prim}. 
In particular, due to the insufficient bounds on the evolution kernels, we require a new approach to handle ``pure' primitive loops---those constructed entirely from same-colored propagators. In the bulk regime, bounding such loops is almost trivial, thanks to the rapid decay of same-colored propagators. However, near the spectral edge, these bounds become too weak for our purposes. Furthermore, the method developed in \cite{Band1D} for treating non-pure loops is not applicable here, as it relies on Ward’s identity, which does not hold for pure loops. To overcome this difficulty, we develop a novel argument based on Cauchy's integral formula (see \Cref{sec:inductive_step}) and a new identity for primitive loops (see \Cref{lem:pure_sum}). We also provide a proof of this identity via the tree representation formula for primitive loops, presented in \Cref{sec:pf_new_identity}.

Finally, a technical difficulty arises in the continuity estimates. The continuity argument used in the bulk \cite{Band1D} relies on $G$-loop estimates propagated along a single characteristic flow line. However, near the spectral edge, the slopes of these flow lines become very small---meaning the real part of the spectral parameter changes much more rapidly than the imaginary part. As a result, applying the bulk continuity argument in this setting introduces an additional diverging factor, given by the reciprocal of the flow line's slope, which is too large to be tolerated in the subsequent analysis. Heuristically, this divergence could be compensated by potential improvements obtained from taking the imaginary part of the $G$-loop. However, realizing such an improvement is challenging: the continuity step only provides a priori estimates, which are far from optimal, and the subsequent flow step tends to degrade the imaginary part by introducing contributions from the real part of $G$.


To overcome this issue, we develop a new argument (see \Cref{subsec_pf_lem_ConArg}) that propagates continuity estimates along a family of multiple flow lines. Specifically, to obtain an a priori bound on the $G$-loops at time $t$ along a flow line associated with a spectral parameter $z$, we instead use information from a different flow line at an earlier time $s<t$, where the flow penetrates ``deeper" into the bulk of the spectrum. This allows us to derive improved continuity estimates. Although these estimates remain non-optimal—due to the presence of an unavoidable diverging factor—they suffice to support the step-by-step analysis of the loop hierarchy.
This approach ultimately yields a sharp local law down to the optimal scale of $\im z$ within the spectrum $[-2,2]$. However, outside $[-2,2]$, the scale of $\im z$ remains non-optimal due to the limitations of the continuity estimate. To establish eigenvalue rigidity, we require exclusion estimates for eigenvalues outside $[-2,2]$, which in turn demand a sharp local law down to the optimal scale of $\im z$. To achieve this, we employ an additional continuity argument outside the spectrum, combined with a standard approach based on self-consistent equations; see \Cref{subsec:outspec}.


\subsection*{Organization of the remaining text}
In \Cref{sec:main}, we present the main results of this paper. \Cref{sec:tools} introduces several key tools that will be utilized in the proofs, including the flow framework, the $G$-loops, and the primitive loops. \Cref{sec:prim} is dedicated to proving some fundamental properties for the primitive loops. The proofs of the main results are provided in \Cref{sec:proof}, which rely on some crucial $G$-loop estimates established in the key theorem, \Cref{lem:main_ind}. This theorem offers an inductive framework for extending the $G$-loop estimates along the flow to larger times $t$, step by step. 
The proof of \Cref{lem:main_ind} is the primary focus of \Cref{Sec:Step1,Sec:Stoflo}. In \Cref{Sec:Step1}, we establish a continuity estimate for $G$-loops, present a new proof for the weak local law of the Green's function, and extend the local law outside the support $[-2,2]$ of the semicircle law down to the optimal scale of $\im z$. Finally, in \Cref{Sec:Stoflo}, we conduct a detailed analysis of the loop hierarchy for the $G$-loop estimates. 
The proof of an upper bound (\Cref{ML:Kbound}) for primitive loops will be given in \Cref{subsec:estimates}, and additional details for the proofs in \Cref{Sec:Stoflo} are deferred to \Cref{sec:main_appd}, due to their similarity with the argument in \cite[Section 5]{Band1D}.


To facilitate the presentation, we introduce some necessary notations that will be used throughout this paper. We will use the set of natural numbers $\N=\{1,2,3,\ldots\}$ and the upper half complex plane $\C_+:=\{z\in \C:\im z>0\}$.  
In this paper, we are interested in the asymptotic regime with $N\to \infty$. When we refer to a constant, it will not depend on $N$ or $W$. Unless otherwise noted, we will use $C$, $D$ etc.~to denote large positive constants, whose values may change from line to line. Similarly, we will use $\e$, $\delta$, $\tau$, $c$, $\fc$, $\fd$ etc.~to denote small positive constants. 
For any two (possibly complex) sequences $a_N$ and $b_N$ depending on $N$, $a_N = \OO(b_N)$, $b_N=\Omega(a_N)$, or $a_N \lesssim b_N$ means that $|a_N| \le C|b_N|$ for some constant $C>0$, whereas $a_N=\oo(b_N)$ or $|a_N|\ll |b_N|$ means that $|a_N| /|b_N| \to 0$ as $N\to \infty$. 
We say that $a_N \asymp b_N$ if $a_N = \OO(b_N)$ and $b_N = \OO(a_N)$. For any $a,b\in\R$, we denote $\llbracket a, b\rrbracket: = [a,b]\cap \Z$, $\qqq{a}:=\qqq{1,a}$, $a\vee b:=\max\{a, b\}$, and $a\wedge b:=\min\{a, b\}$. For an event $\Xi$, we let $\mathbf 1_\Xi$ or $\mathbf 1(\Xi)$ denote its indicator function.  
For any graph (or lattice), we use $x\sim y$ to mean two vertices $x,y$ are neighbors. 
Given a vector $\mathbf v$, $|\mathbf v|\equiv \|\mathbf v\|_2$ denotes the Euclidean norm and $\|\mathbf v\|_p$ denotes the $L^p$-norm. 
Given a matrix $\cal A = (\cal A_{ij})$, $\|\cal A\|$, $\|\cal A\|_{p\to p}$, and $\|\cal A\|_{\infty}\equiv \|\cal A\|_{\max}:=\max_{i,j}|\cal A_{ij}|$ denote the operator (i.e., $L^2\to L^2$) norm,  $L^p\to L^p$ norm (where we allow $p=\infty$), and maximum (i.e., $L^\infty$) norm, respectively. We will use $\cal A_{ij}$ and $ \cal A(i,j)$ interchangeably in this paper. Moreover, we introduce the following simplified notation for trace: $ \left\langle \cal A\right\rangle=\tr (\cal A) .$


\medskip

\noindent{\bf Acknowledgement.}  
Fan Yang is supported in part by the National Key R\&D Program of China (No.~2023YFA1010400). We would like to thank Horng-Tzer Yau for fruitful discussions.

\section{Main results} \label{sec:main}

Denote the eigenvalues of $H$ by $\lambda_1\le \lambda_2\le \cdots\le \lambda_N$ and the corresponding eigenvectors by $\bu_1,\ldots, \bu_N$. 
Our first main result concerns the delocalization of eigenvectors. In particular, we observe the following phenomenon in dimension $d = 1$: when $N \gg W^{1/2}$, the bulk eigenvectors are delocalized, as established in \cite{Band1D}. As $W$ increases, the ``mobility edges" move closer to the spectral edges $\pm 2$, causing some eigenvectors in the transition regime (between the bulk and the edges) to become delocalized, depending on the scaling of $W$. Finally, when \smash{$W \gg N^{5/6}$}, all eigenvectors of $H$ become delocalized. A similar transition occurs in dimension $d = 2$ as $W$ increases from $W \gg 1$ to $W \gg N^{2/3}$.

\begin{theorem}[Delocalization]\label{thm:supu} 
In the setting of \Cref{def: BM} with $d\in\{1,2\}$, 
suppose $W\ge L^{\al}$ for a constant $\mathbf 1_{d=1}/2 < \al \le 1-d/6$. Then, the following delocalization estimate holds for any constants $\e,
\tau, D>0$:
\begin{align}\label{eq:delocalmax2}  
\P\Big(\max_{k: |\lambda_k | \leq 2- N^{ - c_{d,\al} +\e} } \|\bu_k\|_\infty^2 \le  N^{-1+\tau} \Big)\ge 1-N^{-D} 
\end{align}
if $N$ is sufficiently large, where $c_{d,\al}$ is defined as 
\[c_{d,\al}:=\begin{cases}
    2\al-1, \ & \text{if} \ d=1\\
    \al, \ & \text{if} \ d=2
\end{cases}.\]
Furthermore, if the constant $\al$ satisfies $\alpha > 1-d/6$, then all eigenvectors are delocalized: for any constants $\tau,D>0$, 
\begin{align}
\P\Big(\max_{k=1}^N \|\bu_k\|_\infty^2 \le N^{-1+\tau}\Big) \ge 1-N^{-D} \, \label{eq:delocalmax} 
\end{align}
if $N$ is sufficiently large.
\end{theorem}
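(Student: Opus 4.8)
The plan is to read off \eqref{eq:delocalmax2}--\eqref{eq:delocalmax} from the sharp local law of \Cref{thm_locallaw} via the classical spectral-decomposition bound, so that the only genuinely new ingredient in \Cref{thm:supu} is a comparison of scales. Fix an eigenpair $(\lambda_k,\bu_k)$, a site $x\in\ZL$ and a scale $\eta>0$. Keeping only the diagonal term $j=k$ in the spectral expansion
\[ \im G_{xx}(E+\ii\eta)=\eta\sum_{j}\frac{\abs{\bu_j(x)}^2}{(\lambda_j-E)^2+\eta^2} \]
and setting $E=\lambda_k$ yields the elementary inequality $\abs{\bu_k(x)}^2\le\eta\,\im G_{xx}(\lambda_k+\ii\eta)$. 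Hence it suffices to produce, for each relevant $k$, a scale $\eta=\eta_k$ for which $\eta_k\,\im G_{xx}(\lambda_k+\ii\eta_k)\prec N^{-1}$ uniformly in $x$; a union bound over the $N$ indices $k$ and the $N$ sites $x$ is then absorbed into the $N^{\tau}$ slack in the definition of $\prec$.

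\textbf{Choice of scale and input from the local law.}
For an eigenvalue with $\kappa_k:=2-\abs{\lambda_k}\ge N^{-c_{d,\al}+\e}$ we take $\eta_k:=N^{\e/2}\,\eta_*(\lambda_k)$, essentially the smallest scale allowed by \Cref{thm_locallaw}. Since the local law is stated for deterministic energies, one first runs it on a fine deterministic $z$-grid and then transfers to the random point $\lambda_k+\ii\eta_k$ using the trivial bound $\abs{\partial_z\im G_{xx}}\le\eta^{-2}$; as $\eta_k\gg N^{-1}$, this costs only $N^{-\OO(1)}$. By design of $\eta_*$, the sharp local law gives in particular $\im G_{xx}(z)\prec\im m_{\mathrm{sc}}(z)$ for $z=E+\ii\eta$ with $\abs{E}\le C$ and $\eta\ge N^{\e}\eta_*(E)$; since $\im m_{\mathrm{sc}}(E+\ii\eta)\asymp\sqrt{(2-\abs{E})+\eta}$ near the edge, \eqref{eq:delocalmax2} reduces to the purely deterministic estimate
\[ \eta_*(E)\,\sqrt{(2-\abs{E})+\eta_*(E)}\;\lesssim\;N^{-1}\qquad\text{whenever}\quad 2-\abs{E}\ge N^{-c_{d,\al}+\e}, \]
which is exactly what the defining formulas \eqref{eq:defeta*}--\eqref{eq:defeta*2} are tailored to yield. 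Heuristically $\eta_*(E)$ coincides with the local eigenvalue spacing $\asymp\big(N\sqrt{2-\abs{E}}\,\big)^{-1}$ near $E$ as long as $2-\abs{E}$ stays above the threshold $N^{-c_{d,\al}}$ --- the location of the conjectural mobility edge --- and below that threshold the diffusive local law, hence the bound above, ceases to hold.

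\textbf{The regime $\al>1-d/6$.}
Here $c_{d,\al}>2/3$, so the previous step already handles every eigenvalue with $2-\abs{\lambda_k}\ge N^{-2/3+\e}$. The remaining eigenvalues lie in the extreme-edge window $2-\abs{\lambda_k}\le N^{-2/3+\e}$ and number at most $N^{\e}$ on each side, as follows from the rigidity estimate \Cref{thm:rigidity} (or directly from the local law at scale $N^{-2/3+\e}$). For those we take $\eta_k:=N^{\e}N^{-2/3}$, which still dominates $\eta_*$ on this window precisely because $\al>1-d/6$; then $\im m_{\mathrm{sc}}(\lambda_k+\ii\eta_k)\asymp\sqrt{\eta_k}\asymp N^{-1/3+\e/2}$, and therefore $\eta_k\,\im G_{xx}(\lambda_k+\ii\eta_k)\prec N^{-2/3+\e}\cdot N^{-1/3+\e/2}\prec N^{-1}$, which is \eqref{eq:delocalmax}.

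\textbf{Main obstacle.}
With \Cref{thm_locallaw} in hand the steps above are essentially bookkeeping, so the real difficulty of \Cref{thm:supu} is inherited wholesale from the local law: one must show that the error in $G_{xx}(z)-m_{\mathrm{sc}}(z)$ is genuinely $o(\im m_{\mathrm{sc}}(z))$ all the way down to $\im z=N^{\e}\eta_*(E)$, including near the spectral edge. This is delicate because the naive a priori entrywise error is of order $(W^{d}\eta)^{-1}$, which at the optimal scale already exceeds $N^{-1}$; defeating it requires the $\Theta$-weighted (diffusion-improved) bounds on $G$ and on the $G$-loops, which is precisely what the inductive flow analysis culminating in \Cref{lem:main_ind} is built to deliver. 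In short, the substance of \Cref{thm:supu} has been off-loaded onto the loop-hierarchy estimates, and what is described here is only the short concluding reduction.
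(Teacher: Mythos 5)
Your proposal is correct and follows essentially the same route as the paper: the elementary bound $\abs{\bu_k(x)}^2\le\eta\,\im G_{xx}(\lambda_k+\ii\eta)$, the sharp entrywise local law of \Cref{thm_locallaw}, and the rigidity statement \Cref{thm:rigidity} to localize the extreme eigenvalues in the regime $\alpha>1-d/6$. The only cosmetic difference is the choice of scale: you take $\eta_k=N^{\e/2}\eta_*(\lambda_k)$ and then argue that the defining formulas for $\eta_*$ imply $\eta_*\sqrt{\kappa_E+\eta_*}\lesssim N^{-1}$ in the relevant $E$-range (which is true because, when $2-\abs{E}\gtrsim N^{-c_{d,\alpha}+\e}$, the first constraint in \eqref{eq:defeta*}--\eqref{eq:defeta*2} is the binding one), whereas the paper takes the explicit $\eta(E)=N^{-1+\fd}(\kappa_E+N^{-2/3})^{-1/2}$, which simultaneously makes $\eta\,\im m(z)\approx N^{-1+\fd}$ transparent and renders the verification that $\ell(z)\ge n$ straightforward, so that the error term $\p{W^d\ell(z)^d}^{-1}$ becomes exactly $N^{-1}$. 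Both choices are interchangeable, and your observation that the real content has been off-loaded onto the local law and the loop-hierarchy analysis is accurate. One small caveat worth flagging: you only need to note that for eigenvalues slightly outside $[-2,2]$ the formula $\im m\asymp\eta/\sqrt{\kappa_E+\eta}$ replaces $\im m\asymp\sqrt{\kappa_E+\eta}$, but on the extreme-edge window $\kappa_E\lesssim\eta_k$ the two expressions coincide up to constants, so your computation still goes through; and you do not need the separate $z$-grid/transfer step, because the local law in \Cref{thm_locallaw} is already stated as a simultaneous high-probability event over the whole spectral domain.
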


The result \eqref{eq:delocalmax2} suggests that, for $L^{\mathbf 1_{d=1}/2}\ll W\ll L^{1-d/6}$, there should be a sharp phase transition between delocalization and localization as $2-|\lambda_k|$ crosses $N^{-c_{d,\al}+\e}$. This indicates the existence of ``mobility edges" near $\pm(2-N^{-c_{d,\al}})$, which separate the delocalized phase within the bulk from the localized phases at the edge. To establish this conjecture, further work is needed to establish the localization of the edge eigenvectors of $H$ in the subcritical regime $W\ll L^{1-d/6}$. 
We also remark that, as an extension of \Cref{thm:supu}, we will present a more complete---though slightly more technical---result in \Cref{cor:localizationlength} below. That result provides upper bounds on the $L^\infty$-norm of \emph{all eigenvectors} of $H$, uniformly across the \emph{entire parameter regime $0<\al\le 1$}.  In particular, it includes edge eigenvectors corresponding to energies outside the interval $[-2+N^{-c_{d,\al}+\e},2-N^{-c_{d,\al}+\e}]$ in the regime $\al\le 1-d/6$. These upper bounds imply corresponding lower bounds on the \emph{localization length} of the eigenvectors of $H$, which characterizes the typical length scale over which most of the $L^2$-mass of an eigenvector is concentrated.

We can further establish a \emph{quantum unique ergodicity} (QUE) estimate for all the delocalized eigenvectors of $H$. Previously, such QUE estimates have only been established in the bulk regime \cite{BandIII,Band1D,Band2D,RBSO1D}. 

\begin{theorem}[Quantum unique ergodicity]\label{thm:QUE}
In the setting of \Cref{def: BM} with $d\in\{1,2\}$, 
suppose $W\ge L^{\al}$ for a constant $\mathbf 1_{d=1}/2 < \al \le 1-d/6$. Given any $|E|\le 2-N^{-c_{d,\al}+\e}$ for a constant $\e>0$, define the subset \({\cal I}_E\equiv {\cal I}_E(\fd):=\left\{x: |x-E|\le W^{-\fd}\eta_{0}(E)\right\}\) for an arbitrary constant $\fd >0$, where 
$$ \eta_{0}(E):=\begin{cases}
W/N^{3/2}, & \text{if} \ d=1\\
W/{N}, & \text{if} \ d=2\\
\end{cases}.$$ 
Then, for each $d\in\{1,2\}$, there exists a small constant $c$ depending on $\e$ and $\fd$ such that the following estimate holds for large enough $L$:
\begin{equation}\label{Meq:QUE_weak}
\max_{[a]\in \Zn} \P\bigg(\max_{i,j:\lambda_i, \lambda_j \in {\cal I}_E} 
 \bigg| \sum_{x\in[a]}\overline \bu_i(x)\bu_j(x)-\frac{W^{d}}{N}\delta_{ij}  \bigg|^2 \ge \frac{W^{d-c}}{N} \bigg) \le  W^{-c} \, .
\end{equation}
More generally, for any subset $A\subset \Zn$, we have
\begin{equation}\label{Meq:QUE2_weak}
\mathbb{P}\bigg(\max_{k: \lambda_k\in {\cal I}_E} 
\bigg|\sum_{[a]\in A}\sum_{x\in [a]}\left|\bu_k(x)\right |^2 -\frac{W^d}{N}|A|\bigg| \ge  \frac{W^{d-c}|A|}{N}  \bigg) \le W^{-c}.
\end{equation}  
Furthermore, if $W\ge L^{1-d/6+\e_0}$ for a constant $\e_0>0$, then there exists a constant $c>0$ (depending on $\e_0$) such that the following estimates hold for large enough $N$:
\begin{equation}\label{Meq:QUE}
\max_{k=1}^N  
\max_{ [a]\in \Zn} \P\bigg( 
 \bigg| \sum_{x\in[a]}|\bu_k(x)|^2 -\frac{W^{d}}{N} \bigg|^2 \ge \frac{W^{d-c}}{N} \bigg) \le  W^{-c},
\end{equation}
and for any subset $A\subset \Zn$,
\begin{equation}\label{Meq:QUE2}
\max_{k=1}^N 
\max_{ A\subset\ZL}\mathbb{P}\bigg(
\bigg|\sum_{[a]\in A}\sum_{x\in [a]}\left|\bu_k(x)\right |^2 -\frac{W^d}{N}|A|\bigg| \ge  \frac{W^{d-c}|A|}{N}  \bigg) \le W^{-c}.
\end{equation}  
\end{theorem}

The above QUE estimates indicate that with probability $1-\oo(1)$, every delocalized eigenvector of $H$ is asymptotically uniformly distributed (in the sense of $L^2$-mass) across all microscopic scales larger than $W$. In particular, this implies that the localization length of such an eigenvector is indeed of order $\Omega(L)$. 
We also note that the QUE estimates \eqref{Meq:QUE} and \eqref{Meq:QUE2} are slightly stronger than \eqref{Meq:QUE_weak} and \eqref{Meq:QUE2_weak}, as well as the corresponding results in \cite{Band1D, Band2D} for the bulk regime. Specifically, our results establish QUE for each individual eigenvector of $H$, whereas \eqref{Meq:QUE_weak} and \eqref{Meq:QUE2_weak} require additional assumptions on the locations of the eigenvalues $\lambda_k$. 
This improvement is made possible by our next main result, which establishes a rigidity estimate for the eigenvalues of $H$ in the supercritical regime $W \gg L^{1 - d/6}$. 


For $k\in \qqq{N}$, we define the $k$-th quantile $\gamma_k$ for the semicircle law as the unique real solution to the following equation:
\begin{equation}\label{eq:gammak}
\int_{-2}^{\gamma_k}\rho_{sc}(x)\dd x= \frac{k-1/2}{N},  \quad \text{where}\quad \rho_{sc}(x)=\frac{\sqrt{4-x^2}}{2\pi}\mathbf 1_{x\in [-2,2]}\, .
\end{equation}
By definition, it is easy to see that 
\be\label{eq:gammak_location} |\gamma_k+2|\asymp \p{k/N}^{2/3},\quad |\gamma_k-2|\asymp \p{(N+1-k)/N}^{2/3} . \ee
We show that all the eigenvalues of $H$ concentrate around their corresponding quantiles in the supercritical regime. For $k\in \qqq{N}$, we denote
\[\wh k:=\min(k,N+1-k).\]

\begin{theorem}[Eigenvalue rigidity]\label{thm:rigidity}
In the setting of \Cref{def: BM} with $d\in\{1,2\}$, suppose $W\ge L^{1-d/6+\e_0}$ for a constant $\e_0>0$. Then, the following estimates hold for all $k\in \qqq{N}$ and any constants $\tau, D>0$ if $N$ is sufficiently large: when $d=1$, we have
\be\label{eq:rigidity}
\P\left(\left|\lambda_k - \gamma_k\right|\le  N^{- 2/ 3+\tau}\wh k^{-1/3} + W^{-1+\tau} N^{1/6}\wh k^{-1/6}\right)\ge 1-N^{-D} \, ,
\ee
and when $d=2$, we have
\be\label{eq:rigidityd=2}
\P\left(\left|\lambda_k - \gamma_k\right|\le  N^{-2/3+\tau}\wh k^{-1/3} + W^{-2+\tau}\right)\ge 1-N^{-D} \, .
\ee
\end{theorem}  

To establish the above results, \Cref{thm:supu,thm:QUE,thm:rigidity}, we will derive a sharp local law for the Green's function of $H$, defined as in \eqref{def_Green}. As $W\to \infty$, $G(z)$ converges to the following Stieltjes transform of the Wigner semicircle law: 
\be\label{self_mWO}
m(z)\equiv m_{sc}(z):=\frac{-z+\sqrt{z^2-4}}{2},\quad M(z):=m_{sc}(z)I_N.
\ee
For any $E\in \R$, we denote by $\kappa_E:=|2-|E||=|E-2|\wedge |E+2|$ the distance of $E$ from the spectral edges $\pm 2$. Through direct calculations, we can derive the following identities 
\be\label{L2M} 
z=-m(z)-\frac{1}{m(z)},\quad |m(z)|^2=\frac{\im m(z)}{\eta + \im m(z)},
\ee
and the following basic estimates uniformly in all $z=E+\ii\eta$ with $|z|\le c^{-1}$ and $\eta > 0$ (where $c>0$ is an arbitrarily small constant):  \begin{equation}\label{square_root_density}
\im m(z)\asymp \begin{cases}
                    \sqrt{\kappa_E+\eta}, & \text{ if }\ E\in [-2,2]\\
                    {\eta}/{\sqrt{\kappa_E+\eta}}, & \text{ if }\ E\notin [-2,2]
                \end{cases},
\quad \text{and}\quad  |1-m^2(z)|\asymp \sqrt{\kappa_E+\eta}.
        \end{equation} 
Given a small constant $\fc>0$, for $E\in [-2,2]$, we define $\eta_*(E)\equiv \eta_*(E,\fc)>0$ as 
\be\label{eq:defeta*}
\eta_*(E):= 
\begin{dcases}
    \inf\big\{\eta>0: N\eta\sqrt{\kappa_E+\eta}\ge 1,\ W^2\eta (\kappa_E+\eta)^{3/2}\ge 1\big\}, & \text{if}\ d=1 \\
    \inf\left\{\eta>0: N\eta\sqrt{\kappa_E+\eta}\ge 1,\ W^2 (\kappa_E+\eta) \ge W^\fc\right\}, & \text{if}\ d=2
\end{dcases},
\ee
while for $E\notin [-2,2]$ and $d\in\{1,2\}$, we define $\eta_*(E)>0$ as
\be\label{eq:defeta*2}
\eta_*(E):= \inf\left\{\eta>0: N\eta^2/\sqrt{\kappa_E+\eta}\ge 1,\ W^d\eta^2/(\kappa_E+\eta)^{\frac{1}{2}+\frac{d}{4}}\ge 1\right\}.
\ee
We have a sharp local law for the Green's function $G(z)$ for $\im z$ down to the scale $W^{\fd}\eta_*(E)$, where $\fd>0$ is an arbitrarily small constant. Given any large constant $C_0>2$ and small constant $\fd>0$, we define the spectral domain:
\be\label{eq:defDC0} \mathbf D_{C_0,\fd}\equiv \mathbf D_{C_0,\fd}(\fc):=\left\{z=E+\ii\eta\in \C_+: |E|\le C_0, W^\fd \eta_*(E,\fc)\le \eta \le 1\right\}.\ee

\begin{theorem}[Local law]\label{thm_locallaw}
For the model in \Cref{def: BM} with $d\in\{1,2\}$, assume that $W\ge L^\delta$ for a constant $\delta>0$. For any constants $C_0,\fd, \fc, \tau, D>0$, the following events hold with probability $\ge 1-N^{-D}$ for large enough $N$:  
\begin{align}\label{locallaw}
& 
\bigcap_{z=E+\ii\eta\in \mathbf D_{C_0,\fd}(\fc)} 
\bigg\{\|G(z)-M(z)\|_{\max}  \le  W^\tau \sqrt{\frac{\im m(z)}{W^d\ell(z)^d \eta}}
\bigg\}, 
\\
\label{locallaw_aver}
&\bigcap_{z=E+\ii\eta\in \mathbf D_{C_0,\fd}(\fc)} \bigg\{\max_{[a]} \Big|W^{-d}\sum_{x\in [a]}G_{xx}(z) - m(z)\Big| \le \frac{W^\tau\im m(z)}{W^d\ell(z)^d \eta \sqrt{\kappa_E+\eta}}\bigg\} , 
\end{align}
where we introduced the following notation: 
\be\label{eq:elleta}
\ell(z):= \min\left(L/W,\sqrt{{\im m(z)/{\eta}}}\right) . 
\ee
\end{theorem}

Outside the support $[-2,2]$ of the semicircle law, the above local laws actually hold down to a smaller scale $\eta_\circ(E)$, defined as
\be\label{eq:defeta*3}
\eta_{\circ}(E):= \inf\left\{\eta>0: N\eta \sqrt{\kappa_E+\eta}\ge 1,\ W^d\eta (\kappa_E+\eta)^{\frac{1}{2}-\frac{d}{4}}\ge 1\right\}.
\ee
Given any constants $c_0,\fd>0$, we define the spectral domain $\mathbf D^{\mathrm{out}}_{c_0,\fd}$ as 
\[\mathbf D^{\mathrm{out}}_{c_0,\fd}:=\big\{z=E+\ii\eta \in \C_+: 2\le |E|\le c_0^{-1},  \kappa_E\ge W^{c_0}\eta_*(E),  W^\fd\eta_\circ(E)\le \eta\le 1\big\}.\]

\begin{theorem}[Local law outside the support]\label{thm_locallaw_out}
In the setting of \Cref{thm_locallaw}, for any constants $c_0,\fd,\tau,D>0$, the following events hold with probability $\ge 1-N^{-D}$ for large enough $N$:
\begin{align}
&\bigcap_{z=E+\ii\eta\in \mathbf D^{\mathrm{out}}_{c_0,\fd}} \bigg\{\|G(z)-M(z)\|_{\max}  \le  W^\tau\sqrt{ \frac{1}{W^d\ell(z)^d \sqrt{\kappa_E+\eta}}} \bigg\},\label{locallaw_entry_out} \\
&\bigcap_{z=E+\ii\eta\in \mathbf D^{\mathrm{out}}_{c_0,\fd}}  \bigg\{\max_{[a]} \Big|W^{-d}\sum_{x\in [a]}G_{xx}(z)-m(z) \Big| \le \frac{W^\tau}{W^d\ell(z)^d\p{\kappa_E+\eta}}\bigg\}. \label{locallaw_aver_out} 
\end{align}
 
\end{theorem}

We also extend the \emph{quantum diffusion} of random band matrices,  previously established in the bulk regime \cite{BandI,Band1D,Band2D,RBSO1D}, to the entire spectrum. This extension will play a key role in deriving the QUE estimates in \Cref{thm:QUE}. To state the result precisely, we first introduce the relevant matrices $\Theta$ and $S^\pm$. 

\begin{definition}\label{def:projlift}
Define the variance matrix \smash{$S=(S_{xy})$} as
\be\label{eq:SWO}
S_{xy}=\var(H_{xy})= \frac{W^{-d}}{1+2d\lambda^2}\left(I_n  + \lambda^2\Lambda_n\right)_{[x][y]}, 
\quad \forall x,y\in \ZL,
\ee
where $I_n$ and $\Lambda_n$ are respectively the identity and adjacency matrices defined on \smash{$\Zn$}. Then, we define the following matrices adopting the notations in \cite{BandI}:
\be \label{def:Theta}
\Theta(z): = \frac{|m(z)|^2}{1-|m(z)|^2 S},\quad S^+(z)
:= \frac{m(z)^2}{1-m(z)^2 S}\, .
\ee
\end{definition}

\begin{theorem}[Quantum diffusion]\label{thm_diffu}
In the setting of Theorem \ref{thm_locallaw}, for any constants $C_0, \fd, \fc, \tau, D>0$, the following events hold with probability $\ge 1-N^{-D}$ for large enough $N$: 
\begin{align}
&\bigcap_{z=E+\ii\eta\in \mathbf D_{C_0,\fd}(\fc)}
\bigg\{\max_{[a],[b]} \bigg|
\sum_{x\in[a],y\in[b]}\left(|G_{xy}(z)|^2 -\Theta_{xy}(z)\right)\bigg| \le\frac{W^\tau}{[\ell(z)^d\eta]^2}\bigg\}\, ,
\label{eq:diffu1}\\
&\bigcap_{z=E+\ii\eta\in \mathbf D_{C_0,\fd}(\fc)}\bigg\{\max_{[a],[b]} \bigg|
\sum_{x\in[a],y\in[b]}\left(G_{xy}(z)G_{yx}(z) -S^+_{xy}(z)\right)\bigg| \le\frac{W^\tau}{[\ell(z)^d\eta]^2}\bigg\} \, .
\label{eq:diffu2}
\end{align}
Moreover, stronger bounds hold in the sense of expectation for each \(z\in \mathbf D_{C_0,\fd}(\fc)\): 
\begin{align}
\max_{[a],[b]}\bigg|\sum_{x\in[a],y\in[b]}\E\left(|G_{xy}(z)|^2 -\Theta_{xy}(z)\right) \bigg| &\le \frac{W^{2d}}{\im m(z)} \cdot \frac{W^\tau}{[W^d\ell(z)^d\eta]^3},\label{eq:diffuExp1}\\ 
\max_{[a],[b]}\bigg|\sum_{x\in[a],y\in[b]}\E\left(G_{xy}(z)G_{yx}(z) -S^+_{xy}(z)\right)\bigg|&\le \frac{W^{2d}}{\im m(z)} \cdot \frac{W^\tau}{[W^d\ell(z)^d\eta]^3}.\label{eq:diffuExp2}
\end{align}
\end{theorem}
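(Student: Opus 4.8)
\textbf{Proof proposal for Theorem \ref{thm_diffu} (Quantum diffusion).}

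The plan is to deduce the quantum diffusion estimates from the flow framework and the $G$-loop hierarchy that underlie the local law in \Cref{thm_locallaw}, rather than proving them from scratch. The quantities $\frac{1}{W^{2d}}\sum_{x\in[a],y\in[b]}|G_{xy}(z)|^2$ and $\frac{1}{W^{2d}}\sum_{x\in[a],y\in[b]}G_{xy}(z)G_{yx}(z)$ are, up to normalization, the block-averaged $2$-loops $\avg{G(z)E_{[a]}G(z)^*E_{[b]}}$ and $\avg{G(z)E_{[a]}G(z)E_{[b]}}$, where $E_{[a]}$ is the projection onto block $[a]$. The first step is to recall from \Cref{sec:tools} that the flow evolves the Hamiltonian $H_t$ (equivalently the matrix $H$ through a stochastic-flow/characteristic construction) and that the $G$-loop estimates produced by the inductive theorem \Cref{lem:main_ind} already control these $2$-loops at the final time $t$ down to $\eta\ge W^{\fd}\eta_*(E,\fc)$. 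Concretely, the self-consistent (deterministic) value of the $\avg{GE_{[a]}G^*E_{[b]}}$-loop is $W^{-d}\Theta_{[a][b]}(z)$ (this is exactly how $\Theta$ from \eqref{def:Theta} enters the hierarchy, via the resummation of the ladder $\sum_k (|m|^2S)^k$), and the self-consistent value of the $\avg{GE_{[a]}GE_{[b]}}$-loop is $W^{-d}S^+_{[a][b]}(z)$; the second identity holds only for $|E|\le 2-\kappa$ because the geometric series $\sum_k (m^2 S)^k$ defining $S^+$ requires $|m^2(z)\|S\|<1$, which is guaranteed by $|1-m^2(z)|\asymp\sqrt{\kappa_E+\eta}$ from \eqref{square_root_density} away from the edge. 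So the heart of the argument is: (i) identify these averaged Green's-function bilinears with $G$-loop observables, (ii) read off their deterministic approximations from the loop hierarchy, and (iii) convert the a-priori $G$-loop bounds into the stated fluctuation bounds $W^\e/(W^d\ell(z)^d\eta)^2$.

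For the high-probability bounds \eqref{eq:diffu1}--\eqref{eq:diffu2}, I would run the continuity estimate of \Cref{Sec:Step1} together with the detailed loop-hierarchy analysis of \Cref{Sec:Stoflo}: the $2$-loop has a ``size'' of order $W^{-d}\ell(z)^d\cdot(\text{something})$ and its fluctuation around the deterministic value $\Theta$ (resp.\ $S^+$) is controlled by the next order in the hierarchy, which after the block-averaging $\frac{1}{W^{2d}}\sum_{x\in[a],y\in[b]}$ gains the extra factor $(W^d\ell^d\eta)^{-1}$ relative to the entrywise scale, producing the squared denominator. The uniformity over $E$ and $\eta$ is obtained, as usual, by a lattice/net argument in $z$ combined with the Lipschitz continuity of $G$ in $z$ on the relevant scales (the derivative $\partial_z G = G^2$ costs only a polynomial factor, absorbed by $W^\e$), exactly as the two intersections $\bigcap_{|E|\le \fd^{-1}}\bigcap_{W^\fd\eta_*\le\eta\le1}$ are handled for the local law. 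For \eqref{eq:diffu2} one must additionally track that the hierarchy for the $\avg{GEGE}$-loop (the ``$S^+$ channel'', with two $G$'s rather than a $G$ and a $G^*$) closes; this is where the restriction $|E|\le 2-\kappa$ is used, since near the edge $S^+$ blows up like $(\kappa_E+\eta)^{-1}$ and the channel is no longer subordinate.

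For the improved expectation bounds \eqref{eq:diffuExp1}--\eqref{eq:diffuExp2}, the mechanism is the standard gain one gets by taking expectations in a self-consistent hierarchy: the leading fluctuation term is a martingale-type / mean-zero object whose expectation vanishes, so $\E[\text{$2$-loop}-\Theta]$ is governed by one further level of the hierarchy, gaining an additional $(W^d\ell^d\eta)^{-1}$, and the prefactor $1/\im m(z)$ appears because the last diffusive propagator contributes a factor $\|\Theta\|\asymp 1/(\eta\,\im m) \cdot(\dots)$ or, more precisely, because closing the expectation identity divides by the spectral parameter $\im m$ that measures the density of states; this is entirely analogous to how the averaged local law \eqref{locallaw_aver} improves on the entrywise law \eqref{locallaw} by a factor $\sqrt{\im m /(W^d\ell^d\eta)}$. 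I would obtain this by writing Ward-type identities for the averaged loops, inserting the already-established \eqref{eq:diffu1}--\eqref{eq:diffu2} as the a-priori input, and iterating once.

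The main obstacle I anticipate is \textbf{not} the structural identification of the loops with $\Theta$ and $S^+$ — that is essentially bookkeeping once the flow framework of \Cref{sec:tools} is in place — but rather propagating the sharp $W^\e/(W^d\ell^d\eta)^k$ bounds all the way down to the edge-adapted scale $\eta\gtrsim W^\fd\eta_*(E,\fc)$ uniformly in $E$, in particular near the spectral edges where $\ell(z)=\sqrt{\im m/\eta}$ is no longer saturated at $L/W$ and the diffusive approximation degrades; controlling the loop hierarchy there requires the delicate analysis of \Cref{Sec:Stoflo} and is exactly the place where the thresholds in the definition \eqref{eq:defeta*}--\eqref{eq:defeta*2} of $\eta_*(E)$ are dictated. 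A secondary technical point is ensuring the $z$-net argument for the uniform-in-$z$ statements does not lose the gained powers of $(W^d\ell^d\eta)^{-1}$; this is handled by choosing the net fine enough (polynomially many points) and using $\|\partial_z G\|\prec \eta^{-1}$-type continuity, which only costs a harmless $W^\e$.
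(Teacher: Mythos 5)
Your proposal is correct and follows the paper's argument: the paper's proof of \Cref{thm_diffu} also reduces to the flow framework of \Cref{zztE} (via the identity in distribution $G(z) \stackrel{d}{=} \sqrt{t_0}\,G_{t_0,\sE}$), identifies the block averages with $2$-$G$ loops and the deterministic approximations $\Theta$, $S^+$ with the $2$-$\cK$ loops via \eqref{eq:avg_GTheta} and \eqref{Kn2sol}, applies \Cref{ML:GLoop} for \eqref{eq:diffu1}--\eqref{eq:diffu2} and \Cref{ML:GLoop_expec} (where the $\kappa^{-1/2}\asymp 1/\im m(z)$ factor you anticipated appears explicitly) for \eqref{eq:diffuExp1}--\eqref{eq:diffuExp2}, and extends uniformly in $z$ by an $N^{-C}$-net argument.
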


\Cref{thm:supu,thm:rigidity} follow immediately from the local laws in \Cref{thm_locallaw,thm_locallaw_out}. For clarity of presentation, we will adopt the following notion of stochastic domination introduced in \cite{EKY_Average} in the following proof of \Cref{thm:supu,thm:rigidity} and throughout the remainder of the paper to simplify our presentation. 

\begin{definition}[Stochastic domination and high probability event]\label{stoch_domination}
	{\rm{(i)}} Let
	\[\xi=\left(\xi^{(N)}(u):N\in\mathbb N, u\in U^{(N)}\right),\hskip 10pt \zeta=\left(\zeta^{(N)}(u):N\in\mathbb N, u\in U^{(N)}\right),\]
	be two families of non-negative random variables, where $U^{(N)}$ is a possibly $N$-dependent parameter set. We say $\xi$ is stochastically dominated by $\zeta$, uniformly in $u$, if for any fixed (small) $\tau>0$ and (large) $D>0$, 
	\[\mathbb P\bigg(\bigcup_{u\in U^{(N)}}\left\{\xi^{(N)}(u)>N^\tau\zeta^{(W)}(u)\right\}\bigg)\le N^{-D}\]
	for large enough $N\ge N_0(\tau, D)$, and we will use the notation $\xi\prec\zeta$. 
	If for some complex family $\xi$ we have $|\xi|\prec\zeta$, then we will also write $\xi \prec \zeta$ or $\xi=\OO_\prec(\zeta)$. 
	
	\vspace{5pt}
	\noindent {\rm{(ii)}} As a convention, for two \emph{deterministic} non-negative quantities $\xi$ and $\zeta$, we will write $\xi\prec\zeta$ if and only if $\xi\le N^\tau \zeta$ for any constant $\tau>0$. 
	
	\vspace{5pt}
	\noindent {\rm{(iii)}} We say that an event $\Xi$ holds with high probability (w.h.p.) if for any constant $D>0$, $\mathbb P(\Xi)\ge 1- N^{-D}$ for large enough $N$. More generally, we say that an event $\Omega$ holds $w.h.p.$ in $\Xi$ if for any constant $D>0$,
	$\P( \Xi\setminus \Omega)\le N^{-D}$ for large enough $N$.
\end{definition}

\begin{proof}[\bf Proof of \Cref{thm:supu}]
The delocalization estimates \eqref{eq:delocalmax2} and \eqref{eq:delocalmax} follow directly from the entrywise local law \eqref{locallaw} via the bound 
\be\label{eq:ukx}|\bu_k(x)|^2 \le \eta\im G_{xx}(\lambda_k + \ii \eta),\quad \forall \eta>0\, .\ee 
We first prove \eqref{eq:delocalmax} under the condition $\al\ge 1-d/6 +\e_0$ for a constant $\e_0>0$. By \eqref{eq:rigidity} and \eqref{eq:rigidityd=2}, all eigenvalues of $H$ lie in the interval $I_\e:=[-2-N^{-2/3+\e},2+N^{-2/3+\e}]$ with high probability for any constant $\e>0$. Then, we take 
\be\label{etaE}\eta(E)=N^{-1+\fd}(\kappa_E+N^{-2/3})^{-1/2}\, \ee  
for a small constant $0<\fd <\e_0/2$. We can check directly that if $0<\e <\fd/2$, then $\eta(E)\ge N^{\fd/4} \eta_*(E)$ for all $E\in I_\e$. 
Furthermore, with the estimate \eqref{square_root_density}, we can verify that $\ell(z)\ge n$ for $z=E+\ii \eta$ with $E\in I_\e$ and $\eta=\eta(E)$. Thus, by \eqref{locallaw}, we have the following estimate uniformly in $E\in I_\e$:
\[ \max_x \eta(E) \im G_{xx}(z_E) \prec  \eta(E)\im m(z_E)+ \eta(E)\sqrt{\frac{\im m(z_E)}{N\eta(E)}} \le \frac{N^{2\fd}}{N}, \]
where $z_E$ denotes $z_E=E+\ii \eta(E)$, and we used \eqref{square_root_density} again in the second step. Since $\fd$ can be arbitrarily small, this concludes the estimate \eqref{eq:delocalmax}. 

The proof of \eqref{eq:delocalmax2} is similar by applying the local law \eqref{locallaw} at $z=E+\ii \eta$ with $|E|\le 2- N^{-c_{d,\al}+\e}$ and $\eta=\eta(E)$. Here, the condition $2- N^{-c_{d,\al}+\e}$ is required to guarantee that $\eta(E)\ge  W^c\eta_*(E)$ for a constant $c>0$ under the assumption $\mathbf 1_{d=1}/2 < \al \le 1-d/6$.
\end{proof}

\begin{proof}[\bf Proof of \Cref{thm:rigidity}]
The rigidity of eigenvalues in \Cref{thm:rigidity} follows essentially from the averaged local law \eqref{locallaw_aver}. However, we first need to bound the largest and smallest eigenvalues of $H$; specifically, we need to show that 
\be\label{eq:rigid_largest}
\P\p{\lambda_1\ge -2-N^{-2/3+\e}, \  \lambda_N\le 2+N^{-2/3+\e}}\ge 1-N^{-D},
\ee
for any constants $\e,D>0$. To this end, we apply the averaged local law \eqref{locallaw_aver_out} outside the support of the semicircle law and obtain that 
\begin{align}\label{locallaw_aver_out2}
&{N}^{-1}\tr G(z) - m(z) \prec \big[W^d\ell(z)^{d}\kappa_E\big]^{-1}   
\end{align}
uniformly for all $z=E+\ii \eta$ with\footnote{Note that under the assumption $W\ge L^{1-d/6+\e_0}$, if we choose $\fd<(\e\wedge\e_0)/4$, then $\kappa_E\ge W^{\e/2}\eta_*(E)$ and $\eta  \ge W^{\e/4} \eta_{\circ}(E)$ by the definitions \eqref{eq:defeta*2} and \eqref{eq:defeta*3}.} 
\be\label{eq;Eeta}
E\in \big[2+W^{\e}N^{-2/3}, \e^{-1}\big],\quad \text{and}\quad \eta=W^{-\fd}\p{\sqrt{\kappa_E}/N}^{1/2}.\ee

For $z=E+\ii \eta$ satisfying \eqref{eq;Eeta}, by \eqref{eq:elleta} and \eqref{square_root_density}, we have that
$$  \im m(z)\asymp \frac{\eta}{\sqrt{\kappa_E}} \le \frac{W^{-2\fd}}{N\eta}, \quad \frac{1}{W^d\ell(z)^d\kappa_E}\lesssim \frac{1}{W^d\kappa_E^{1-d/4}} \le \frac{W^{-\frac{\e}{4}}}{N\eta} \, .$$
Combining these facts with \eqref{locallaw_aver_out2} and applying the same argument as in \cite[Section 11.1]{erdHos2017dynamical}, we can show that, with high probability, $H$ has no eigenvalues in the regime $[2+W^{\e}N^{-2/3}, \e^{-1}]$ for any constant $\e>0$. By symmetry, $H$ also has no eigenvalues in the regime $[-\e^{-1}, -2-W^{\e}N^{-2/3}]$ with high probability.
Furthermore, it is known that with high probability, the operator norm of $H$ is at most $\e^{-1}$ for small enough constant $\e>0$ (see e.g., the bound  \eqref{eq:largest_eig} below). This establishes \eqref{eq:rigid_largest}. 

Combining \eqref{eq:rigid_largest} with the averaged local law \eqref{locallaw_aver}, and applying the arguments in \cite[Section 5]{ErdYauYin2012Rig} or \cite[Section 8]{EKYY_ER1}, we can derive the rigidity of eigenvalues stated in \eqref{eq:rigidity} and \eqref{eq:rigidityd=2}. Since the argument is standard, we omit the details. \end{proof}

As an extension of \Cref{thm:supu}, in the subcritical regime $1\ll W \le L^{1-d/6}$, the local laws \eqref{locallaw} and \eqref{locallaw_entry_out} allow us to derive upper bounds on the $L^\infty$-norms of all eigenvectors of $H$. These bounds provide insight into the localization lengths of eigenvectors near the spectral edges. We summarize these estimates in the following corollary. For any $\kappa>0$, we introduce the intervals 
\begin{align*}
    I_{\mathrm{in}}(\kappa)&:=\{E\in \R: |E|< 2, \kappa\le \kappa_E \le 2\kappa\},\\
    I_{\mathrm{out}}(\kappa)&:=\{E\in \R: |E|> 2, \kappa\le \kappa_E \le 2\kappa\}.
\end{align*}

\begin{corollary}\label{cor:localizationlength}
In the setting of \Cref{def: BM} with $d\in\{1,2\}$,  if $W\ge L^{\delta}$ for a constant $\delta>0$, then the following estimates holds: when $d=1$, we have 
\be\label{eq;localvvvd=1}
\begin{aligned}
\max_{k: |2-|\lambda_k||\leq N^{-2/3} + W^{-4/5}} \|\bu_k\|_\infty^2& \prec N^{-1} + W^{-6/5} ,\\
\max_{k: \lambda_k \in I_{\mathrm{in}}(\kappa)} \|\bu_k\|_\infty^2 &\prec N^{-1} + \big(W^2\kappa\big)^{-1} ,\\
\max_{k: \lambda_k \in I_{\mathrm{out}}(\kappa)} \|\bu_k\|_\infty^2 &\prec N^{-1} + \big(W^4\kappa\big)^{-3/8},
\end{aligned}
\ee
for any $\kappa$ satisfying $N^{-2/3} + W^{-4/5}\le \kappa\le C$ for some constant $C>0$; 
when $d=2$, the following estimates hold for any constant $\e>0$:
\be\label{eq;localvvvd=2}
\begin{aligned}
&\max_{k: |\lambda_k|\le  2-(N^{-2/3} + W^{-2+\e})} \|\bu_k\|_\infty^2 \prec N^{-1} ,\\
&\max_{k:  |\lambda_k|\ge 2-(N^{-2/3} + W^{-2+\e})} \|\bu_k\|_\infty^2 \prec N^{-1} + W^{-3} .
\end{aligned} 
\ee
\end{corollary}
\begin{proof}
As in the proof of \Cref{thm:supu}, these estimates follow directly from the inequality \eqref{eq:ukx}, the local laws \eqref{locallaw} and \eqref{locallaw_entry_out}, the definitions \eqref{eq:defeta*}, \eqref{eq:defeta*2}, and \eqref{eq:defeta*3}, and the estimate \eqref{square_root_density}. We omit the details. 
\end{proof}

The proofs of QUE, local laws, and quantum diffusion  (\Cref{thm_locallaw,thm_diffu,thm:QUE}) will be presented in \Cref{sec:proof}. By combining all the main results established above---including the local laws, eigenvalue rigidity, eigenvector delocalization, and QUE---we can prove that the edge eigenvalue statistics of $H$ asymptotically match those of GUE in the supercritical regime $W \gg L^{1 - d/6}$. This follows from a Green's function comparison argument developed in \cite{BandIII} for the proof of Theorem 1.3 there. Similar methods have also been applied in \cite{Band1D,Band2D} to establish the universality of bulk eigenvalue statistics for 1D and 2D Gaussian random band matrices. Extending this method to the edge regime is straightforward using the tools developed in this paper. (For instance, a related argument was used in \cite{QC_FSYY} to establish the edge universality for a random block matrix model.) However, the edge eigenvalue statistics---particularly the Tracy–Widom law for the extreme eigenvalues---have already been established for 1D random band matrices in \cite{Sod2010} and for 2D random band matrices in \cite{Band_Edge123} using an intricate moment method. Therefore, we do not pursue this direction further in the present work.


\section{Main tools}\label{sec:tools}

In this section, we introduce some key tools and convenient notations for our proof. First, the following classical Ward's identity, which follows from a simple algebraic calculation, will be used tacitly throughout the proof. 

\begin{lemma}[Ward's identity]\label{lem-Ward}
Given any Hermitian matrix $\cal A$, define its resolvent as $R(z):=(\cal A-z)^{-1}$ for a $z= E+ \ii \eta\in \C_+$. Then, we have 
    \be\label{eq_Ward0}
    \begin{split}
\sum_x \overline {R_{xy'}}  R_{xy} = \frac{R_{y'y}-\overline{R_{yy'}}}{2\ii \eta},\quad
\sum_x \overline {R_{y'x}}  R_{yx} = \frac{R_{yy'}-\overline{R_{y'y}}}{2\ii \eta}.
\end{split}
\ee
As a special case, if $y=y'$, we have
\be\label{eq_Ward}
\sum_x |R_{xy}( z)|^2 =\sum_x |R_{yx}( z)|^2 = \frac{\im R_{yy}(z) }{ \eta}.
\ee
\end{lemma}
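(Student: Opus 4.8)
\textbf{Proof proposal for Lemma \ref{lem-Ward}.}

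The plan is to verify the two identities in \eqref{eq_Ward0} by a direct algebraic computation using the resolvent identity, and then to obtain \eqref{eq_Ward} as the special case $y=y'$. First I would write, for $z = E + \ii\eta \in \C_+$ with $\eta > 0$, the elementary identity for the Hermitian matrix $\cal A$:
\[
R(z) - R(z)^* = R(z)\,\bigl((\cal A - \bar z) - (\cal A - z)\bigr)\,R(z)^* = (z - \bar z)\,R(z)R(z)^* = 2\ii\eta\, R(z)R(z)^*,
\]
where I used $R(z)^* = (\cal A - \bar z)^{-1}$ since $\cal A$ is Hermitian. Taking the $(y', y)$ entry of both sides gives
\[
R_{y'y}(z) - \overline{R_{yy'}(z)} = 2\ii\eta \,\bigl(R(z)R(z)^*\bigr)_{y'y} = 2\ii\eta \sum_x R_{y'x}(z)\,\overline{R_{yx}(z)},
\]
which after rearranging is exactly the second identity in \eqref{eq_Ward0}. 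The first identity follows the same way from $R(z) - R(z)^* = 2\ii\eta\, R(z)^* R(z)$ (or equivalently by applying the second identity to $\cal A^\tran$, or by taking adjoints), taking the $(y, y')$ entry, and noting $\bigl(R(z)^*R(z)\bigr)_{yy'} = \sum_x \overline{R_{xy}(z)}\,R_{xy'}(z)$; relabeling $y \leftrightarrow y'$ in the conjugated form yields $\sum_x \overline{R_{xy'}}R_{xy} = (R_{y'y} - \overline{R_{yy'}})/(2\ii\eta)$ as stated.

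For the special case \eqref{eq_Ward}, setting $y = y'$ collapses the right-hand side to $(R_{yy} - \overline{R_{yy}})/(2\ii\eta) = 2\ii\,\im R_{yy}/(2\ii\eta) = \im R_{yy}(z)/\eta$, while the left-hand sides become $\sum_x |R_{xy}(z)|^2$ and $\sum_x |R_{yx}(z)|^2$ respectively; this gives both equalities at once.

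There is essentially no obstacle here — the only point requiring a word of care is the use of $R(z)^* = (\cal A - \bar z)^{-1}$, which relies on the Hermiticity of $\cal A$, and the fact that $R(z)$ is well-defined (i.e.\ $\cal A - z$ is invertible) for all $z \in \C_+$, which holds because $\cal A$ has real spectrum. The manipulation is purely formal and finite-dimensional, so no convergence or boundedness issues arise.
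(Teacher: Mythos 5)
Your proof is correct and follows the standard resolvent-identity calculation that the paper alludes to (the paper states the lemma without proof, calling it "classical" and "a simple algebraic calculation"); your two resolvent identities $R(z)-R(z)^* = 2\ii\eta\,R(z)R(z)^* = 2\ii\eta\,R(z)^*R(z)$, the entrywise extraction, and the relabeling $y\leftrightarrow y'$ all check out. The specialization to $y=y'$ is handled correctly as well.
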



\subsection{Flows}\label{subsec:flow}

In this subsection, we introduce the flow framework for our proof, which extends that employed for RBMs within the bulk regime \cite{Band1D,Band2D}. Consider the following matrix Brownian motion: 
\begin{align}\label{MBM}
\dd (H_{t})_{xy}=\sqrt{S_{xy}}\dd (B_{t})_{xy}, \quad H_{0}=0, \quad \forall x,y\in \ZL.
\end{align}
Here, $(B_{t})_{xy}$ are independent complex Brownian motions up to the Hermitian symmetry \smash{$(B_{t})_{xy}=\overline {(B_{t})_{yx}}$}, i.e., \smash{$t^{-1/2}B_t$} is an $N\times N$ GUE whose entries have zero mean and unit variance; $S=(S_{xy})$ is the variance matrix defined in \Cref{def:projlift}. Correspondingly, we define the deterministic flow as follows.  

\begin{definition}[Deterministic flow]\label{def_flow}
For any $\sE \in \mathbb R$ and $t\in [0,1]$, denote $m{(\sE)}\equiv m_{sc}(\sE+\ii 0_+)$. Then, we define the flow $z_t$ by 
\be\label{eq:zt}
z_t(\sE) = \sE + (1-t) m(\sE),\quad t\in [0, 1]. 
\ee
Throughout the proof, we will refer to $\sE$ as the {\bf flow parameter}. 
Next, for any $t\in [0,1]$ and $z\in \C_+$, define $m_t(z)$ as the unique solution to 
 \be\label{self_mt}
 m_t(z)=- \left(z + tm_t(z)\right)^{-1}
\ee
such that $\im m_t(z)>0$ for $z\in \C_+$. 
In other words, $m_t(z)$ is the Stieltjes transform of the semicircle law (with an extra scaling $\sqrt{t}$):
\[m_t(z)=\int\frac{\rho_t(x)\dd x}{x-z}=\frac{-z+\sqrt{z^2-4t}}{2t},\quad \text{with}\quad \rho_t(x)=\frac{\sqrt{4t-x^2}}{2\pi t}\mathbf 1_{x\in [-2\sqrt{t},2\sqrt{t}]} \, .\]
Note that under the above flow \eqref{eq:zt}, we have 
\be\label{eq:mt_stay} m_t(z_t(\sE))\equiv m(\sE),\quad \forall t\in [0,1].
\ee 
We will denote $z_t(\sE)=E_t(\sE) + \ii \eta_t(\sE)$ with 
\begin{align}\label{eta}
E_t(\sE)=\sE+(1-t)\re m(\sE),\quad \eta_t(\sE) = (1-t)  \im m(\sE).
\end{align}
\end{definition}

Next, we define the stochastic flow for the Green's function with $H_t$ given by \eqref{MBM} and $z_t$ given by \eqref{eq:zt}.

\begin{definition}[Stochastic flow]\label{Def:stoch_flow}
Consider the matrix dynamics $H_t$ evolving according to \eqref{MBM}. Then, we denote Green's function of $H_{t}$ as
\be\label{self_Gt} G_t(z):=(H_t-z)^{-1},\quad z\in \C_+,\ee
and define the resolvent flow as 
\begin{align}
G_{t,\sE}\equiv G_t(z_t(\sE)):=(H_{t}-z_t{(\sE)})^{-1}.
\end{align}
By It{\^o}'s formula, $G_{t,\sE}$ satisfies the following SDE
\begin{align}\label{eq:SDE_Gt}
\dd G_{t,\sE}=-G_{t,\sE}(\dd H_{t})G_{t,\sE}+G_{t,\sE}\{\mathcal{S}[G_{t,\sE}]-m_t(z_t(\sE))\}G_{t,\sE}
\dd t,
\end{align}
where $\mathcal{S}:M_{N}(\C)\to M_{N}(\C)$ is a linear operator defined as
\begin{align}\label{eq:opS}
\mathcal{S}[X]_{xy}:=\delta_{xy}\sum_{y=1}^{N}S_{xy}X_{yy},\quad \text{for}\quad X\in M_{N}(\C).
\end{align}
\end{definition}

For any spectral parameter $z\in \mathbf D_{C_0,\fd}$ (recall \eqref{eq:defDC0}), we are interested in the original resolvent $G(z)=(H-z)^{-1}$. This can be achieved through the stochastic flow by carefully choosing the parameter $\sE$.

\begin{lemma}\label{zztE}
Fix any $z=E+\ii \eta\in \mathbf D_{C_0,\fd}$. We choose 
\be\label{eq:t0E0}t_0\equiv t_0(z)=|m(z)|^2=\frac{\im m(z)}{\im m(z)+ \eta},\quad \sE\equiv \sE(z)=-2\frac{\re m(z)}{|m(z)|}\, .\ee
Then, we have  
 \begin{equation}\label{eq:zztE}
\sqrt{t_0}m(\sE)=m(z), \quad     z_{t_0}(\sE) =\sqrt{t_0} z \, , 
 \end{equation} 
and the following equality in distribution (denoted by ``$\stackrel{d}{=}$"):
   \begin{equation}\label{GtEGz}
    G(z) \stackrel{d}{=} \sqrt{t_0} G_{t_0,\sE} .
   \end{equation}
Under the choices of parameters in \eqref{eq:t0E0}, abbreviating $\kappa\equiv \kappa\p{\sE}$, we have the following estimates:
\begin{align}\label{eq:E0bulk}
&\kappa \asymp (\im m(z))^2 \asymp\begin{dcases}
    \kappa_E + \eta, & \text{ if } \ E\in [-2,2]\\
    {\eta^2}/(\kappa_E+\eta) , & \text{ if } \ E\notin [-2,2]
\end{dcases},\quad 1-t_0\asymp \frac{\eta}{\sqrt{\kappa}}\, . \end{align}
Furthermore, for the parameters in \eqref{eta}, we have that  
\begin{align}
\label{eq:kappat}
E_t(\sE)=\frac{1}{2}(1+t)\sE, 
\quad \eta_t(\sE) = (1-t)\im m(\sE) \asymp  (1-t)\sqrt{\kappa}, \quad \forall t\in[0,t_0].
\end{align}
\end{lemma}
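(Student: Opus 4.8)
\textbf{Proof proposal for Lemma \ref{zztE}.}

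The plan is to verify each assertion by direct computation starting from the two defining identities for $m_{sc}$ and $m_t$, namely $z=-m(z)-1/m(z)$ from \eqref{L2M} and $m_t(z)=-(z+tm_t(z))^{-1}$ from \eqref{self_mt}. First I would check \eqref{eq:zztE}. Write $m(z)=|m(z)|e^{\ii\theta}$ so that $\re m(z)=|m(z)|\cos\theta$; the choice $\sE=-2\cos\theta$ together with $t_0=|m(z)|^2$ gives $\sE+\ii 0_+$ lying in $[-2,2]$ (since $|\cos\theta|\le 1$), hence $m(\sE)=m_{sc}(\sE+\ii0_+)=(-\sE+\ii\sqrt{4-\sE^2})/2=\cos\theta+\ii\sin\theta=e^{\ii\theta}$, using $\im m(z)>0\Rightarrow \sin\theta>0$. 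Therefore $\sqrt{t_0}\,m(\sE)=|m(z)|e^{\ii\theta}=m(z)$, which is the first half of \eqref{eq:zztE}. For the second half, plug $t=t_0$, $z\mapsto z_{t_0}(\sE)$ into \eqref{self_mt} and use $m_{t_0}(z_{t_0}(\sE))=m(\sE)$ (this is \eqref{eq:mt_stay}, or alternatively it follows from $z_t(\sE)=\sE+(1-t)m(\sE)$ combined with $\sE=-m(\sE)-1/m(\sE)$): one obtains $m(\sE)=-(z_{t_0}(\sE)+t_0 m(\sE))^{-1}$, i.e. $z_{t_0}(\sE)=-1/m(\sE)-t_0 m(\sE)$. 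On the other hand $\sqrt{t_0}\,z=\sqrt{t_0}(-m(z)-1/m(z))=-\sqrt{t_0}\,m(z)-\sqrt{t_0}/m(z)=-t_0 m(\sE)-1/m(\sE)$ after substituting $m(z)=\sqrt{t_0}m(\sE)$; the two expressions agree, giving $z_{t_0}(\sE)=\sqrt{t_0}\,z$. The distributional identity \eqref{GtEGz} is then immediate: $H_{t_0}\overset{d}{=}\sqrt{t_0}\,H$ by the scaling of the matrix Brownian motion \eqref{MBM} (the variance of $(H_{t_0})_{xy}$ is $t_0 S_{xy}$), hence $G_{t_0,\sE}=(H_{t_0}-z_{t_0}(\sE))^{-1}\overset{d}{=}(\sqrt{t_0}H-\sqrt{t_0}z)^{-1}=t_0^{-1/2}G(z)$, i.e. $G(z)\overset{d}{=}\sqrt{t_0}\,G_{t_0,\sE}$.

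Next I would establish the estimates \eqref{eq:E0bulk}. Since $m(\sE)=e^{\ii\theta}$ lies on the unit circle and $\sE+\ii0_+\in[-2,2]$, the relation $|1-m^2(\sE)|\asymp\sqrt{\kappa(\sE)}$ from \eqref{square_root_density} gives $\kappa(\sE)\asymp|1-e^{2\ii\theta}|^2=|1-m(z)^2/|m(z)|^2|^2$. To convert this into the stated form, use the second identity in \eqref{L2M}, $|m(z)|^2=\im m(z)/(\eta+\im m(z))$, which yields $1-|m(z)|^2=\eta/(\eta+\im m(z))=\eta\,(1-|m(z)|^2)/\im m(z)$, hence $1-|m(z)|^2\asymp \eta/(\eta+\im m(z))$ and also $1-t_0=1-|m(z)|^2$. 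Combining with $\kappa(\sE)\asymp(\im m(z))^2$ — which one checks from $|1-m(z)^2/|m(z)|^2|\asymp (1-|m(z)|^2)\vee|\im m(z)|/|m(z)|$-type algebra, or more cleanly from $\im(m(z)^2)$ and $\re(1-m(z)^2)$ estimates — gives $\kappa\asymp(\im m(z))^2$. The case dichotomy $\im m(z)\asymp\sqrt{\kappa_E+\eta}$ for $E\in[-2,2]$ and $\im m(z)\asymp\eta/\sqrt{\kappa_E+\eta}$ for $E\notin[-2,2]$ is exactly \eqref{square_root_density}, and feeding this into $\kappa\asymp(\im m(z))^2$ produces the two branches in \eqref{eq:E0bulk}; the estimate $1-t_0\asymp\eta/\sqrt{\kappa}$ follows since $1-t_0\asymp\eta/(\eta+\im m(z))\asymp\eta/\im m(z)\asymp\eta/\sqrt{\kappa}$ (note $\eta\le\im m(z)$ or comparable in this range, or more precisely $\eta+\im m(z)\asymp\im m(z)$ when $\eta\lesssim\sqrt{\kappa_E+\eta}$, which holds throughout $\mathbf D_{C_0,\fd}$; the boundary case is handled by the same $\asymp$).

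Finally, for \eqref{eq:kappat} I would compute directly from the definitions \eqref{eta} and \eqref{eq:defxit}. With $m(\sE)=e^{\ii\theta}$, $\re m(\sE)=\cos\theta=-\sE/2$, so $E_t(\sE)=\sE+(1-t)(-\sE/2)=\tfrac12(1+t)\sE$. Then
\begin{align*}
\xi_t(\sE)&=2\sqrt{t}-|E_t(\sE)|=2\sqrt{t}-\tfrac12(1+t)|\sE|
\end{align*}
(taking $E_t(\sE)\ge 0$ without loss of generality by symmetry). Now $|\sE|=2|\cos\theta|=2\sqrt{1-\sin^2\theta}$; since $\kappa=\kappa(\sE)\asymp|1-e^{2\ii\theta}|^2=|1-\cos2\theta-\ii\sin2\theta|$, and in fact $\kappa(\sE)=2-|\sE|=2-2|\cos\theta|$ exactly for $\sE\in[-2,2]$ (by definition of $\kappa_E$ as distance to $\pm2$), we get $|\sE|=2-\kappa$, hence $\xi_t(\sE)=2\sqrt{t}-\tfrac12(1+t)(2-\kappa)=2\sqrt t-(1+t)+\tfrac12(1+t)\kappa=\tfrac12(1+t)\kappa-(1-\sqrt t)^2$, which is the first claim. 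For the second, $\eta_t(\sE)=(1-t)\im m(\sE)=(1-t)\sin\theta$, and $\sin\theta\asymp\sqrt{1-\cos^2\theta}=\sqrt{(1-|\cos\theta|)(1+|\cos\theta|)}\asymp\sqrt{1-|\cos\theta|}=\sqrt{\kappa/2}\asymp\sqrt\kappa$ (using $\kappa\lesssim 1$ from $|z|\le C_0$, so $1+|\cos\theta|\asymp 1$), giving $\eta_t(\sE)\asymp(1-t)\sqrt\kappa$ for all $t\in[0,t_0]$.

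The computations are entirely elementary; the only point requiring care is tracking the $\asymp$ constants uniformly over the spectral domain $\mathbf D_{C_0,\fd}$ — in particular verifying that $\eta+\im m(z)\asymp\im m(z)$ and $1+|\cos\theta|\asymp 1$ hold throughout, which is where the restriction $\eta\le 1$ and $|E|\le C_0$ enters. I expect no genuine obstacle; this lemma is a bookkeeping step that sets up the correspondence between the static resolvent $G(z)$ and the flow $G_{t_0,\sE}$ used in the rest of the paper.
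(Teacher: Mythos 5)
Your proof is correct and follows essentially the same path as the paper: compute $m(\sE)$ explicitly from the formula for $m_{sc}$, verify $m(\sE)=m(z)/|m(z)|$, use the self-consistent equation (either directly for $m(\sE)$, as the paper does, or via \eqref{eq:mt_stay} as you do — these are trivially equivalent) to obtain $z_{t_0}(\sE)=\sqrt{t_0}\,z$, invoke the Gaussian scaling $H_{t_0}\stackrel{d}{=}\sqrt{t_0}H$, and read off the estimates from \eqref{square_root_density} and the explicit formula $\sE=-m(\sE)-1/m(\sE)$. There are a couple of small algebraic slips in passing (the chain "$1-|m(z)|^2=\eta/(\eta+\im m(z))=\eta(1-|m(z)|^2)/\im m(z)$" should end in $\eta|m(z)|^2/\im m(z)$, and "$|1-e^{2\ii\theta}|^2=|1-\cos2\theta-\ii\sin2\theta|$" mismatches a square), but these are parenthetical and do not affect the steps you actually rely on.
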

\begin{proof}
The identities in  \eqref{eq:zztE} have been proved in \cite[Lemma 2.8]{Band1D}. We repeat it for the convenience of readers. With \eqref{eq:t0E0} and the definition in \eqref{self_mWO}, we get
$$ m(\sE)=\frac{-\sE+\sqrt{\sE^2-4}}{2}=\frac{m(z)}{|m(z)|}=\frac{m(z)}{\sqrt{t_0}} , $$
which gives the first identify in \eqref{eq:zztE}. 
Next, with the first identity in \eqref{L2M}, we get
$$ z=-\sqrt{t_0}m(\sE) -\frac{1}{\sqrt{t_0}m(\sE)}=\frac{1}{\sqrt{t_0}}\left(-t_0 m(\sE) + \sE +m(\sE)\right)=\frac{z_{t_0}(\sE)}{\sqrt{t_0}},$$
which concludes the second identify in \eqref{eq:zztE}. Using \eqref{eq:zztE} and the fact that \smash{$H_{t_0}\stackrel{d}{=}\sqrt{t_0}H$}, we get 
\begin{align*}
    G_{t_0,\sE} \stackrel{d}{=}\left(\sqrt{t_0}H  - z_{t_0}(\sE)\right)^{-1}=t_0^{-1/2} G(z),
\end{align*}
which concludes \eqref{GtEGz}. 
For $\sE$ in \eqref{eq:t0E0}, we can write $\kappa$ as  
$$\kappa= \bigg(2-2\frac{\re m(z)}{\sqrt{|\re m(z)|^2+|\im m(z)|^2}}\bigg) \wedge \bigg(2+2\frac{\re m(z)}{\sqrt{|\re m(z)|^2+|\im m(z)|^2}}\bigg). $$
Combining it with the estimate on $\im m(z)$ in \eqref{square_root_density}, we obtain the first estimate in \eqref{eq:E0bulk}. The second estimate in \eqref{eq:E0bulk} then follows directly from \eqref{eq:t0E0}. 
The first equation in \eqref{eq:kappat} follows from the fact that $\re m(\sE)=-\sE/2$. 
Finally, the second estimate in \eqref{eq:kappat} follows from the estimate on $\im m(\sE)$ in \eqref{square_root_density} since $|\sE|\le 2$. 
\end{proof}

In the proof, we will fix a target spectral parameter $z=E+\ii \eta\in \mathbf D_{C_0,\fd}$ for an arbitrarily small constant $\fd>0$. 
Accordingly, we choose $t_0$ and $\sE$ as specified in \eqref{eq:t0E0}. For clarity, unless we want to emphasize their dependence on $\sE$, we will often omit this variable from various notations---particularly from the notations $z_t(\sE)$, $E_t(\sE)$, $\eta_t(\sE)$, $m(\sE)$, $M(\sE)$, and $G_{t,\sE}$.

\subsection{\texorpdfstring{$G$}{G}-loops and primitive loops}\label{subsec:Gloop}

Our focus will be on the dynamics of $G_{t}\equiv G_{t,\sE}$ and the corresponding $G$-loops defined in \Cref{Def:G_loop}. 

\begin{definition}[$G$-loop]\label{Def:G_loop}
For $\sigma\in \{+,-\}$, we denote 
 \begin{equation}\nonumber
  G_{t}(\sigma):=\begin{cases}
       (H_t-z_t)^{-1}, \ \ \text{if} \ \  \sigma=+,\\
        (H_t-\bar z_t)^{-1}, \ \ \text{if} \ \ \sigma=-.
   \end{cases}
 \end{equation}
In other words, we let $G_{t}(+)\equiv G_{t}$ and $G_{t}(-)\equiv G_{t}^*$. Denote by $I_{[a]}$ and $E_{[a]}$, \smash{$[a]\in \Zn$}, the block identity and rescaled block identity matrices, respectively:
\be\label{def:Ia} (I_{[a]})_{ij}= \delta_{ij}\cdot \mathbf 1_{i\in [a]} , \quad E_{[a]}=W^{-d}I_{[a]}.\ee
For any $\fn\in \N$, for $\bsig=(\sigma_1, \cdots \sigma_\fn)\in \{+,-\}^\fn$ and $\ba=([a_1], \ldots, [a_\fn])\in (\Zn)^\fn$, we define the $\fn$-$G$ loop by 
\begin{equation}\label{Eq:defGLoop}
    {\cal L}^{(\fn)}_{t, \boldsymbol{\sigma}, \ba}=\left \langle \prod_{i=1}^\fn \left(G_{t}(\sigma_i) E_{[a_i]}\right)\right\rangle .
\end{equation}
Furthermore, we denote 
\begin{equation}\label{def_mtzk}
m (\sigma ):= \begin{cases}
    m(\sE),  &\text{if} \ \ \sigma  =+ \\
    \bar m(\sE),  &\text{if} \ \ \sigma = -
\end{cases},\quad M (\sigma ):= \begin{cases}
    M(\sE) ,  &\text{if} \ \ \sigma  =+ \\
    M(\sE)^*,  &\text{if} \ \ \sigma = -
\end{cases}.
\end{equation}
\end{definition}

To represent the loop hierarchy for the $G$-loops, we introduce the following operations as in \cite{Band1D}.

\begin{definition}[Loop operations]\label{Def:oper_loop}
For the $G$-loop in \eqref{Eq:defGLoop}, we define the following operations on it. 

 \medskip
 
\noindent \emph{(1)} 
 For $k \in \qqq{\fn}$ and $[a]\in \Zn$, we define the first type of cut-and-glue operator ${\cut}^{[a]}_{k}$ as follows:
\be\label{eq:cut1}
    {\cut}^{[a]}_{k} \circ {\cal L}^{(\fn)}_{t, \boldsymbol{\sigma}, \ba}:= \left \langle \prod_{i<k}  \left(G_{t}(\sigma_i) E_{[a_i]}\right)\left( G_{t}(\sigma_k) E_{[a]} G_{t}(\sigma_k)E_{[a_k]}\right)\prod_{i>k}  \left(G_{t}(\sigma_i) E_{[a_i]}\right)\right\rangle.
\ee
In other words, it is the $(\fn+1)$-$G$ loop obtained by replacing $G_t(\sigma_k)$ as $G_{t}(\sigma_k) E_a G_{t}(\sigma_k)$. Graphically, the operator \smash{${\cut}^{[a]}_{k}$} cuts the $k$-th $G$ edge $G_t(\sigma_k)$ and glues the two new ends with $E_{[a]}$. This operator can also be considered as an operator on $(\boldsymbol{\sigma},\ba)$: 
\begin{align*}{\cut}^{[a]}_{k} (\boldsymbol{\sigma}, \ba) =  \big( & (\sigma_1,\ldots, \sigma_{k-1}, \sigma_k,\sigma_k ,\sigma_{k+1},\ldots, \sigma_\fn ),\\
& ([a_1],\ldots, [a_{k-1}], [a],[a_k],[a_{k+1}],\ldots, [a_\fn] )\big).\end{align*}
Hence, we will also express \eqref{eq:cut1} as 
    $${\cut}^{[a]}_{k} \circ {\cal L}^{(\fn)}_{t, \boldsymbol{\sigma}, \ba}\equiv 
     {\cal L}^{(\fn+1)}_{t, \;  {\cut}^{[a]}_{k} (\boldsymbol{\sigma}, \ba)}.
    $$


 \noindent 
 \emph{(2)} For $k < l \in \qqq{\fn}$, we define the second type of cut-and-glue operator ${\cutL}^{[a]}_{k,l}$ from the left (``L") of $k$ as: 
 \begin{align}\label{eq:cutL}
 {\cutL}^{[a]}_{k,l} \circ {\cal L}^{(\fn)}_{t, \boldsymbol{\sigma}, \ba}:= \left \langle \prod_{i<k}  \left[G_{t}(\sigma_i) E_{[a_i]}\right]\left( G_{t}(\sigma_k) E_{[a]} G_{t}(\sigma_l)E_{[a_l]}\right)\prod_{i>l}  \left[G_{t}(\sigma_i) E_{[a_i]}\right]\right\rangle, 
 \end{align}
and the third type of cut-and-glue operator ${\cutR}^{[a]}_{k,l}$ from the right (``R") of $k$ as: 
 \begin{align}\label{eq:cutR}
 {\cutL}^{[a]}_{k,l} \circ {\cal L}^{(\fn)}_{t, \boldsymbol{\sigma}, \ba}:= \left \langle \prod_{k\le i <l}  \left[G_{t}(\sigma_i) E_{[a_i]}\right]\cdot \left( G_{t}(\sigma_l) E_{[a]}\right)\right\rangle. 
 \end{align}
In other words, the second type operator cuts the $k$-th and $l$-th $G$ edges $G_t(\sigma_k)$ and $G_t(\sigma_l)$, and creates two chains: the left chain to the vertex $[a_k]$ is of length $(\fn+k-l+1)$ and contains the vertex $[a_\fn]$, while the right chain to the vertex $[a_k]$ is of length $(l-k+1)$ and does not contain the vertex $[a_\fn]$.  
Then, \eqref{eq:cutL} (resp.~\eqref{eq:cutR}) gives a $(\fn+k-l+1)$-loop (resp.~$(l-k+1)$-loop) obtained by gluing the left chain (resp.~right chain) at the new vertex $[a]$.
Again, we can also consider the two operators to be defined on the indices $\boldsymbol{\sigma},\ba$: 
\begin{align*}
    &{\cutL}^{[a]}_{k,l} (\boldsymbol{\sigma}, \ba) = \left((\sigma_1,\ldots, \sigma_k,\sigma_l ,\ldots, \sigma_\fn ),([a_1],\ldots, [a_{k-1}], [a],[a_l],\ldots, [a_\fn] )\right),\\
    &{\cutR}^{[a]}_{k,l} (\boldsymbol{\sigma}, \ba) = \left((\sigma_k,\ldots, \sigma_l),([a_k],\ldots, [a_{l-1}], [a])\right).
\end{align*}
Hence, we will also express \eqref{eq:cutL} and \eqref{eq:cutR} as 
    $${\cutL}^{[a]}_{k,l} \circ {\cal L}^{(\fn)}_{t, \boldsymbol{\sigma}, \ba}\equiv 
     {\cal L}^{(\fn+k-l+1)}_{t, \;  {\cutL}^{[a]}_{k,l} (\boldsymbol{\sigma}, \ba)},\quad {\cutR}^{[a]}_{k} \circ {\cal L}^{(\fn)}_{t, \boldsymbol{\sigma}, \ba}\equiv 
     {\cal L}^{(l-k+1)}_{t, \;  {\cutR}^{[a]}_{k,l} (\boldsymbol{\sigma}, \ba)}.
    $$
\end{definition}

Given a matrix $\cal A$ defined on $ \ZL$, we define its ``projection" $\cal A^{L\to n}$ to $\Zn$ as
\be\nonumber 
\cal A^{\LK}_{[x][y]}:=  W^{-d}\, \sum_{x'\in [x]}\sum_{y'\in [y]}\cal A_{x'y'}. 
\ee
Under this definition, the projection of the variance matrix in \eqref{eq:SWO} is given by
\be\label{eq:SLton}S^{\LK}=(1+2d\lambda^2)^{-1}\left( I_n + \lambda^2\Lambda_n\right).\ee 
For $(x,y)\in (\ZL)^2$, we denote $\partial_{xy}:=\partial_{(H_t)_{xy}}$. Then, by It\^o's formula and equation \eqref{eq:SDE_Gt}, it is easy to derive the following SDE satisfied by the $G$-loops.  

\begin{lemma}[The loop hierarchy] \label{lem:SE_basic}
An $\fn$-$G$ loop satisfies the following SDE, called the ``loop hierarchy": 
\begin{align}\label{eq:mainStoflow}
    \dd \mathcal{L}^{(\fn)}_{t, \boldsymbol{\sigma}, \ba} &=\dd  
    \mathcal{B}^{(\fn)}_{t, \boldsymbol{\sigma}, \ba} 
    +\mathcal{W}^{(\fn)}_{t, \boldsymbol{\sigma}, \ba} \dd t 
    \nonumber\\
    &+  W^d\sum_{1 \le k < l \le \fn} \sum_{[a], [b]} \left( \cutL^{[a]}_{k, l} \circ \mathcal{L}^{(\fn)}_{t, \boldsymbol{\sigma}, \ba} \right) S^{\LK}_{[a][b]} \left( \cutR^{[b]}_{k, l} \circ \mathcal{L}^{(\fn)}_{t, \boldsymbol{\sigma}, \ba} \right) \dd t, 
\end{align}
 where the martingale term $\mathcal{B}^{(\fn)}_{t, \boldsymbol{\sigma}, \ba}$ and the ``weight" term $\mathcal{W}^{(\fn)}_{t, \boldsymbol{\sigma}, \ba}$ are defined by 
 \begin{align} \label{def_Edif}
\dd\mathcal{B}^{(\fn)}_{t, \boldsymbol{\sigma}, \ba} :  = & \sum_{x,y\in \ZL} 
  \left( \partial_{xy}  {\cal L}^{(\fn)}_{t, \boldsymbol{\sigma}, \ba}  \right)
 \cdot \sqrt{S _{xy}}
  \left(\dd B_t\right)_{xy}, \\\label{def_EwtG}
\mathcal{W}^{(\fn)}_{t, \boldsymbol{\sigma}, \ba}: = &  {W}^d \sum_{k=1}^\fn \sum_{[a], [b]\in \Zn} \; 
 \left \langle (G_t(\sigma_k)-M(\sig_k)) E_{[a]} \right\rangle
   S^{\LK}_{[a][b]} 
  \left( {\cut}^{[b]}_{k} \circ {\cal L}^{(\fn)}_{t, \boldsymbol{\sigma}, \ba} \right) . 
\end{align}
\end{lemma}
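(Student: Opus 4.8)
The plan is to apply It\^o's formula directly to the product of resolvents and block identities defining $\mathcal{L}^{(\fn)}_{t,\boldsymbol\sigma,\ba}$, using the SDE \eqref{eq:SDE_Gt} for each factor $G_{t,\sE}(\sigma_i)$ as the basic input. Since $\mathcal{L}^{(\fn)}_{t,\boldsymbol\sigma,\ba}=\langle\prod_{i=1}^\fn (G_t(\sigma_i)E_{[a_i]})\rangle$ is a smooth (polynomial) function of the entries of $G_t$, and each $G_t(\sigma_i)$ obeys an SDE of the form $\dd G = -G(\dd H_t)G + G\{\mathcal S[G]-m_t\}G\,\dd t$, It\^o's formula produces three kinds of contributions: (i) the first-order drift terms coming from the $\{\mathcal S[G]-m_t\}$ part of each $\dd G_t(\sigma_i)$; (ii) the martingale term coming from the $-G(\dd H_t)G$ part; and (iii) the second-order It\^o correction coming from the quadratic variation between the martingale parts of two distinct factors $G_t(\sigma_k)$ and $G_t(\sigma_l)$ (there is no self-term since $(\dd B_t)_{xy}(\dd B_t)_{xy}=0$ and only $(\dd B_t)_{xy}\overline{(\dd B_t)_{xy}}$-type pairings survive, which links two different resolvent factors around the loop). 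The first objective is to identify term (i) with $\mathcal{W}^{(\fn)}$, term (ii) with $\dd\mathcal B^{(\fn)}$, and term (iii) with the $\cutL$--$\cutR$ sum.

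The main computational step is organizing term (iii). Using $\dd(H_t)_{xy}=\sqrt{S_{xy}}\,\dd(B_t)_{xy}$ and the complex-Hermitian covariance structure $\E[(\dd B_t)_{xy}\overline{(\dd B_t)_{x'y'}}]=\delta_{xx'}\delta_{yy'}\,\dd t$, the cross-variation of the $-G(\sigma_k)(\dd H_t)G(\sigma_k)$ insertion in the $k$-th factor with the $-G(\sigma_l)(\dd H_t)G(\sigma_l)$ insertion in the $l$-th factor produces
$$
\sum_{x,y} S_{xy}\,\big(\text{$k$-factor split at $(x,y)$}\big)\big(\text{$l$-factor split at $(y,x)$}\big)\,\dd t.
$$
Here splitting the $k$-th resolvent edge at $(x,y)$ means replacing $G_t(\sigma_k)\to G_t(\sigma_k)|_{\cdot x}\,(\cdot)\,G_t(\sigma_k)|_{y\cdot}$, i.e.\ cutting that edge; doing this for both the $k$-th and $l$-th edges and reconnecting along the loop yields exactly two subloops, one of length $l-k+1$ and one of length $\fn+k-l+1$, joined at the new index. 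The step that requires care is passing from the entrywise variance $S_{xy}$ to the block-projected matrix $S^{\LK}_{[a][b]}$ with the prefactor $W^d$: because $S_{xy}$ depends on $x,y$ only through the blocks $[x],[y]$ (it is $W^{-d}$ times a function of $[x]-[y]$, by \eqref{eq:SWO}), the sums over $x\in[a]$, $y\in[b]$ collapse each cut-edge into the rescaled block identities $E_{[a]}$, $E_{[b]}$, and the bookkeeping of the $W^{-d}$ factors inside $S$ versus inside $E_{[a]}=W^{-d}I_{[a]}$ produces precisely the overall $W^d$ and the projected kernel $S^{\LK}$ of \eqref{eq:SLton}. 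One must also check the combinatorial range $1\le k<l\le\fn$ is correct (the pair $\{k,l\}$ is unordered, but the two resulting chains—``left'' containing $[a_\fn]$ and ``right'' not—are distinguished by the $\cutL/\cutR$ convention, so each unordered pair contributes once), and that degenerate cases $l=k+1$ or $l=\fn,\,k=1$ are covered by the same formula.

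Similarly, term (i) unfolds: each factor's drift $G_t(\sigma_k)\{\mathcal S[G_t(\sigma_k)]-m_t\}G_t(\sigma_k)$ inserts, at the $k$-th edge, the operator $\mathcal S[G_t(\sigma_k)]-m_t(\sigma_k)$, which is diagonal with entries $\sum_j S_{\cdot j}(G_t(\sigma_k))_{jj}-m_t(\sigma_k)=\langle (G_t(\sigma_k)-M(\sigma_k))E_{[b]}\rangle$-weighted sums after block projection; pulling out $W^d$ and $S^{\LK}$ as before turns this into $\mathcal{W}^{(\fn)}$ with the single-edge cut operator $\cut^{[b]}_k$. Term (ii) is immediate from the chain rule: $\partial_{xy}\mathcal L^{(\fn)}$ collects, over $k$, the $-G(\sigma_k)$-split of the $k$-th edge at $(x,y)$, matching \eqref{def_Edif}. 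I expect the genuine obstacle to be purely notational rather than conceptual: keeping the loop's cyclic structure, the placement of the block identities $E_{[a_i]}$, and the $W^d$/$S^{\LK}$ normalization all consistent through the It\^o expansion—in particular verifying that the two cut operations in the quadratic-variation term glue in the orientation dictated by the $\sigma_k=\sigma_l$ versus $\sigma_k\ne\sigma_l$ index patterns, and that no extra terms survive from the $\dd H_t\,\dd t$ or $(\dd t)^2$ products. Once the dictionary between It\^o terms and the operators $\cut,\cutL,\cutR$ of Definition~\ref{Def:oper_loop} is fixed, \eqref{eq:mainStoflow} follows by collecting like terms.
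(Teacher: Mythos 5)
Your proposal is correct and is exactly the computation the paper has in mind: the paper states Lemma~\ref{lem:SE_basic} without proof, noting only that it follows "by It\^o's formula and equation \eqref{eq:SDE_Gt}," and your proposal fills in precisely that It\^o expansion, matching the first-order drift to $\mathcal{W}^{(\fn)}$, the martingale piece to $\dd\mathcal{B}^{(\fn)}$, and the cross-variation between distinct resolvent factors (which splits the trace into a $\cutL$-loop and a $\cutR$-loop joined through $W^d S^{\LK}_{[a][b]}$) to the final term. One small clarification worth making in a write-up: the absence of a "self" quadratic-variation term between a factor and itself is not because $(\dd B_t)_{xy}(\dd B_t)_{xy}=0$, but because that self-variation is already absorbed into the drift $G\mathcal{S}[G]G\,\dd t$ of the SDE \eqref{eq:SDE_Gt} you are using as input, so the It\^o product rule only needs the martingale--martingale pairings for $k<l$.
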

  
We will see that this loop hierarchy is well-approximated by the primitive loops, defined as follows. 

\begin{definition}[Primitive loops]\label{Def_Ktza}
We define the primitive loop of length 1 as: 
\be\label{eq:1loop}{\cal K}^{(1)}_{t,\sigma,[a]}=m(\sigma),\quad \forall t\in [0,1],\ \sigma\in \{+,-\}.\ee 
For $\fn\ge 2$, we define the function ${\cal K}^{(\fn)}_{t, \boldsymbol{\sigma}, \ba}$ (of $t\in[0,1]$, $\bsig\in \{+,-\}^k$, and $\ba\in (\Zn)^\fn$) to be the unique solution to the following system of differential equation: 
 \begin{align}\label{pro_dyncalK}
       \partial_t{\cal K}^{(\fn)}_{t, \boldsymbol{\sigma}, \ba} 
       =  
       W^d \sum_{1\le k < l \le \fn} \sum_{[a], [b]} \left( \cutL^{[a]}_{k, l} \circ \mathcal{K}^{(\fn)}_{t, \boldsymbol{\sigma}, \ba} \right) S^{\LK}_{[a][b]} \left( \cutR^{[b]}_{k, l} \circ \mathcal{K}^{(\fn)}_{t, \boldsymbol{\sigma}, \ba} \right) ,
    \end{align}
with the following initial condition at $t=0$: 
\be\label{eq:initial_K}
{\cal K}^{(k)}_{0, \boldsymbol{\sigma}, \ba} =  {\cal M}^{(k)}_{\boldsymbol{\sigma}, \ba} ,\quad \forall k\in \N, \ \bsig\in \{+,-\}^k,\ \ba\in (\Zn)^k,\ee
where ${\cal M}^{(k)}_{\boldsymbol{\sigma}, \ba}$ is defined as
\be\label{eq:KMloop} {\cal M}^{(k)}_{\boldsymbol{\sigma}, \ba}:=\left \langle \prod_{i=1}^k \left(M(\sigma_i) E_{[a_i]}\right)\right\rangle = W^{-(k-1)d}\prod_{i=1}^k m(\sig_i) \mathbf 1([a_1]=\cdots=[a_k]).\ee
In equation \eqref{pro_dyncalK}, the operators $\cutL$ and $\cutR$ act on ${\cal K}^{(\fn)}_{t, \boldsymbol{\sigma}, \ba}$ through the actions on indices, that is, 
  \be\label{calGonIND}
     {\cutL}^{[a]}_{k,l}  \circ {\cal K}^{(\fn)}_{t, \boldsymbol{\sigma}, \ba} := {\cal K}^{(\fn+k-l+1)}_{t,  {\cutL}^{[a]}_{k,l}  (\boldsymbol{\sigma}, \ba)} , \ \  {\cutR}^{[b]}_{k,l}  \circ {\cal K}^{(\fn)}_{t, \boldsymbol{\sigma}, \ba} := {\cal K}^{(l-k+1)}_{t,  {\cutR}^{[b]}_{k,l}  (\boldsymbol{\sigma}, \ba)}.  
       \ee
    We will call ${\cal K}^{(\fn)}_{t, \boldsymbol{\sigma}, \ba}$ a primitive loop of length $\fn$ or an $\fn$-$\cK$ loop.  
\end{definition}

Note equation \eqref{eq:mainStoflow} involves $G$-loops of length larger than $\fn$, and hence represents a ``hierarchy" rather than a ``self-consistent equation" for the $G$-loops. Conversely, the primitive equation \eqref{pro_dyncalK} indeed gives a self-consistent equation that can be solved inductively, as demonstrated in \cite{Band1D}. We will present an explicit representation of the primitive loops in \Cref{sec:prim}, which will serve as the deterministic limits of the $G$-loops. 
For clarity of presentation, we will also call $G$-loops and primitive loops as $\cL$-loops and $\cK$-loops, respectively. We will also call \smash{$(\cL-{\cal K})^{(\fn)}_{t, \boldsymbol{\sigma}, \ba}\equiv \cL^{(\fn)}_{t, \boldsymbol{\sigma}, \ba}-\cK^{(\fn)}_{t, \boldsymbol{\sigma}, \ba}$} an $(\cL-\cK)$-loop. 
Finally, by replacing the spectral parameter $z_t(\sE)$ with $z$ in the definitions of the $\cL$ and $\cK$-loops (i.e., we replace $G_{t,\sE}$ and $m(\sE)$ with $G_t(z)$ and $m_t(z)$, respectively), we can define the more general notations \smash{$\cL^{(\fn)}_{t, \boldsymbol{\sigma}, \ba}(z)$} and \smash{$\cK^{(\fn)}_{t, \boldsymbol{\sigma}, \ba}(z)$}.  

Applying Ward's identity in \Cref{lem-Ward} to $G$, we can show that the $G$-loops satisfy the following identity \eqref{WI_calL}, which we will also refer to as a ``Ward's identity". In \cite{Band1D}, it shows that a similar Ward's identity \eqref{WI_calK} holds for the $\cal K$-loops. 

\begin{lemma}[Ward's identity for $\cL$-loops and $\cK$-loops]\label{lem_WI_K} 
For an $\fn$-$G$ loop ${\cal L}^{(\fn)}_{t, \boldsymbol{\sigma}, \ba}$ with $\fn\ge 2$ and $\sigma_1=-\sig_{\fn}$, 
we have the following identities, which are called Ward's identities at the vertex $[a_\fn]$: 
\begin{align}\label{WI_calL}
&\sum_{[a_\fn]}{\cal L}^{(\fn)}_{t, \boldsymbol{\sigma}, \ba}(z)=
\frac{1}{2\ii W^d\eta}\left( {\cal L}^{(\fn-1)}_{t,  \wh\bsig^{(+,\fn)}, \wh\ba^{(\fn)}}(z)- {\cal L}^{(\fn-1)}_{t,  \wh\bsig^{(-,\fn)} , \wh\ba^{(\fn)}}(z)\right),  \\
\label{WI_calK}
& \sum_{[a_\fn]}{\cal K}^{(\fn)}_{t, \boldsymbol{\sigma}, \ba}(z)=
\frac{1}{2\ii W^d\eta}\left( {\cal K}^{(\fn-1)}_{t,  \wh\bsig^{(+,\fn)}, \wh\ba^{(\fn)}}(z)- {\cal K}^{(\fn-1)}_{t,  \wh\bsig^{(-,\fn)}, \wh\ba^{(\fn)}}(z)\right) , 
\end{align}
where $\eta=\im z$, $\wh\bsig^{(\pm,\fn)}$ is obtained by removing $\sigma_\fn$ from $\boldsymbol{\sigma}$ and replacing $\sigma_1$ with $\pm$, i.e., $\wh\bsig^{(\pm,\fn)}:=(\pm, \sigma_2, \cdots \sigma_{\fn-1})$, and  $\wh\ba^{(\fn)}$ is obtained by removing $[a_\fn]$ from $\ba$, i.e., 
$\wh\ba^{(\fn)}:=([a_1], [a_2],\cdots, [a_{\fn-1}]).$ 
\end{lemma}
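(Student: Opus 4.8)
The plan is to prove \eqref{WI_calL} by a direct computation, expanding the trace defining the $\fn$-$G$ loop into entries and summing over the block index $a_\fn$, so that the rescaled block identity $E_{[a_\fn]}=W^{-d}I_{[a_\fn]}$ becomes, after summation, $W^{-d}$ times the full identity on $\ZL$. Concretely, writing $G_t(\sigma_i)\equiv G^{(i)}$ for brevity, we have
\begin{align}\nonumber
\sum_{[a_\fn]}{\cal L}^{(\fn)}_{t,\bsig,\ba}
= W^{-d}\sum_{x}\Big(G^{(\fn)}E_{[a_1]}G^{(1)}\cdots E_{[a_{\fn-1}]}G^{(\fn-1)}\Big)_{xx},
\end{align}
where I have cyclically rotated the trace so that the two $G$-factors adjacent to the summed vertex $[a_\fn]$, namely $G^{(\fn)}$ and $G^{(1)}$, sit at the two ends; the key point is that summing $E_{[a_\fn]}$ over all blocks collapses the product $G^{(1)}E_{[a_\fn]}\cdots$ (after cyclic rotation) to $G^{(1)}\cdots$ with the two ends now joined through a free index $x$ that ranges over all of $\ZL$. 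Thus the sum over $a_\fn$ removes one vertex and leaves an expression in which $G^{(\fn)}$ and $G^{(1)}$ meet at the summation index $x$.

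The second step is to invoke the hypothesis $\sigma_1=-\sigma_\fn$, so that one of $G^{(1)},G^{(\fn)}$ is evaluated at $z_t$ and the other at $\bar z_t$; then apply Ward's identity from Lemma~\ref{lem-Ward}, in the form $\sum_x \overline{R_{x y'}}\,R_{xy}=(R_{y'y}-\overline{R_{yy'}})/(2\ii\eta)$ with $R=G_t(z_t)$ and $\eta=\eta_t$. This converts the product $\sum_x (G^{(\fn)})_{x\,\cdot}(G^{(1)})_{\cdot\,x}$ (one of which is the conjugate transpose of a resolvent at $z_t$) into a difference of two resolvent entries of $G_t(z_t)$ and $G_t(\bar z_t)$, i.e., into a telescoping of two $(\fn-1)$-$G$ loops: one in which the two merged edges become a single $G_t(+)$ edge, and one in which they become a single $G_t(-)$ edge. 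Tracking the surviving indices shows precisely that the first term is ${\cal L}^{(\fn-1)}_{t,\wh\bsig^{(+,\fn)},\wh\ba^{(\fn)}}$ and the second is ${\cal L}^{(\fn-1)}_{t,\wh\bsig^{(-,\fn)},\wh\ba^{(\fn)}}$, where removing $\sigma_\fn$ and overwriting $\sigma_1$ with $\pm$ is exactly the bookkeeping of which resolvent ($z_t$ or $\bar z_t$) the merged edge carries. The prefactor $1/(2\ii\eta_t)$ from Ward's identity combines with the $W^{-d}$ from $E_{[a_\fn]}$ to give the stated $1/(2\ii W^d\eta_t)$.

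The main obstacle—really the only non-routine point—is bookkeeping: getting the cyclic rotation of the trace right so that the two $G$-edges incident to the summed vertex are the ones to which Ward's identity is applied, and then verifying that the resulting two $(\fn-1)$-loops have exactly the index/sign data claimed (the replacement $\sigma_1\mapsto \pm$ and the deletion of $\sigma_\fn$, $a_\fn$). One must also be careful about which of the two Ward identities in \eqref{eq_Ward0} to use depending on whether $\sigma_\fn=+$ and $\sigma_1=-$ or vice versa, and check that in both cases the signs line up so that \eqref{WI_calL} holds verbatim. The identity \eqref{WI_calK} for the $\cK$-loops is not proved here from scratch—it is quoted from \cite{Band1D}, where it follows from the explicit representation of the primitive loops (or, alternatively, can be deduced from \eqref{WI_calL} in a limiting/deterministic sense together with the defining ODE \eqref{pro_dyncalK}); I would simply cite that reference.
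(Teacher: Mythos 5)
Your proposal is correct and takes the same route as the paper's (very terse) proof: the paper simply asserts that \eqref{WI_calL} follows from \eqref{eq_Ward0} and cites Band1D for \eqref{WI_calK}, which is exactly what you do, with the sum over $[a_\fn]$ collapsing $E_{[a_\fn]}$ to $W^{-d}I_N$, a cyclic rotation to bring $G_t(\sigma_\fn)G_t(\sigma_1)$ together, and Ward's identity $R^*R = RR^* = (R-R^*)/(2\ii\eta)$ handling both sign cases identically. The one small point worth noting is that in either case $\sigma_\fn=+,\sigma_1=-$ or $\sigma_\fn=-,\sigma_1=+$, the two identities in \eqref{eq_Ward0} both yield $G_t(\sigma_\fn)G_t(\sigma_1)=\big(G_t(+)-G_t(-)\big)/(2\ii\eta_t)$, so the case distinction you flag as a possible subtlety is in fact vacuous.
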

\begin{proof}
\eqref{WI_calL} follows from \eqref{eq_Ward0}, while \eqref{WI_calK} is proved in \cite[Lemma 3.6]{Band1D}. 
\end{proof}

We will also need an additional property of \emph{pure primitive loops}, which have only one type of charge. Its proof will be given in \Cref{sec:pf_new_identity}.

\begin{lemma}\label{lem:pure_sum}
Let $\bm{\sigma}$ be a pure loop such that $\sigma_1 = \sigma_2 = \cdots = \sigma_\fn=+$. Then, we have the following identity for the derivatives of $m_t(z)$ (recall the definition \eqref{self_mt}):
\be\label{eq:pure_sum}  
\frac{1}{(\fn-1)!}\frac{\dd^{\fn-1}}{\dd z^{\fn-1}}m_t(z)= \int\frac{\rho_t(x)\dd x}{(x-z)^\fn}= W^{(\fn-1)d}\sum_{[a_2],\ldots,[a_\fn]} \cK^{(\fn)}_{t,\bsig,\ba}(z).\ee
\end{lemma}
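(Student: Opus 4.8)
The plan is to verify the two equalities in \eqref{eq:pure_sum} separately, since the middle expression $\int \rho_t(x)(x-z)^{-\fn}\dd x$ serves as the natural bridge. The first equality, $\frac{1}{(\fn-1)!}\frac{\dd^{\fn-1}}{\dd z_t^{\fn-1}}m_t(z_t)=\int \rho_t(x)(x-z_t)^{-\fn}\dd x$, is immediate from differentiating the integral representation $m_t(z_t)=\int \rho_t(x)(x-z_t)^{-1}\dd x$ under the integral sign $\fn-1$ times (legitimate since $z_t\in\C_+$ keeps the integrand smooth and the density $\rho_t$ is compactly supported), using $\frac{\dd^{k}}{\dd z^{k}}(x-z)^{-1}=k!\,(x-z)^{-k-1}$. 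So the content of the lemma is the second equality, which identifies the summed pure $\cK$-loop with this derivative.

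For the second equality I would argue by induction on $\fn$, using the self-consistent equation \eqref{pro_dyncalK} that defines the primitive loops together with the Ward identity \eqref{WI_calK}. The base case $\fn=1$ is just the definition \eqref{eq:1loop}: $\cK^{(1)}_{t,+,[a]}=m(\sigma)=m_t(z_t)$, matching $\int\rho_t(x)(x-z_t)^{-1}\dd x$. For the inductive step, observe that summing the pure loop $\cK^{(\fn)}$ over all its block indices except $[a_1]$ collapses the combinatorics: by \eqref{eq:KMloop} the initial condition $\cM^{(\fn)}$ already forces all indices equal, and the loop operators $\cutL,\cutR$ preserve this structure, so one effectively reduces to a scalar ODE. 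Concretely, I would set $f_\fn(t):=W^{(\fn-1)d}\sum_{[a_2],\dots,[a_\fn]}\cK^{(\fn)}_{t,\bsig,\ba}$ and show, by applying \eqref{pro_dyncalK} and summing, that $\frac{\dd}{\dd t}f_\fn(t)$ is a sum over splittings $k<l$ of products $f_{l-k+1}\cdot f_{\fn-(l-k)+\cdots}$ with the $S^{\LK}$ factor summing to $1$ (since each row of $S^{\LK}$ sums to $1$ by \eqref{eq:SLton}). An alternative, and probably cleaner, route: differentiate the known self-consistent equation for $m_t(z_t)$ — namely \eqref{self_mt}, $m_t=-1/(z+tm_t)$, equivalently $z=-m_t^{-1}-t m_t$ — to extract an ODE in $t$ (at fixed $z$, along the characteristic where $z_t$ is chosen) for the derivatives $\partial_z^{\fn-1}m_t$, and check this ODE coincides term-by-term with the one satisfied by $f_\fn$, with matching initial data at $t=0$ coming from $m_0(z)=-1/z$ so that $\frac{1}{(\fn-1)!}\partial_z^{\fn-1}m_0(z)=(-1)^\fn z^{-\fn}$, which one checks equals $W^{(\fn-1)d}\sum\cM^{(\fn)}_{\bsig,\ba}$ via \eqref{eq:KMloop}. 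Uniqueness of solutions to the resulting ODE system (guaranteed since \eqref{pro_dyncalK} has a unique solution by hypothesis) then forces $f_\fn(t)=\frac{1}{(\fn-1)!}\partial_z^{\fn-1}m_t(z_t)$.

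The main obstacle I anticipate is purely bookkeeping: checking that the nonlinear convolution structure of \eqref{pro_dyncalK}, once summed over block indices, reproduces \emph{exactly} the combinatorial identity satisfied by the Taylor coefficients of a function obeying $w=-1/(z+tw)$ — i.e., that the map $(k,l)\mapsto$ (length of left loop, length of right loop) together with the weight $W^d S^{\LK}$ correctly encodes the Faà di Bruno–type recursion for $\partial_z^{\fn-1}m_t$. One must be careful that the $W^d$ prefactor in \eqref{pro_dyncalK}, the $W^{-(k-1)d}$ in \eqref{eq:KMloop}, and the $W^{(\fn-1)d}$ normalization in \eqref{eq:pure_sum} all cancel so that $f_\fn$ satisfies a genuinely $W$-independent scalar ODE; tracking these powers of $W^d$ correctly, and confirming the index sums telescope because $\sum_{[b]}S^{\LK}_{[a][b]}=1$, is where the argument could go wrong if done carelessly. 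Everything else is routine once the scalar reduction is in place.
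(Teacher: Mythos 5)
Your proof is correct but takes a genuinely different route from the paper's. The paper proves the second equality in \eqref{eq:pure_sum} combinatorially, using the $M$-graph representation \eqref{eq:tree_rep2} of $\cK$-loops together with the slicing decomposition (\Cref{slice-decomposition}): differentiating $\Gamma^{(\fn)}_{M;t,\bsig,\ba}$ with respect to $z_1$ is shown to reproduce exactly the slices of $\Gamma$, and summing over all $\Gamma\in\TSP(\cP_{\ba})$ and all slices gives precisely $\TSP(\cP_{(\ba,[a_{\fn+1}])})$. Your approach (option B in your sketch) bypasses the tree representation entirely: you reduce to the scalar quantities $f_\fn(t)=W^{(\fn-1)d}\sum_{[a_2],\ldots,[a_\fn]}\cK^{(\fn)}_{t,\bsig,\ba}$, derive a closed triangular ODE system $\dd f_\fn/\dd t=\sum_{1\le k<l\le\fn}f_{\fn+k-l+1}f_{l-k+1}$ from \eqref{pro_dyncalK}, derive the matching ODE $\dd g_\fn/\dd t=\tfrac{\fn}{2}\sum_{i=2}^{\fn}g_i g_{\fn-i+2}$ for $g_\fn:=\tfrac{1}{(\fn-1)!}\partial_z^{\fn-1}m_t|_{z=z_t}$ by implicit differentiation of \eqref{self_mt} and the chain rule along the flow (the two right-hand sides agree after the symmetrization $j\mapsto\fn-j+2$), and conclude by uniqueness plus matching initial data $f_\fn(0)=g_\fn(0)=m(\sE)^\fn$. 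This is more elementary in the sense that it does not invoke the tree-representation machinery of Section~\ref{sec_tree}, whereas the paper's proof is shorter \emph{given} that this machinery is already in place for \Cref{ML:Kbound} and yields extra structural insight (a bijection on partitions). The bookkeeping you flag does work out: the powers of $W^d$ cancel exactly, $\sum_{[b]}S^{\LK}_{[a][b]}=1$ handles the generic $k\ge 2$ cuts, and the $k=1$ cuts (where $[a_1]$ lands in the \emph{right} sub-loop) additionally use column-stochasticity of $S^{\LK}$, which holds since $S^{\LK}$ is symmetric; you should spell that case out, since the factorization into $f_{\fn+k-l+1}\cdot f_{l-k+1}$ is not immediate there and requires summing $\sum_{[a]}S^{\LK}_{[a][b]}$ rather than $\sum_{[b]}S^{\LK}_{[a][b]}$.
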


For simplicity of presentation, we also define the following concepts of generalized $G$-loops and primitive loops by introducing a new charge ``$\im$". 

\begin{definition}[Generalized $G$-loops and primitive loops]\label{Def:G_loopgen}
We introduce a new charge $\im$, where
 \begin{equation}\nonumber
  G_{t}(\im):= \im G_t = \frac{1}{2\ii}\p{G_t(+)-G_t(-)}.
 \end{equation}
For $\bchi=(\chi_1, \cdots \chi_\fn)\in \{+,-,\im\}^\fn$ and $\ba=([a_1], \ldots, [a_\fn])\in (\Zn)^\fn$, we define the \emph{generalized $\fn$-$G$ loop} by 
\begin{equation}\label{Eq:defGLoop_gen}
    {\cal L}^{(\fn)}_{t, \bchi, \ba}=\left \langle \prod_{i=1}^\fn \left(G_{t}(\chi_i) E_{[a_i]}\right)\right\rangle ,
\end{equation}
where, with a slight abuse of notation, we continue to use $\mathcal{L}$ to denote the $G$-loops. 
Let $v_\bchi$ denote the subset of indices that correspond to the charge $\im$: 
\be\label{eq:vbchi}v_\bchi=\{i_1,\ldots, i_{|v_\bchi|}\}:=\{i\in\qqq{\fn}: \chi_i=\im\}.\ee  
Then, we can expand ${\cal L}^{(\fn)}_{t, \bchi, \ba}$ as
\begin{align*}
    {\cal L}^{(\fn)}_{t, \bchi, \ba}=\frac{1}{(2\ii)^{|v_\bchi|}}\sum_{\bsig\in\{+,-\}^\fn}\mathbf 1(\sig_{i}=\chi_i, \forall i\notin v_\bchi) \cdot (-1)^{k_{-}(\bsig)} {\cal L}^{(\fn)}_{t, \bsig, \ba},
\end{align*}
where $k_{-}(\bsig):=\#\{i\in v_\bchi: \sig_i=-1\}$. Correspondingly, we define the \emph{generalized primitive loop} by 
\begin{align}\label{Eq:defKLoop_gen}
    {\cal K}^{(\fn)}_{t, \bchi, \ba}:=\frac{1}{(2\ii)^{|v_\bchi|}}\sum_{\bsig\in\{+,-\}^\fn}\mathbf 1(\sig_{i}=\chi_i, \forall i\notin v_\bchi) \cdot (-1)^{k_{-}(\bsig)} {\cal K}^{(\fn)}_{t, \bsig, \ba}.
\end{align}
In the proof, we will use generalized $G$-loops and primitive loops with only one charge equal to $\Im$.
\end{definition}

\subsection{Propagators} \label{sec:propagator}

The primitive loops will be expressed with the $\Theta$-propagators defined as follows. 
 
\begin{definition}[$\Theta$-propagator]\label{def_Theta}
Given $t\in[0,1]$ and $\sigma_1,\sigma_2\in \{+,-\}$, the $\Theta$-propagator $\Theta_t^{(\sigma_1,\sigma_2)}$ is an $n^d\times n^d$ matrix defined as (recall \smash{$S^{\LK}$} given by \eqref{eq:SLton}):
\begin{equation}\label{def_Thxi}
\Theta_{t}^{(\sigma_1,\sigma_2)} := \big[1 - t m(\sigma_1)m(\sigma_2)S^{\LK}\big]^{-1} \, .
\end{equation}
We will denote its entries by $\Theta_{t}^{(\sigma_1,\sigma_2)}([a],[b])$ or $\Theta_{t,[a][b]}^{(\sigma_1,\sigma_2)}$. 
\end{definition}

We now state some fundamental properties of  \smash{$\Theta_{t}^{(\sigma_1,\sigma_2)}$} in \Cref{lem_propTH}, which has been proved in \cite{Band1D,Band2D,RBSO1D}. 

\begin{lemma}\label{lem_propTH}
In the flow framework given by \Cref{zztE}, define
\be\label{eq:ellt}
 \ell_t:= \min\big(|1-t|^{-\frac 1 2}, n\big),\ \ \hell_{t}:=\min\big(\omega_t^{-\frac 1 2}, n\big),\ \ \text{where}\ \ \omega_t:=|1-t|+\sqrt{\kappa}.
\ee
Then, the $\Theta$-propagators satisfy the following properties for $t\in [0,1)$ and $\sigma_1,\sigma_2\in \{+,-\}$:
\begin{enumerate}
    \item {\bf Transposition}: We have $\Theta_{t}^{(\sigma_1,\sigma_2)}=\Theta_{t}^{(\sigma_2,\sigma_1)}= (\Theta_{t}^{(\sigma_1,\sigma_2)})^\top .$
    
\item {\bf Symmetry}: For any $[x],[y],[a]\in \Zn$, we have
\be\label{symmetry}
\begin{aligned}
    & \Theta^{(\sigma_1,\sigma_2)}_{t}([x]+[a],[y]+[a])= \Theta^{(\sigma_1,\sigma_2)}_{t}([x],[y]),\\
   &  \Theta^{(\sigma_1,\sigma_2)}_{t}(0,[x])= \Theta^{(\sigma_1,\sigma_2)}_{t}(0,-[x]).
\end{aligned}   \ee

\item {\bf Exponential decay on length scale $\ell_t$}: For any large constant $D>0$, there exists a constant $c>0$ such that the following estimate holds for all $\sigma_1,\sigma_2\in\{+,-\}$: 
\begin{equation}\label{prop:ThfadC}     
\Theta^{(\sigma_1,\sigma_2)}_{t}(0,[x])\prec \frac{e^{-c |[x]|/ {\ell}_t}}{|1-t|  {\ell}_t^d}+ W^{-D}.  
\end{equation}
When $\sigma_1=\sigma_2$, we have that for any constants $\tau,D>0$, 
\begin{equation}\label{prop:ThfadC_short}  \Theta^{(\sigma_1,\sigma_2)}_{t}(0,[x])\prec \frac{1}{\omega_t\hell_t^d}\mathbf 1\left(|[x]|\le W^\tau \hell_t\right)+ W^{-D}.
\end{equation}


\item {\bf First-order finite difference}: The following estimate holds for all $[x],[y]\in \Zn$ and $\sigma_1, \sigma_2\in\{+,-\}$:  \begin{equation}\label{prop:BD1}     
    \left| \Theta^{(\sigma_1,\sigma_2)}_{t}(0, [x])-\Theta^{(\sigma_1,\sigma_2)}_{t}(0, [y])\right|\prec \frac{|[x]-[y]|}{\langle [x]\rangle^{d-1}+\qq{[y]}^{d-1}}.
     \end{equation}

\item {\bf Second-order finite difference}: The following estimate holds for all $[x],[y]\in \Zn$ and $\sigma_1, \sigma_2\in\{+,-\}$: 
\begin{equation}\label{prop:BD2}
\qquad \Theta^{(\sigma_1,\sigma_2)}_{t} (0,[x]+[y]) + \Theta^{(\sigma_1,\sigma_2)}_{t} (0,[x]-[y])-  2\Theta^{(\sigma_1,\sigma_2)}_{t} (0,[x]) 
\prec \frac{|[y]|^2}{\langle [x]\rangle^{d}} .
 \end{equation}
 
\end{enumerate}
\end{lemma}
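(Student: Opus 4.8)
\textbf{Proof plan for Lemma~\ref{lem_propTH}.}
The plan is to reduce everything to a spectral analysis of the operator $S^{\LK}$ on $\Zn$ and to the scalar prefactor $1-tm(\sigma_1)m(\sigma_2)$. First I would record the key algebraic inputs: by \eqref{eq:SLton} we have $S^{\LK}=(1+2d\lambda^2)^{-1}[I_n+\lambda^2(2dI_n-\Delta_n)]$, which is a real symmetric Markov-type operator, diagonalized by the Fourier modes on the torus $\Zn$ with eigenvalues $\widehat S(p)=(1+2d\lambda^2)^{-1}(1+\lambda^2\sum_{j=1}^d(2-2\cos p_j))$, so that $\widehat S(0)=1$ and $1-\widehat S(p)\asymp |p|^2$ for small $p$. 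The transposition and symmetry properties in items (1)--(2) are then immediate: $\Theta_t^{(\sigma_1,\sigma_2)}$ is a function of the symmetric operator $S^{\LK}$, hence symmetric; it is a convolution operator on the torus (translation invariant), giving the first identity of \eqref{symmetry}; and since $\widehat S(p)=\widehat S(-p)$ it is invariant under $[x]\mapsto-[x]$, giving the second. Symmetry under $(\sigma_1,\sigma_2)\leftrightarrow(\sigma_2,\sigma_1)$ is clear from the definition \eqref{def_Thxi}.

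Next I would control the scalar factor. Write $a_t:=tm(\sigma_1)m(\sigma_2)$; using \eqref{square_root_density}, \eqref{eq:E0bulk}, \eqref{eq:kappat} and $|m(\sE)|^2=\frac{\im m(\sE)}{\eta_t+\im m(\sE)}$ together with the fact $|m(\sE)|\le 1$, one shows that for $t\in[0,t_0]$ the ``mass gap'' of $1-a_tS^{\LK}$ behaves like $1-|a_t|\asymp \omega_t$ when $\sigma_1=\sigma_2$ (because then $a_t$ is close to a positive real $\le t|m|^2$ and $1-t|m|^2\asymp|1-t|+\kappa^{1/2}+\ldots$, which is $\asymp\omega_t$ after using $t\le t_0$ and $\kappa\asymp(\im m(z))^2$), while for $\sigma_1\ne\sigma_2$ the factor $a_t$ acquires an imaginary part of size $\asymp(1-t)\im m(\sE)\asymp(1-t)\kappa^{1/2}$ relative to $|1-t|\kappa$, so the real part can be as close to $t$ as $O(1-t)$ and only $1-\re a_t\asymp|1-t|$ survives — this is exactly the source of the two different length scales $\hell_t$ versus $\ell_t$. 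The exponential decay estimates \eqref{prop:ThfadC}--\eqref{prop:ThfadC_short} then follow from a standard Combes--Thomas / contour-deformation argument: write $\Theta_t^{(\sigma_1,\sigma_2)}(0,[x])$ as a Fourier integral $\int_{\T^d}\frac{e^{\ii p\cdot[x]}}{1-a_t\widehat S(p)}\,dp$ (on $\Zn$ replace the integral by the corresponding finite sum), shift the contour in the direction of $[x]$ by an amount $\asymp1/\ell_t$ (resp.\ $1/\hell_t$), use $1-a_t\widehat S(p)\asymp|1-t|(|p|^2+\ell_t^{-2})$ (resp.\ $\asymp\omega_t(|p|^2+\hell_t^{-2})$) on the shifted contour together with the lower cutoff $\ell_t\le n$ enforced by the torus, and bound the resulting integral by $\frac{e^{-c|[x]|/\ell_t}}{|1-t|\ell_t^d}$; the $W^{-D}$ tail absorbs the periodization error. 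For the finite-difference bounds \eqref{prop:BD1}--\eqref{prop:BD2}, I would not differentiate the exponential estimate directly (it degrades near $[x]=0$) but instead interpolate: for $|[y]|\gtrsim\langle[x]\rangle$ use the pointwise bound $|\Theta_t(0,[x])|\prec \langle[x]\rangle^{-(d-1)}$ in $d=1$ trivially and a short-distance bound $|\Theta_t(0,[x])|\prec\langle[x]\rangle^{2-d}$ in $d=2$ (from the Fourier representation, cutting the integral at $|p|\sim\langle[x]\rangle^{-1}$), and for $|[y]|\ll\langle[x]\rangle$ write the first (resp.\ second) difference as a Fourier integral with factor $e^{\ii p\cdot[y]}-1$ (resp.\ $e^{\ii p\cdot[y]}+e^{-\ii p\cdot[y]}-2$), bound $|e^{\ii p\cdot[y]}-1|\le|p|\,|[y]|$ (resp.\ $\le|p|^2|[y]|^2$), and redo the same contour/cutoff estimate, which produces the extra power of $|[y]|$ and the stated denominators uniformly in $t$ and $\sigma_1,\sigma_2$.

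Most of this is bookkeeping that has already been carried out in \cite{Band1D,Band2D}, so I would organize the proof as: (i) Fourier-diagonalize $S^{\LK}$ and record $\widehat S$; (ii) prove (1) and (2) directly; (iii) establish the two-sided bounds on $1-a_t\widehat S(p)$ separating the cases $\sigma_1=\sigma_2$ and $\sigma_1\ne\sigma_2$, which pins down $\ell_t$ and $\hell_t$ via \eqref{eq:ellt}; (iv) run the contour-shift argument for \eqref{prop:ThfadC} and \eqref{prop:ThfadC_short}; (v) deduce \eqref{prop:BD1} and \eqref{prop:BD2} by the interpolation just described. The main obstacle is step (iii): one must verify \emph{uniformly} over the whole flow interval $t\in[0,t_0]$ — including the transition time $\tk$ where $E_t$ crosses the edge $\pm2\sqrt t$ and $\kappa_t/\eta_t=O(1)$ — that $|1-t|+\kappa^{1/2}$ is genuinely the correct order of the spectral gap for the like-charge propagator and that no cancellation improves it for the mixed-charge one; this requires carefully combining \eqref{eq:E0bulk}, \eqref{eq:kappat}, \eqref{eq:kappa-to-eta} and the identity $|m(\sE)|^2=\im m(\sE)/(\eta_t+\im m(\sE))$, and is where the choice $t_0=|m(z)|^2$ and $1-t_0\asymp\eta/\sqrt\kappa$ is used in an essential way. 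Once (iii) is in place, (iv)--(v) are routine stationary-phase/Combes--Thomas estimates.
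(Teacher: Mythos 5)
The paper does not reprove this lemma; it observes that $|1-t|m(\sE)|^2|=1-t$ and $|1-tm(\sE)^2|\asymp\omega_t$ and then cites \cite[Lemma 2.14]{Band1D}, \cite[Lemma 2.14]{Band2D} for items (3) and \cite[Lemma 3.10]{RBSO1D} for items (4)--(5). Your plan of Fourier-diagonalizing $S^{\LK}$, identifying the spectral gap, and running a Combes--Thomas / contour-shift argument followed by an interpolation for the finite differences is indeed the right framework and roughly matches what those references do, so items (1)--(2) and the overall shape of (iv)--(v) are fine.

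However, step (iii)---which you yourself flag as the crux---contains a real error and some extraneous ingredients. First, the charge cases are reversed: since $|m(\sE)|=1$ for $\sE\in(-2,2)$, when $\sigma_1\ne\sigma_2$ the scalar is $m(\sE)\overline{m(\sE)}=|m(\sE)|^2=1$, so $a_t=t$ is \emph{exactly real} and the gap is $1-a_t=1-t$; it does not "acquire an imaginary part." Conversely, when $\sigma_1=\sigma_2$ the scalar is $m(\sE)^2=e^{2\ii\arg m(\sE)}$, which is the genuinely complex case, and a direct computation gives $|1-tm(\sE)^2|^2=(1-t)^2+t\kappa(4-\kappa)\asymp\omega_t^2$. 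Your assertion "$1-t|m|^2\asymp|1-t|+\kappa^{1/2}$" is false ($1-t|m(\sE)|^2=1-t$); you appear to have conflated $|m(\sE)|^2$ with $m(\sE)^2$ (or $m(\sE)$ with $m(z)$). Second, the gap analysis depends only on $t$ and on $m(\sE)$, which is \emph{constant along the flow} (cf.\ \eqref{eq:mt_stay}); bringing in $t_0=|m(z)|^2$, $\kappa\asymp(\im m(z))^2$, and the transition time $\tk$ is unnecessary, and the "main obstacle" you identify---uniformity across the crossing of $E_t$ over $\pm 2\sqrt t$---is not actually present, since $\Theta_t$ never sees $E_t$; it sees only $m(\sE)$ and $t$. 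Once the two identities above are stated correctly, the rest of your sketch goes through as planned.
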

\begin{proof}
The properties (1) and (2) follow directly from the definition of $\Theta_{t}^{(\sigma_1,\sigma_2)}$. The estimate \eqref{prop:ThfadC} has been established in \cite[Lemma 2.14]{Band1D} and \cite[Lemma 2.14]{Band2D}, and the estimates \eqref{prop:BD1} and \eqref{prop:BD2} have been proved in \cite[Lemma 3.10]{RBSO1D}. Finally, noting that $|1-tm^2|\asymp \omega_t$, the estimate \eqref{prop:ThfadC_short} can be proved using the Fourier series representation for \smash{$\Theta^{(\sigma,\sigma)}_{t}$}, along with a summation by parts argument, as illustrated in \cite[Appendix E]{RBSO}. We omit the details of the proof. 
\end{proof}

We can solve the primitive equation \eqref{pro_dyncalK} explicitly in the cases $\fn \in\{2, 3\}$.

\begin{example} 
As shown in \cite{Band1D}, the $2$-$\cK$ and $3$-$\cK$ loops are given by  
\begin{align}    \label{Kn2sol}\cK^{(2)}_{t,\boldsymbol{\sigma}, \ba}&=W^{-d}m(\sig_1)m(\sig_2) \Theta_{t,[a_1][a_2]}^{(\sigma_1,\sigma_2)}   ,  \\
    \label{Kn3sol} \mathcal{K}^{(3)}_{t, \boldsymbol{\sigma}, \ba} 
    &= \frac{m(\sig_1)m(\sig_2)m(\sig_3)}{W^{2d}}\sum_{[b]}\Theta_{t,[a_1][b]}^{(\sig_1,\sig_2)} \Theta_{t,[a_2][b]}^{(\sig_2,\sig_3)}\Theta_{t,[a_3][b]}^{(\sig_3,\sig_1)} ,
\end{align}
where $\bsig=(\sig_1,\ldots,\sig_\fn)$ and $\ba=([a_1],\ldots,[a_\fn])$ for $\fn\in \{2,3\}$. 

In the setting of \Cref{thm_diffu}, given $z=E+\ii \eta\in \mathbf D_{C_0,\fd}$, we can choose the deterministic flow as described in \Cref{zztE} such that \eqref{eq:zztE} and \eqref{GtEGz} hold. Using the notations introduced earlier, we can express the quantities in \Cref{thm_diffu} as
\be\label{eq:avg_GTheta}
\begin{split}
&W^{-2d}\sum_{x\in[a],y\in[b]}|G_{xy}|^2= t_0 \cL^{(2)}_{t_0,(-,+),([a],[b])},\\ &W^{-2d}\sum_{x\in[a],y\in[b]}G_{xy}G_{yx}= t_0\cL^{(2)}_{t_0,(+,+),([a],[b])},\\
&W^{-2d}\sum_{x\in[a],y\in[b]}\Theta_{xy}=t_0\cK^{(2)}_{t_0,(-,+),([a],[b])},\\ &W^{-2d}\sum_{x\in[a],y\in[b]}S^{+}_{xy}=t_0\cK^{(2)}_{t_0,(+,+),([a],[b])}.
\end{split}\ee
Here, the third and fourth identities follow directly from the definitions \eqref{def:Theta}, \eqref{def_Thxi}, and \eqref{Kn2sol}. Hence, \Cref{thm_diffu} essentially states that the $2$-$G$ loops converge to the 2-$\cK$ loops as $N\to \infty.$
\end{example}

In \Cref{sec:prim}, we will describe the tree representation for general $\fn$-$\cK$ loops with $\fn\ge 4$. Using this representation, together with Ward's identity \eqref{WI_calK}, the estimates in \Cref{lem_propTH}, and a key \emph{sum zero property}, we establish the following upper bound \eqref{eq:bcal_k} for $\mathcal{K}$-loops of length $\fn\ge 2$. 
The proof follows a strategy similar to that of \cite[Lemma 3.11]{Band1D} in the bulk, with appropriate modifications to handle the edge regime. For the reader’s convenience, we defer the proof of \Cref{ML:Kbound} to \Cref{subsec:estimates}.


\begin{lemma}[Estimates of $\cK$-loops]\label{ML:Kbound}
In the flow framework given by \Cref{zztE}, the primitive loops satisfy the following estimate for each $t\in [0,t_0]$ and fixed $\fn\ge 2$: 
\begin{equation}\label{eq:bcal_k}
\max_{\bsig,\ba}\left|{\cal K}^{(\fn)}_{t, \boldsymbol{\sigma}, \ba}\right|  \prec \frac{\omega_t}{(W^d \ell_t^d |1-t| \cdot \omega_t )^{\fn-1}} \, .
\end{equation}
For the $1$-$\cK$ loop defined in \eqref{eq:1loop}, it is easy to see that 
$$ {\cal K}^{(1)}_{t,\sigma,[a]}=m(\sigma) =\OO(1) \quad \text{for} \ \ \sig\in\{+,-\},\quad \text{and}\quad  {\cal K}^{(1)}_{t,\im,[a]}=\im m =\OO(\sqrt{\kappa}).$$
\end{lemma}

\section{Properties of primitive loops}\label{sec:prim}

This section is devoted to the proofs of \Cref{lem:pure_sum,ML:Kbound}. For this purpose, we first define a \emph{tree representation} for the primitive loops as discovered in \cite{Band1D}. 


\subsection{Tree Representation}\label{sec_tree}
 
The tree representation is built upon the concept of \emph{canonical partitions of polygons} introduced in \cite[Definition 3.1]{Band1D}. Essentially, canonical partitions of an oriented polygon $\mathcal{P}_{\ba}$ are partitions in which each edge of the polygon is in one-to-one correspondence with each region in the partition. Throughout this section, given two vertices $[a]$ and $ [b]$ in a graph, we will use $\p{[a],[b]}$ to represent an \emph{undirected edge}. In particular, we always identify $\p{[a],[b]}$ with $\p{[b],[a]}$.

\begin{definition}[Canonical partitions]\label{def:canpnical_part}
Fix $\fn\ge 3$ and let $\cal P_{\ba}$ be an oriented polygon with vertices $\ba=([a_1],[a_2], \ldots ,[a_\fn])$ arranged in counterclockwise order. We adopt the cyclic convention that $[a_{i}]=[a_{j}]$ if and only if $i=j\mod \fn$. Then, we use $([a_{k-1}],[a_k])$ to denote the $k$-th side of $\cal P_{\ba}$. A planar partition of the polygonal domain enclosed by $\cal P_{\ba}$ is called {\bf canonical} if the following properties hold:
\begin{itemize}
\item Every sub-region in the partition is also a polygonal domain. 
\item There is a one-to-one correspondence between the edges of the polygon and the sub-regions, such that each side $([a_{k-1}],[a_k])$ belongs to exactly one sub-region, and each sub-region contains exactly one side of $\cal P_{\ba}$. Denote the sub-region containing $([a_{k-1}],[a_k])$ by $R_k$. The sub-region $R_k$ is assigned a charge $\sig_k$, corresponding to the charge of the edge $([a_{k-1}],[a_k])$. 

\item Every vertex $a_k$ of $\cal P_{\ba}$ belongs to exactly two regions, $R_k$ and $R_{k+1}$ (with the convention $R_{\fn+1}=R_1$). 
  
\end{itemize}
Note that given a canonical partition, by removing the $\fn$ sides of the polygon ${\cal P}_{\ba}$, the remaining interior edges form a tree, with the leaves being the vertices of ${\cal P}_{\ba}$. Following \cite{Band1D}, we define the equivalence classes of all such trees under graph isomorphism, and denote the collection of equivalence classes by $\TSP({\cal P}_{\ba})$. Subsequently, we will consider each element of $\TSP({\cal P}_{\ba})$ as an abstract tree structure rather than as an equivalence class, and call it a \emph{canonical tree partition}.

\end{definition}

In the following discussion, we will typically use the letter $[a]$ to refer to the vertices of the polygon $\mathcal{P}_{\ba}$, which we call \emph{external vertices}, while we will use the letter $[b]$ to denote the remaining interior vertices, referred to as \emph{internal vertices}. 
We will call an edge containing exactly one external vertex as an \emph{external edge}, and an edge between internal vertices as an \emph{internal edge}. The sides of $\mathcal{P}_{\ba}$ will be referred to as  \emph{boundary edges}. (We will always draw boundary edges as \emph{dashed lines}, since they are irrelevant to the graph values as we will see.)
In a canonical tree partition $\Gamma \in \TSP({\cal P}_{\ba})$, we say that two regions $R_k$ and $R_l$ are \emph{adjacent} or \emph{neighbors} if they share a common side, which may be either an external or an internal edge. In the case of an external edge, we must have $k-l\mod \fn =\pm 1$, and we refer to $R_k$ and $R_l$ as \emph{trivial neighbors}. If the shared edge is internal, we call them as \emph{nontrivial neighbors}. 
We refer readers to the left picture of \Cref{example} below for an illustration of a canonical tree partition $\Gamma\in \TSP({\cal P}_{\ba})$ of a polygon with six vertices, where $R_4$ and $R_6$ are nontrivial neighbors. We remark that the figures in this section are taken from \cite{RBSO1D}, since our graphs exhibit similar structures to those presented therein.

We assign a value to a canonical tree partition according to the following rule. 
 
\begin{definition}\label{M-graph-value-definition}
Given a polygon $\mathcal{P}_{\ba}$ with $\fn$ vertices $\ba=([a_1],\ldots, [a_\fn])$, let $\Gamma \in \TSP\p{\mathcal{P}_{\ba}}$ and denote 
  \begin{equation}\label{unlabeled-edge-set}
    \mathcal{E}\p{\Gamma}
      \coloneqq \set{e \suchthat e\text{ is a non-boundary edge in }\Gamma}.
  \end{equation}
For an arbitrary $\bsig\in\{+,-\}^\fn$, we define the value function $f_\bsig \colon \mathcal{E}(\Gamma) \to \C$ in the following way.
  \begin{enumerate}
    \item If $e = \p{[a_k], [b]}$ is an external edge lying between regions $R_k$ and $R_{k+1}$ (with the cyclic convention that $R_{1}=R_{\fn+1}$), then we define 
      \begin{equation}\label{f-external}
        f_{t,\bsig}\p{e} \coloneqq \Theta^{(\sig_k,\sig_{k+1})}_{t}([a],[b]).
      \end{equation}
    \item If $e = \p{[b_1], [b_2]}$ is an internal edge lying between regions $R_k$ and $R_{l}$, then 
      \begin{equation}\label{f-internal}
      \begin{aligned}
        f_{t,\bsig}\p{e}
          &\coloneqq \big(\Theta^{(\sig_k,\sig_{l})}_{t}-1\big) \p{[b_1], [b_2]}\\
          &= t m(\sig_k)m(\sig_l)\big(S^{\LK}\Theta^{(\sig_k,\sig_{l})}_{t}\big) \p{[b_1], [b_2]} .
          \end{aligned}
      \end{equation}
  \end{enumerate}
Then, we assign a value $\Gamma^{(\fn)}_{t,\bsig,\ba}$ to  $\Gamma$ as:
\begin{equation}\label{M-graph-value-unsummed}
\Gamma^{(\fn)}_{t,\bsig,\ba} \coloneqq \p{\prod_{i=1}^\fn m(\sigma_i)} \cdot \sum_{\mathbf b} \prod_{e \in \mathcal{E}(\Gamma )} f_{t,\bsig}\p{e}    \, ,
\end{equation}
where $\mathbf b=([b_1],\ldots,[b_{\fm}])$ denotes the internal vertices in $\Gamma$. 
The boundary edges are irrelevant to the graph values---one can think that each of them has a value of 1.
\end{definition}

Recall that the primitive loops of length 2 and 3 take the forms \eqref{Kn2sol} and \eqref{Kn3sol}. The next lemma gives the tree representation formula for general primitive loops of length $\fn\ge 4$, expressed as a sum of \smash{$\Gamma^{(\fn)}_{t,\bsig,\ba}$} for all possible canonical tree partitions.

\begin{lemma}[Lemma 3.4 of \cite{Band1D}]\label{tree-representation}
For any $\fn\ge 4$, $t\in[0,1)$, $\bsig\in \{+,-\}^\fn$, and $\ba\in (\Zn)^\fn$, we have the following representation formula for primitive loops:
  \begin{equation}\label{eq:tree_rep}
\cK_{t,\bm{\sigma},\ba}^{(\fn)}
      =W^{-d(\fn-1)} \sum_{\Gamma \in \TSP\p{\mathcal{P}_{\ba}}} \Gamma^{(\fn)}_{t,\bm{\sigma},\ba}.
  \end{equation}
\end{lemma}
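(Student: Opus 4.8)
\textbf{Proof strategy for \Cref{tree-representation}.}
The plan is to prove the representation by induction on $\fn$, using the primitive equation \eqref{pro_dyncalK} as the defining ODE and verifying that the right-hand side of \eqref{eq:tree_rep} satisfies the same ODE with the same initial condition. Since \Cref{lem:SE_basic} and \Cref{Def_Ktza} specify $\cK^{(\fn)}_{t,\bsig,\ba}$ as the unique solution of \eqref{pro_dyncalK} with initial data \eqref{eq:initial_K}, it suffices to check two things for the candidate $\widetilde{\cK}^{(\fn)}_{t,\bsig,\ba} := W^{-d(\fn-1)}\sum_{\Gamma\in\TSP(\mathcal P_\ba)}\Gamma^{(\fn)}_{t,\bsig,\ba}$: (i) at $t=0$ it reduces to $\cM^{(\fn)}_{\bsig,\ba}$ in \eqref{eq:KMloop}; and (ii) its $t$-derivative equals the quadratic cut-and-glue expression on the right of \eqref{pro_dyncalK}, assuming the representation already holds for all shorter loops (the base cases $\fn=2,3$ being \eqref{Kn2sol} and \eqref{Kn3sol}, which are themselves special instances of the canonical-tree sum). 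For (i), note from \Cref{def_Theta} that $\Theta_0^{(\sigma_1,\sigma_2)}=I$, so every external edge contributes $\mathbf 1([a]=[b])$ and every internal edge $(\Theta_0-1)=0$; hence the only surviving tree is the ``star'' with all external vertices identified, which forces $[a_1]=\cdots=[a_\fn]$ and reproduces \eqref{eq:KMloop}.

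The substantive step is (ii): differentiating $\Gamma^{(\fn)}_{t,\bsig,\ba}$ in $t$. Each factor in $\prod_{e}f_\bsig(e)$ depends on $t$ through a $\Theta$-propagator, and from \eqref{def_Thxi} one has $\partial_t \Theta_t^{(\sigma_1,\sigma_2)} = m(\sigma_1)m(\sigma_2)\,\Theta_t^{(\sigma_1,\sigma_2)} S^{\LK}\Theta_t^{(\sigma_1,\sigma_2)}$, i.e. differentiating a propagator inserts one extra $S^{\LK}$-vertex along that edge; there is also the $t$-dependence hidden in $m(\sigma_i)$ through $m_t(z_t)$, but under the flow \eqref{eq:mt_stay} we have $m_t(z_t(\sE))\equiv m(\sE)$, so those prefactors are constant in $t$ and contribute nothing. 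Applying the product rule, $\partial_t\Gamma^{(\fn)}_{t,\bsig,\ba}$ becomes a sum over edges $e$ of $\Gamma$ of the graph obtained by subdividing $e$ with a new $S^{\LK}$-labelled vertex. The combinatorial heart of the argument is to reorganize this edge-subdivision sum: subdividing an edge of a canonical tree partition of $\mathcal P_\ba$ and then ``pinching'' at the new vertex splits the polygon into two sub-polygons along a chord, and one checks that the resulting pairs of sub-partitions range exactly over all ways of writing $\TSP(\mathcal P_\ba)$ from canonical tree partitions of a left sub-polygon and a right sub-polygon glued at a common vertex — which is precisely the bookkeeping encoded by the operators $\cutL^{[a]}_{k,l}$ and $\cutR^{[b]}_{k,l}$ acting on the index data (recall \eqref{eq:cutL}--\eqref{eq:cutR} and \eqref{calGonIND}). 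Matching the $S^{\LK}_{[a][b]}$ weight on the new vertex with the $S^{\LK}_{[a][b]}$ in \eqref{pro_dyncalK}, and accounting for the powers of $W^d$ (the prefactor $W^{-d(\fn-1)}$ splits as $W^{-d(k_1-1)}\cdot W^{-d(k_2-1)}\cdot W^{-d}\cdot W^{d}$ where $k_1+k_2=\fn+2$ are the lengths of the two sub-loops and the extra $W^d$ is the one displayed in \eqref{pro_dyncalK}), yields exactly the right-hand side of \eqref{pro_dyncalK} with every $\cK$ replaced by $\widetilde{\cK}$. Invoking the inductive hypothesis on the two shorter sub-loops then closes the induction.

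The main obstacle I anticipate is the combinatorial bijection in step (ii): one must show that (a) every edge-subdivision of every $\Gamma\in\TSP(\mathcal P_\ba)$, followed by pinching, produces a genuine pair of canonical tree partitions of two sub-polygons sharing a single vertex, with the correct charge assignments on the two boundary pieces created by the chord (the new boundary edge on each side inherits the charge of the pinched internal/external edge, consistent with \eqref{f-internal}); and (b) conversely every such glued pair arises from a unique $(\Gamma,e)$ — no overcounting and no omissions. Subdividing an external edge versus an internal edge must be handled, but both cases are uniform once one observes that adding the $S^{\LK}$-vertex is the same as upgrading $\Theta_t$ to $\Theta_t S^{\LK}\Theta_t$ and then reading the middle $S^{\LK}$ as the glue vertex. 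The range of the double sum $\sum_{1\le k<l\le\fn}$ over pairs of cut positions matches the choice of which chord (equivalently, which pair of polygon sides ends up on the two sub-polygons) the pinching creates; degenerate cases (e.g. $l=k+1$, giving a length-$2$ sub-loop) must be checked to land correctly on \eqref{Kn2sol}. Since \cite{Band1D} already carries out this argument in the one-dimensional bulk setting and the algebra of $\TSP$, the $\Theta$-propagators, and the cut operators is identical here (only the underlying lattice $\Zn$ and the value of $S^{\LK}$ differ, and neither enters the combinatorics), the proof reduces to citing and transcribing that computation; this is why the statement is attributed as ``Lemma 3.4 of \cite{Band1D}''. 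I would therefore present the proof as: recall the base cases, state the derivative identity for $\Theta_t$, describe the pinching bijection, and refer to \cite[Section 3]{Band1D} for the detailed verification.
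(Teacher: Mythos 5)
Your proposal is consistent with how the paper handles this statement: the paper gives no proof at all, importing the lemma verbatim as Lemma 3.4 of \cite{Band1D}, and your outline (check the initial condition via $\Theta_0^{(\sigma_1,\sigma_2)}=I$, note $m(\sigma)$ is $t$-independent along the flow, differentiate each $\Theta_t$-edge to insert an $S^{\LK}$-vertex, and match the resulting pinched pairs of sub-trees with the $\cutL/\cutR$ structure of \eqref{pro_dyncalK}, closing by induction since the hierarchy for $\cK^{(\fn)}$ only involves loops of length at most $\fn$) is a faithful reconstruction of that cited argument. The verifications you spell out are correct, and deferring the detailed pinching bijection to \cite{Band1D} mirrors exactly what the paper itself does.
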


Both proofs of \Cref{lem:pure_sum,ML:Kbound} rely on the tree representation formula for $\cK$-loops presented above. In the remainder of this section, we focus on proving the new result, \Cref{lem:pure_sum}, and postpone the proof of \Cref{ML:Kbound} to \Cref{subsec:estimates}.

\subsection{Proof of \Cref{lem:pure_sum}}\label{sec:pf_new_identity}

For the proof of \Cref{lem:pure_sum}, we use an equivalent representation of primitive loops introduced in \cite[Section 4]{RBSO1D}. 
This representation is slightly more complicated than that presented in \Cref{sec_tree}, but it has a clearer structure, which enhances the clarity of our proof. We first introduce an extension of \Cref{def:canpnical_part}, which incorporates loops consisting of \emph{$M$-edges}. As the name suggests, these edges correspond to the entries of $M_t(z)=m_t(z)I_N$, whereas the remaining non-boundary edges in our graphs will be \emph{unlabeled} with the external and internal edges representing the entries of $\Theta_t$ and $tS^{\LK}\Theta_t$, respectively.

\begin{definition}[Canonical partitions with $M$-loops]\label{m-loop-tsp}
Let $\Gamma \in \TSP\p{\mathcal{P}_{\ba}}$ be a canonical tree partition of the oriented polygon $\mathcal{P}_{\ba}$, and denote the internal vertices of $\Gamma$ by $\mathbf b = \p{[b_1], \ldots, [b_\fm]}$. We define the graph {$\Gamma_{M}$} by replacing each $b_i$ with an $M$-loop in the following way.

\medskip 

\noindent 1. Consider the subgraph $(V^{\p{b_i}}, E^{\p{b_i}})$ of all vertices in $\Gamma$ connected to $[b_i]$. More precisely, we let
      \begin{equation}\label{b_i-subgraph}
        V^{\p{b_i}} \coloneqq \set{[b_i],[c_1], \ldots, [c_{\delta_i}]},
        \quad \text{and}\quad E^{\p{b_i}} = \set{\p{[c_1], [b_i]}, \ldots, \p{[c_{\delta_i}], [b_i]}}
      \end{equation}
      denote the subsets of all vertices (including $[b_i]$) and edges connected to $[b_i]$. Reordering the $[c_k]$'s if necessary, we can ensure that $\p{[c_1], \ldots, [c_{\delta_i}]}$ form a loop without any crossing edges and with vertices arranged in counterclockwise order.
      
\medskip 

\noindent 2. Next, we construct a new graph $(\widetilde{V}^{\p{b_i}}, \widetilde{E}^{\p{b_i}})$ with vertices and edges
      \begin{align*}
          \widetilde{V}^{\p{b_i}} &= \set{[b_{i,1}], \ldots, [b_{i,\delta_i}], [c_1], \ldots, [c_{\delta_i}]},\\
           \widetilde{E}^{\p{b_i}}
     &=\set{\p{[c_j], [b_{i,j}]} :  j \in \qqq{\delta_i}} \cup \set{\p{[b_{i,j}], [b_{i,j+1}]; M}: j \in \qqq{\delta_i}}, 
      \end{align*}
   where $e = \p{e_i, e_f; M}$ refers to an edge labeled with $M$, and we adopt the cyclic convention that $[b_{i,\delta_i+1}]=[b_{i,1}]$. We will call $\p{e_i, e_f; M}$ as an \emph{$M$-edge}. Relabeling the $[b_{i,j}]$'s if needed, we ensure that $\p{[b_{i,1}] \to [b_{i,2}] \to \cdots \to [b_{i,\delta_i}]}$ are also arranged in counterclockwise order. 

\medskip 

\noindent 3. Lastly, we replace the subgraph $(V^{\p{b_i}}, E^{\p{b_i}})$ with $(\widetilde{V}^{\p{b_i}}, \widetilde{E}^{\p{i}})$.

  \begin{figure}[h]\label{b_i-loop-replacement}
    \centering
     \scalebox{0.9}{
    \begin{tikzpicture}
      \coordinate (bi) at (0, 0);
      
      \fill (bi) circle (1pt) node[left=4pt]{$[b_i]$};
      \foreach \i/\t in {1/72, 2/144, 3/216, 4/288, 5/360} {
        \fill (\t:2) circle (1pt);
        \draw (bi) -- (\t:2);
        \node at (\t+72:2.4) {$[c_{\i}]$};
      }
    \end{tikzpicture}
    $\quad$
    \begin{tikzpicture}
      \node at (-3, 0) {$\to$};
      \node at (0, 0) {$M$};
      \foreach \i/\t in {1/72, 2/144, 3/216, 4/288, 5/360} {
        \fill (\t:2) circle (1pt);
        \fill (\t:1) circle (1pt);
        \draw (\t:1) -- (\t:2);
        \draw (\t:1) -- (\t+72:1);
        \node at (\t+72:2.4) {$[c_{\i}]$};
        \node at (\t+92:1.3) {$[b_{i,\i}]$};
      }
    \end{tikzpicture}}
    \caption{Replacing $[b_i]$ with an $M$-loop with 5 sides.}\label{Fig:Mloop}
  \end{figure}

  In \Cref{Fig:Mloop}, we illustrate the above procedure for an example with $\delta_i=5$. Repeating these steps for each internal vertex $b_i$, $i\in\qqq{\fm}$, we get a graph \smash{$\Gamma_{M}$}, which we will refer to as the \emph{$M$-graph corresponding to $\Gamma$}. Note that the order in which we replace $b_i$'s by $M$-loops does not matter, and every boundary edge $\p{a_{k-1}, a_k}$ still belongs to exactly one polygonal region in the $M$-graph \smash{$\Gamma_{M}$}. 
With a slight abuse of notation, we still use $R_k$ to denote the sub-region containing $\p{a_{k-1}, a_k}$ in \smash{$\Gamma_{M}$}, and assign the charge $\sig_k$ to $R_k$. 
      
      
\end{definition}

We refer readers to \Cref{example} for an example of a canonical tree partition $\Gamma$ and its $M$-graph $\Gamma_{M}$. 
  \begin{figure}[h]
    \centering
     \scalebox{0.9}{
    \begin{tikzpicture}
      \coordinate (b1) at (-1, 0);
      \coordinate (b2) at (1, 0);
      
      \fill (b1) circle (1pt);
      \fill (b2) circle (1pt);

      \foreach \i/\t in {1/60, 2/120, 3/180, 4/240, 5/300, 6/360} {
        \coordinate (a\i) at (\t+90:2.5);
        \fill (a\i) circle (1pt);
        \draw (a\i) [dashed]-- (\t+150:2.5);
        \node at (\t+90:2.85) {$[a_{\i}]$};
        \node at (\t+60:2.5) {$\sigma_{\i}$};
      }
      
      \draw (a6) -- (b1);
      \draw (a1) -- (b1);
      \draw (a2) -- (b1);
      \draw (a3) -- (b1);
      
      \draw (a4) -- (b2);
      \draw (a5) -- (b2);
      
      \draw (b1) -- (b2);

      \node at (0, -3.5) {$\Gamma$};
    \end{tikzpicture}
    \hspace{1in}
    \begin{tikzpicture}
      \foreach \i/\t in {1/60, 2/120, 3/180, 4/240, 5/300, 6/360} {
        \coordinate (a\i) at (\t+90:2.5);
        \fill (a\i) circle (1pt);
        \draw (a\i) [dashed]-- (\t+150:2.5);
        \node at (\t+90:2.85) {$[a_{\i}]$};
        \node at (\t+60:2.5) {$\sigma_{\i}$};
      }
      
      \coordinate (c11) at ($(b1)!0.45!(a6)$);
      \coordinate (c12) at ($(b1)!0.45!(a1)$);
      \coordinate (c13) at ($(b1)!0.45!(a2)$);
      \coordinate (c14) at ($(b1)!0.45!(a3)$);
      \coordinate (c15) at ($(b1)!0.45!(b2)$);
      
      \coordinate (c21) at ($(b2)!0.25!(b1)$);
      \coordinate (c22) at ($(b2)!0.45!(a4)$);
      \coordinate (c23) at ($(b2)!0.45!(a5)$);
      
      \foreach \i/\nc in {1/5, 2/3} {
        \foreach \j in {1, 2, ..., \nc} {
          \fill (c\i\j) circle (1pt);
        }
      }
      
      \foreach \i/\nc in {1/5, 2/3} {
        \draw (c\i1)
        \foreach \j in {1, ..., \nc} {
           -- (c\i\j)
        }
          -- cycle;
      }
      
      \draw (c11) -- (a6);
      \draw (c12) -- (a1);
      \draw (c13) -- (a2);
      \draw (c14) -- (a3);
      
      \draw (c21) -- (c15);
      
      \draw (c22) -- (a4);
      \draw (c23) -- (a5);
      
      \node at ($0.2*(c11) + 0.2*(c12) + 0.2*(c13) + 0.2*(c14) + 0.2*(c15)$) {$M$};
      \node at ($0.333*(c21) + 0.333*(c22) + 0.333*(c23)$) {$M$};
      
      \node at (0, -3.5) {$\Gamma_{{M}}$};
    \end{tikzpicture}}
    \caption{Example of $\Gamma \in \TSP\protect\p{\mathcal{P}_{\ba}}$ and its corresponding $M$-graph $\Gamma_{M}$.}\label{example}
  \end{figure}

Given a $M$-graph \smash{$\Gamma_{M}$}, set
  \begin{equation}\label{unlabeled-edge-set2}
    \mathcal{E}\p{\Gamma_{M}}
      \coloneqq \set{e  \suchthat e\text{ is a non-boundary edge in } {\Gamma_{M}}}.
  \end{equation}
Given $\bsig\in\{+,-\}^\fn$ and $z\in \C_+$, define the value function $g_{\bsig,z} \colon \mathcal{E}(\Gamma_M) \to \C$ as: 
  \begin{enumerate}
    \item If $e = \p{[a_k], [b]}$ is an external edge lying between regions $R_k$ and $R_{k+1}$, then 
      \begin{equation}\label{f-external2}
        g_{\bsig,z}\p{e} \coloneqq \Theta^{\p{\sig_{k}, \sig_{k+1}}}_{t,[a_k][b]}(z) \, .
      \end{equation}
   Here, similar to \eqref{def_Thxi}, we define 
\[\Theta_{t}^{(\sigma_1,\sigma_2)}(z):= \big[1 - t m_t(z,\sigma_1)m_t(z,\sigma_2)S^{\LK}\big]^{-1} \, ,\]
with $m_t(z,+)\equiv m_t(z)$ and $m_t(z,-)\equiv \overline m_t(z)$ following the convention in \eqref{def_mtzk}. Note that under this notation, we can express \eqref{def_Thxi} as \smash{$\Theta_{t}^{(\sigma_1,\sigma_2)}(z_t)$}.

    \item If $e = \p{[b_1], [b_2]}$ is an internal unlabeled edge lying between regions $R_k$ and $R_{l}$, then 
      \begin{equation}\label{f-internal2}
        g_{\bsig,z}\p{e}
          \coloneqq \big(tS^{\LK}\Theta^{(\sig_k,\sig_{l})}_{t}(z)\big)_{[b_1][b_2]} .
      \end{equation}

    \item If $e = \p{[b_1], [b_2]}$ is an internal $M$-edge belonging to the region $R_k$, then 
      \begin{equation}\label{f-internalM}
        g_{\bsig,z}\p{e}
          \coloneqq m_t(z,\sig_k)\delta_{[b_1][b_2]}\, .
      \end{equation}
      
  \end{enumerate}
Now, we assign a value $\Gamma^{(\fn)}_{M;t,\bsig,\ba}(z)$ to $\Gamma_M$ as:
\begin{equation}\label{M-graph-value-unsummed2}
\Gamma^{(\fn)}_{M;t,\bsig,\ba}(z) \coloneqq \sum_{\mathbf b}\prod_{e \in \mathcal{E}(\Gamma_M)} g_{\bsig,z}\p{e}  \,  ,
\end{equation}
where $\mathbf b$ denotes the internal vertices in $\Gamma$. 
Counting the $m_t(z_t,\sig_i)=m(\sig_i)$ factors, it is not hard to see \smash{$\Gamma^{(\fn)}_{M;t,\bsig,\ba}(z)$} is equal to \smash{$\Gamma^{(\fn)}_{t,\bsig,\ba}$} defined in \eqref{M-graph-value-unsummed} when we take $z=z_t$. Hence, the tree representation formula \eqref{eq:tree_rep} can be rewritten as:
\begin{equation}\label{eq:tree_rep2}
\cK_{t,\bm{\sigma},\ba}^{(\fn)}
=W^{-d(\fn-1)} \sum_{\Gamma \in \TSP\p{\mathcal{P}_{\ba}}} \Gamma^{(\fn)}_{M;t,\bm{\sigma},\ba}(z_1,\ldots, z_\fn),
  \end{equation}
where $z_i\in \{z_t,\overline z_t\}$ labels all the $m(\sig_i)=m_t(z_i,\sig_i)$ factors with charge $\sig_i$ for $i\in \qqq{\fn}$. 

For each $k \in \qqq{\fn}$, the region $R_k$ (in $\Gamma$ or $\Gamma_M$) is a polygon and hence has exactly two paths from $[a_{k-1}]$ to $[a_k]$; one is $\p{[a_{k-1}] \to [a_k]}$, and we label the other as $p_k \coloneqq \p{[d_{k,1}] \to [d_{k,2}] \to \cdots \to [d_{k,l_k}]}$ with $[d_{k,1}] = [a_{k-1}]$ and $[d_{k,l_k}] = [a_k]$. We call it a \emph{canonical directed path} (in $\Gamma$ or $\Gamma_M$). Then, we define the following operations on graphs $\Gamma \in \TSP\p{\mathcal{P}_{\ba}}$. 

\begin{definition}\label{Def:slice}
For $\Gamma \in \TSP\p{\mathcal{P}_{\ba}}$. We define the \textbf{slices} of $\Gamma$ as follows.
  \begin{enumerate}
    \item Let $p_1$ be the canonical directed path in $R_1$. 
    Suppose $p_1$ can be written as 
    \be\label{eq:p1}p_1 = \p{[a_\fn] \to [b_1] \to \cdots \to [b_{k-1}] \to [a_1]}.\ee 

    \item For each edge of the form $\p{[b_{i-1}], [b_i]}$, $1\le i \le k$ (with the convention $[b_0] = [a_\fn]$ and $[b_k] = [a_1]$), we define $\Gamma_{\p{[b_{i-1}], [b_i]}} \in \TSP(\mathcal{P}_{\p{\ba,[a_{\fn+1}]}})$ as the graph obtained by adding new vertices $[a_{\fn+1}], [b']$ to $\Gamma$ and replacing $\p{[b_{i-1}], [b_i]}$ with the subgraph consisting of three edges $\p{[b_{i-1}], [b']}$, $\p{[b'], [b_i]},$ and $ \p{[a_{\fn+1}], [b']}$.
    \item For each vertex $[b_i]$, $1 \leq i \leq k - 1$ (i.e., we exclude the vertices $[a_1], [a_\fn]$), we define $\Gamma_{\p{[b_i]}} \in \TSP(\mathcal{P}_{\p{\ba, [a_{\fn+1}]}})$ as the graph obtained by simply adding the vertex $[a_{\fn+1}]$ and the edge $\p{[a_{\fn+1}], [b_i]}$.
  \end{enumerate}
  See \Cref{Fig:slice} for illustrations of the above slicing operations. We then define the collection of slices of $\Gamma$ by
  \begin{equation}
    \slice\p{\Gamma}
      \coloneqq \set{\Gamma_{\p{[b_{i-1}],[b_i]}} \suchthat 1 \leq i \leq k} \cup \set{\Gamma_{\p{[b_i]}} \suchthat 1 \leq i \leq k-1}. \label{gamma-slices}
  \end{equation}
  \begin{figure}[h]
    \centering
    \scalebox{0.85}{
    \begin{tikzpicture}
      \coordinate (a1) at (180:2);
      \coordinate (an) at (90:2);
      \coordinate (b) at (0, 0);
      
      \fill (a1) circle (1pt) node[left]{$[a_1]$};
      \fill (an) circle (1pt) node[above]{$[a_\fn]$};
      \fill (b) circle (1pt) node[below right]{$[b_1]$};
      
      \draw (an) [dashed]-- (a1);
      \draw (a1) -- (b);
      \draw (b) -- (an);
      
      \node at (-1, -0.5) {$\Gamma$};
    \end{tikzpicture}
    \begin{tikzpicture}
      \coordinate (a1) at (180:2);
      \coordinate (an) at (90:2);
      \coordinate (an+1) at (135:2);
      \coordinate (b) at (0, 0);
      \coordinate (c) at (90:1);
      
      \fill (a1) circle (1pt) node[left]{$[a_1]$};
      \fill (an) circle (1pt) node[above]{$[a_\fn]$};
      \fill (b) circle (1pt) node[below right]{$[b_1]$};
      \fill (an+1) circle (1pt) node[above left]{$[a_{\fn+1}]$};
      \fill (c) circle (1pt);
      
      \draw (an) [dashed]-- (an+1);
      \draw (an+1) [dashed]-- (a1);
      \draw (a1) -- (b);
      \draw (b) -- (c) -- (an);
      \draw (an+1) -- (c);
      
      \node at (-1, -0.5) {$\Gamma_{\p{[a_\fn],[b_1]}}$};
      \node at (-3, 1) {$\to$};
    \end{tikzpicture}
    $\quad$
    \begin{tikzpicture}
      \coordinate (a1) at (180:2);
      \coordinate (an) at (90:2);
      \coordinate (an+1) at (135:2);
      \coordinate (b) at (0, 0);
      
      \fill (a1) circle (1pt) node[left]{$[a_1]$};
      \fill (an) circle (1pt) node[above]{$[a_\fn]$};
      \fill (b) circle (1pt) node[below right]{$[b_1]$};
      \fill (an+1) circle (1pt) node[above left]{$[a_{\fn+1}]$};
      
      \draw (an) [dashed]-- (an+1);
      \draw (an+1) [dashed]-- (a1);
      \draw (a1) -- (b);
      \draw (b) -- (an);
      \draw (an+1) -- (b);
      
      \node at (-1, -0.5) {$\Gamma_{\p{[b_1]}}$};
    \end{tikzpicture}
    $\quad$
    \begin{tikzpicture}
      \coordinate (a1) at (180:2);
      \coordinate (an) at (90:2);
      \coordinate (an+1) at (135:2);
      \coordinate (b) at (0, 0);
      \coordinate (c) at (180:1);
      
      \fill (a1) circle (1pt) node[left]{$[a_1]$};
      \fill (an) circle (1pt) node[above]{$[a_\fn]$};
      \fill (b) circle (1pt) node[below right]{$[b_1]$};
      \fill (an+1) circle (1pt) node[above left]{$[a_{\fn+1}]$};
      \fill (c) circle (1pt);
      
      \draw (an) [dashed]-- (an+1);
      \draw (an+1) [dashed]-- (a1);
      \draw (a1) -- (c);
      \draw (c) -- (b) -- (an);
      \draw (an+1) -- (c);
      
      \node at (-1, -0.5) {$\Gamma_{\p{[b_1],[a_1]}}$};
    \end{tikzpicture}
    }
    \caption{$\slice\protect\p{\Gamma}$ is created by ``slicing" up $R_1$ into two pieces.}\label{Fig:slice}
  \end{figure}
\end{definition}

By the above definition, it is easy to observe that the $\TSP$ graphs on $(\fn + 1)$ vertices can be constructed by taking slices of the $\TSP$ graphs on $\fn$ vertices. 
\begin{claim}\label{slice-decomposition}
$\TSP(\mathcal{P}_{\p{\ba, [a_{\fn+1}]}})$ can be expressed as a disjoint union
  \[
    \TSP\p{\mathcal{P}_{\p{\ba, [a_{\fn+1}]}}}
      = \bigsqcup_{\Gamma \in \TSP\p{\mathcal{P}_{\ba}}} \slice\p{\Gamma}.
  \]
\end{claim}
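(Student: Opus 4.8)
The plan is to establish the identity by exhibiting a bijection between $\TSP(\mathcal{P}_{(\ba, [a_{\fn+1}])})$ and the disjoint union $\bigsqcup_{\Gamma \in \TSP(\mathcal{P}_{\ba})} \slice(\Gamma)$, and then to argue that the union is genuinely disjoint. First I would observe that there is a natural ``forgetting'' map in one direction: given a canonical tree partition $\Gamma' \in \TSP(\mathcal{P}_{(\ba, [a_{\fn+1}])})$ on $\fn+1$ external vertices, the leaf $[a_{\fn+1}]$ is attached to the rest of $\Gamma'$ by a single external edge, say $([a_{\fn+1}], [b'])$. Removing this edge together with the vertex $[a_{\fn+1}]$, and then — if $[b']$ has now become a degree-$2$ internal vertex lying on the canonical directed path — smoothing $[b']$ out (i.e., merging its two incident edges into one), produces a tree $\Gamma$ on the polygon $\mathcal{P}_{\ba}$. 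The key point is that this operation lands in $\TSP(\mathcal{P}_{\ba})$: one must check that the edge–region correspondence, the charge assignment, and the ``every vertex lies in exactly two regions'' condition of \Cref{def:canpnical_part} are all preserved. Concretely, deleting $[a_{\fn+1}]$ merges the two regions $R_{j}$ and $R_{j+1}$ of $\Gamma'$ that $[a_{\fn+1}]$ separated back into the single region $R_1$ of $\Gamma$ (after relabeling), so the correspondence between sides and regions is restored.

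Next I would verify that this forgetting map, when restricted to the requirement that the removed external edge touches the region $R_1$ of the resulting $\Gamma$, is exactly inverse to the slicing operations in the definition of $\slice(\Gamma)$. That is, I would check case by case that inserting $[a_{\fn+1}]$ back via one of the three moves — attaching to a midpoint of an edge $([b_{i-1}],[b_i])$ of the canonical path $p_1$, or attaching directly to an interior vertex $[b_i]$ of $p_1$ — recovers precisely the graphs listed in \eqref{gamma-slices}, and that every $\Gamma'$ with its $[a_{\fn+1}]$-edge incident to $R_1$ arises this way from a unique $\Gamma$ and a unique choice of edge/vertex of $p_1$. Here one uses that the canonical directed path $p_1$ of $R_1$ is well-defined (the polygon $R_1$ has exactly two $[a_\fn]$-to-$[a_1]$ paths, and $p_1$ is the one not equal to the side $([a_\fn],[a_1])$), so the list in \eqref{gamma-slices} is unambiguous and exhausts all ways of growing a new leaf into $R_1$.

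Finally I would address the disjointness of the union, which is where the subtlety lies. Two different $\Gamma_1, \Gamma_2 \in \TSP(\mathcal{P}_{\ba})$ could, a priori, have a slice in common. But from a graph $\Gamma' \in \slice(\Gamma_1) \cap \slice(\Gamma_2)$ one recovers $\Gamma_i$ by applying the forgetting map above, which is a deterministic operation depending only on $\Gamma'$; hence $\Gamma_1 = \Gamma_2$. The one thing to be careful about — and I expect this to be the main obstacle — is the ambiguity introduced by degree-$2$ smoothing: when $[a_{\fn+1}]$ is attached to the midpoint of an edge, removing it creates a spurious degree-$2$ internal vertex $[b']$ that must be smoothed away, and one must confirm that this smoothing is forced (i.e., that a genuine canonical tree partition never has a ``real'' internal vertex of degree $2$ on a boundary-adjacent region that could be confused with $[b']$). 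This follows from the structure of canonical partitions — an internal vertex of a canonical tree partition has degree $\ge 3$ — so the smoothing is canonical and no information is lost. Assembling these three points (well-definedness of forgetting into $\TSP(\mathcal{P}_{\ba})$, its being a two-sided inverse to slicing, and injectivity giving disjointness) yields the claimed disjoint-union decomposition; the counting identity $|\TSP(\mathcal{P}_{(\ba,[a_{\fn+1}])})| = \sum_\Gamma (2\ell_1 - 1)$ implicit in the slice sizes is then a corollary but is not needed for the statement.
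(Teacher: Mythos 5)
The paper does not actually prove \Cref{slice-decomposition} — it is presented as an observation ("it is easy to observe that the $\TSP$ graphs on $(\fn+1)$ vertices can be constructed by taking slices of the $\TSP$ graphs on $\fn$ vertices"). Your bijective argument via the forgetting map is a correct and essentially complete way of filling that gap, and I believe it is the argument the authors had in mind.

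A few points you raise deserve emphasis. First, you correctly identify that everything hinges on a structural fact: internal vertices of a canonical tree partition have degree $\ge 3$. This holds because $\Gamma$ is defined as the tree of interior \emph{edges} arising from the boundaries of the sub-regions — a degree-$2$ point on such a boundary is simply not a vertex of the tree — and without this the forgetting map would not be single-valued. Granting it, the dichotomy is clean: after deleting $[a_{\fn+1}]$ and its pendant edge, the attachment vertex $[b']$ drops to degree $2$ exactly when $\Gamma'$ is a slice of edge type $\Gamma_{([b_{i-1}],[b_i])}$ (so smoothing $[b']$ is forced and recovers the edge), and keeps degree $\ge 3$ exactly when $\Gamma'$ is a slice of vertex type $\Gamma_{([b_i])}$. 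Second, the step you correctly flag as needed — that the attachment vertex $[b']$ must lie on the canonical directed path $p_1$ of $R_1$ after merging — follows because the pendant edge $([a_{\fn+1}],[b'])$ separates the two regions $R'_{\fn+1}$ and $R'_1$ of $\Gamma'$ incident to $[a_{\fn+1}]$, and upon deletion these merge into the single region $R_1$ whose non-side boundary is exactly $p_1$; since the slice operations enumerate precisely the $2k-1$ ways to graft a new leaf onto an interior vertex or edge of $p_1$, forgetting and slicing are two-sided inverse to each other. This simultaneously gives surjectivity (every $\Gamma'$ arises as a slice) and disjointness (the forgetting map is a deterministic left inverse, so $\slice(\Gamma_1)\cap\slice(\Gamma_2)\neq\emptyset$ forces $\Gamma_1=\Gamma_2$). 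Your proof is sound.
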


Now, we are prepared to give the proof of \Cref{lem:pure_sum} using the representation \eqref{eq:tree_rep2} and \Cref{slice-decomposition}. 
\begin{proof}[\bf Proof of \Cref{lem:pure_sum}]
First, \eqref{eq:pure_sum} trivial holds when $\fn=1$. Second, using the self-consistent equation $m_t(z)(z+tm_t(z))+1=0$ for $m_t$, we can derive that
\be\label{eq:mt'z} \partial_z m_t(z)= \frac{m_t(z)^2}{1-t m_t(z)^2}.\ee
Combining it with the definition of $2$-$\cK$ loops in \eqref{Kn2sol} (with $z_t$ replaced by $z$), we see that \eqref{eq:pure_sum} also holds when $\fn=2$.

Suppose we have shown that \eqref{eq:pure_sum} holds for some $\fn \ge 2$. We now show that 
\be\label{eq:pure_sum2}  
W^{-\fn d} \frac{\dd^{\fn}}{\dd z^{\fn}}m_t(z)= \fn!\sum_{[a_2],\ldots,[a_{\fn+1}]} \cK^{(\fn+1)}_{t,\bsig,\ba}(z).\ee
With the induction hypothesis, the representation \eqref{eq:tree_rep2}, and the symmetry under permutations of $z_1,\ldots, z_{\fn}$ (since $\bsig$ is a pure loop), we see that it suffices to showing the following identity (with $\ba':=(\ba,[a_{\fn+1}])$):
\be\label{eq:pure_sum_equiv} 
\sum_{\Gamma \in \TSP\p{\mathcal{P}_{\ba}}} \partial_{z_1}\Gamma^{(\fn)}_{M;t,\bm{\sigma},\ba}(z_1,\ldots, z_\fn) = \sum_{[a_{\fn+1}]}\sum_{\Gamma \in \TSP(\mathcal{P}_{\ba'})} \Gamma^{(\fn+1)}_{M;t,\bm{\sigma},\ba}(z_1,\ldots, z_{\fn+1}).
\ee

To show \eqref{eq:pure_sum_equiv}, we pick an arbitrary $\Gamma \in \TSP\p{\mathcal{P}_{\ba}}$. Denote its internal vertices by $\mathbf b$ and suppose the canonical directed path in region $R_{1}$ of \smash{$\Gamma$} takes the form \eqref{eq:p1}. Then, we consider the canonical directed path in region $R_{1}$ of \smash{$\Gamma_{M}$}, which contains all edges that are associated with the charge $\sig_1$. Suppose the product of these values takes the form
  \be\label{eq:calGz1}
  \begin{aligned}
    \mathcal{G}(z_1)
      \coloneqq&~ \Theta^{\p{\sigma_\fn,\sigma_1}}_{t,[a_\fn] [b_{1,1}]}
        \p{\prod_{i=2}^{k-1} M^{\p{\sigma_1}}_{[b_{i-1,1}][b_{i-1,2}]} \p{tS^{\LK}\Theta_t^{(\sig_{r_i},\sigma_1)}}_{[b_{i-1,2}][b_{i,1}]}} \\
&~\times     M^{\p{\sigma_1}}_{[b_{k-1,1}][b_{k-1,2}]}
        \Theta^{\p{\sigma_1,\sigma_2}}_{t,[a_1][b_{k-1,2}]},
        \end{aligned}  \ee
  where, as defined in \Cref{m-loop-tsp}, $b_{i,j}$ denotes vertices on the $M$-loops obtained from the replacements of the vertices $b_i$ in $\Gamma$, $M^{(\sigma_1)}$ represents $m_t(z_1)I_N$, and $r_i$ labels the other domain that is adjacent to the edge $([b_{i-1,2}],[b_{i,1}])$. Note that these are the only factors depending on $z_1$, and that they are listed in the order in which their indices appear in the canonical directed path.  
      
      
   
We now study the derivative of $\cal G(z_1)$ with respect to $z_1$. First, suppose the derivative acts on an $M$-edge \smash{\(M^{(\sig_1)}_{[b_{i,1}][b_{i,2}]}=m_t(z_1)\delta_{[b_{i,1}][b_{i,2}]}\)}. With \eqref{eq:mt'z}, we get that 
\begin{align*}
  \partial_{z_1}m_t(z_1)\delta_{[b_{i,1}][b_{i,2}]}&= \frac{m_t(z_1)^2}{1-tm_t(z_1)^2}\delta_{[b_{i,1}][b_{i,2}]}\\
  &=\sum_{[a_{\fn+1}],[b']} m_t(z_1)\delta_{[b_{i,1}][b']} \cdot m_t(z_1)\delta_{[b'][b_{i,2}]} \cdot \Theta_{t,[a_{\fn+1}][b']}^{(+,+)} \, .
\end{align*}
Referring to Figure \ref{ward-even-i}, we readily see that the RHS corresponds to the $M$-graph of the slice $\Gamma_{([b_i])}$. In other words, we have that
  \[\Gamma^{\p{\fn}}_{M;t, \bm{\sigma},\ba}(z) \cdot \frac{\partial_{z_1}m_t(z_1)}{m_t(z_1)}
      = \sum_{[a_{\fn+1}]} \p{\Gamma_{\p{[b_{i}]}}}^{\p{\fn+1}}_{M;t, \p{\bm{\sigma},+}, (\ba,[a_{\fn+1}])},\quad \forall i=1,\ldots, k-1 \, .
  \]
  \begin{figure}[h]
    \centering
    \scalebox{0.9}{
    \begin{tikzpicture}
      \coordinate (an) at (0, 3);
      \coordinate (a1) at (-3, 0);
      \coordinate (b11) at (0, 1);
      \coordinate (b12) at (-1, 0);
      
      \fill (an) circle (1pt) node[above]{$[a_\fn]$};
      \fill (a1) circle (1pt) node[left]{$[a_1]$};
      \fill (b11) circle (1pt) node[right]{$[b_{i,1}]$};
      \fill (b12) circle (1pt) node[below]{$[b_{i,2}]$};
      
      \draw[dashed, arrows={->[scale=1.5]}, shorten >= 2pt] (an) -- (a1) node[black, pos=0.5, above left]{$\sigma_1$};
      \draw[dotted] (an) -- (b11);
      \draw[->, shorten >=2pt] (b11) -- (b12);
      \draw[dotted] (b12) -- (a1);
      
      \node at (135:0.25) {$M$};
    \end{tikzpicture}
    \begin{tikzpicture}
      \coordinate (an) at (0, 3);
      \coordinate (a1) at (-3, 0);
      \coordinate (an+1) at (135:3);
      \coordinate (b11) at (0, 1);
      \coordinate (b12) at (-1, 0);
      \coordinate (b') at (135:1);
      
      \fill (an) circle (1pt) node[above]{$[a_\fn]$};
      \fill (a1) circle (1pt) node[left]{$[a_1]$};
      \fill (an+1) circle (1pt) node[above left]{$[a_{\fn+1}]$};
      \fill (b11) circle (1pt) node[right]{$[b_{i,1}]$};
      \fill (b12) circle (1pt) node[below]{$[b_{i,2}]$};
      \fill (b') circle (1pt) node[above, xshift=2pt, yshift=2pt]{$[b']$};
      
      \draw[dashed, red, arrows={->[scale=1.5]}, shorten >= 2pt] (an) -- (an+1) node[black, pos=0.5, above left]{$+$};
      \draw[dashed, blue, arrows={->[scale=1.5]}, shorten >= 2pt] (an+1) -- (a1) node[black, pos=0.5, above left]{$+$};
      \draw[->, red, arrows={->[scale=1.5]}, shorten >= 2pt] (b11) -- (b');
      \draw[dotted] (an) -- (b11);
      \draw[->, blue, arrows={->[scale=1.5]}, shorten >= 2pt] (b') -- (b12);
      \draw[dotted] (b12) -- (a1);
      \draw (an+1) -- (b');
      
      \node at (135:0.25) {$M$};
      \node[anchor=east] at (-3.5, 1.5) {$\to \sum_{[a_{\fn+1}], [b']}$};
    \end{tikzpicture}}
    \caption{Derivative of an $M$-edge.}\label{ward-even-i}
  \end{figure}

  Next, suppose $\partial_{z_1}$ acts on an internal unlabeled edge \smash{\(\big(tS^{\LK}\Theta_t^{(\sig_{r_i},\sigma_1)}\big)_{[b_{i-1,2}][b_{i,1}]}\)}. Then, with \eqref{eq:mt'z}, we can calculate that 
  \begin{align*}
  &\frac{\dd}{\dd z_1}\left(\frac{tS^{\LK}}{1-tm_t(z_1)m_t(z_{r_i})S^{\LK}}\right)_{[b_{i-1,2}][b_{i,1}]} =\left(tS^{\LK}\Theta_{t}^{(+,+)} \frac{m_t(z_{r_i}) m_t(z_1)^2}{1-t m_t(z_1)^2}tS^{\LK}\Theta_{t}^{(+,+)} \right)_{[b_{i-1,2}][b_{i,1}]} \\
  &= \sum_{[a_{\fn+1}],[b'],[x],[y]}\left(tS^{\LK}\Theta_t^{(+,+)}\right)_{[b_{i-1,2}][x]}\left(tS^{\LK}\Theta_t^{(+,+)}\right)_{[y][b_{i,1}]} \cdot \p{m(z)^3\delta_{[x][b']} \delta_{[b'][y]}  \delta_{[y][x]}} \cdot \Theta^{(+,+)}_{t,[a_{\fn+1}][b']}.
  \end{align*}
Referring to Figure \ref{ward-odd-i}, we see that the RHS corresponds to the $M$-graph of the slice \smash{$\Gamma_{([b_{i-1}],[b_i])}$}.
With a similar argument, we can check that the derivatives of the two external edges {$\Theta^{\p{\sigma_\fn,\sigma_1}}_{t,[a_\fn] [b_{1,1}]}$ and $\Theta^{\p{\sigma_1,\sigma_2}}_{t,[a_1][b_{k-1,2}]}$} with respect to $z_1$ give rise to graphs that correspond to the $M$-graphs of the slices $\Gamma_{([a_\fn],[b_1])}$ and $\Gamma_{([b_{k-1}],[a_1])}$, respectively.
 
  \begin{figure}[h]
    \centering
    \scalebox{0.9}{
    \begin{tikzpicture}
      \coordinate (an) at (0, 3);
      \coordinate (a1) at (-3, 0);
      \coordinate (b12) at (0, 1);
      \coordinate (b21) at (-1, 0);
      
      \fill (an) circle (1pt) node[above]{$[a_\fn]$};
      \fill (a1) circle (1pt) node[left]{$[a_1]$};
      \fill (b12) circle (1pt) node[right]{$[b_{i-1,2}]$};
      \fill (b21) circle (1pt) node[below]{$[b_{i,1}]$};
      
      \draw[dashed, arrows={->[scale=1.5]}, shorten >= 2pt] (an) -- (a1) node[black, pos=0.5, above left]{$\sigma_1$};
      \draw[dotted] (an) -- (b12);
      \draw[arrows={->[scale=1.5]}, shorten >= 2pt] (b12) -- (b21);
      \draw[dotted] (b21) -- (a1);
      
      \node at (135:0.25) {$R_{r_i}$};
    \end{tikzpicture}
    \begin{tikzpicture}
      \coordinate (an) at (0, 3);
      \coordinate (a1) at (-3, 0);
      \coordinate (an+1) at (135:3);
      \coordinate (b11) at (0, 1);
      \coordinate (b12) at (-1, 0);
      \coordinate (b') at (135:2);
      \coordinate (x) at (-0.5, 1.25);
      \coordinate (y) at (-1.25, 0.5);
      
      \fill (an) circle (1pt) node[above]{$[a_\fn]$};
      \fill (a1) circle (1pt) node[left]{$[a_1]$};
      \fill (an+1) circle (1pt) node[above left]{$[a_{\fn+1}]$};
      \fill (b11) circle (1pt) node[right]{$[b_{i-1,2}]$};
      \fill (b12) circle (1pt) node[below]{$[b_{i,1}]$};
      \fill (b') circle (1pt) node[above, xshift=3pt]{$[b']$};
      \fill (x) circle (1pt) node[above, xshift=2pt]{$[x]$};
      \fill (y) circle (1pt) node[left, yshift=-2pt]{$[y]$};
      
      \draw[dashed, red, arrows={->[scale=1.5]}, shorten >= 2pt] (an) -- (an+1) node[black, pos=0.5, above left]{$+$};
      \draw[dashed, blue, arrows={->[scale=1.5]}, shorten >= 2pt] (an+1) -- (a1) node[black, pos=0.5, above left]{$+$};
      \draw (b11) -- (x);
      \draw[->, red, shorten >=2pt] (x) -- (b');
      \draw[->, blue, shorten >=2pt] (b') -- (y);
      \draw[->, shorten >=2pt] (y) -- (x);
      \draw[dotted] (an) -- (b11);
      \draw (y) -- (b12);

      \draw[dotted] (b12) -- (a1);
      \draw (an+1) -- (b');
      
      \node at (135:1.5) {$M$};
      \node at (135:0.75) {$R_{r_i}$};
      \node[anchor=east] at (-3.5, 1.5) {$\to \displaystyle  \sum_{[a_{\fn+1}], [b'], [x], [y]}$};
    \end{tikzpicture}}
    \caption{Derivative of an unlabeled internal edge.}\label{ward-odd-i}
  \end{figure}

  Putting everything together and applying \Cref{slice-decomposition}, we conclude \eqref{eq:pure_sum_equiv}, which implies \eqref{eq:pure_sum2} and completes the induction step.  
\end{proof}

\section{$G$-loop estimates}\label{sec:proof}

The local laws, \Cref{thm_locallaw}, and quantum diffusion, \Cref{thm_diffu}, follow directly from the following three key theorems about $G$-loop estimates.  

\begin{theorem}[$G$-loop estimates]\label{ML:GLoop}
In the setting of \Cref{thm_locallaw}, fix any $z=E+\ii \eta\in \mathbf D_{C_0,\fd}(\fc)$ and consider the flow framework in \Cref{zztE}. For each fixed $\fn\in \N$, the following estimate holds uniformly in $t\in [0,t_0]$: 
\be\label{Eq:L-KGt}
 \max_{\boldsymbol{\sigma}, \ba}\left|{\cL}^{(\fn)}_{t, \boldsymbol{\sigma}, \ba}-{\cal K}^{(\fn)}_{t, \boldsymbol{\sigma}, \ba}\right|\prec \p{W^d\ell_t^d\eta_t}^{-\fn} .  
 \ee
\end{theorem}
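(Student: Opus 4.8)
\textbf{Proof proposal for Theorem \ref{ML:GLoop}.}

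The plan is to run a continuity (bootstrap) argument along the flow $t\in[0,t_0]$, using the loop hierarchy \eqref{eq:mainStoflow} for the $\cL$-loops and the primitive equation \eqref{pro_dyncalK} for the $\cK$-loops. The target quantity is the $(\cL-\cK)$-loop $\cD^{(\fn)}_{t,\bsig,\ba}:=\cL^{(\fn)}_{t,\bsig,\ba}-\cK^{(\fn)}_{t,\bsig,\ba}$, and the goal is to propagate the bound $\max_{\bsig,\ba}|\cD^{(\fn)}_{t,\bsig,\ba}|\prec (W^d\ell_t^d\eta_t)^{-\fn}$ from $t=0$ (where it holds trivially since $\cL^{(\fn)}_{0}=\cK^{(\fn)}_{0}=\cM^{(\fn)}$) to $t=t_0$. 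Subtracting \eqref{pro_dyncalK} from \eqref{eq:mainStoflow}, the SDE for $\cD^{(\fn)}$ has three sources: (i) the martingale term $\dd\mathcal B^{(\fn)}$, controlled by the Burkholder--Davis--Gundy inequality after estimating its quadratic variation via Ward's identity; (ii) the ``forcing" term $\mathcal W^{(\fn)}_{t,\bsig,\ba}$, which is quadratic in the $G$-loops and involves the local law error $\langle(G_t-M)E_{[a]}\rangle$, hence is already genuinely smaller; and (iii) the difference of the bilinear $\cutL$--$\cutR$ terms, which, after writing $\cL=\cK+\cD$, produces a ``linear-in-$\cD$" part that must be treated as the leading term and absorbed, plus a ``quadratic-in-$\cD$" remainder. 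The linear part has the schematic form $W^d\sum_{k<l}\sum_{[a],[b]}\big[(\cutL_{kl}\circ\cD)\,S^{\LK}\,(\cutR_{kl}\circ\cK)+(\cutL_{kl}\circ\cK)\,S^{\LK}\,(\cutR_{kl}\circ\cD)\big]$; this is the place where one must use the structure of the primitive equation to recognize it as (close to) the linearization of the $\cK$-dynamics, so that the corresponding Duhamel propagator is bounded. Concretely, one introduces the integrating factor solving the linear part of \eqref{pro_dyncalK}, or equivalently uses the explicit $\Theta_t$-propagator bounds from \Cref{lem_propTH} together with the $\cK$-loop bounds \eqref{eq:bcal_k}, to show that the homogeneous linear flow for $\cD^{(\fn)}$ is stable (in the relevant weighted norm with weight $(W^d\ell_t^d\eta_t)^{\fn}$).

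The key structural inputs are: the $\cK$-loop estimate \eqref{eq:bcal_k} of \Cref{ML:Kbound} (so that all $\cK$-factors appearing as coefficients are under control); Ward's identities \eqref{WI_calL} and \eqref{WI_calK} from \Cref{lem_WI_K}, which are essential both for bounding the martingale quadratic variation and for turning ``long" loops appearing on the right-hand side into gains of $(W^d\eta_t)^{-1}$ per closed pair; the decay and finite-difference properties of $\Theta_t$ in \Cref{lem_propTH}; and the relation $\eta_t\asymp(1-t)\sqrt\kappa$, $\omega_t=|1-t|+\sqrt\kappa$, $\ell_t=\min(|1-t|^{-1/2},n)$, $1-t_0\gtrsim N^\fd\eta_*/\sqrt\kappa$ from \Cref{zztE} and the flow setup, which controls how the scales degrade as $t\uparrow t_0$. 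One also needs the a priori (crude) bound $\|G_t\|_{\max}=\OO_\prec(1)$ and $\|G_t\|=\OO(\eta_t^{-1})$, plus the $n$-independent local law at the initial scales, which are available by standard arguments (or can be taken as the $t$ small regime already handled in \cite{Band1D,Band2D}). The induction should be organized simultaneously in $\fn$: to close the hierarchy for length-$\fn$ loops one needs the estimates for loops of length $\le\fn+$(a few), which is the usual feature of loop hierarchies; this is handled by also tracking generalized $G$-loops with one $\im$-charge (\Cref{Def:G_loopgen}), whose improved bounds \eqref{eq:bcal_kgen}-type estimates feed back into the length-$\fn$ bound. The self-improving structure is: assume \eqref{Eq:L-KGt} for all $t\le t_1$ and all lengths, show the right-hand side of the $\cD^{(\fn)}$-SDE integrates to something a factor $N^{-c}$ smaller than $(W^d\ell_t^d\eta_t)^{-\fn}$ on $[0,t_1]$, conclude the bound extends to $t_1+\delta t$, and iterate; continuity in $t$ of the $\cL$-loops (a separate, softer estimate, presumably the content of the continuity estimate referenced for \Cref{Sec:Step1}) lets one pass from a discrete grid of times to all $t\in[0,t_0]$.

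The main obstacle will be step (iii): controlling the ``linear-in-$\cD$'' term and showing the associated Duhamel/integrating-factor propagator does not blow up as $t\to t_0$, i.e.\ that the flow does not amplify errors. This is delicate precisely because near the edge the available time $1-t_0$ is short and the scales $\ell_t,\eta_t$ are the smallest, so one has little room; the argument must exploit a cancellation or a ``sum-zero"-type property (analogous to \Cref{sum-zero}, now at the level of $\cutL$-$\cutR$ contractions against $S^{\LK}$) to gain the extra factor needed to close. A secondary difficulty is bookkeeping the combinatorics of which loop lengths and which charge patterns appear on the right-hand side, and ensuring that every time a long loop is produced one actually harvests the Ward gain rather than merely the crude bound; getting the exponents of $W^d\ell_t^d\eta_t$ to match exactly (rather than losing a power) requires care in distinguishing trivial versus nontrivial contractions and in using the improved $\im$-charge bounds at the right spots. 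Everything else—BDG for the martingale, the quadratic-in-$\cD$ remainder, and the $\mathcal W^{(\fn)}$ forcing—should follow by power counting from the a priori hypothesis and the $\Theta_t$ estimates, with the local-law input handled self-consistently by deriving \eqref{locallaw} from the $\fn=2$ case of \eqref{Eq:L-KGt} and Ward's identity.
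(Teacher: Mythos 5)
Your high-level strategy---subtracting the primitive equation from the loop hierarchy, writing the resulting dynamics for $\cD = \cL-\cK$ in Duhamel form with the integrating factor coming from the $\lenk=2$ piece, and then running a bootstrap along the flow with BDG controlling the martingale term---is indeed the paper's approach, and you correctly identify the central obstacle: the linear-in-$\cD$ Duhamel propagator threatens to blow up as $t\uparrow t_0$ near the edge, and some cancellation must be exploited. That is a genuinely good diagnosis. However, there are several concrete mechanisms the paper relies on that your sketch either misses or substitutes with something too vague to close.

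First, the a priori input. You treat ``continuity in $t$'' as a soft $\varepsilon$-net step to pass from a grid to the continuum. In the paper this is not soft at all: Lemma~\ref{lem_ConArg} compares $\cL_t$ at flow parameter $\sE$ with $\cL_s$ at a \emph{different} flow parameter $\wt\sE$ chosen so that $G_{s,\wt\sE}$ and $G_{t,\sE}$ are resolvents of the \emph{same} $H_t$ at two nearby spectral points, enabling the resolvent expansion \eqref{GGZGG}; together with the $L^2$ localization inequality \eqref{lem_L2Loc} and Ward's identity this yields the a priori bound \eqref{lRB1} with a loss factor $(\ell_t^d/\ell_s^d)^{\fn-1}$. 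Without this intermediate-strength a priori estimate the bootstrap cannot even start past the initial window $t\le c_0$, since the target bound cannot self-improve from nothing. Second, the treatment of the linear term is not merely ``recognize it as the linearization of the $\cK$-dynamics and bound the propagator''---the naive $(\infty\to\infty)$ bound on $\cU^{(\fn)}_{s,t,\bsig}$ (Lemma~\ref{lem:sum_Ndecay}) grows like $((1-s)/(1-t))^\fn$, which is exactly what must be beaten. The device is the sum-zero operator $\cQ_t$ of Definition~\ref{Def:QtPt}: one splits $\cD$ into a piece annihilated by the partial-sum $\cP$ (to which the improved kernel estimates \eqref{sum_res_2} and especially the symmetric version \eqref{sum_res_2_sym} apply) plus a rank-one ``partial-sum'' piece $[\cP\circ\cD]_{[a_1]}\dthn_{t,\ba}$. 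This is different from the sum-zero property of primitive loops in Lemma~\ref{sum-zero}, which concerns $\cK$ only; your proposal conflates the two. Third, the partial-sum piece is controlled by Ward's identity only when $\bsig$ is not a pure loop; for pure loops (all $+$ or all $-$) Ward's identity simply does not apply, and the paper instead exploits Lemma~\ref{lem:pure_sum} to express $\cP\circ\cK^{(\fn)}$ as an $(\fn-1)$-th derivative of $m_t$, and then bounds $\cP\circ\cD^{(\fn)}$ via a Cauchy contour integral around $z_u$ (Lemma~\ref{lem:partialsumterm}, equations \eqref{eq;Cauchy-int}--\eqref{eq:cont-est_pure2}). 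This is not something you would naturally guess from the structure you describe. Finally, the self-improvement you sketch does not close in one pass: to bound $\cD^{(\fn)}$ one needs $\cL^{(\fn+1)}$ (from $\mathcal W^{(\fn)}$) and $\cL^{(2\fn+2)}$ (from the martingale quadratic variation), which are higher order; the paper handles this with a two-parameter iteration in loop length and ``sharpness level'' (Lemma~\ref{lem:iterations}) that interpolates between the crude a priori bound and the sharp one, combined with Cauchy--Schwarz to reduce $\cL^{(2\fn+2)}$ to shorter loops. Your mention of the $\im$-charge generalized loops as a primary closing mechanism is a red herring; they appear in the continuity lemma but do not drive the hierarchy closure.

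In short: right skeleton and correct identification of the bottleneck, but missing the $\cQ_t$ sum-zero projector, the Cauchy-integral trick for pure loops, the nontrivial (non-$\varepsilon$-net) continuity estimate providing the a priori input, and the two-parameter iteration that actually closes the length hierarchy.
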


Since $\eta_t\lesssim |1-t|\omega_t$, combining \eqref{Eq:L-KGt} with \eqref{eq:bcal_k}, we obtain the following estimate for any fixed $\fn\ge2$: 
\be
\max_{\bsig,\ba} \left| {\cal L}^{(\fn)}_{t, \bsig, \ba} \right| \prec   \frac{\sqrt{\kappa}}{(W^d \ell_t^d \eta_t )^{\fn-1}}. \label{Eq:L-KGt2} \ee
To see why \eqref{Eq:L-KGt} implies \eqref{Eq:L-KGt2}, we apply the estimates in \eqref{eq:E0bulk} and \eqref{eq:kappat} to obtain that for each $z=E+\ii \eta\in \mathbf D_{C_0,\fd}(\fc)$ with $E\in[-2,2]$ and $t\in [0,t_0(z)]$,
\begin{align}
W^d \ell_t^d \eta_t \sqrt{\kappa} \gtrsim \min\p{N \eta \sqrt{\kappa_E+\eta},W^d\eta^{1-d/2}(\kappa_E+\eta)^{1/2+d/4}}\gtrsim W^{(\fd/2)\wedge \fc} ,\label{eq:small_para}
\end{align}
where we used $\eta\ge W^\fd\eta_*(E)$ in the second step with $\eta_*(E)$ defined in \eqref{eq:defeta*}. Similarly, if $E\notin[-2,2]$ and $\eta\ge W^\fd\eta_*(E)$ with $\eta_*(E)$ defined in \eqref{eq:defeta*2},  we have that for $t\in [0,t_0(z)]$,
\begin{align}
W^d \ell_t^d \eta_t \sqrt{\kappa} \gtrsim \min\p{N \eta^2/\sqrt{\kappa_E+\eta},W^d\eta^{2}/(\kappa_E+\eta)^{1/2+d/4}}\gtrsim W^{\fd}.\label{eq:small_para2}
\end{align}

\begin{theorem}[$2$-$G$ loop estimates]\label{ML:GLoop_expec}
In the setting of \Cref{ML:GLoop}, the expectation of a $2$-$G$ loop satisfies a better bound uniformly in $t\in [0,t_0]$: 
\begin{equation}\label{Eq:Gtlp_exp}
\max_{\boldsymbol{\sigma}, \ba}\left|\mathbb E{\cal L}^{(2)}_{t, \boldsymbol{\sigma}, \ba}-{\cal K}^{(2)}_{t, \boldsymbol{\sigma}, \ba}\right|\prec \kappa^{-1/2}(W^d\ell_t^d\eta_t)^{-3}
 . 
\end{equation}
Moreover, for $\boldsymbol{\sigma}=(+,-)$ and $ \ba=([a_1], [a_2])$, we have the following decay estimate uniformly in $t\in [0,t_0]$: for any large constant $D>0$, 
\begin{equation}\label{Eq:Gdecay}
 \left| {\cal L}^{(2)}_{t, \boldsymbol{\sigma}, \ba}-{\cal K}^{(2)}_{t, \boldsymbol{\sigma}, \ba}\right|\prec \frac{1}{\p{W^d\ell_t^d\eta_t}^{2}}\exp \left(-\left|\frac{ [a_1]-[a_2] }{\ell_t}\right|^{1/2}\right)+W^{-D}. 
\end{equation}
\end{theorem}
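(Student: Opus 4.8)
\textbf{Proof proposal for Theorem \ref{ML:GLoop_expec}.}

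The plan is to treat the three assertions in a hierarchy, with the expectation bound \eqref{Eq:Gtlp_exp} being the heart of the matter and the decay bound \eqref{Eq:Gdecay} following from a decorated refinement of the same argument. Throughout I would work in the flow framework of \Cref{zztE}, writing $G_t\equiv G_{t,\sE}$ and comparing $\cL$-loops to $\cK$-loops via the difference $(\cL-\cK)^{(\fn)}_{t,\bsig,\ba}$. The starting point is the loop hierarchy \eqref{eq:mainStoflow} together with the primitive equation \eqref{pro_dyncalK}: subtracting them gives an SDE for $(\cL-\cK)^{(2)}_{t,\bsig,\ba}$ whose right-hand side contains (i) a martingale term $\dd\cB^{(2)}$, (ii) the ``$\cW$'' term $\cW^{(2)}_{t,\bsig,\ba}\dd t$ built from $\langle (G_t-M)E_{[a]}\rangle$ contracted against a cut loop, and (iii) the quadratic ``$\cutL/\cutR$'' term, which when linearized around $\cK$ produces a self-energy-type operator acting on $(\cL-\cK)^{(2)}$ plus genuinely higher-order remainders involving $(\cL-\cK)^{(\fn)}$ for $\fn\ge 3$ and products of two such differences. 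Taking expectations kills the martingale term, and the task becomes to solve the resulting (deterministic) integral equation for $\E(\cL-\cK)^{(2)}$ using the $\Theta$-propagator bounds in \Cref{lem_propTH} as the integration kernel and the a priori inputs \eqref{Eq:L-KGt}--\eqref{Eq:L-KGt2} from \Cref{ML:GLoop} to control all the error terms.

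Concretely, I would first record the ``zeroth order'' input: by \Cref{ML:GLoop} we already know $|(\cL-\cK)^{(\fn)}_{t,\bsig,\ba}|\prec (W^d\ell_t^d\eta_t)^{-\fn}$, and by \eqref{Eq:L-KGt2} that $|\cL^{(\fn)}_{t,\bsig,\ba}|\prec \sqrt\kappa(W^d\ell_t^d\eta_t)^{-(\fn-1)}$. Then, in the integral equation for $\E(\cL-\cK)^{(2)}$, each of the error mechanisms — the $\cW$ term (which carries an extra factor $\langle(G_t-M)E_{[a]}\rangle$, itself of size $\prec(W^d\ell_t^d\eta_t)^{-1}$, i.e.\ a $(\cL-\cK)^{(1)}$-type gain that the averaged local law upgrades by a further power of $1/(W^d\ell_t^d\eta_t)$), the cubic remainder involving $(\cL-\cK)^{(3)}$, and the quadratic-in-difference remainder $(\cL-\cK)^{(2)}\cdot(\cL-\cK)^{(2)}$ — should be shown to contribute, after convolution against $\Theta_t$ and integration in $t$ over $[0,t_0]$, a total of size $\prec \kappa^{-1/2}(W^d\ell_t^d\eta_t)^{-3}$. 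The two ``wins'' over the deterministic bound $\prec(W^d\ell_t^d\eta_t)^{-2}$ for $|(\cL-\cK)^{(2)}|$ are: one extra factor $(W^d\ell_t^d\eta_t)^{-1}$ coming from the improved averaged local law \eqref{locallaw_aver}/\eqref{eq:diffuExp1} input on the $\langle(G_t-M)E_{[a]}\rangle$ factor (this is where the expectation is essential — the fluctuation of a single averaged entry is smaller than that of an entrywise one), and the factor $\kappa^{-1/2}$ coming from the time integral $\int_0^{t_0}\frac{\dd t}{1-t}$, which is logarithmically divergent but is cut off at $1-t_0\asymp \eta/\sqrt\kappa$, together with the fact that the self-energy kernel's resolvent $1/(1-tm(\sig_1)m(\sig_2)S^{\LK})$ has operator norm $\asymp (1-t)^{-1}$ on the constants for the $(+,-)$ channel (cf.\ \eqref{eq:sumTheta}) so that the accumulated time integral produces exactly the $\sqrt\kappa^{-1}$. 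I would carry this out by a Grönwall/bootstrap argument in $t$: assume \eqref{Eq:Gtlp_exp} with a slightly worse constant up to time $t$, feed it into the integral equation, and close.

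For the decay estimate \eqref{Eq:Gdecay}, the approach is the same SDE but now tracking the spatial profile rather than just the sup-norm. Here one does not take expectations; instead one uses that the kernel $\Theta_t^{(\sig_1,\sig_2)}(0,[x])$ decays exponentially on scale $\ell_t$ by \eqref{prop:ThfadC}, and that the primitive loop $\cK^{(2)}_{t,(+,-),([a_1],[a_2])}=W^{-d}m(\sig_1)m(\sig_2)\Theta_t^{(+,-)}([a_1],[a_2])$ already carries this decay. The point is that the fluctuation $\cL^{(2)}-\cK^{(2)}$ inherits a decay of the form $\exp(-c|[a_1]-[a_2]|^{1/2}/\ell_t^{1/2})$ — the stretched exponential (square-root in the exponent) rather than a clean exponential arises because each ``hopping'' step in the hierarchy loses a power in the exponent: iterating the self-consistent equation, the $k$-th order term is supported on distances $\lesssim k\ell_t$ with a $k!$-type combinatorial cost, and optimizing over $k$ for a target distance $r$ gives $\exp(-c(r/\ell_t)^{1/2})$; this is exactly the mechanism already used in \cite{Band1D,Band2D} and one should cite/adapt it. The $W^{-D}$ additive error absorbs all the negligible tails (the $W^{-D}$ terms already present in \Cref{lem_propTH} and in the truncation of the flow).

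The main obstacle I anticipate is the bookkeeping of the quadratic term in the hierarchy when linearized: one must show that the ``self-energy'' operator obtained by linearizing $\sum_{k<l}(\cutL\circ\cL)S^{\LK}(\cutR\circ\cL)$ around $\cK$ — i.e.\ the operator whose kernel governs the integral equation for $(\cL-\cK)^{(2)}$ — is, up to errors controllable by the a priori bounds, exactly the operator whose inverse is the $\Theta_t$-propagator (this is the ``self-consistency'' / ``Ward identity'' cancellation), so that no spurious $\OO(1)$ (rather than $\oo(1)$) factor accumulates over the long time interval $t\in[0,t_0]$ when $\kappa$ is small. Equivalently, the subtlety is that $1-t_0$ can be as small as $N^\fd\eta_*/\sqrt\kappa$, so the time integral is long and one cannot afford any loss that is not genuinely a power of the small parameter $W^d\ell_t^d\eta_t\sqrt\kappa\gg W^{c}$ from \eqref{eq:small_para}--\eqref{eq:small_para2}; making the cancellations in the hierarchy precise enough to see that the only surviving growth is the harmless $\int\dd t/(1-t)\asymp\log(\sqrt\kappa/\eta)$, which the extra factor from the averaged local law more than compensates, is the crux. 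I expect this to require the sum-zero property (\Cref{sum-zero}) and Ward's identity for $\cL$-loops (\Cref{lem_WI_K}) in an essential way, exactly as in the bulk analysis, with the edge case handled by the uniform-in-$\omega_t=|1-t|+\sqrt\kappa$ formulation that \Cref{rem:simple} sets up.
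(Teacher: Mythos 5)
Your high-level architecture --- subtract the primitive equation from the loop hierarchy, take expectations to kill the martingale, feed in an improved averaged local law and the a priori bounds from \Cref{ML:GLoop}, and close via a Gr\"onwall-type bootstrap in $t$ --- is the right frame, and your recognition that the sum-zero property and Ward's identity must enter is correct. But several specific mechanisms are missing, and without them the argument as described would not close.

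\smallskip

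\textbf{The sum-zero operator decomposition.} After taking expectations, the paper does not estimate $\E(\mathcal L-\mathcal K)^{(2)}$ directly against the kernel $\mathcal U^{(2)}_{u,t,\bsig}$. Instead it decomposes it via the operator $\mathcal Q_t$ and the partial-sum projection $\mathcal P$ from \Cref{Def:QtPt}, writing $\E(\mathcal L-\mathcal K)^{(2)} = \mathcal Q_t\circ\E(\mathcal L-\mathcal K)^{(2)} + [\mathcal P\circ\E(\mathcal L-\mathcal K)^{(2)}]\dthn^{(2)}_t$, and treats the two pieces with different tools. This is not optional bookkeeping: the plain kernel estimate \eqref{sum_res_1} carries a factor $(\ell_s^d(1-s)/(\ell_t^d(1-t)))^2$ that does not beat the log-divergent time integral on its own. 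You need the upgraded estimate \eqref{sum_res_2_sym}, which is available only for tensors satisfying the sum-zero property \emph{and} the symmetry condition \eqref{eq:A_zero_sym}. Your proposal gestures at sum zero but does not build the operator that isolates the sum-zero part.

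\smallskip

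\textbf{The symmetry is the real reason expectation helps.} You attribute the gain over the fluctuation bound entirely to the improved averaged local law for $\langle(G-M)E_{[a]}\rangle$. That ingredient is needed (it is \eqref{res_ELK_n=1}, derived by Gaussian integration by parts), but the decisive point is different: the symmetry condition \eqref{eq:A_zero_sym} on the integrand holds \emph{only after taking expectation}, by the block-translation-invariance and parity of the model. This is what lets one invoke \eqref{sum_res_2_sym} instead of the weaker \eqref{sum_res_2} and is what separates the $\kappa^{-1/2}(W^d\ell_t^d\eta_t)^{-3}$ expectation bound from the $(W^d\ell_t^d\eta_t)^{-2}$ fluctuation bound. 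Your narrative misses this.

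\smallskip

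\textbf{The pure-loop partial sum needs a separate device.} For the $\mathcal P$-piece with $\sig_1\ne\sig_2$, Ward's identity \eqref{WI_calL}--\eqref{WI_calK} does the job, as you anticipate. But when $\sig_1=\sig_2$ there is no Ward's identity, and the paper instead uses the pure-loop derivative formula \eqref{eq:pure_sum} together with Cauchy's integral formula on a small contour around $z_u$ (see \eqref{eq;Cauchy-int}, \eqref{eq;Cauchy-int2}), combined with a deterministic perturbation argument to estimate $\langle(G_u(z)-M_u(z))E_{[a]}\rangle$ off the flow. This is a nontrivial extra mechanism that your outline does not provide.

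\smallskip

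\textbf{The decay estimate \eqref{Eq:Gdecay}.} Your proposed mechanism --- deriving the stretched exponential from a $k!$-versus-$(r/\ell_t)^k$ optimization over the iteration order --- is a plausible heuristic but not what the paper does, and I am skeptical it would produce the result cleanly. The paper instead \emph{assumes} the control parameter $\mathcal T_{u,D}(\ell) = (W^d\ell_u^d\eta_u)^{-2}\exp(-(\ell/\ell_u)^{1/2})+W^{-D}$ and proves it is propagated by the evolution kernel $\mathcal U^{(2)}$ (\Cref{TailtoTail}), establishing first a weak version \eqref{Eq:Gdecay_w} with a $(1-s)/(1-t)$ loss via a stopping-time argument (Step 2), then removing the loss in Step 5 using the sharp Step-4 bound near the diagonal ($|[a_1]-[a_2]| = O(\ell_t^*)$) and the improved off-diagonal estimate \eqref{53} far from the diagonal. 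In particular, \eqref{Eq:Gdecay} is established \emph{before} \eqref{Eq:Gtlp_exp} in the paper's step order, not after --- it feeds into Step 6, not vice versa. Your proposal treats the decay as a ``decoration'' of the expectation argument, which reverses the logical dependency.

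\smallskip

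\textbf{Minor:} the source of the $\kappa^{-1/2}$ in \eqref{Eq:Gtlp_exp} is not the log-divergent time integral as you suggest; it comes from the identity $\eta_u^{-1}\asymp[(1-u)\sqrt\kappa]^{-1}$ in the error estimates \eqref{jdasfuao02}--\eqref{jdasfuao03}, with the time integral then controlled by the kernel factor $(\ell_u^d(1-u)/(\ell_t^d(1-t)))^2$ as in \eqref{eq:EKL_added}.
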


We define a deterministic control parameter $\Psi_t(\sE)$ as  
\[\Psi_t(\sE):=\sqrt{\frac{\sqrt{\kappa(\sE)}}{W^d\ell_t^d\eta_t(\sE)}} . 
\]
 Note that under \eqref{eq:small_para} and \eqref{eq:small_para2}, we have \be\label{eq;smallpsi}\Psi_t(\sE)\lesssim W^{-(\fd\wedge \fc)/4}\sqrt{\kappa(\sE)}\asymp W^{-(\fd\wedge \fc)/4}\im m(\sE).\ee  
\begin{theorem}[Local law]\label{ML:GtLocal}
In the setting of \Cref{ML:GLoop}, the following local laws hold uniformly in $t\in [0,t_0]$: 
\begin{align}\label{Gt_bound}
 \|G_{t,\sE}-M(\sE)\|_{\max} &\prec \Psi_t(\sE), \\
 \max_{[a]}\left| \avg{\p{G_{t,\sE}-M(\sE)}E_{[a]}}\right|&\prec \Psi_t(\sE)^2/\omega_t(\sE).\label{Gt_avgbound}
 \end{align} 
Note that the averaged local law \eqref{Gt_avgbound} gives a slightly stronger bound than the one-loop estimate in \eqref{Eq:L-KGt} with $\fn=1$, improved by a factor of $\sqrt{\kappa}/\omega_t(\sE)$. 
\end{theorem}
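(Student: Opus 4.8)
The plan is to derive the local law \Cref{ML:GtLocal} as a consequence of the $G$-loop estimates in \Cref{ML:GLoop} and \Cref{ML:GLoop_expec}, following the general philosophy of \cite{Band1D}: a single matrix entry of $G_{t,\sE}-M$ can be controlled by a $2$-$G$ loop through Ward's identity, and the averaged local law comes from a more refined analysis combining the $2$-loop expectation bound with a fluctuation (second-moment) estimate. Concretely, first I would establish the \emph{entrywise} bound \eqref{Gt_bound}. Writing $\mathring G_{t}:=G_{t,\sE}-M(\sE)$, one uses the standard identity relating $|(\mathring G_t)_{xy}|^2$ (after a block average) to the $2$-$G$ loop $\cL^{(2)}_{t,(+,-),([a],[b])}$, together with the diagonal estimate: Ward's identity gives $\sum_x |G_{xy}|^2 = \eta_t^{-1}\im G_{yy}$, so averaging over blocks and invoking \eqref{Eq:L-KGt2} with $\fn=2$ yields $W^{-d}\sum_{x\in[a]}|G_{xy}|^2 \prec \sqrt{\kappa}\,(W^d\ell_t^d\eta_t)^{-1}$. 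Combined with the off-diagonal decay \eqref{Eq:Gdecay} and a dyadic decomposition over the blocks $[b]$ at distance $\asymp 2^j$ from $[a]$ (using the exponential factor $\exp(-|[a]-[b]|^{1/2}/\ell_t^{1/2})$ to localize the sum to blocks within distance $\lesssim \ell_t(\log W)^2$ of $[a]$), one extracts the pointwise bound $\max_{x,y}|(\mathring G_t)_{xy}|^2 \prec \frac{\sqrt\kappa}{W^d\ell_t^d\eta_t} + (\text{lower order})$, i.e. $\|\mathring G_t\|_{\max}\prec \Psi_t$. The subtlety here is that $\Psi_t$ has two terms: the leading $\sqrt{\sqrt\kappa/(W^d\ell_t^d\eta_t)}$ comes from the diagonal/Ward contribution as just described, while the second term $\sqrt\kappa/(W^d\ell_t^d\eta_t\omega_t)$ arises as the genuine size of the fluctuation of the $2$-loop around its deterministic limit $\cK^{(2)}$, and one checks $\cK^{(2)}_{t,(+,-)}\asymp \sqrt\kappa/(W^d\ell_t^d\eta_t\omega_t)$ from \eqref{Kn2sol} and \Cref{lem_propTH}.

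Second, for the averaged local law \eqref{Gt_avgbound} I would write $\avg{(\mathring G_t)E_{[a]}} = W^{-d}\sum_{x\in[a]}(G_{xx}-m)$ and split it into its expectation plus its fluctuation. The expectation is controlled by the $1$-loop component of \Cref{ML:GLoop_expec}-type reasoning; more efficiently, one notes $\avg{(\mathring G_t)E_{[a]}}$ is (up to lower order) a diagonal specialization that relates to $\cL^{(2)}$ through the loop hierarchy / self-consistent equation, and the improved $2$-loop expectation bound \eqref{Eq:Gtlp_exp} feeds in: $|\E\avg{(\mathring G_t)E_{[a]}}| \prec \kappa^{-1/2}(W^d\ell_t^d\eta_t)^{-2}\cdot(\text{something})$, which after bookkeeping with the factors of $\sqrt\kappa$ and $\omega_t$ is $\lesssim \Psi_t^2/\omega_t$. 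For the fluctuating part, one uses the martingale decomposition / cumulant expansion (or directly the SDE \eqref{eq:SDE_Gt} for $G_t$ and Itô isometry) to bound the variance of $\avg{(\mathring G_t)E_{[a]}}$ by a sum over $x,y$ of $|G_{xy}|^2$-type quantities, i.e. again by $2$- and higher $G$-loops, giving $\var(\avg{(\mathring G_t)E_{[a]}}) \prec \Psi_t^4/\omega_t^2$, so that a high-moment Markov argument upgrades this to $\prec$. Matching the two terms of $\Psi_t^2/\omega_t$ against the two sources (expectation from $\cK^{(2)}$-size, fluctuation from the $(\cL-\cK)$-loop bound) is the combinatorial heart of the estimate.

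The main obstacle I expect is not any single inequality but the \emph{uniformity in $t\in[0,t_0]$} together with the sharp tracking of the parameters $\kappa,\eta_t,\omega_t,\ell_t$ through the transition time $\tk$. Recall $\eta_t\asymp(1-t)\sqrt\kappa$, $\omega_t=|1-t|+\sqrt\kappa$, and $\ell_t=\min(|1-t|^{-1/2},n)$, so for $t$ far from $1$ one has $\omega_t\asymp 1-t$ and $\ell_t\asymp(1-t)^{-1/2}$, while near $t=t_0$ (where $1-t_0\asymp\eta/\sqrt\kappa$ can be as small as $N^\fd\eta_*/\sqrt\kappa$) the smallness parameter $W^d\ell_t^d\eta_t\sqrt\kappa$ is only bounded below by $W^{(\fd/2)\wedge\fc}$ (cf.\ \eqref{eq:small_para}--\eqref{eq:small_para2}), so there is very little room. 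One must verify that at every $t$ the $2$-loop bound \eqref{Eq:L-KGt2} and the decay \eqref{Eq:Gdecay} are strong enough to beat the $W^\tau$ losses in the $\prec$ notation when summed over the $\asymp \ell_t^d$ relevant blocks and the $N$ choices of $x$, and that the two terms of $\Psi_t$ really do dominate all error contributions (in particular that the crossover between the square-root term and the linear term happens correctly as $t$ varies). A secondary technical point is justifying the reduction of diagonal entries $G_{xx}-m$ to $2$-loops with the correct error: this uses the self-consistent / fluctuation-averaging structure already present in the loop hierarchy \Cref{lem:SE_basic}, and one has to make sure the $\cal W^{(\fn)}$ and martingale terms there, when specialized, do not contribute more than $\Psi_t^2/\omega_t$. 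Given that all the ingredients—Ward's identity for $G$-loops \eqref{WI_calL}, the $\cK$-loop size bound \eqref{eq:bcal_k}, the $\Theta$-propagator decay \Cref{lem_propTH}, and the $2$-loop estimates—are already in hand, the argument is essentially bookkeeping, but bookkeeping that must be done carefully and uniformly.
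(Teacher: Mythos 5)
Your high-level picture is right --- both estimates in \Cref{ML:GtLocal} are deduced from the $2$-$G$ loop bound, with the entrywise law at the ``square root'' level and the averaged law one order better thanks to fluctuation averaging --- but the proposal misses the single tool the paper actually uses, and one of your heuristics for the shape of $\Psi_t$ is off.

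The paper's proof (inside Step 2 of \Cref{lem:main_ind}) is essentially one line: once the $2$-loop bound $\max_{[a],[b]}\cL^{(2)}_{t,(+,-),([a],[b])}\prec \Phi_t^2:=\sqrt\kappa/(W^d\ell_t^d\eta_t)$ is in hand (as in \eqref{lk2safyas}), both \eqref{Gt_bound} and \eqref{Gt_avgbound} follow by plugging this $\Phi_t$ into \Cref{lem G<T2}, which gives $\|G_t-M\|_{\max}\prec\Phi_t+\Phi_t^2/\omega_t$ and $|\avg{(G_t-M)E_{[a]}}|\prec\Phi_t^2/\omega_t+\Phi_t^4/\omega_t^3$, and one checks these match $\Psi_t$ and $\Psi_t^2/\omega_t$. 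Your proposal never invokes \Cref{lem G<T} or \Cref{lem G<T2}, and this is a genuine gap, not a stylistic difference: a block-averaged bound on $W^{-2d}\sum_{x\in[a],y\in[b]}|G_{xy}|^2=\cL^{(2)}$ does \emph{not} by itself control a single entry $|G_{xy}|$, so your ``standard identity relating $|(\mathring G_t)_{xy}|^2$ (after a block average) to the $2$-$G$ loop'' followed by a dyadic decomposition over \emph{blocks} does not close the argument --- the worry is concentration within one block, and the upgrade from block average to pointwise is exactly the $T$-variable resolvent-identity machinery of \Cref{lem G<T} (and \Cref{lem_GbEXP}). Likewise, for the averaged law the extra order is supplied by the fluctuation averaging in \Cref{lem G<T2}, not by the route you sketch.

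Two further points. First, you attribute the second term $\sqrt\kappa/(W^d\ell_t^d\eta_t\omega_t)$ of $\Psi_t$ to ``$\cK^{(2)}_{t,(+,-)}\asymp\sqrt\kappa/(W^d\ell_t^d\eta_t\omega_t)$''; this is off by a factor of $\omega_t$ --- from \eqref{Kn2sol}, \eqref{prop:ThfadC}, and $\eta_t\asymp(1-t)\sqrt\kappa$ one has $\cK^{(2)}\asymp (W^d\ell_t^d|1-t|)^{-1}\asymp\sqrt\kappa/(W^d\ell_t^d\eta_t)=\Phi_t^2$. The true origin of the second term of $\Psi_t$ is the propagation of $\Phi_t^2$ through $\|(1-tm^2S)^{-1}\|_{\infty\to\infty}\prec\omega_t^{-1}$ in the self-consistent equation for the diagonal entries, i.e.\ the $\Phi_t^2/|1-t m_t^2|$ correction in \eqref{GiiGEX2}. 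Second, your plan for the averaged law --- split into $\E$ (controlled by \eqref{Eq:Gtlp_exp}) plus a fluctuation controlled by It\^o/variance --- is both harder and, within the paper's inductive scheme, circular: in \Cref{lem:main_ind} the averaged local law at time $t$ is established in Step 2, while \eqref{Eq:Gtlp_exp_flow} is only established afterward in Step 6 \emph{using} the averaged law as input. Also, a variance bound plus Markov does not give $\prec$; you would need all high moments, which is essentially re-deriving the fluctuation averaging lemma from scratch. Sticking to the $2$-loop max bound plus \Cref{lem G<T2} avoids all of this.
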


Before proceeding to the proofs of Theorems \ref{ML:GLoop}--\ref{ML:GtLocal}, we use them to complete the proofs of our main results, \Cref{thm_locallaw,thm_diffu,thm:QUE}.

\subsection{Proof of \Cref{thm_locallaw,thm_diffu,thm:QUE}}

For a fixed $z=E+\ii \eta \in \in \mathbf D_{C_0,\fd}(\fc)$, we can choose the deterministic flow as in \Cref{zztE} such that \eqref{eq:zztE} and \eqref{GtEGz} hold. 
By \eqref{square_root_density}, \eqref{eq:E0bulk}, and \eqref{eq:kappat}, we notice that $\eta_{t_0(z)}(\sE)\asymp\eta$, $\omega_{t_0}(\sE)\asymp \sqrt{\kappa_E+\eta}$, \smash{$\sqrt{\kappa(\sE)}\asymp \im m(z)$}, and the function $\ell(z)$ defined in \eqref{eq:elleta} is of the same order as $\ell_{t_0(z)}$ defined in \eqref{eq:ellt}.  
With these estimates, we observe that \eqref{Gt_bound} gives the entrywise local law \eqref{locallaw}, \eqref{Gt_avgbound} implies the averaged local law \eqref{locallaw_aver}, \eqref{ML:GLoop} (with $\fn=2$) provides the quantum diffusion estimates \eqref{eq:diffu1} and \eqref{eq:diffu2}, and \eqref{Eq:Gtlp_exp} yields the expected quantum diffusion estimates \eqref{eq:diffuExp1} and \eqref{eq:diffuExp2}.
These estimates are initially established at each \emph{fixed $z$}. To extend them uniformly over all $z$, we apply a standard argument based on an $N^{-C}$-net, union bound, and perturbation estimates, whose details are omitted here. This completes the proofs of \Cref{thm_locallaw,thm_diffu}.


\Cref{thm:QUE} is an immediate consequence of \Cref{thm_diffu}. We first prove \eqref{Meq:QUE} under the condition \smash{$W\ge L^{1-d/6+\e_0}$}. For $k\in \qqq{N}$, we choose $z=\gamma_k+\ii \eta$, where $\eta=W^\fd N^{-2/3}$ for a small constant $0<\fd<\e_0/2$. 
Using \eqref{eq:gammak_location}, we can check that $\eta \ge W^{\fd/4}\eta_*(\gamma_k)$.
Furthermore, with the estimate \eqref{square_root_density}, we can check that $\ell(z)\ge n$ under the condition $W\ge L^{1-d/6+\e_0}$, so we have $W^d\ell(z)^d=N$.
Now, with the spectral decomposition of $G(z)$ and the eigenvalue rigidity estimate \eqref{eq:rigidity} or \eqref{eq:rigidityd=2} for $\lambda_k$, we obtain that
\begin{align}\label{ssfa2}
&  \left|\bu_k^*\left(E_{[a]}-N^{-1}\right) \bu_k\right|^2 \prec \eta^4 \sum_{i, j} \frac{\left|\bu_i^*\left(E_{[a]}-N^{-1}\right) \bu_j\right|^2}{\left|\lambda_i-z\right|^2\left|\lambda_j-z\right|^2}\nonumber\\
& = 
\eta^2\tr \left[\operatorname{Im} G(z)\left(E_{[a]}-N^{-1}\right) \operatorname{Im} G(z)\left(E_{[a]}-N^{-1}\right)\right].
\end{align}
The expectation of the RHS can be written as 
\begin{align}\label{que0} 
&~\frac{\eta^2}{n^{2d}}\sum_{[b],[b']\in \Zn}
\mathbb{E}  
 \tr \left[\operatorname{Im} G\left(E_{[a]}-E_{[b]}\right) \operatorname{Im} G\left(E_{[a]}-E_{[b']}\right)\right] \nonumber
\\ 
\lesssim &~ \eta^2\max_{[a],[b],[b']}
\Big|\mathbb{E}\tr \left(\p{\operatorname{Im} G} E_{[a]}  \p{\operatorname{Im} G} E_{[b]} \right)
-\mathbb{E}\tr \left(\p{\operatorname{Im} G} E_{[a]}  \p{\operatorname{Im} G} E_{[b']}\right) \Big|. 
\end{align}
Expanding $\im G$ as $(G-G^*)/(2\ii)$ and recalling \eqref{eq:avg_GTheta} and \eqref{Kn2sol}, we can bound \eqref{que0} by 
\begin{align*}
I_1+I_2:=&~\eta^2 \max_{[a],[b]}\max_{\sig_1,\sig_2} \left|\E (\cL-\cK)^{(2)}_{t_0,(\sig_1,\sig_2),([a],[b])}\right| \\
&~+ \frac{\eta^2}{W^d}  \max_{[a],[b],[b']}\max_{\sig_1,\sig_2} \left|\Theta_{t_0,[a][b]}^{(\sig_1,\sig_2)}-\Theta_{t_0,[a][b']}^{(\sig_1,\sig_2)}\right| . 
\end{align*}

Applying the QUE estimates \eqref{eq:diffuExp1} and \eqref{eq:diffuExp2}, we can bound $I_1$ as follows for any small constant $\tau>0$:  
\be\label{eq:I1} I_1 \le \frac{W^\e \eta^2}{(W^d\ell(z)^d\eta)^3 \im m(z)} \lesssim \frac{1}{N^2}\cdot \frac{W^\tau}{N\eta \sqrt{\kappa_{\gamma_k}+\eta}} 
\lesssim \frac{W^{-3\fd/2+\tau}}{N^2} \, ,
\ee
where in the second step we used \eqref{square_root_density} and $W^d\ell(z)^d=N$. To bound $I_2$, we apply the estimate \eqref{prop:BD1} (by choosing the flow as described in \Cref{zztE} and applying this estimate at time $t_0$). 
Using \eqref{prop:BD1} and $W\ge L^{1-d/6+\e_0}$, we can get the following bounds: when $d=2$,
\be\label{eq:diffsig1}
\frac{\eta^2}{W^d}  \max_{[a],[b],[b']}\max_{\sig_1,\sig_2} \left|\Theta_{t_0,[a][b]}^{(\sig_1,\sig_2)}-\Theta_{t_0,[a][b']}^{(\sig_1,\sig_2)}\right| \prec \frac{\eta^2}{W^{d}} \lesssim \frac{W^{-2\e_0+2\fd}}{N^2} \, ;
\ee
when $d=1$, we have that for any constant $D>0$,
\be\label{eq:diffsig2}
\frac{\eta^2}{W^d}  \max_{[a],[b],[b']}\max_{\sig_1,\sig_2} \left|\Theta_{t_0,[a][b]}^{(\sig_1,\sig_2)}-\Theta_{t_0,[a][b']}^{(\sig_1,\sig_2)}\right|  \prec \frac{\eta^2 n}{W} \lesssim \frac{W^{-2\e_0+2\fd}}{N^2} \, .
\ee
Combining \eqref{eq:I1}--\eqref{eq:diffsig2}, we get that $ I_1+I_2 \prec W^{-\fd}/N^2$ since $\tau$ can be arbitrarily small. Together with \eqref{ssfa2} and \eqref{que0}, it gives that 
$$ \E  \bigg|\frac{N}{W^{d}}\sum_{x\in[a]}|\bu_k(x)|^2 -1 \bigg|^2= \E \left|N\cdot \bu_k^*\left(E_{[a]}-N^{-1}\right) \bu_k\right|^2 \prec  W^{-\fd} .
$$
Then, applying Markov's inequality and taking $c$ sufficiently small depending on $\fd$, we conclude \eqref{Meq:QUE}. The proof of \eqref{Meq:QUE2} follows similarly---we simply replace $E_{[a]}$ in \eqref{ssfa2} with \smash{$|A|^{-1}\sum_{[a]\in A} E_{[a]}$}, after which all subsequent arguments remain valid. 

The proofs of the estimates \eqref{Meq:QUE_weak} and \eqref{Meq:QUE2_weak} follow a similar strategy and are nearly identical to those of \cite[Theorem 2.4]{Band1D} and \cite[Theorem 2.2]{RBSO1D}. Therefore, we omit the details. This concludes the proof of \Cref{thm:QUE}.


\subsection{Strategy for the proofs of \Cref{ML:GLoop,ML:GLoop_expec,ML:GtLocal}}\label{subsec:mainproofs}

We now outline the strategy for the proofs of Theorems \ref{ML:GLoop}--\ref{ML:GtLocal}. First, these theorems have already been established in \cite{Band1D,Band2D} uniformly for all flow parameters in the bulk regime, i.e., for $\sE\in [-2+c,2-c]$, where $c>0$ is an arbitrarily small constant. Hence, our focus will be on the edge regime, where $|\sE-2|\ll 1$. Moreover, the proof for the case $\sE\notin [-2,2]$ proceeds in exactly the same way as for $\sE\in [-2,2]$, with the only difference being that we invoke \eqref{eq:small_para2} in place of \eqref{eq:small_para} at various points. Finally, by symmetry, all results established for $\sE\ge 0$ also hold for $\sE\le 0$. Thus, for clarity of presentation and without loss of generality, we adopt the following simplifying assumptions throughout the proof.


\begin{assumption}\label{simple_assm}
We assume that $\sE\ge 0$ and $\kappa=|\sE-2|\le c_\sE$ for a small constant $ c_\sE \in (0,10^{-2})$. 
Additionally, we assume that the target spectral parameter $z=E+\ii\eta$ lies inside the support $[-2,2]$ of the semicircle law, so that the estimate in \eqref{eq:E0bulk} corresponding to the $E\in [-2,2]$ case holds.
\end{assumption}

At $t=0$, we have $G_{0}(\sigma)=M(\sigma)$ for $\sig\in\{+,-\}$. Together with Definitions \ref{Def:G_loop} and \ref{Def_Ktza}, it implies that for any fixed $\fn\in \N$:   
$${\cL}^{(\fn)}_{0, \boldsymbol{\sigma},\ba}= {\cK}^{(\fn)}_{0, \boldsymbol{\sigma},\ba} = \cM^{(\fn)}_{ \boldsymbol{\sigma},\ba},\quad \forall \; \boldsymbol{\sigma}\in \{+,-\}^\fn,\ \ \ba\in (\Zn)^\fn. 
$$
To extend the proof to $t\in [0,t_0]$ and to the edge regime, where $\sE \in [2 - c_{\sE}, 2]$ (recall that $c_{\sE}$ is the constant in \Cref{simple_assm}), we establish a key theorem, \Cref{lem:main_ind}. This theorem allows us to extend the estimates \eqref{Eq:L-KGt},  \eqref{Eq:Gtlp_exp}, \eqref{Eq:Gdecay}, \eqref{Gt_bound}, and \eqref{Gt_avgbound} inductively along the flow to all $t\in[0,t_0]$, and from the bulk regime to the edge. We first define a lattice for the flow parameter $\sE$: given $\sE_0\in [2-c_\sE,2]$, we set
\be\label{eq:lattice}
\sE_{k}:=\sE_0 -  c_\sE k /N^{10},\quad k=0,1,\ldots, N^{10} \, .
\ee
We will analyze the loop hierarchy \eqref{eq:mainStoflow} along the flows defined by these discretized values of the flow parameter. 
In the proof, if a function (e.g., $\kappa$, $z_t$, $E_t$, $\eta_t$, $\omega_t$, $G_t$, $m$, $M$, \smash{$\hell_t$}, $\cal L$, and $\cK$) contains an argument $\sE$, it is understood to be defined along the flow with flow parameter $\sE$. At times, to simplify the notation, we will also place $\sE$ as a superscript or subscript.

\begin{theorem}\label{lem:main_ind}
Suppose the assumptions of \Cref{ML:GLoop} hold.
In addition, suppose the simplified assumptions in \Cref{simple_assm} hold. 
Assume in addition that the estimates \eqref{Eq:L-KGt}, \eqref{Eq:Gtlp_exp}, \eqref{Eq:Gdecay}, \eqref{Gt_bound}, and \eqref{Gt_avgbound} hold at some fixed $s\in [0,t_0]$, uniformly for all $\sE\in[0,2-c_\sE] \cup\{\sE_k:k=0,1,\ldots, N^{10}\}$, that is:
\begin{itemize}
\item[(a)] {\bf $G$-loop estimate}: For each fixed $\fn\in \N$, we have
\begin{align}\label{Eq:L-KGt+IND}
 \max_{\boldsymbol{\sigma}, \ba}\left|{\cal L}^{(\fn)}_{s, \boldsymbol{\sigma}, \ba}(\sE)-{\cal K}^{(\fn)}_{s, \boldsymbol{\sigma}, \ba}(\sE)\right|&\prec \big(W^d\ell_{s}^d\eta_{s}(\sE)\big)^{-\fn}.
\end{align}

\item[(b)] {\bf 2-$G$ loop estimate}: 
For $\boldsymbol{\sigma}\in\{(+,-),(-,+)\}$ and $ \ba=([a_1],[a_2]),$ and for any large constant $D>0$,
\be\label{Eq:Gdecay+IND}
\qquad \left| {\cal L}^{(2)}_{s, \boldsymbol{\sigma}, \ba}(\sE)-{\cal K}^{(2)}_{s, \boldsymbol{\sigma}, \ba}(\sE)\right|\prec \frac{e^{- (\left|[a_1]-[a_2]\right|/\ell_s)^{1/2}}}{\p{W^d\ell_s^d\eta_s(\sE)}^{2}}+W^{-D} . 
\ee
\item[(c)] {\bf Local law}: The following entrywise and averaged local laws hold:
\begin{align} \label{Gt_bound+IND}
 \|G_{s,\sE}-M(\sE)\|_{\max} &\prec \Psi_s(\sE)\, ,\\
  \max_{[a]}\left| \avg{\p{G_{s,\sE}-M(\sE)}E_{[a]}}\right| &\prec \Psi_s(\sE)^2/\omega_s(\sE)\, .\label{Gt_avgbound+IND}
\end{align}

\item[(d)] {\bf Expected $2$-$G$ loop estimate}:
 \be \label{Eq:Gtlp_exp+IND}
\max_{\boldsymbol{\sigma}, \ba}\left|\mathbb E{\cal L}^{(2)}_{s, \boldsymbol{\sigma}, \ba}(\sE)-{\cal K}^{(2)}_{s, \boldsymbol{\sigma}, \ba}(\sE)\right|\prec \kappa(\sE)^{-1/2}\big({W^d\ell_s^d\eta_{s}(\sE)}\big)^{-3} \, .
\ee
\end{itemize}
Then, for any $t\in [s,t_0]$ satisfying that
\begin{equation}\label{con_st_ind}
W^{-\frac{\fd\wedge\fc}{100}} \le  \frac{1-t}{1-s} < 1, 
\end{equation}
the estimates \eqref{Eq:L-KGt}, \eqref{Eq:Gtlp_exp}, \eqref{Eq:Gdecay}, \eqref{Gt_bound}, and \eqref{Gt_avgbound} hold uniformly for all $\sE=\sE_k$ with $k \in \qqq{0, N^{10}}$. 
\end{theorem}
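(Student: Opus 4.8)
\textbf{Proof proposal for \Cref{lem:main_ind}.}

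The plan is to run the stochastic flow \eqref{eq:mainStoflow} from time $s$ to time $t$ and propagate all five families of estimates simultaneously by a continuity/bootstrap argument along the discretized lattice $\{\sE_k\}$, using that the $G$-loops are well-approximated by the $\cK$-loops and that the $\cK$-loop bounds \eqref{eq:bcal_k} and sum-zero properties (\Cref{ML:Kbound}, \Cref{sum-zero}) are already available. First I would fix the final time $t$ with \eqref{con_st_ind}, and set up a bootstrap: assume that on $[s,t]$ the estimates \eqref{Eq:L-KGt}--\eqref{Gt_avgbound} hold with an enlarged constant (say with $W^\e$ replaced by $W^{2\e}$ or with an extra $(\log N)^C$), and show that the loop hierarchy forces them to hold with the original constants. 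The condition \eqref{con_st_ind} guarantees that $1-t \asymp 1-s$ up to a factor $W^{O(\fd\wedge\fc)}$, hence $\eta_t\asymp\eta_s$, $\ell_t\asymp\ell_s$, $\omega_t\asymp\omega_s$ and $\Psi_t\asymp\Psi_s$ along the whole sub-flow, so all target bounds are comparable to the input bounds at $s$; the content is to control the \emph{accumulated error} over the short time interval $t-s \lesssim W^{-\Omega(\fd\wedge\fc)}(1-s)$.

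The main steps, in order: (i) \emph{Continuity estimate.} Show a priori that $\cL^{(\fn)}_{t,\bsig,\ba}$ does not change too much over $[s,t]$ — this is where I would invoke the continuity argument referenced for \Cref{Sec:Step1}: bound the martingale term $\cB^{(\fn)}$ by its quadratic variation (which, by Ward's identity \eqref{WI_calL} and the input bounds, is controlled by products of lower-order loops) and use the BDG inequality, together with a crude deterministic bound on the drift obtained by plugging the bootstrap hypothesis into \eqref{eq:mainStoflow} and \eqref{def_EwtG}. (ii) \emph{Loop hierarchy analysis.} Write $\cL^{(\fn)}_{t}-\cK^{(\fn)}_{t}$ using Duhamel with the primitive equation \eqref{pro_dyncalK} subtracted from \eqref{eq:mainStoflow}: the difference satisfies an SDE whose drift is a sum of (a) the ``non-primitive'' terms $\cW^{(\fn)}$, which are cubic-in-smallness via the averaged local law \eqref{Gt_avgbound} — this is exactly the self-energy/self-consistent cancellation that makes $\avg{(G-M)E_{[a]}}$ gain an extra $\sqrt\kappa/\omega_t$ — and (b) bilinear terms $\cutL\circ(\cL-\cK)\cdot S^{\LK}\cdot \cutR\circ\cK + \cK\cdot S^{\LK}\cdot(\cL-\cK)$ plus $(\cL-\cK)\cdot S^{\LK}\cdot(\cL-\cK)$. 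Estimate these using the input bounds \eqref{Eq:L-KGt+IND}, the $\cK$-loop bounds \eqref{eq:bcal_k}, the $\Theta$-propagator decay (\Cref{lem_propTH}), and crucially the sum-zero property \eqref{sum-zero-property} to control the ``long'' propagator sums without losing powers of $1/\omega_t$. (iii) \emph{Self-improvement.} Integrate the Duhamel representation in $t$; because $t-s$ is polynomially small in $W$ relative to the relevant scales, the time integral of the bootstrap-sized drift is smaller than the target bound by $W^{-\Omega(\fd\wedge\fc)}$, closing the bootstrap. (iv) \emph{Expectation bound and decay.} For \eqref{Eq:Gtlp_exp} and \eqref{Eq:Gdecay}, repeat the Duhamel analysis for the $\fn=2$ loop, now taking expectations so that the martingale term drops and only the (smaller) drift contributes, and carry the exponential factor $\exp(-|([a_1]-[a_2])/\ell_t|^{1/2})$ through the convolution with $\Theta_t$ using \eqref{prop:ThfadC}. (v) \emph{Entrywise local law.} Derive \eqref{Gt_bound} from the $\fn=1,2$ loop estimates together with a fluctuation-averaging/Schur-complement step as in \cite{Band1D}: the $2$-loop estimate controls $\sum_x|G_{xy}|^2$ blockwise via Ward, and a standard bootstrap on $\|G-M\|_{\max}$ upgrades the averaged control to entrywise control. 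Finally, propagate along the net $\{\sE_k\}$: since consecutive $\sE_k$ differ by $c_\sE N^{-10}$, the loops are Lipschitz in $\sE$ with polynomial-in-$N$ constant, so a union bound over $k\in\qqq{0,N^{10}}$ together with the single-$\sE$ estimates gives uniformity.

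The hard part will be step (ii)(b): controlling the bilinear and quadratic self-interaction terms in the loop hierarchy without degrading the $\sqrt\kappa$ and $\omega_t$ powers that are essential near the edge (where $\kappa$ and $\omega_t$ are small). Unlike the bulk case of \cite{Band1D}, here $\omega_t=|1-t|+\sqrt\kappa$ can be much smaller than $1$, so naive bounds on $\cutL\circ\cL\cdot S^{\LK}\cdot\cutR\circ\cK$ lose powers of $\omega_t^{-1}$; the resolution must come from exploiting the sum-zero property \eqref{sum-zero-property} and the second-order finite-difference bound \eqref{prop:BD2} for $\Theta_t$ systematically, exactly as in the proof of \Cref{ML:Kbound}, to extract one ``free'' factor of $1-t$ (equivalently $\omega_t$) from each long-edge summation. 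A secondary difficulty is ensuring the time-integrated error genuinely beats the target: this forces the quantitative choice of the exponent $\fd\wedge\fc/100$ in \eqref{con_st_ind} and must be tracked carefully so that the many applications of the hierarchy (over $\fn$ up to any fixed value, and over the $N^{10}$ lattice points) do not accumulate into a loss. I expect the detailed bookkeeping of these power counts — isolating where each $\sqrt\kappa$, $\omega_t$, $\ell_t$ and $W^d$ comes from — to be the technical heart of \Cref{Sec:Stoflo}.
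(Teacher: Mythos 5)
Your proposal captures the overall architecture of the paper's proof: a continuity estimate to bridge $[s,t]$, a Duhamel representation of the $(\cL-\cK)$-loop hierarchy against the evolution kernel, sum-zero and smoothing properties of the $\Theta$-propagators to avoid losing powers of $\omega_t$, and extraction of the local laws from $2$-loop bounds via \Cref{lem G<T,lem G<T2}. The sequencing in the paper is a six-step ladder (a priori loop bound $\to$ sharp local law and a priori $2$-loop decay $\to$ sharp $G$-loop bound $\to$ sharp $(\cL-\cK)$ limit $\to$ sharp $2$-loop decay $\to$ expectation bound), which is compatible with your plan. However, there are two places where your sketch is either off or misses the new edge-specific ingredient.

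First, Step~1 in the paper is \emph{not} a time-bootstrap of the kind you describe (assume enlarged constants on $[s,t]$, integrate the drift, close). \Cref{lem_ConArg} instead compares the $G$-loops at $(t,\sE)$ directly to $G$-loops at $(s,\wt\sE)$ for a suitably shifted flow parameter $\wt\sE$ given by \eqref{wtsE}, using the in-distribution identity \eqref{eq:wtGt1} (which exploits $H_t\stackrel{d}{=}\sqrt{t/s}\,H_s$), the resolvent identity \eqref{GGZGG}, the expansion \eqref{abform1}, and the linear-algebra bound \eqref{lem_L2Loc} applied to the kernel matrices $A^{(l)}$. This is a scaling comparison, not a Duhamel/stopping-time argument; the stopping-time bootstrap you describe does appear, but in Step~2 (the ``forbidden region'' argument \eqref{res_lo_bo_etagen_pf2}--\eqref{res_lo_bo_etagen_pf3}), not in Step~1. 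The distinction matters because the a priori loop bound from Step~1 is an input to the Duhamel analysis, not an output of it.

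Second, and more importantly: your plan treats the partial-sum term $\cal P\circ(\cL-\cK)^{(\fn)}$ as if it can always be controlled by Ward's identity plus the sum-zero property. This works only when the loop $\bsig$ has at least one sign change ($\sig_i\neq\sig_{i+1}$), since Ward's identity \eqref{WI_calL}--\eqref{WI_calK} requires $\sig_1=-\sig_\fn$ at the summed vertex. For a \emph{pure} loop $\bsig=(+,\ldots,+)$, neither Ward's identity nor the sum-zero structure \eqref{sumAzero} is available, and this is precisely where the edge differs from the bulk (in the bulk $\omega_t\asymp 1$ and the naive bound is already good enough). The paper's resolution is \Cref{lem:pure_sum} together with the tree-representation machinery of \Cref{sec:prim}: the pure partial sum equals a $z$-derivative $\frac{\dd^{\fn-1}}{\dd z^{\fn-1}}\avg{(G-M)E_{[a]}}$ (equation \eqref{eq:pure_sum_equiv}), which is then controlled via Cauchy's integral formula on a small contour $\gamma$ around $z_u$ (equation \eqref{eq;Cauchy-int}), together with a fresh continuity estimate for $\avg{(G_u(z)-M_u(z))E_{[a]}}$ at $z\in\gamma$. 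Without this ingredient you cannot close the bound on the parameter $\Xi^+_u$ in \eqref{eq;Xip}, and the iteration in \Cref{lem:STOeq_Qt}/\Cref{lem:iterations} collapses. You correctly flag the power counting in $\omega_t,\sqrt\kappa$ as the technical heart, but the mechanism you propose (sum-zero plus second finite differences) only covers the non-pure charges; the pure-charge case is the genuinely new difficulty near the spectral edge and requires the derivative/Cauchy-integral device.

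A smaller omission: the uniformity over the lattice $\{\sE_k\}$ is not simply ``polynomial Lipschitz plus union bound''; the paper needs a \emph{deterministic} perturbation claim (\Cref{claim:uniform}) because the stochastic-domination statements are only established at isolated flow parameters, and the probability budget for repeated applications of \Cref{lem G<T2} and \Cref{lem_ConArg} is not negligible. This is tracked carefully in the remark after the Step~1 proof.
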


With \Cref{lem:main_ind}, we can obtain Theorems \ref{ML:GLoop}, \ref{ML:GLoop_expec}, and \ref{ML:GtLocal} easily by induction in $t$. 
The proof of \Cref{lem:main_ind} will be divided into six steps, where the details for each step will be provided in Section \ref{Sec:Stoflo}.
We remark that the following estimates, established at each step, hold uniformly in $u\in[s,t]$ and for $\sE=\sE_k$ with $k \in \qqq{0, N^{10}}$ (recall \Cref{stoch_domination}). 
This uniformity follows from a standard $\e$-net argument combined with a {\bf deterministic} perturbation technique (c.f.~the proof of \Cref{claim:uniform} below). 
For simplicity of presentation, we will not always state this uniformity explicitly in the statements and proofs that follow.

\medskip 
\noindent 
\textbf{Step 1} (A priori $G$-loop bound): We will show that $\fn$-$G$ loops ($\fn\ge 2$) satisfy the a priori bound 
 \begin{equation}\label{lRB1}
{\cal L}^{(\fn)}_{u,\boldsymbol{\sigma}, \ba}(\sE) \prec \bigg(\frac{\ell_u^d}{\ell_s^d}\bigg)^{\fn-1} 
  \frac{\sqrt{\kappa(\sE)}} {\p{W^d\ell_u^d\eta_{u}(\sE)}^{\fn-1}},\quad  \forall s\le u\le t.
\end{equation}
Furthermore, the following entrywise local law holds: 
\begin{equation}\label{Gtmwc}
\|G_{u,\sE}-M(\sE)\|_{\max}\prec  \big({{\ell_u^d}/{\ell_s^d}}\big)\cdot \Psi_u(\sE),\quad \forall s\le u\le t \, .
\end{equation}

\smallskip
\noindent 
\textbf{Step 2} (Sharp local law and a priori $2$-$G$ loop estimate): 
The following sharp local laws hold for all $u\in[s,t]$: 
\begin{align}\label{Gt_bound_flow}
     \|G_{u,\sE}-M(\sE)\|_{\max}&\prec \Psi_u(\sE) \, ,\\
     \max_{[a]}\left| \avg{\p{G_{u,\sE}-M(\sE)}E_{[a]}}\right| &\prec \Psi_u(\sE)^2/\omega_u(\sE) \, .\label{Gt_avgbound_flow}
\end{align} 
Hence, the local laws \eqref{Gt_bound} and  \eqref{Gt_avgbound} hold at time $t$. Note that \eqref{Gt_avgbound_flow} directly implies the $G$-loop estimate \eqref{Eq:L-KGt} when $\fn=1$:
\begin{align}\label{Gt_bound_loop1}
\max_{\sig\in\{+,-\}}\max_{[a]}\left|{\cL}^{(1)}_{u, {\sigma}, [a]}-{\cal K}^{(1)}_{u, {\sigma}, [a]}\right|\prec \big(W^d\ell_u^d\eta_u\big)^{-1} \, .
\end{align}
Furthermore, for $\boldsymbol{\sigma}\in\{(+,-),(-,+)\}$ and $ \ba=([a_1],[a_2]),$ and for any large constant $D>0$, the following estimate holds uniformly for all $u\in[s,t]$: 
\begin{equation}\label{Eq:Gdecay_w}
\left| {\cal L}^{(2)}_{u, \boldsymbol{\sigma}, \ba}(\sE)-{\cal K}^{(2)}_{u, \boldsymbol{\sigma}, \ba}(\sE)\right| \prec \left(\frac{1-s}{1-u}\right)^4\frac{e^{- \left(\left|[a_1]-[a_2]\right|/{\ell_u}\right)^{1/2}}}{(W^d\ell_u^d\eta_{u}(\sE))^{2}}+W^{-D} \, .
\end{equation}

   \medskip
 \noindent 
\textbf{Step 3} (Sharp $G$-loop bound): 
 The following bound on $\fn$-$G$ loops holds for each fixed $\fn\ge 2$: 
\begin{equation}\label{Eq:LGxb}
\max_{\bsig\in\{+,-\}^\fn}\max_{\ba}\left| {\cal L}^{(\fn)}_{u, \boldsymbol{\sigma}, \ba}(\sE) \right|
\prec 
  \frac{\sqrt{\kappa(\sE)}}{(W^d\ell_u^d\eta_{u}(\sE))^{\fn-1}} ,\quad \forall s\le u\le t .
\end{equation}

\medskip
 \noindent 
\textbf{Step 4}  (Sharp $(\cL-\cK)$-loop estimate): 
The following estimate on $ ({\cal L}-{\cal K})_u$ holds for each fixed $\fn\ge 2$: 
\begin{align}\label{Eq:L-KGt-flow}
 \max_{\bsig\in\{+,-\}^\fn}\max_{\ba}\left|{\cal L}^{(\fn)}_{u, \boldsymbol{\sigma}, \ba}(\sE)-{\cal K}^{(\fn)}_{u, \boldsymbol{\sigma}, \ba}(\sE)\right|&\prec \big(W^d\ell_u^d\eta_{u}(\sE)\big)^{-\fn},\quad \forall u\in [s,t]\, .
\end{align}
Hence, the $G$-loop estimate \eqref{Eq:L-KGt} holds at time $t$.

\medskip
 \noindent 
\textbf{Step 5}  (Sharp 2-$G$ loop estimate): For $\boldsymbol{\sigma}\in\{(+,-),(-,+)\}$ and $ \ba=([a_1],[a_2]),$ the following estimate holds for any large constant $D>0$:
\begin{equation}\label{Eq:Gdecay_flow}
\left| {\cal L}^{(2)}_{u, \boldsymbol{\sigma}, \ba}(\sE)-{\cal K}^{(2)}_{u, \boldsymbol{\sigma}, \ba}(\sE)\right| \prec   \frac{e^{- \left(\left|[a_1]-[a_2]\right|/{\ell_u}\right)^{1/2}}}{\p{W^d\ell_u^d\eta_{u}(\sE)}^{2}}+W^{-D}. 
\end{equation}
It shows that the estimate \eqref{Eq:Gdecay} holds at time $t$.

\medskip
\noindent 
\textbf{Step 6} (Expected 2-$G$ loop estimate): 
The following estimate holds: 
\begin{equation}\label{Eq:Gtlp_exp_flow}
\max_{\boldsymbol{\sigma}, \ba}\left|\mathbb E{\cal L}^{(2)}_{u, \boldsymbol{\sigma}, \ba}(\sE)-{\cal K}^{(2)}_{u, \boldsymbol{\sigma}, \ba}(\sE)\right|\prec \kappa\p{\sE}^{-1/2} \big(W^d\ell_u^d\eta_{u}\p{\sE}\big)^{-3},\quad  
 \forall s \le u \le t .
\end{equation}
Hence, the estimate \eqref{Eq:Gtlp_exp} holds at time $t$. We remark that in Steps 1--5, the induction hypothesis \eqref{Eq:Gtlp_exp+IND} will not be used, i.e., the estimates \eqref{Eq:L-KGt}, \eqref{Eq:Gdecay},  \eqref{Gt_bound}, and \eqref{Gt_avgbound} still hold at $t$ without assuming  \eqref{Eq:Gtlp_exp+IND}.

For the remainder of the paper, we focus on the proof of \Cref{lem:main_ind}. Compared to the proofs in the bulk regime \cite{Band1D,Band2D}, the primary challenges and distinctions arise in Steps 1 and 3. In Step 1, we develop a new approach to establish continuity estimates for $G$-loops and address the key difficulty posed by the instability of the self-consistent equations near the spectral edge, as mentioned in \Cref{sec:idea}. This step is the main focus of \Cref{Sec:Step1}. In Step 3, we deal with complications related to same-colored propagators \smash{$\Theta_t^{(\sig,\sig)}$} and pure $G$-loops, which will be discussed in detail in \Cref{sec:inductive_step}.


\subsection{Resolvent entry estimates}

Our proof of \Cref{lem:main_ind} depends crucially on the following two lemmas, which provide estimates on the resolvent entries via bounds on 2-$G$ loops. Specifically, \Cref{lem G<T} establishes bounds on both the entrywise and averaged differences between $G$ and $M$ in the $\max$-norm sense, while \Cref{lem_GbEXP} provides a finer estimate on the off-diagonal resolvent entries, which allows us to derive the decay of the off-diagonal resolvent entries from the decay of the 2-$G$ loops. We denote $M_t(z):=m_t(z)I_N$, where $m_t(z)$ is defined in \eqref{self_mt}, and recall that $G_t(z)$ is defined in \eqref{self_Gt}. 

\begin{lemma}\label{lem G<T2}
In the setting of \Cref{thm_locallaw}, fix an arbitrary $z=E+\ii \eta\in \mathbf D_{C_0,\fd}\cup \mathbf D^{\mathrm{out}}_{c_0,\fd}$ and a small constant $\e_0>0$. Let $W^{-d/2}\le \phi_t \le W^{-\e_0}$ be a deterministic control parameter. For any $t\in [0,1]$, under the assumptions
\be\label{initialGT2} 
\|G_{t}(z) - M_t(z) \|_{\max}\prec W^{-\e_0}|1-t(m_t(z))^2|,\ \max_{[a],[b]\in \Zn} \cL^{(2)}_{t,(-,+),([a],[b])}(z) \prec \phi_t^2 \, , 
\ee
the following entrywise and averaged local laws hold:
\begin{align}
\|G_{t}(z)-M_t(z)\|_{\max} &\prec \phi_t + {\phi_t^2}/{|1-t(m_t(z))^2|}  \, ,\label{GiiGEX2}  \\
\max_{[a]} \left|\left\langle  \left(G_{t}(z)-M_t(z)\right)E_{[a]}\right\rangle \right| &\prec {\phi_t^2}/{|1-t(m_t(z))^2|} \, . \label{GavLGEX}
\end{align}
Note that when $z=z_t(\sE)$, we have \[|1-t[m_t(z_t(\sE))]^2|=|1-t[m(\sE)]^2|\asymp (1-t)+\sqrt{\kappa(\sE)}=\omega_t(\sE).\] 
\end{lemma}
\begin{proof}
The proofs of the local laws \eqref{GiiGEX2} and \eqref{GavLGEX} for random band matrices in the literature typically rely on standard resolvent identities, combined with large deviation estimates for linear and quadratic forms of high-dimensional random vectors. This approach is demonstrated in \cite[Sections 5 and 6]{Semicircle} and \cite[Section 4]{Band1D}.
In addition, the proof of the averaged local law \eqref{GavLGEX} requires a fluctuation averaging mechanism; see, for example, \cite[Proposition 3.3]{EKY_Average} and \cite[Theorems 4.6 and 4.7]{Semicircle}.
For the reader’s convenience---and more importantly, to inform our later discussion of the instability issue associated with the self-consistent equations---we now provide further details.

Given the bound on 2-$G$ loops in \eqref{initialGT2}, the off-diagonal entries of $G_t$ can be readily bounded using the techniques in \cite{Semicircle}. In particular, the following estimate was established in equation (4.11) of \cite{Band1D}, on the event $\Omega_{t,\e}(z):=\{\|G_{t}(z)- M_t(z)\|_{\max}\le W^{-\e}\}$, where $\e$ is an arbitrary constant:
\begin{align}
\mathbf 1\p{\Omega_{t,\e}(z)}\cdot \max_{x\ne y}\left|(G_{t}(z))_{xy}\right|^2 \prec \max_{\ba}\big|{\cal L}^{(2)}_{t, (+,-), \ba}(z)\big| + W^{-d}\prec \phi_t^2 \, . \label{GijGEXoff}
\end{align}
Next, using \eqref{GijGEXoff} and the techniques in \cite{Semicircle}, we can show that the diagonal entries of $G_t$ satisfy the following system of self-consistent equations on $\Omega_{t,\e}(z)$:
\be\label{eq:self0}
\frac{1}{(G_t)_{xx}}=- z - t \sum_y S_{xy}(G_t)_{yy}+\OO_\prec(\phi_t)\, .
\ee
Subtracting this system from the self-consistent equation for $m_t\equiv m_t(z)$ yields:
\be\label{eq:self}
\mathbf 1(\Omega_{t,\e}(z)) \sum_y \left(1- t m_t^2 S\right)_{xy}[(G_t)_{yy}-m_t] \prec \phi_t + \Lambda_t(z)^2 \, ,
\ee
where \smash{$\Lambda_t\equiv \Lambda_t(z):=\max_{x}|(G_t(z))_{xx}-m_t(z)|$}. Using a similar estimate to that in \eqref{prop:ThfadC_short}, we find  
\be\label{eq:inftoinf}\big\|\big(1-tm_t^2S\big)^{-1}\big\|_{\infty\to\infty} =\big\|\big(1-tm_t^2S^{\LK}\big)^{-1}\big\|_{\infty\to\infty} \prec |1-tm_t^2|^{-1}.\ee 
Thus, by solving the equation \eqref{eq:self}, we obtain that 
\be\label{eq:self1}
\mathbf 1\p{\Omega_{t,\e}(z)}\cdot  \Lambda_t \prec \phi_t/|1-t(m_t(z))^2| + \Lambda_t^2/|1-t(m_t(z))^2|. \ee
Incorporating the first assumption in \eqref{initialGT2}, we derive from equation \eqref{eq:self1} that 
\be\label{eq:diagonal_bad} 
\Lambda_t \prec \phi_t/|1-t(m_t(z))^2| + W^{-\e_0}\Lambda_t \implies \Lambda_t \prec \phi_t/|1-t(m_t(z))^2| .\ee
(Note the first assumption in \eqref{initialGT2} is not used in the derivation leading to \eqref{eq:self1}.)

To improve the bound \eqref{eq:diagonal_bad} on the diagonal resolvent entries, we employ the fluctuation averaging estimate from \cite[Proposition 3.3]{EKY_Average}. This allows us to derive the following improved estimate for the \emph{average} of equation \eqref{eq:self}:
\be\label{eq:self_improve}
\sum_y \left(1- t m_t^2 S^{\LK}\right)_{[a][b]}\avg{(G_t-M_t)E_{[b]}} \prec \phi_t^2 + \Lambda_t(z)\theta_t(z), \quad \forall [a]\in \Zn,
\ee
where \smash{$\theta_t(z):=\max_{[a]}|\langle(G_t-M_t)E_{[a]}\rangle|$}. Solving equation \eqref{eq:self_improve}, and using  \eqref{eq:inftoinf} and the first assumption in \eqref{initialGT2}, we derive that 
\(\theta_t(z) \prec \phi_t^2/|1-t(m_t(z))^2| ,\) which concludes the proof of \eqref{GavLGEX}. Finally, plugging \eqref{GavLGEX} into \eqref{eq:self0} leads to the bound \(\Lambda_t(z) \prec \phi_t+\phi_t^2/|1-t(m_t(z))^2|\) for the diagonal entries. Combined with \eqref{GijGEXoff}, this completes the proof of \eqref{GiiGEX2}.   
\end{proof}


\begin{lemma}\label{lem_GbEXP}
In the setting of \Cref{lem G<T2}, suppose \eqref{initialGT2} holds. Let $0<\Phi_t([a],[b])\le W^{-\e_0}$ be a set of deterministic control parameters such that 
\be\label{eq:def_Phit} 
{\cal L}^{(2)}_{t, (-,+), ([a], [b])}(z)\prec \Phi_t([a],[b]),\quad \forall [a],[b]\in \Zn \, .\ee
For all $[a]\ne [b]\in \Zn$, the following estimate holds for any large constant $D>0$:
\begin{align}\label{GijGEX}
\max_{x\in [a], y \in [b]} |(G_t(z))_{xy}|^{2}  \prec & \sum_{\substack{|[a']-[a]|\le 1 ,\\ |[b']-[b]|\le 1}}  \Phi_t([a'],[b'])+ W^{-d}\mathbf 1_{|[a]-[b]|\le 1} +W^{-D} \, .
\end{align}
\end{lemma}
\begin{proof}
    It is proven as Lemma 4.1 in \cite{Band1D} for 1D random band matrices using resolvent identities; however, the same arguments apply in 2D. 
\end{proof}

\section{Continuity estimates and local law}\label{Sec:Step1}


This section is devoted to completing Step 1 of the proof of \Cref{lem:main_ind}, as well as to proving \Cref{thm_locallaw_out}. Our approach is based on a novel continuity argument for $G$-loops, along with a flow-based method for establishing the local laws, both of which will be described in detail below.

\subsection{Step 1 in the proof of \Cref{lem:main_ind}}
First, the continuity estimate \eqref{lRB1} for $G$-loops follows directly from the following lemma, whose proof will be given in \Cref{subsec_pf_lem_ConArg}.


\begin{lemma}[Continuity estimate for $G$-loops]\label{lem_ConArg}
Fix any \( c\le s \leq t< 1\) for a constant \(c > 0\). In the flow framework of \Cref{zztE}, assume that the following two bounds hold at time \(s\) for each fixed \(\fn \ge 2\), uniformly for all $\sE\in[-2+c_\sE,2-c_\sE] \cup\{\sE_k:k=0,1,\ldots, N^{10}\}$: 
\begin{equation}\label{55}
\left\|G_{s,\sE}-M(\sE)\right\|_{\max}\prec \Psi_s(\sE), \quad 
\max_{\bsig,\ba}\left|{\cal L}^{(\fn)}_{s, \boldsymbol{\sigma}, \ba}(\sE)\right|\prec  \frac{\sqrt{\kappa(\sE)}}{\left( W^d \ell_{s}^d \eta_{s}(\sE) \right)^{\fn-1} } \, .
\end{equation}
Then, for any $k\in \qqq{0,N^{10}}$ and fixed $\fn\ge 2$, the following estimate holds uniformly for all $u\in [s,t]$: 
\begin{equation}\label{res_lo_bo_eta}
    \max_{\bsig,\ba} \left|{\cal L}_{u, \boldsymbol{\sigma}, \ba}^{(\fn)}(\sE_k)\right| 
    = \bigg(\frac{\ell_{u}^d}{\ell_{s}^d}\bigg)^{\fn-1}  \frac{\sqrt{\kappa(\sE_k)}+\max_{[a]} \left|\avg{\p{\im G_{u,\sE_k}}E_{[a]}}\right|}{\left( W^d \ell_{u}^d \eta_{u}(\sE_k) \right)^{\fn-1} }\, . 
\end{equation}
\end{lemma}


Second, to establish the local law \eqref{Gtmwc} using \Cref{lem G<T2}, we must first verify the assumptions in \eqref{initialGT2}. This, in turn, requires proving a weak entrywise local law, as stated in the following lemma. 

\begin{lemma}\label{lem G<T}
In the setting of \Cref{lem:main_ind}, fix any $0\le s \le t \le t_0$ such that \eqref{con_st_ind} holds. Suppose the estimates in \eqref{55} hold at time $s$, and the estimate \eqref{res_lo_bo_eta} holds uniformly for all $u\in[s,t]$ for $\fn=2$ and an arbitrary $k\in\qqq{0,N^{10}}$. Then, the following local law holds uniformly for all $u\in[s,t]$:
\begin{align}
\|G_{u,\sE_k}-M(\sE_k)\|_{\max} \prec \frac{1-s}{1-t}\Phi_u \, ,\label{GiiGEX}
\end{align}
where we abbreviate that
\[\Phi_u^2:=\frac{\ell_u^d}{\ell_s^d}\frac{\sqrt{\kappa(\sE_k)}}{W^d\ell_u^d\eta_u(\sE_k)}.\]
\end{lemma}

The proof of \Cref{lem G<T} will be presented in \Cref{subsec_pf_lem_G<T} below, where we address the central challenge posed by the instability of the self-consistent equation \eqref{eq:self0} for the diagonal resolvent entries. Before turning to these proofs, we first apply \Cref{lem_ConArg,lem G<T} to complete the first step in the proof of \Cref{lem:main_ind}.


\begin{proof}[\bf Step 1: Proof of \eqref{lRB1} and \eqref{Gtmwc}] 
We begin by establishing an upper bound on $\|H\|$ for the matrix $H$ defined in \eqref{eq:WO}. Using the rigidity of eigenvalues of Wigner matrices \cite{ErdYauYin2012Rig} and the rigidity of singular values of sample covariance matrices \cite{isotropic}, we can bound both the operator norms of the diagonal blocks of $H$ (denoted by $H|_{[x][x]}$) and the off-diagonal blocks (denoted by $H|_{[x][y]}$ for $[x] \sim [y]$) as follows:  
$$ \|H|_{[x][x]}\|\le \frac{2}{\sqrt{1+2d\lambda^2}}+\e ,\quad \|H|_{[x][y]}\|\le \frac{2\lambda}{\sqrt{1+2d\lambda^2}}+\e, $$
with high probability for any constant $\e>0$. Combining these bounds, we conclude that $\|H\| \le C_{d,\lambda}$ with high probability for some constant $C_{d,\lambda} > 2$. In particular, it implies that for all $t\in [0,1]$,
\be\label{eq:largest_eig}
\|H_t\|\le C_{d,\lambda}\sqrt{t} \quad \text{with high probability} \, .
\ee
Since $z_0(\sE)\asymp 1$, by \eqref{eq:largest_eig}, there exists a small enough constant $c_0>0$ such that 
\be\label{eq:largest_eig2}
\|H_t\|\le |z_t(\sE)|-c_0 \ \ \ \text{with high probability for all }  t\in[0, c_0] \, .
\ee
Then, we divide the proof into the following three cases. For simplicity of presentation, we will omit the argument $\sE=\sE_k$ from our notations in the proof. 

\medskip

\noindent {\bf Case 1:} 
$t> s \ge c_0$. Note that under \eqref{eq:small_para}, \eqref{eq:small_para2}, and the condition \eqref{con_st_ind}, we have  
\be\label{eq:verifyT1}\frac{1-s}{1-t}\Phi_{u}\le W^{-(\fd\wedge\fc)/5}\sqrt{\kappa(\sE)},\quad \forall u\in [s,t].\ee  
Then, combining \eqref{GiiGEX} and \eqref{res_lo_bo_eta}, we immediately obtain \eqref{lRB1}.
Next, with \eqref{GiiGEX} and \eqref{lRB1} (when $\fn=2$) verifying the assumptions in \eqref{initialGT2} with $\phi_t=\Phi_t$, we conclude \eqref{Gtmwc} with \Cref{lem G<T2}.  

\medskip
\noindent {\bf Case 2:} $0\le s<t \le c_0$. By \eqref{eq:largest_eig2}, we have that $\|G_{u,\sE}\|\lesssim 1$ with high probability, uniformly for all $u \in [s, t]$. This immediately implies that
\begin{equation}\label{lRB2}
{\cal L}^{(\fn)}_{u,\boldsymbol{\sigma}, \ba} \prec W^{-d(\fn-1)},\quad  \forall s\le u\le t,
\end{equation}
and hence \eqref{lRB1} follows. Then, applying \Cref{lem G<T}, \eqref{lRB2}, and \Cref{lem G<T2} gives 
\be\label{eq:Gtmwcstrong}
\|G_u-M\|_{\max}\prec W^{-d/2},\quad  \forall s\le u\le t,
\ee
which concludes \eqref{Gtmwc}. 


\medskip
\noindent {\bf Case 3:} If $0\le s <c_0 < t$, then we can first show that \eqref{lRB2} and \eqref{eq:Gtmwcstrong} hold for $u\in [s,c_0]$ as in Case 2. Then, applying the argument in Case 1 with $s$ replaced by $c_0$, we can further show that \eqref{lRB1} and \eqref{Gtmwc} hold for $u\in [c_0,t]$. 
\end{proof}

\subsection{Proof of \Cref{lem_ConArg}}\label{subsec_pf_lem_ConArg}
There are some key differences between the proof of \Cref{lem_ConArg} and the argument presented in \cite[section 6]{Band1D}. Therefore, we will present the full details here. To prepare for the proof, we first claim that under the assumptions of \Cref{lem_ConArg}, the estimates in \eqref{55} hold uniformly for all $\sE\in [-2+c_{\sE},\sE_0]$. 
\begin{claim}\label{claim:uniform}
For any $\sE\in [-2+c_{\sE},\sE_0]$, let $\sE'$ denote the closest point to $\sE$ in the set $[-2+c_\sE,2-c_\sE] \cup\{\sE_k:k=0,1,\ldots, N^{10}\}$. Fix an arbitrary constant $\e>0$ and $\fn\in 2\N$. On the events 
\[\Omega_1:=\big\{\|G_{s,\sE'}\|_{\max}\le C_0\big\},\quad \Omega_2:=\big\{\max_{[a]} \left|\avg{\p{\im G_{s,\sE'}}E_{[a]}}\right|\le C_0\sqrt{\kappa(\sE')}\big\}\] 
for a constant $C_0>1$, we have 
\be\label{eq:claim_perturb2}\mathbf 
 1(\Omega_1)\cdot \left\|G_{s,\sE}\right\|_{\max}\le C_0+1 , \quad 1(\Omega_2)\cdot \max_{[a]} \left|\avg{\p{\im G_{s,\sE}}E_{[a]}}\right|\le (C_0+1)\sqrt{\kappa(\sE)} \, .\ee
On the event 
\be\label{eq:wtLboundE'}\Omega:= \bigcap_{2\le k \le 2\fn} \left\{  \max_{\bsig,\ba}\left|{\cal L}^{(k)}_{s, \boldsymbol{\sigma}, \ba}(\sE')\right|\le  \frac{W^\e\sqrt{\kappa(\sE')}}{\left( W^d \ell_{s}^d \eta_{s}(\sE') \right)^{k-1} }\right\}\ee
for a constant $C>0$, we have that 
\be\label{eq:claim_perturb}\mathbf 1(\Omega_1\cap \Omega)\cdot \max_{\bsig,\ba}\left|{\cal L}^{(k)}_{s, \boldsymbol{\sigma}, \ba}(\sE)\right|\le  \frac{2W^\e\sqrt{\kappa(\sE)}}{\left( W^d \ell_{s}^d \eta_{s}(\sE)\right)^{k-1}},\quad \forall 2\le k \le \fn .\ee
\end{claim}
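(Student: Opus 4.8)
\textbf{Proof plan for \Cref{claim:uniform}.}
The plan is to treat $\sE\mapsto G_{s,\sE}$, $\sE\mapsto \eta_s(\sE)$, $\sE\mapsto \kappa(\sE)$, etc.\ as deterministic (in $\sE$) perturbations, and to show that moving the flow parameter by an amount $|\sE-\sE'|\lesssim c_\sE N^{-10}$ changes every quantity appearing in the statement by a negligible factor. First I would record the relevant Lipschitz/comparison facts at the level of the deterministic data. From \eqref{eq:zt}, $\eta_t(\sE)=(1-t)\im m(\sE)$ and $E_t(\sE)=\sE+(1-t)\re m(\sE)$, so $\partial_\sE z_t(\sE)=1+(1-t)m'(\sE)$, and since $|m'(\sE)|\lesssim \kappa(\sE)^{-1/2}\le \kappa(\sE')^{-1/2}\cdot(1+o(1))$ with $\kappa(\sE')\gtrsim (c_\sE N^{-10})$ in the worst case, we get $|z_s(\sE)-z_s(\sE')|\lesssim N^{-10}\kappa^{-1/2}\le N^{-5}$, say; likewise $|\kappa(\sE)-\kappa(\sE')|\lesssim N^{-10}$, $|\eta_s(\sE)-\eta_s(\sE')|\lesssim N^{-10}$, $|\im m(\sE)-\im m(\sE')|\lesssim N^{-10}/\sqrt{\kappa}$, and $\ell_s(\sE),\omega_s(\sE)$ are comparable to $\ell_s(\sE'),\omega_s(\sE')$ up to $1+o(1)$. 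Using the crude deterministic bound $\|\partial_z G_{s}(z)\|=\|G_s(z)^2\|\le (\eta_s(\sE'))^{-2}\le N^{C}$ for some constant $C$ (valid since $\eta_s(\sE')\ge W^{\fd}\eta_*(E)\ge N^{-C'}$), together with $|z_s(\sE)-z_s(\sE')|\lesssim N^{-5}$, I obtain the deterministic perturbation estimate $\|G_{s,\sE}-G_{s,\sE'}\|_{\max}\le \|G_{s,\sE}-G_{s,\sE'}\|\lesssim N^{-5+C}$, which can be made $\le N^{-1}$ by taking the lattice spacing in \eqref{eq:lattice} fine enough (the exponent $10$ is generous, and if needed one simply increases it).

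Next I would deduce the three conclusions. For the first bound in \eqref{eq:claim_perturb2}: on $\Omega_1$ we have $\|G_{s,\sE'}\|_{\max}\le C_0$, so $\|G_{s,\sE}\|_{\max}\le \|G_{s,\sE'}\|_{\max}+\|G_{s,\sE}-G_{s,\sE'}\|_{\max}\le C_0+N^{-1}\le C_0+1$, as claimed. For the second bound in \eqref{eq:claim_perturb2}: $\big|\avg{(\im G_{s,\sE})E_{[a]}}-\avg{(\im G_{s,\sE'})E_{[a]}}\big|\le \|\im G_{s,\sE}-\im G_{s,\sE'}\|_{\max}\lesssim N^{-5+C}$, and since on $\Omega_2$ the $\sE'$-quantity is $\le C_0\sqrt{\kappa(\sE')}$ with $\sqrt{\kappa(\sE')}\ge N^{-C''}$ (as $\kappa(\sE')\gtrsim N^{-10}$ in the worst case and is $\gtrsim 1$ away from the edge), the perturbation is $\le \sqrt{\kappa(\sE)}$ after making the spacing fine enough, giving $\le (C_0+1)\sqrt{\kappa(\sE)}$. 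For \eqref{eq:claim_perturb}: a $k$-$G$ loop $\cL^{(k)}_{s,\bsig,\ba}(\sE)=\langle\prod_{i=1}^k(G_{s}(\sigma_i;\sE)E_{[a_i]})\rangle$ is a polynomial of degree $k$ in the entries of $G_{s,\sE}$ and $G_{s,\sE}^*$; on $\Omega_1$ (extended to $\|G_{s,\sE}\|_{\max}\le C_0+1$) each factor has max-norm $O(1)$, and $\|E_{[a_i]}\|_{1\to 1}=W^{-d}$, so by a telescoping/triangle-inequality expansion $\big|\cL^{(k)}_{s,\bsig,\ba}(\sE)-\cL^{(k)}_{s,\bsig,\ba}(\sE')\big|\le C_k (C_0+1)^{k-1}\|G_{s,\sE}-G_{s,\sE'}\|_{\max}\lesssim_k N^{-5+C}$; since on $\Omega$ the $\sE'$-value obeys the bound with the extra $W^\e$ slack and the RHS $W^\e\sqrt{\kappa(\sE')}(W^d\ell_s^d\eta_s(\sE'))^{-(k-1)}$ is bounded below by $N^{-C'''}$, the perturbation is absorbed and, combining with the comparability of $\kappa,\ell_s,\eta_s$ between $\sE$ and $\sE'$ up to a factor $1+o(1)$, we get the stated bound with constant $2$ in front.

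The only mildly delicate point is bookkeeping the worst-case lower bounds: one must check that all the deterministic quantities that appear in denominators—$\eta_s(\sE')$, $\kappa(\sE')$, $W^d\ell_s^d\eta_s(\sE')$—are bounded below by a fixed negative power of $N$ over the entire admissible range of $\sE'$ (including $\sE'$ near the edge), so that a single choice of lattice spacing $N^{-10}$ (or any sufficiently large fixed power) makes every perturbation genuinely negligible simultaneously. This follows from the constraint $\eta\ge W^{\fd}\eta_*(E,\fc)$ built into $\mathbf D_{C_0,\fd}$ together with the explicit form of $\eta_*$ in \eqref{eq:defeta*}–\eqref{eq:defeta*2} and the scaling $W\ge L^\delta$, so it is routine; I do not expect any real obstacle here, only careful constant-tracking. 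I would not spell out these elementary polynomial bounds in full detail in the final text, referring instead to the deterministic nature of the perturbation.
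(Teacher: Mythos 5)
Your treatment of the two bounds in \eqref{eq:claim_perturb2} is essentially the paper's argument: the resolvent identity \eqref{GGZGG0} together with the computation \eqref{z-zs} and a crude deterministic bound on the operator norm of $G_{s}$. (The paper uses the deterministic bound $\|G_s\|\le \eta_s^{-1}\lesssim N^{C'}$ rather than your $\|\partial_z G\|=\|G^2\|$; these are interchangeable.) The only thing worth emphasizing is that you do need the lower bound $\kappa(\sE)\gtrsim N^{-O(1)}$ that is implicit in the domain $\mathbf D_{C_0,\fd}$, but that part of your bookkeeping is fine.

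For \eqref{eq:claim_perturb}, however, your telescoping argument has a genuine gap and would not work as stated. You bound $|\cL^{(k)}_{s,\bsig,\ba}(\sE)-\cL^{(k)}_{s,\bsig,\ba}(\sE')|\le C_k(C_0+1)^{k-1}\|G_{s,\sE}-G_{s,\sE'}\|_{\max}$, and $\|G_{s,\sE}-G_{s,\sE'}\|_{\max}$ is a \emph{fixed} negative power of $N$ (determined by the lattice spacing $N^{-10}$ and the deterministic bound on $\|G_s\|$), \emph{independent of $k$}. But the right-hand side of \eqref{eq:claim_perturb}, namely $W^\e\sqrt{\kappa(\sE)}\,(W^d\ell_s^d\eta_s(\sE))^{-(k-1)}$, decays like $N^{-(k-1)}$ in the worst case (since $W^d\ell_s^d\eta_s$ can be as large as $N$ and $\sqrt{\kappa}$ as small as $N^{-O(1)}$), so the exponent $C'''$ in your lower bound $N^{-C'''}$ grows linearly in $k$. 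Since the lattice spacing in \eqref{eq:lattice} is a fixed power of $N$ that is part of the global setup of \Cref{lem:main_ind} and cannot be tuned to depend on $\fn$, your perturbation term overwhelms the target bound once $\fn$ is moderately large — exactly the regime the claim must cover, since $\fn\in 2\N$ is arbitrary. Your parenthetical remark about ``increasing the exponent if needed'' would require making the lattice resolution $\fn$-dependent, which is not what the statement asserts and would ripple through the rest of Section 6.

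The paper avoids this by not telescoping. It repeats the argument of \Cref{lem_ConArg}: expand each $G$-chain via \eqref{GGZGG} and \eqref{abform1}, then apply the $L^2$-comparison inequality \eqref{lem_L2Loc} with $p=2$ together with Ward's identity, yielding the analogue of \eqref{eq;CS-chain2} (namely \eqref{eq;CS-chain4}). In that bound the perturbation contribution automatically carries the factors $(W^d\ell_s^d\eta_s)^{-2(k-l)+\cdots}$ matching the target scaling, while the genuinely small prefactor is $|z-\wt z|^2/(\im z\cdot\im\wt z)\lesssim N^{-16}$; the loop bound is then closed by the same induction on even loop length, using the fact that $\max_x|\im (G_{s,\sE})_{xx}|\le C_0+1$ on $\Omega_1$ plays the role of the averaged-imaginary-part input. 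The essential point your proposal misses is that the perturbation must be estimated \emph{inside} the Ward's-identity/Cauchy--Schwarz machinery so that it inherits the $(W^d\ell_s^d\eta_s)^{-m}$ scaling, rather than after the fact by a flat operator-norm bound on $G_{s,\sE}-G_{s,\sE'}$.
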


Note that this claim is completely \emph{deterministic}; in particular, there is no loss of probability. To show this claim, we use the following resolvent identity: 
\begin{equation}\label{GGZGG0}
  G_{s}(z_s(\sE))= G_{s}(z_s(\sE'))+(z_s(\sE)-z_s(\sE')) G_{s}(z_s(\sE)) G_{s}(z_s(\sE')) \, ,
\end{equation}
where, using the expressions of $z_s(\sE)$ and $z_s(\sE_k)$ in \eqref{eq:zt}, we can verify that 
\begin{align}\label{z-zs}
  |z_s(\sE)-z_s(\sE')| = |(\sE-\sE') + (1-s)(m(\sE)-m(\sE'))| \lesssim \frac{|\sE-\sE'|}{\sqrt{\kappa(\sE)}}\le N^{-9}.
\end{align}
With \eqref{GGZGG0}, \eqref{z-zs}, and the trivial bound $\|G_s\|\le N^{-1}$, we immediately derive \eqref{eq:claim_perturb2}. To establish \eqref{eq:claim_perturb}, we will adopt a similar argument to that used in the proof of \eqref{res_lo_bo_eta}, which we will present below. 
Thus, we will postpone the proof of \eqref{eq:claim_perturb} until we complete the proof of \Cref{lem_ConArg}.

For the proof of \eqref{res_lo_bo_eta}, we fix a $\sE=\sE_k$ for some $k\in\qqq{0,N^{10}}$ and consider the resolvent $G_{t,\sE}=(H_t-z_t(\sE))^{-1}$. 
Then, we choose 
\be\label{wtsE}\wt \sE= \frac{(1+t)\sqrt{s}}{(1+s)\sqrt{t}}\sE,\ee
and consider the resolvent
\be\label{eq:wtGt1} G_{s,\wt\sE}=\big(H_{s}-z_{s}(\wt\sE)\big)^{-1} \stackrel{d}{=}\sqrt{\frac{t}{s}}\p{H_{t}-\sqrt{\frac{t}{s}} z_{s}(\wt\sE)}^{-1} \, .
\ee
For clarity of presentation, we will use the following notations in the proof:
\be\label{eq:abbrv}
\begin{aligned}
z\equiv z_{t}(\sE),\quad  
&\wt z:= \sqrt{{t}/{s}} \cdot z_{s}(\wt\sE),\quad G\equiv G_{t,\sE},\quad \widetilde{G}:=\p{H_{t}-\wt z}^{-1},\\ &{\cal L}_{t,\bsig,\ba}^{(\fn)}\equiv {\cal L}_{t,\bsig,\ba}^{(\fn)}(\sE), \quad 
{\cal L}_{s,\bsig,\ba}^{(\fn)}\equiv {\cal L}_{s,\bsig,\ba}^{(\fn)}(\wt\sE).
\end{aligned}
\ee
Further, we also define the $\fn$-$G$ loops \(\widetilde{\cal L}_{s,\bsig,\ba}^{(\fn)}\)  with resolvents \smash{\({G}_{s,\wt\sE}(\sig_i)\)} in ${\cal L}_{s,\bsig,\ba}^{(\fn)}(\wt \sE)$ replaced by \smash{\(\widetilde{G}(\sig_i)\)} for $i\in\qqq{\fn}$. Note that by \eqref{eq:wtGt1}, we have 
\begin{equation}\label{calLcalL}
\wt{\cal L}_{s,\bsig,\ba}^{(\fn)} \stackrel{d}{=}\left({s}/{t}\right)^{\fn/2} {\cal L}_{s,\bsig,\ba}^{(\fn)}.  
\end{equation}
Note that \(\wt\sE<\sE\) under the choice \eqref{wtsE}, so ${\cal L}_{s,\bsig,\ba}^{(\fn)}$ satisfies the assumption \eqref{55} by \Cref{claim:uniform}. Consequently, \smash{$\wt{\cal L}_{s,\bsig,\ba}^{(\fn)}$} satisfies the same estimate as that in \eqref{55}. 

Our proof is based on expanding the  $\cL_t$-loops in terms of the \smash{$\wt{\cL}_s$} loops, using a resolvent identity similar to \eqref{GGZGG0}: 
\begin{equation}\label{GGZGG}
  G=\widetilde{G}+(z-\wt {z})  G \widetilde{G} \, .
\end{equation}
Under \eqref{wtsE}, using \eqref{eq:kappat}, we can check that 
\be\label{eq:Imz} 
\begin{aligned}
&\re \wt z= \sqrt{\frac{t}{s}}\frac{1+s}{2}\wt\sE=\frac{1+t}{2}\sE = \re z,\\ 
&\im z=\eta_t(\sE)\asymp (1-t)\sqrt{\kappa(\sE)},\quad \im \wt z \asymp \eta_s(\wt\sE)\asymp (1-s)\sqrt{\kappa(\wt\sE)}.
\end{aligned}\ee
Moreover, we have 
\be\label{eq:kappawtsE} \kappa(\wt\sE) = \kappa(\sE)+(\sE-\wt\sE)= \kappa(\sE)+\frac{(\sqrt{t}-\sqrt{s})(1-\sqrt{ts})}{(1+s)\sqrt{t}}\sE\asymp \kappa(\sE)+(t-s)(1-s),\ee
which, combined with \eqref{eq:Imz}, implies that 
\begin{align}
   |z-\widetilde{z}|\lesssim \im \wt z \asymp (1-s)\sqrt{\kappa(\sE) +(t-s)(1-s)} \lesssim (1-s)\omega_{s,\sE}.\label{eq:z-z}
\end{align}
Besides \eqref{GGZGG}, our proof will also use the following linear algebra fact: given $k\in \N,$ let $\bv$ and $\bw_i$, $i \in \qqq{k}$, be a sequence of vectors in a Hilbert space and defined the $k\times k$ Hermitian matrix $A$ with $A_{ij} = (\bw_i, \bw_j)$. Then, for any $p \in 2\N$, we have that 
\be\label{lem_L2Loc}
\sum_i |(\bv, \bw_i)|^2 \le \|\bv\|_2^2 \|A\|_{2\to 2}\le \|\bv\|_2^2 (\operatorname{tr} A^p)^{1/p}.
\ee
The proof of this bound is straightforward; readers may also refer to \cite[Lemma 6.1]{Band1D} for details.
 

Now, we are ready to give the proof of \eqref{res_lo_bo_eta}. Note that for any $k\in \N$, the $(2k+1)$-$G$ loop can be bounded by $G$-loops of lengths $2k$ and $(2k+2)$ by using Cauchy-Schwarz inequality: 
\begin{align}\label{adummzps}
\left|\mathcal{L}^{(2k+1)}_{t, \bsig, \textbf{a}}\right|^2 \le \left|\mathcal{L}^{(2k)}_{t, \bsig_1, \ba_1} \right| \left| \mathcal{L}^{(2k+2)}_{t, \bsig_2, \ba_2}\right|, \quad \forall \bsig\in \{+,-\}^{2k+1}, \ \ba\in (\Zn)^{2k+1} ,
  \end{align}
where $\bsig_1$, $\ba_1$, $\bsig_2$, and $\ba_2$ are defined by  
\begin{align*}
&\bsig_1=(\sig_1,\ldots,\sig_{k},-\sig_{k},\ldots,-\sig_1), \ \ \bsig_2=(-\sig_{2k+1},\ldots,-\sig_{k+1},\sig_{k+1},\ldots,\sig_{2k+1}),\\ 
&\ba_1=([a_1],\ldots, [a_{k-1}],[a_{k}],[a_{k-1}],\ldots, [a_1],[a_{2k+1}]),\\
&\ba_2=([a_{2k}],\ldots, [a_{k+1}],[a_{k}],[a_{k+1}],\ldots, [a_{2k}],[a_{2k+1}]).
\end{align*}
Thus, we only need to prove \eqref{res_lo_bo_eta} for $G$-loops of even lengths. We will perform the proof inductively. 
For clarity of presentation, given $r\in \N$, $\bsig=(\sig_1,  \ldots, \sig_r)\in \{+,-\}^r,$ and \smash{$\ba = ([a_1], \ldots, [a_{r-1}])\in (\Zn)^{r-1}$}, we introduce the following notation of \emph{$G$-chain} of length $r$:
\begin{equation}\label{defC=GEG}
    {\cal C}^{(r)}_{t, \bsig, \ba} := \prod_{i=1}^{r-1}\left(G(\sig_i) E_{[a_i]}\right)\cdot  G(\sig_r),\quad \wt{\cal C}^{(r)}_{s, \bsig, \ba} := \prod_{i=1}^{r-1}\big(\wt G(\sig_i) E_{[a_i]}\big)\cdot  \wt G(\sig_r) .
\end{equation}

Given $\fn=2k\in 2\N$, consider a loop $\cL^{(2k)}_{t,\bsig_0,\ba_0}$ where 
$$\bsig_0=(\sig_1,\ldots, \sig_k, \sig_1',\ldots, \sig_k'),\quad \ba_0=([a_1],\ldots,[a_{k-1}],[a],[a_1'],\ldots,[a'_{k-1}],[a']).$$ 
Applying the Cauchy-Schwarz inequality, we obtain a similar inequality as in \eqref{adummzps}:
\begin{align}\label{eq;CS-chain}
\left|\mathcal{L}^{(2k)}_{t, \boldsymbol{\sigma}, \textbf{a}}\right|^2 \le  
\frac{1}{W^{2d}}\sum_{x\in [a'],y\in [a]}\left|\left({\cal C}^{(k)}_{t,\bsig,\ba}\right)_{xy}\right|^2\cdot \frac{1}{W^{2d}}\sum_{x\in [a'],y\in [a]}\left|\left({\cal C}^{(k)}_{t,\bsig',\ba'}\right)_{yx}\right|^2, \end{align}
where $\bsig$, $\ba$, $\bsig'$, and $\ba'$ are defined as $\bsig:=(\sig_1,\ldots,\sig_{k})$, $\ba=([a_1],\ldots, [a_{k-1}])$, and $\bsig'=(\sig'_{1},\ldots, \sig'_{k})$, $\ba'=([a'_{1}],\ldots, [a'_{k-1}])$. 
We only need to bound the first average on the RHS of \eqref{eq;CS-chain} involving the chain \smash{${\cal C}^{(k)}_{t,\bsig,\ba}$}, while the second average involving the chain \smash{${\cal C}^{(k)}_{t,\bsig',\ba'}$} satisfies exactly the same bound. 
Using the resolvent identity \eqref{GGZGG} and the following algebraic identity (where $A_i$ and $B_i$ are arbitrary matrices) 
\begin{equation}\label{abform1}
        \prod_{i=1}^{k} (A_i + B_i) = \prod_{i=1}^k A_i + \sum_{l=1}^k \left(\prod_{j=1}^{l-1} (A_j + B_j)\right) B_l \left(\prod_{j=l+1}^k A_j\right),
\end{equation}
we can expand \smash{${\cal C}^{(k)}_{t,\bsig,\ba}$} as:
\begin{align}\label{eq:onebyone}
{\cal C}^{(k)}_{t,\bsig,\ba} = & \wt{\cal C}^{(k)}_{s,\bsig,\ba} + (z - \widetilde{z}) \sum_{l=1}^k \prod_{r=1}^{l-1}\p{G(\sig_r) E_{[a_r]}} G(\sig_l)\wt G(\sig_l) \prod_{r=l}^{k-1} \p{E_{[a_r]}\wt G(\sig_{r+1})}.
\end{align}
From \eqref{eq:onebyone}, we obtain that for any $x\in[a']$ and $y\in[a]$, 
\begin{align}\label{GwGtC}
\left| \left({\cal C}^{(k)}_{t,\bsig,\ba}\right)_{xy} \right|^2 \lesssim  & \left| \left(\wt{\cal C}^{(k)}_{s,\bsig,\ba}\right)_{xy} \right|^2 + \left|z - \widetilde{z}\right|^2 \sum_{l=1}^k  \left|\left(\bv^{(l)}_x, \bw^{(l)}_y\right)\right|^2,
\end{align}
where we introduce the vectors $\bv_x^{(l)} ,    \bw^{(l)}_y\in \mathbb C^{N}$, defined as follows for \(\al\in \ZL\):
$$
\bv_x^{(l)}(\al) = \p{\prod_{r=1}^{l-1}\p{G(\sig_r) E_{[a_r]}} G(\sig_l)}_{x\al}, \ \ 
\overline{  \bw^{(l)}_y(\al)} = \p{\wt G(\sig_l)  \prod_{r=l}^{k-1} \p{E_{[a_r]}\wt G(\sig_{r+1})}}_{\al y},
$$
Then, using \eqref{lem_L2Loc}, we can bound the average of $|(\bv_x^{(l)}, \bw^{(l)}_y)|^2$ over $y\in[a]$ by 
\begin{align}\label{jziwmas}
\frac{1}{W^d}\sum_{y \in [a]} \left|\left(\bv_x^{(l)}, \bw^{(l)}_y\right)\right|^2 
    \le \big\|\bv_x^{(l)}\big\|_2^2 \left( \operatorname{tr} \big( A^{(l)} \big)^p \right)^{1/p} 
\end{align}
for any fixed $p\in 2\N$, where the matrix $A^{(l)}$ is defined as
\begin{align*}
    A^{(l)}_{yy'} := \frac{1}{W^d}\left( \bw^{(l)}_y, \bw^{(l)}_{y'} \right)=\sum_{\al\in \ZL}\bw^{(l)}_y(\al)\overline{\bw^{(l)}_{y'}(\al)} =\frac{1}{W^d\im \wt z} \left(\wt{\cal C}^{(2(k-l)+1)}_{s,\bchi_1,\ba_1}\right)_{yy'}
\end{align*}
for $y, y' \in [a]$. Here, we applied Ward's identity to $\wt G(-\sig_l)\wt G(\sig_l)$ in the third step, and $\bchi_1$ and $\ba_1$ are defined as \(\bchi_1=(-\sig_k,\ldots, -\sig_{l+1},\im,\sig_{l+1},\ldots, \sig_k)\) and \(\ba_1=([a_{k-1}],\ldots,[a_l],[a_l],\ldots,[a_{k-1}])\). Note that $ \tr[(\im \wt z\cdot  A^{(l)})^p]$ is a (generalized) $\wt\cL$-loop of length $2p(k-l)+p$. Thus, by the assumption \eqref{55}, we can bound it by 
\be\nonumber
 \left\{ \tr\left[(A^{(l)})^p\right]\right\}^{1/p} \prec \frac{1}{\im \wt z}\frac{\kappa(\wt\sE)^{1/(2p)}}{\big( W^d \ell_{s}^d \eta_{s}(\wt\sE) \big)^{2(k-l)+1-1/p} } \, .
\ee
Plugging this estimate into \eqref{jziwmas} and taking $p$ arbitrarily large, we get
\be\label{eq:boundAp}
 \frac{1}{W^d}\sum_{y \in [a]} \left|\left(\bv_x^{(l)}, \bw^{(l)}_y\right)\right|^2 
    \prec \big\|\bv_x^{(l)}\big\|_2^2 \cdot \p{\im \wt z}^{-1} \big( W^d \ell_{s}^d  \eta_{s}(\wt\sE) \big)^{-2(k-l)-1}  \,  .
\ee
Next, using Ward's identity, we can express the average of \smash{$\|\bv_x^{(l)}\|_2^2$} over $x\in[a']$ as
\begin{align} \label{eq:boundAp2}
 &   \frac{1}{W^d} \sum_{x \in [a']}\big\|\bv_x^{(l)}\big\|_2^2  = \p{\im z}^{-1}  \mathcal{L}^{(2l-1)}_{t, \bchi_2, \ba_2},   
\end{align} 
where $\bchi_2$ and $\ba_2$ are defined as 
\(\bchi_2:=(\sig_1,\ldots, \sig_{l-1},\im, -\sig_{l-1},\ldots, -\sig_1)\) and \(\ba_2:=([a_1],\ldots, [a_{l-1}],[a_{l-1}],\ldots, [a_1],[a']).\) 
With \eqref{eq:boundAp} and \eqref{eq:boundAp2}, we can bound the average of \eqref{GwGtC} over $x\in[a']$ and $y\in[a]$ as 
\begin{align}
    \frac{1}{W^{2d}}\sum_{x\in [a'],y\in [a]}\left|\left({\cal C}^{(k)}_{t,\bsig,\ba}\right)_{xy}\right|^2 \prec  
    \wt{\cL}^{(2k)}_{s,\bsig_3,\ba_3}+ \frac{\eta_s(\wt\sE)}{\eta_t(\sE)} \sum_{l=1}^k \frac{\mathcal{L}^{(2l-1)}_{t, \bchi_2, \ba_2}}{\big( W^d \ell_{s}^d \eta_{s}(\wt\sE)\big)^{2(k-l)+1}}   ,\label{LLVWL}
\end{align}
where $\bsig_3:=(\sig_1,\ldots, \sig_k,-\sig_k,\ldots, -\sig_1)$, $\ba_3:=([a_1],\ldots,[a_{k-1}],[a],[a_{k-1}],\ldots, [a'])$, and we also used \eqref{eq:z-z} in the derivation. A similar bound holds for the second average on the RHS of \eqref{eq;CS-chain} involving the chain \smash{${\cal C}^{(k)}_{t,\bsig',\ba'}$}. Thus,  from \eqref{eq;CS-chain}, we obtain the following estimate (recall $v_\bchi$ defined in \eqref{eq:vbchi}):  
\begin{align}\label{eq;CS-chain2}
\max_{\bsig,\ba}\left|\mathcal{L}^{(2k)}_{t, \boldsymbol{\sigma}, \textbf{a}}\right| \prec   \max_{\bsig,\ba}\left|\wt{\cL}^{(2k)}_{s,\bsig,\ba}\right|+ \frac{\eta_s(\wt\sE)}{\eta_t(\sE)} \sum_{l=1}^k \frac{\max_{\bchi:|v_{\bchi}|=1}\max_{\ba}\big|\mathcal{L}^{(2l-1)}_{t, \bchi, \ba}\big|}{\big( W^d \ell_{s}^d \eta_{s}(\wt\sE)\big)^{2(k-l)+1}}   . \end{align}

We now complete the proof using the induction relation from \eqref{eq;CS-chain2}. First, when $\fn=2k=2$, using the assumption \eqref{55}, we obtain from \eqref{eq;CS-chain2} that 
\begin{align*}
 \max_{\bsig,\ba}\left|\mathcal{L}^{(2)}_{t, \boldsymbol{\sigma}, \textbf{a}}\right| \prec \frac{\big(\kappa(\wt\sE)\big)^{1/2}}{W^d \ell_{s}^d \eta_{s}(\wt\sE)} + \frac{\max_{[a]} \left|\avg{\p{\im G_{t,\sE}}E_{[a]}}\right|}{W^d \ell_{s}^d \eta_{t}(\sE)} \lesssim  \frac{a_t}{W^d \ell_{s}^d \eta_{t}(\sE)} \, ,
\end{align*}
where we abbreviate $a_t:=\sqrt{\kappa(\sE)}+\max_{[a]} \left|\avg{\p{\im G_{t,\sE}}E_{[a]}}\right|$, and we also applied \eqref{eq:kappat} in the second step. 
This verifies \eqref{res_lo_bo_eta} for $\fn=2$ (and at $u=t$), thus completing the first step of the induction argument.

Next, given $\fn=2k\ge 4$, suppose we have shown that \eqref{res_lo_bo_eta} holds for all $\fn=2l$ where $1\le l <k$. By the Cauchy-Schwarz inequality in \eqref{adummzps}, we know that \eqref{res_lo_bo_eta} also holds for all $\fn=2l-1$ where $2\le l <k$. 
Using the assumption \eqref{55} and the induction hypothesis, we obtain from the relation \eqref{eq;CS-chain2} that 
\begin{align}\label{eq:induc_2k}
 \max_{\bsig,\ba}\left|\mathcal{L}^{(2k)}_{t, \boldsymbol{\sigma}, \textbf{a}}\right| \prec   \frac{a_t}{\left( W^d \ell_{s}^d \eta_{t}(\sE) \right)^{2k-1} }+ \frac{\eta_s(\wt\sE)}{\eta_t(\sE)} \frac{\max_{\bsig,\ba}\big|\mathcal{L}^{(2k-1)}_{t, \bsig, \ba}\big|}{W^d \ell_{s}^d \eta_{s}(\wt\sE)}   . 
\end{align}
Applying the Cauchy-Schwarz inequality in \eqref{adummzps} again, we get
\begin{align*}
\max_{\bsig,\ba}\big|\mathcal{L}^{(2k-1)}_{t, \bsig, \ba}\big| \le  \max_{\bsig_1,\ba_1}\left|\mathcal{L}^{(2k-2)}_{t, \bsig_1, \ba_1} \right|^{1/2}\cdot \max_{\bsig_2,\ba_2} \left| \mathcal{L}^{(2k)}_{t, \bsig_2, \ba_2}\right|^{1/2}\, . 
\end{align*}
Plugging it into \eqref{eq:induc_2k} and using the induction hypothesis \eqref{res_lo_bo_eta} with $\fn=2k-2$, we obtain that 
\begin{align}
 \max_{\bsig,\ba}\left|\mathcal{L}^{(2k)}_{t, \boldsymbol{\sigma}, \textbf{a}}\right| \prec  \frac{a_t}{\left( W^d \ell_{s}^d \eta_{t}(\sE) \right)^{2k-1} } +  \max_{\bsig,\ba} \big| \mathcal{L}^{(2k)}_{t, \bsig, \ba}\big|^{1/2} \cdot \frac{a_t^{1/2}}{\left( W^d \ell_{s}^d \eta_{t}(\sE) \right)^{k-1/2} }.\label{eq:ind_prf1}
\end{align}
This implies \eqref{res_lo_bo_eta} for $\fn=2k$ (and at $u=t$), thus completing the induction argument.

The above arguments establish \eqref{res_lo_bo_eta} for each fixed $u\in [s,t]$. To extend this uniformly to all $u\in [s,t]$, we again apply a standard perturbation argument combined with a union bound over an $N^{-C}$-net. This completes the proof of \Cref{lem_ConArg}.
 

\begin{proof}[\bf Proof of \Cref{claim:uniform}]
As mentioned below \eqref{z-zs}, it remains to prove \eqref{eq:claim_perturb}. Our approach follows the argument used in the proof of \Cref{lem_ConArg}. Adopting the notations from that proof, we abbreviate $\wt z\equiv z_s(\sE')$, $z\equiv z_s(\sE)$, \smash{$\wt G\equiv G_{s,\sE'}$}, and \smash{$G\equiv G_{s,\sE}$}, and denote the $G$-loops formed with $G$ and \smash{$\wt G$} by $\cal L$ and \smash{$\wt{\cal L}$}, respectively. 
On the event $\Omega$, the \smash{$\wt{\cal L}$}-loops satisfy the bounds in \eqref{eq:wtLboundE'}. 
Then, for $2\le 2k \le \fn$, by repeating the arguments between \eqref{lem_L2Loc} and \eqref{eq;CS-chain2} with $p=2$, we get that on $\Omega$,
\begin{align}\label{eq;CS-chain4}
\max_{\bsig,\ba}\left|\mathcal{L}^{(2k)}_{s, \boldsymbol{\sigma}, \textbf{a}}\right| \le   \max_{\bsig,\ba}\left|\wt{\cL}^{(2k)}_{s,\bsig,\ba}\right|+ \frac{CW^\e |z-\wt z|^2}{\im z \cdot \im \wt z} \sum_{l=1}^k \frac{\max_{\bchi:|v_{\bchi}|=1}\max_{\ba}\big|\mathcal{L}^{(2l-1)}_{t, \bchi, \ba}\big|}{\big( W^d \ell_{s}^d \eta_{s}(\sE')\big)^{2(k-l)+1/2}} 
\end{align}
for a constant $C>0$ depending on $k$.
Next, on the event $\Omega_1$, we have 
\[\max_{x}|\im (G_{s,\sE})_{xx}|\le C_0+1\]
by the first estimate in \eqref{eq:claim_perturb2}. Using the above two estimates, along with the facts \eqref{z-zs}, $\im z\ge N^{-1}$, and $\im \wt z\ge N^{-1}$, we can repeat the inductive proof below \eqref{eq;CS-chain2} to conclude the proof of \eqref{eq:claim_perturb}.
\end{proof}

\subsection{Proof of \Cref{lem G<T}}\label{subsec_pf_lem_G<T}

For simplicity of presentation, we abbreviate $\sE\equiv \sE_k$ and $G_u\equiv G_{u,\sE}$ in the following proof. The proof of the weak local law estimate \eqref{GiiGEX} in the literature typically follows the strategy used in our proof of \Cref{lem G<T2}. This step is relatively straightforward in the bulk of the spectrum, as demonstrated in \cite[Section 4]{Band1D}, since $\|(1-u m^2S)^{-1}\|_{\infty\to\infty}\asymp [\omega_u(\sE)]^{-1}\lesssim 1$ in the bulk. This ensures that the self-consistent equation \eqref{eq:self0} for the diagonal resolvent entries remains stable. However, this method encounters significant difficulties near the spectral edges, where \smash{$[\omega_t(\sE)]^{-1}$} diverges and the self-consistent equation becomes unstable. We now explain this issue in greater detail.


Using the assumption \eqref{res_lo_bo_eta}, we obtain that $\max_{\bsig,\ba}|{\cal L}^{(2)}_{u, \boldsymbol{\sigma}, \ba}(\sE)|\prec\zeta_u^2$ uniformly for $u\in [s,t]$, where $\zeta_u$ is a \emph{random} control parameter defined as  
\[
\zeta_u= \frac{\ell_{u}^d}{\ell_{s}^d}  \frac{\sqrt{\kappa(\sE)}+\max_{[a]} \left|\avg{\p{\im G_{u}}E_{[a]}}\right|}{ W^d \ell_{u}^d \eta_{u}(\sE)  } \ge \Phi_u\, . 
\]
In particular, if $\left|\avg{\p{\im G_{u}}E_{[a]}}\right|\lesssim\sqrt{\kappa(\sE)}$, then $\zeta_u$ can be bounded by the \emph{deterministic} control parameter $\Phi_u$. As a result, the off-diagonal entries of $G_u$ satisfies the estimate in \eqref{GijGEXoff}, with $t$ and $\phi_t$ replaced by $u$ and $\zeta_u$, respectively, and the self-consistent equation \eqref{eq:self1} transforms to:
\be\label{eq:self4}
\mathbf 1\p{\|G_{u}- M\|_{\max}\le W^{-\e}}\cdot  \Lambda_u \prec \zeta_u/\omega_u + \Lambda_u^2/\omega_u, \ee
where \smash{$\Lambda_u:=\max_{x}|(G_u)_{xx}-m|$}. On the other hand, we have the initial bound $\Lambda_s\prec \Psi_s(\sE)\lesssim \Phi_s(\sE)$ by \eqref{55}. Within the framework developed in \cite{Semicircle}, one attempts to propagate this estimate from $s$ to $t$ by applying \eqref{eq:self4} along with a perturbative argument on an $N^{-C}$-net of $[s,t]$. This method transfers control over $\Lambda_u$ step by step along the lattice points. 
However, due to the $\Lambda_u^2/\omega_u$ term on the RHS of \eqref{eq:self4}, this continuity argument hinges critically on the smallness condition $\Phi_u/\omega_u\ll \omega_u$, which effectively replaces the role of the first assumption in \eqref{initialGT2}. Unfortunately, this condition is not always satisfied under our flow. Given \eqref{eq:small_para} and \eqref{eq:small_para2}, we can only ensure the weaker condition $\Phi_u/\omega_u\ll 1$.


To deal with the above issue, we adopt a dynamic argument to establish the local law estimate \eqref{GiiGEX} for the diagonal resolvent entries along the flow. For this purpose, we introduce the stopping times
\begin{align}\label{def:T}
T:=\inf \Big\{u\ge s: \|G_{u}(\sE) - M(\sE) \|_{\max} \ge W^{-\e_0}\sqrt{\kappa(\sE)}\Big\}
\end{align}
for a small constants $0< \e_0 < (\fd\wedge\fc)/10$. 
Note that for $u\le t\wedge T$, we have \be\label{eq:zetau}
\avg{(\im G_u)E_{[a]}} = \im m(E)+\avg{\im (G_u-M)E_{[a]}}\asymp \sqrt{\kappa(\sE)},\quad \text{and}\quad 
\zeta_u\lesssim\Phi_u.
\ee
From the equation \eqref{eq:SDE_Gt}, we obtain that for any $w\in \ZL$,
\begin{align}\label{eq:SDE_Gt2}
(G_{t\wedge T})_{ww}-m=&~\br{(G_{s})_{ww}-m}-\int_s^{t\wedge T} (G_u)_{w x} (G_{u})_{yw} \sqrt{S_{xy}}\dd (B_{u})_{xy} \nonumber\\
&~+\int_s^{t\wedge T} W^d\sum_{[b]}\avg{(G_u-M)E_{[b]}}\p{G_{u} E_{[b]}G_{u}}_{ww}\dd u \, .
\end{align}
We first bound the quadratic variation of the martingale term on the RHS of \eqref{eq:SDE_Gt2}, defined as:
\begin{align}\label{eq:quad_var}
\int_s^{t\wedge T} \sum_{x,y}S_{xy} |(G_u)_{wx}|^2|(G_u)_{wy}|^2  \dd u.
\end{align}
Using \eqref{GijGEXoff} and \eqref{eq:zetau}, we can derive that for $u\in [s,t\wedge T]$,
\begin{align}\label{eq:boundmartg}
\sum_{x,y}S_{xy} |(G_u)_{wx}|^2|(G_u)_{wy}|^2 &\prec \p{\Phi_u^2+W^{-d}} \sum_y |(G_u)_{wy}|^2 \nonumber\\
&\lesssim \Phi_u^2 \frac{\im (G_u)_{ww}}{\eta_u} \lesssim \frac{\Phi_u^2}{1-u} , 
\end{align}
where we used Ward's identity in the second step and \eqref{eq:kappat} in the third step.\footnote{With a standard perturbation and union bound argument on an $N^{-C}$-net, we can guarantee that the estimate \eqref{eq:boundmartg} holds uniformly for $u\in [s,t\wedge T]$.} 
So, we can bound \eqref{eq:quad_var} as:
\begin{align*}
\int_s^{t\wedge T} \sum_{x,y}S_{xy} |(G_u)_{wx}|^2|(G_u)_{wy}|^2  \dd u \prec \int_s^{t\wedge T} \frac{1}{1-u}\Phi_u^2\dd u \prec \Phi_{t\wedge T}^2.
\end{align*}
Applying the Burkholder-Davis-Gundy inequality, we obtain that 
\be\label{eq:martingaleterm}
\int_s^{t\wedge T} (G_u)_{w x} (G_{u})_{yw} \sqrt{S_{xy}}\dd (B_{u})_{xy}\prec \Phi_{t\wedge T} .
\ee
For the last term on the RHS of \eqref{eq:SDE_Gt2}, recall that \smash{$\Lambda_u:=\max_{x}|(G_u)_{xx}-m|$}. Then, using the Cauchy-Schwarz inequality and Ward's identity again, we obtain that 
\begin{align}\label{eq:driftterm}
W^d\sum_{[b]}\big|\avg{(G_u-M)E_{[b]}}\big| \big|\p{G_{u} E_{[b]}G_{u}}_{ww}\big| &\le \Lambda_u \frac{\im (G_u)_{ww}}{\eta_u}= \Lambda_u \frac{\im m + W^{-\e_0}\sqrt{\kappa(\sE)}}{\eta_u} \nonumber\\
&= (1+\OO(W^{-\e_0}))\frac{\Lambda_u}{1-u}
\end{align}
for $u\le t\wedge T$, where we used the definition of $T$ in the second step and the definition of $\eta_u$ in \eqref{eta} in the last step. Plugging \eqref{eq:martingaleterm} and \eqref{eq:driftterm} into \eqref{eq:SDE_Gt2}, and taking the maximum over $w$, we obtain that with high probability, 
\begin{align*}
\Lambda_{t\wedge T}\le (1+\OO(W^{-\e_0}))\int_{s}^{t\wedge T}\frac{\Lambda_u }{1-u}\dd u + \OO_\prec \left( \Psi_s(\sE)+\Phi_{t\wedge T}\right).   
\end{align*} 
Applying Gr{\"o}nwall's inequality, we derive that 
\begin{align}\label{eq:thetatt}
\Lambda_{t\wedge T}\prec \frac{1-s}{1-t}\left[\Psi_s(\sE)+\Phi_{t\wedge T} \right] \lesssim  \frac{1-s}{1-t}\Phi_{t\wedge T} .   
\end{align} 
Together with \eqref{GijGEXoff} and \eqref{eq:zetau}, it gives the entrywise local law 
\be\label{eq:verifyT2} \|G_{t\wedge T}-m\|_{\max} \prec \frac{1-s}{1-t}\Phi_{t\wedge T} .\ee
From \eqref{eq:verifyT1} and \eqref{eq:verifyT2}, we see that $T\ge t$ with high probability, which concludes the proof of \Cref{lem G<T}. 

\subsection{Proof of \Cref{thm_locallaw_out}}\label{subsec:outspec}



The proof of \Cref{thm_locallaw_out} follows a similar strategy to that of \Cref{lem_ConArg}. Specifically, given $E\notin [-2,2]$, we begin by establishing a continuity estimate that extends the $G$-loop bound in \eqref{Eq:L-KGt2} from  $\eta\gg \eta_*(E)$ down to $\eta\gg \eta_{\circ}(E)$, using an argument analogous to that in the proof of \Cref{lem_ConArg}. We then extend the local laws from $\eta\gg \eta_*(E)$ down to $\eta\gg \eta_{\circ}(E)$ by combining \Cref{lem G<T2} with a simple a priori estimate for the resolvent entries. 

As in \eqref{Eq:defGLoop}, consider the $G$-loops formed by $G(z)\equiv G(+)$ and $G(\overline z)\equiv G(-)$: 
$${\cal L}^{(\fn)}_{\boldsymbol{\sigma}, \ba}(z)=\left \langle \prod_{i=1}^\fn \left(G(\sigma_i) E_{[a_i]}\right)\right\rangle .$$ 
We first claim the following continuity estimate for these loops.

\begin{lemma}\label{lem:below_threshold}
In the setting of \Cref{thm_locallaw_out}, fix any $2\le |E| \le c_0^{-1}$ and $\wt\eta\ge W^{\fd}\eta_{\circ}(E)$ for some constant $\fd>0$. Suppose that the following entrywise local law holds at \(\wt z=E+\ii \wt \eta\): 
\begin{equation}\label{55_out}
\left\|G(\wt z)-M(\wt z)\right\|_{\max}^2\prec \frac{1}{W^d\ell(\wt z)^d\sqrt{\kappa_E+\wt \eta}}, 
\end{equation}
and the following $G$-loop bound holds for each $\fn\ge 2$:
\begin{equation}\label{55_loop}
\max_{\bsig,\ba}\left|{\cal L}^{(\fn)}_{ \boldsymbol{\sigma}, \ba}(\wt z)\right|\prec  \frac{\im m(\wt z)}{\left( W^d \ell(\wt z)^d \wt \eta \right)^{\fn-1} } \, .
\end{equation}
Then, the following 2-$G$ loop bound holds uniformly in all $z=E+\ii \eta$ with $W^{\fd}\eta_\circ(E) \le \eta \le \wt \eta$: 
\begin{equation}\label{cont_smalleta}
\max_{\bsig,\ba} \left|{\cal L}_{ \boldsymbol{\sigma}, \ba}^{(2)}(z)\right| \prec  \frac{\im m(z) + \max_{[a]} \left|\avg{\p{\im G(z)}E_{[a]}}\right|}{ W^d \ell(\wt z)^d \eta} \, . 
\end{equation}
\end{lemma}
\begin{proof} 
Denote \smash{$\wt G\equiv G(\wt z)$}, $G\equiv G(z)$, and the $\fn$-$G$ loops formed with $\wt G$ as \smash{$\wt{\cal L}^{(\fn)}_{\bsig,\ba}$}. Then, following the proof of \eqref{res_lo_bo_eta} in \Cref{subsec_pf_lem_ConArg}, we can obtain \eqref{cont_smalleta}. We omit the details.
\end{proof}

Next, we establish the following induction result, which allows us to extend the local laws to an arbitrary $z\in\mathbf D^{\mathrm{out}}_{c_0,\fd}$ along a sequence of spectral parameters with progressively smaller imaginary parts. 


\begin{lemma}\label{lem:below_threshold2}
In the setting of \Cref{thm_locallaw_out}, fix any $\wt z=E+\ii \wt\eta\in \mathbf D^{\mathrm{out}}_{c_0,\fd}$ with $\wt \eta \le W^\e\eta_*(E)$ for a small constant $0<\e<(\fd\wedge c_0)/20$. Suppose the entrywise local law \eqref{55_out} holds and the $2$-$G$ loop bound \eqref{cont_smalleta} holds uniformly for all $z=E+\ii \eta$ with $W^\fd\eta_{\circ}(E)\le \eta \le \wt\eta$. 
Then, the following local laws hold uniformly for all $z=E+\ii \eta \in \mathbf D^{\mathrm{out}}_{c_0,\fd}$ satisfying $W^{-(\fd\wedge c_0)/20}\wt\eta \le \eta \le \wt\eta$:
\begin{align}
\|G(z)-M(z)\|_{\max}^2&\prec \frac{1}{W^d\ell(z)^d\sqrt{\kappa_E+\eta}}, \label{locallaw_entry_below}\\ 
 \max_{[a]} \big|\big\langle{(G(z)-M(z)) E_{[a]}}\big\rangle\big| &\prec \frac{1}{W^d\ell(z)^d\p{\kappa_E+\eta}} . \label{locallaw_aver_below}
\end{align}
\end{lemma}
\begin{proof}
For any $\eta\le \wt\eta\le W^\e\eta_*(E)$, we have $\kappa_E \gg \eta$ and $\ell(z)\asymp \ell_{E}:=\min(\kappa_E^{-1/4},n)$. Applying the Cauchy-Schwarz inequality and Ward's identity, we obtain that for all $0<\eta\le \wt\eta$ and $x,y\in \ZL$, 
\be\label{eq:derGGG}
\abs{\partial_\eta G_{xy}(z)}=\abs{(G^2(z))_{xy}}\le \frac{\p{\im G_{xx}(z)\cdot\im G_{yy}(z)}^{\frac 1 2}}{\eta} \le \frac{\wt \eta}{\eta^2}\max_{x}\im G_{xx}(\wt z),
\ee
where in the second step, we used the monotonicity $\eta \im G_{xx}(z) \le \wt\eta \im G_{xx}(\wt z)$. Integrating equation \eqref{eq:derGGG}, we obtain that for all $0<\eta\le \wt\eta$,
\be\label{eq:G-wtG} \max_{x,y}|G_{xy}(z)-G_{xy}(\wt z)| \le \frac{\wt\eta}{\eta}  \max_{x}\im G_{xx}(\wt z) \lesssim  \frac{\wt\eta}{\eta}  \br{\im m(\wt z)+\OO_\prec(\Phi_E)},\ee
where we applied the local law \eqref{55_out} at $\wt z$ and denote that \(\Phi_E := (W^d\ell_E^d\sqrt{\kappa_E})^{-1/2}.\) On the other hand, it is easy to check that 
\[|m(z)-m(\wt z)|\lesssim\wt\eta/\sqrt{\kappa_E+\eta}.\]
Combining it with \eqref{eq:G-wtG} and the local law \eqref{55_out} at $\wt z$, and using \eqref{square_root_density} for $\im m(\wt z)$, we get that 
\be\label{eq:G-wtG2} \|G(z)-M(z)\|_{\max}  \prec  \frac{\wt\eta}{\eta}\frac{\wt\eta}{\sqrt{\kappa_E}} +\frac{\wt\eta}{\eta}\Phi_E\le  W^{-(\fd\wedge c_0)/4}\sqrt{\kappa_E},\ee
where in the second step, we used the definition \eqref{eq:defeta*3} and the facts $\kappa_E\ge W^{c_0}\eta_*(E)$, $\wt\eta\le W^\e\eta_*(E)$, and $\wt\eta/\eta\le W^{(\fd\wedge c_0)/20}$. This verifies the first condition in \eqref{initialGT2}. Moreover, combining \eqref{eq:G-wtG2} with \eqref{cont_smalleta}, we obtain that 
\begin{equation}\label{cont_smalleta0}
	\max_{\bsig,\ba} \left|{\cal L}_{ \boldsymbol{\sigma}, \ba}^{(2)}(z)\right| \prec  \phi_0^2:=\frac{\im m(z) + W^{-(\fd\wedge c_0)/4}\sqrt{\kappa_E}}{ W^d \ell_E^d \eta}. 
\end{equation}


Now, applying \Cref{lem G<T2} (at $t=1$), and using \eqref{cont_smalleta0} and  $|1-m(z)^2|\asymp\sqrt{\kappa_E}$, we can establish that
\begin{equation}\label{locallaw_aver_below_iter0}
\|G(z)-M(z)\|_{\max}\prec \phi_0,\quad \max_{[a]} \big|\big\langle{(G(z)-M(z)) E_{[a]}}\big\rangle\big| \prec  \phi_0^2/\sqrt{\kappa_E}\, .
\end{equation}
With \eqref{cont_smalleta} and \eqref{locallaw_aver_below_iter0}, we get that
\begin{equation}\label{cont_smalletause0}
	\max_{\bsig,\ba} \left|{\cal L}_{ \boldsymbol{\sigma}, \ba}^{(2)}(z)\right| \prec   \phi_1^2:=\frac{\im m(z)+\phi_0^2/\sqrt{\kappa_E}}{ W^d \ell_{E}^d \eta} \, .
\end{equation}
For $\eta\ge W^\fd\eta_{\circ}(E)$, we can check from the definition \eqref{eq:defeta*3} that $\phi_1$ is a smaller parameter than $\phi_0$: 
\[\phi_1^2 = \frac{\im m(z)}{ W^d \ell_{E}^d \eta}+\OO(W^{-\fd}\phi_0^2).\]
Applying \Cref{lem G<T2} again gives an even better parameter $\phi_2^2$. Iterating the above argument at most $k=\lceil d\fd^{-1}\rceil$ many times, we can decrease the $\phi$ parameter to $\phi_k=\im m(z)/[W^d \ell_{E}^d \eta]\asymp\Phi_E$, which concludes \eqref{locallaw_entry_below} and \eqref{locallaw_aver_below}.
\end{proof}

Combining \Cref{lem:below_threshold,lem:below_threshold2}, we can complete the proof of \Cref{thm_locallaw_out}.
\begin{proof}[\bf Proof of \Cref{thm_locallaw_out}]
When $\eta \ge W^\e\eta_*(E)$ (where $\e$ is the constant in \Cref{lem:below_threshold2}), \Cref{thm_locallaw_out} follows directly from \Cref{thm_locallaw}. 

Now, given any $z=E+\ii \eta\in \mathbf D^{\mathrm{out}}_{c_0,\fd}$, we define a sequence of $z_k=E+\ii \eta_k$ with decreasing imaginary parts \smash{$\eta_k:= \max\left(W^{-k\e+\e} \eta_*(E), \eta\right)$}. 
First, with the local law \eqref{55_out} at $\wt z=z_0$ and the $G$-loop bounds \eqref{55_loop} established in \eqref{Eq:L-KGt2}, we can apply \Cref{lem:below_threshold} to conclude that \eqref{cont_smalleta} holds uniformly for all $W^\fd\eta_{\circ}(E)\le \eta \le \eta_0$. Next, suppose we have proved \eqref{locallaw_entry_below} for $z_{k-1} = E + \ii\eta_{k-1}$. Then, applying \Cref{lem:below_threshold2}, we obtain that \eqref{locallaw_entry_below} and \eqref{locallaw_aver_below} hold at $z_{k}$. By induction in $k$, we conclude that \eqref{locallaw_entry_below} and \eqref{locallaw_aver_below} hold for each fixed \smash{$z=E+\ii \eta \in \mathbf D^{\mathrm{out}}_{c_0,\fd}$} with $\eta\le W^\fd \eta_*(E)$. 
Finally, the uniformity in $z$ follows from a standard \smash{$N^{-C}$}-net, union bound, and perturbation argument, whose detail we omit.
\end{proof}

\section{Analysis of the loop hierarchy}\label{Sec:Stoflo}


This section is devoted to completing the proof of \Cref{lem:main_ind}, following the steps outlined in \Cref{subsec:mainproofs}. With the a priori $G$-loop bound \eqref{lRB1} and the weak local law \eqref{Gtmwc} established in Step 1, the remainder of the argument closely follows the approach in \cite[Section 5]{Band1D} and \cite[Section 7]{RBSO1D}, building on the tools developed in the preceding sections. 
In particular, we make use of the flow (\Cref{zztE}), the loop hierarchy (\Cref{lem:SE_basic}), the deterministic estimates from \Cref{lem_propTH}, and the resolvent entry estimates from \Cref{lem G<T2,lem_GbEXP}. We also rely on a number of key structural properties of $\cK$-loops, including the upper bound in \Cref{ML:Kbound}, Ward's identity (\Cref{lem_WI_K}), the identity \eqref{eq:pure_sum} for pure loops, and the tree representation (\Cref{tree-representation}). 
Hence, we will outline only the main ideas in the body of the text, focusing on the key differences---particularly those related to the treatment of pure $G$-loops in Steps 3 and 4 (see \Cref{sec:inductive_step,sec:step4}). For the remaining technical details, we either omit them due to their similarity with \cite{Band1D,RBSO1D}, or defer certain key proofs to \Cref{sec:main_appd} for the reader’s convenience. The proof of Step 6, which is largely independent of the preceding steps, is presented separately in \Cref{sec:pf_step6}.


For clarity of presentation, we fix an arbitrary parameter $\sE=\sE_k$ with $k \in \qqq{0, N^{10}}$, and suppress the argument $\sE$ in various notations throughout the proof. By a union bound, all estimates established in each step hold uniformly for all $\sE=\sE_k$ with $k \in \qqq{0, N^{10}}$. Moreover, using an argument similar to that in \Cref{claim:uniform}, these estimates extend uniformly to all $\sE\in [-2+c_\sE,\sE_0]$.

\subsection{Dynamics of the $G$-loops}
We begin by deriving a representation of the $G$-loop dynamics, formulated using Duhamel’s principle and certain evolution kernels, which we introduce below.
For any fixed $\fn\in \N$, combining the loop hierarchy \eqref{eq:mainStoflow} and the equation \eqref{pro_dyncalK} for primitive loops, we obtain that
\begin{align}\label{eq_L-K-1}
       \dd(\mathcal{L} - \mathcal{K})^{(\fn)}_{t, \boldsymbol{\sigma}, \ba} 
    =&~ W^d \sum_{1 \leq k < l \leq \fn} \sum_{[a],[b]} 
   (\mathcal{L} - \mathcal{K})^{(\fn+k-l+1)}_{t, \cutL^{[a]}_{k, l}\left(\boldsymbol{\sigma},\, \ba\right)} S^{\LK}_{[a][b]}
   \mathcal{K}^{(l-k+1)}_{t,\cutR^{[b]}_{k,l}\left(\boldsymbol{\sigma} ,\ba\right)}\, \dd t \nonumber\\
    +&~ W^d \sum_{1 \leq k < l \leq \fn} \sum_{[a],[b]} \mathcal{K}^{(\fn+k-l+1)}_{t, \cutL^{[a]}_{k, \,l}\left(\boldsymbol{\sigma},\ba\right)} 
   S^{\LK}_{[a][b]}\left(\cL-\cK\right)^{(l-k+1)}_{t,\cutR^{[b]}_{k, l}\left(\boldsymbol{\sigma},\ba\right)} \, \dd t \nonumber\\
   +&~ \mathcal{E}^{(\fn)}_{t, \boldsymbol{\sigma}, \ba}\dd t + 
    \dd\mathcal{B}^{(\fn)}_{t, \boldsymbol{\sigma}, \ba} 
    +\mathcal{W}^{(\fn)}_{t, \boldsymbol{\sigma}, \ba}
    \dd t,
\end{align}
where $\mathcal{E}^{(\fn)}_{t, \boldsymbol{\sigma}, \ba}$ is defined by 
\begin{equation}\label{def_ELKLK}
\mathcal{E}^{(\fn)}_{t, \boldsymbol{\sigma}, \ba} :=
    W^d \sum_{1 \leq k < l \leq \fn} \sum_{[a],[b]} 
   (\mathcal{L} - \mathcal{K})^{(\fn+k-l+1)}_{t, \cutL^{[a]}_{k, l}\left(\boldsymbol{\sigma},\, \ba\right)} 
  S^{\LK}_{[a][b]} (\cL-\mathcal{K})^{(l-k+1)}_{t,\cutR^{[b]}_{k,l}\left(\boldsymbol{\sigma} ,\ba\right)}\,  .
\end{equation}
We can rearrange the first two terms on the RHS of \eqref{eq_L-K-1} according to the lengths of $\cK$-loops and rewrite them as a sum of \smash{$\big[\OK^{(\lenk)} (\mathcal{L} - \mathcal{K})\big]^{(\fn)}_{t, \boldsymbol{\sigma}, \ba}\dd t$} for $2\le \lenk \le \fn$, where \smash{$\OK^{(\lenk)}$} is a linear operator defined as 
\begin{align}\label{DefKsimLK}
&\left[\OK^{(\lenk)} (\mathcal{L} - \mathcal{K})\right]_{t, \boldsymbol{\sigma}, \ba}^{(\fn)}:= W^d \sum_{1\le k < l \leq \fn : l-k=\lenk-1} \sum_{[a],[b]} 
   (\mathcal{L} - \mathcal{K})^{(\fn-\lenk+2)}_{t, \cutL^{[a]}_{k, l}\left(\boldsymbol{\sigma},\, \ba\right)} 
   S^{\LK}_{[a][b]}\mathcal{K}^{(\lenk)}_{t,\cutR^{[b]}_{k,l}\left(\boldsymbol{\sigma} ,\ba\right)} \nonumber\\
&\quad +  W^d \sum_{1 \leq k < l \leq \fn:l-k=\fn-\lenk+1} \sum_{[a],[b]} \mathcal{K}^{(\lenk)}_{t, \cutL^{[a]}_{k, \,l}\left(\boldsymbol{\sigma},\ba\right)} S^{\LK}_{[a][b]}
   \left(\cL-\cK\right)^{(\fn-\lenk+2)}_{t,\cutR^{[b]}_{k, l}\left(\boldsymbol{\sigma},\ba\right)}  .
\end{align}
Taking out the leading term with $\lenk =2$, we can rewrite \eqref{eq_L-K-1} as
\begin{align}\label{eq_L-Keee}
    \dd(\mathcal{L} - \mathcal{K})^{(\fn)}_{t, \boldsymbol{\sigma}, \ba} = &\left[\OK^{(2)} (\mathcal{L} - \mathcal{K})\right]^{(\fn)}_{t, \boldsymbol{\sigma}, \ba} \, \dd t+\sum_{\lenk=3}^\fn \left[\OK^{(\lenk)} (\mathcal{L} - \mathcal{K})\right]^{(\fn)}_{t, \boldsymbol{\sigma}, \ba}\, \dd t \nonumber\\
    &+ \mathcal{E}^{(\fn)}_{t, \boldsymbol{\sigma}, \ba}\dd t + \dd\mathcal{B}^{(\fn)}_{t, \boldsymbol{\sigma}, \ba} +\mathcal{W}^{(\fn)}_{t, \boldsymbol{\sigma}, \ba}\dd t.
\end{align} 

\begin{definition}[Evolution kernel]\label{DefTHUST}
For $t\in [0,1]$ and $\bsig=(\sigma_1,\ldots,\sig_\fn)\in \{+,-\}^\fn$, define the linear operator \smash{${\thn}^{(\fn)}_{t, \boldsymbol{\sigma}}$} acting on $\fn$-dimensional tensors \smash{${\cal A}$} as follows:
\begin{align}\label{def:op_thn}
    \left({{\thn}}^{(\fn)}_{t, \boldsymbol{\sigma}} \circ \mathcal{A}\right)_{\ba} & = \sum_{i=1}^\fn \sum_{[b_i]\in\Zn} \left(\frac{m(\sig_i)m(\sig_{i+1})}{1 - t m(\sig_i)m(\sig_{i+1})S^\LK}\right)_{[a_i] [b_i]}  \mathcal{A}_{\ba^{(i)}([b_i])},
\end{align} 
where $\ba=([a_1],\ldots, [a_\fn])\in (\Zn)^\fn$, $\ba^{(i)}([b_i])$ is defined as 
\begin{align}    
    \ba^{(i)}([b_i]):= ([a_1], \ldots, [a_{i-1}], [b_i], [a_{i+1}], \ldots, [a_\fn]),
\end{align}
and we recall the convention that $\sigma_{\fn+1}=\sig_1$. Then, the evolution kernel corresponding to \smash{${\thn}^{(\fn)}_{t, \boldsymbol{\sigma}}$} is given by 
    \begin{align}\label{def_Ustz}
        \left(\mathcal{U}_{s, t, \boldsymbol{\sigma}}^{(\fn)} \circ \mathcal{A}\right)_{\ba} = \sum_{\mathbf{b} = ([b_1], \ldots, [b_{\fn}])} \prod_{i=1}^\fn \left(\frac{1 - s \cdot m(\sig_i)m(\sig_{i+1}) S^\LK }{1 - t \cdot m(\sig_i)m(\sig_{i+1}) S^\LK}\right)_{[a_i] [b_i]} \cdot \mathcal{A}_{\mathbf{b}} . 
    \end{align}
\end{definition}

By the definition of the 2-$\cK$ loop in \eqref{Kn2sol}, we observe that 
\begin{align*}
\frac{\dd }{\dd t}\left(\mathcal{U}_{s, t, \boldsymbol{\sigma}}^{(\fn)} \circ \mathcal{A}\right)_{\ba} =     \left({\thn}_{t, \boldsymbol{\sigma}}^{(\fn)} \circ \mathcal{U}_{s, t, \boldsymbol{\sigma}}^{(\fn)} \circ \mathcal{A}\right)_{\ba} = \left[\OK^{(2)} (\mathcal{U}_{s, t, \boldsymbol{\sigma}}^{(\fn)} \circ \mathcal{A})\right]^{(\fn)}_{t, \boldsymbol{\sigma}, \ba}.
\end{align*}
With Duhamel's principle, we can derive from \eqref{eq_L-Keee} the following equation for $s\le t$: 
\begin{align}\label{int_K-LcalE}
   & (\mathcal{L} - \mathcal{K})^{(\fn)}_{t, \boldsymbol{\sigma}, \ba}  =
    \left(\mathcal{U}^{(\fn)}_{s, t, \boldsymbol{\sigma}} \circ (\mathcal{L} - \mathcal{K})^{(\fn)}_{s, \boldsymbol{\sigma}}\right)_{\ba} + \sum_{l_\mathcal{K} =3}^\fn \int_{s}^t \left(\mathcal{U}^{(\fn)}_{u, t, \boldsymbol{\sigma}} \circ \Big[\OK^{(\lenk)} (\mathcal{L} - \mathcal{K})\Big]^{(\fn)}_{u, \boldsymbol{\sigma}}\right)_{\ba} \dd u \nonumber \\
    &+ \int_{s}^t \left(\mathcal{U}^{(\fn)}_{u, t, \boldsymbol{\sigma}} \circ \mathcal{E}^{(\fn)}_{u, \boldsymbol{\sigma}}\right)_{\ba} \dd u  + \int_{s}^t \left(\mathcal{U}^{(\fn)}_{u, t, \boldsymbol{\sigma}} \circ \cW^{(\fn)}_{u, \boldsymbol{\sigma}}\right)_{\ba} \dd u + \int_{s}^t \left(\mathcal{U}^{(\fn)}_{u, t, \boldsymbol{\sigma}} \circ \dd \cB^{(\fn)}_{u, \boldsymbol{\sigma}}\right)_{\ba} .
\end{align}
Furthermore, let $T$ be a stopping time with respect to the matrix Brownian motion $\{H_t\}$ in \eqref{MBM}, and denote $\tau:= T\wedge t$. Then, we have a stopped version of \eqref{int_K-LcalE}:  
\begin{align}\label{int_K-L_ST}
&(\mathcal{L} - \mathcal{K})^{(\fn)}_{\tau, \boldsymbol{\sigma}, \ba} =
    \left(\mathcal{U}^{(\fn)}_{s, \tau, \boldsymbol{\sigma}} \circ (\mathcal{L} - \mathcal{K})^{(\fn)}_{s, \boldsymbol{\sigma}}\right)_{\ba} + \sum_{l_\mathcal{K} =3}^\fn \int_{s}^\tau \left(\mathcal{U}^{(\fn)}_{u, \tau, \boldsymbol{\sigma}} \circ \Big[\OK^{(\lenk)} (\mathcal{L} - \mathcal{K})\Big]^{(\fn)}_{u, \boldsymbol{\sigma}}\right)_{\ba} \dd u \nonumber \\
    &+ \int_{s}^\tau \Big(\mathcal{U}^{(\fn)}_{u, \tau, \boldsymbol{\sigma}} \circ \mathcal{E}^{(\fn)}_{u, \boldsymbol{\sigma}}\Big)_{\ba} \dd u  + \int_{s}^\tau \Big(\mathcal{U}^{(\fn)}_{u, \tau, \boldsymbol{\sigma}} \circ \cW^{(\fn)}_{u, \boldsymbol{\sigma}}\Big)_{\ba} \dd u + \int_{s}^\tau \Big(\mathcal{U}^{(\fn)}_{u, \tau, \boldsymbol{\sigma}} \circ \dd \cB^{(\fn)}_{u, \boldsymbol{\sigma}}\Big)_{\ba} . 
\end{align}
We will utilize the above two equations to estimate the $(\mathcal{L} - \mathcal{K})$-loops. 

Before the analysis, we introduce the following notation that corresponds to the quadratic variation of the martingale term (recall \eqref{def_Edif}).

\begin{definition}\label{def:CALE} 
For $t\in [0,1]$ and $\bsig=(\sigma_1,\ldots,\sig_\fn)\in \{+,-\}^\fn$, we introduce the $(2\fn)$-dimensional tensor
  \begin{align*}
 \left( \cB\otimes  \cB \right)^{(2\fn)}_{t, \boldsymbol{\sigma}, \ba, \ba'} \; := \; &
 \sum_{k=1}^\fn \left( \cB\times  \cB \right)^{(k)}_{t, \,\boldsymbol{\sigma}, \ba, \ba'},\quad \forall \ba=([a_1],\ldots, [a_\fn]) ,\ \ba'=([a_1'],\ldots, [a_\fn']), 
 \end{align*}
 where $\left( \cB\times  \cB \right)^{(k)}_{t, \,\boldsymbol{\sigma}, \ba, \ba'}$ is defined as 
\be\label{defEOTE}
 \left( \cB\times  \cB \right)^{(k)}_{t, \boldsymbol{\sigma}, \ba, \ba'} :
 =   W^d \sum_{[b],[b']} S^{\LK}_{[b][b']}{\cal L}^{(2\fn+2)}_{t, (\boldsymbol{\sigma}\times\overline\bsig)^{(k) },(\ba\times \ba')^{(k )}([b],[b'])}.
\ee
Here, ${\cal L}^{(2\fn+2)}$ denotes a $(2\fn+2)$-loop  
obtained by cutting the $k$-th edge of ${\cal L}^{(\fn)}_{t,\boldsymbol{\sigma},\ba}$ and then gluing it (with indices $\ba$) with its conjugate loop (with indices $\ba'$) along the new vertices $[b]$ and $[b']$. Formally, its expression is written as: 
\begin{align*}
   & {\cal L}^{(2\fn+2)}_{t, (\boldsymbol{\sigma}\times\overline\bsig)^{(k) },(\ba\times \ba')^{(k )}([b],[b'])}:=  \bigg \langle \prod_{i=k}^\fn \left(G_{t}(\sigma_i) E_{[a_i]}\right) \cdot \prod_{i=1}^{k-1} \left(G_{t}(\sigma_i) E_{[a_i]}\right) \cdot G_t(\sigma_k) E_{[b]} G_t(-\sigma_k) \\
    &\qquad \times \prod_{i={1}}^{k-1} \left(E_{[a_{k-i}']} G_{t}(-\sigma_{k-i}) \right)\cdot \prod_{i=k}^{\fn} \left(E_{[a_{\fn+k-i}']} G_{t}(-\sigma_{\fn+k-i}) \right) E_{[b']}\bigg\rangle ,
\end{align*}
where the notations $(\boldsymbol{\sigma}\times\overline \bsig)^{(k) }$ and $(\ba\times \ba')^{(k )}$ represent (with $\overline\bsig$ denoting $(-\sig_1,\ldots,-\sig_\fn)$)
\begin{align}\label{def_diffakn_k}
   & (\ba\times \ba')^{(k )}([b],[b'])=( [a_k],\ldots, [a_\fn], [a_1],\ldots [a_{k-1}], [b], [a'_{k-1}],\ldots [a_1'], [a_\fn']\cdots [a'_{k}],[b']),\nonumber \\
  &  (\boldsymbol{\sigma}\times \overline\bsig)^{(k )}=(  \sigma_k, \ldots \sigma_\fn,   \sigma_1,\ldots ,\sigma_{k}, -\sigma_{k}, \ldots, -\sigma_1 ,   -\sigma_\fn, \ldots, -\sigma_{k }).
\end{align}
We remark that the symbols ``$\otimes$" and ``$\times$" in the above notations do not represent any kind of ``products". 
\end{definition}

Under the above notation, the following lemma is an immediate consequence of the Burkholder-Davis-Gundy inequality. 
\begin{lemma}[Lemma 5.5 of \cite{Band1D}]\label{lem:DIfREP}
Let $T$ be a stopping time with respect to the matrix Brownian motion $\{H_t\}$, and denote $\tau:= T\wedge t$. Then, for any fixed $p\in \N$, we have 
 \begin{equation}\label{alu9_STime}
   \mathbb{E} \left [    \int_{s}^\tau\left(\mathcal{U}^{(\fn)}_{u, \tau, \boldsymbol{\sigma}} \circ \dd\cB^{(\fn)}_{u, \boldsymbol{\sigma}}\right)_{\ba} \right ]^{2p} 
   \lesssim 
   \mathbb{E} \left(\int_{s}^\tau 
   \left(\left(
   \mathcal{U}^{(\fn)}_{u, \tau,  \boldsymbol{\sigma}}
   \otimes 
   \mathcal{U}^{(\fn)}_{u, \tau,  \overline{\boldsymbol{\sigma}}}
   \right) \;\circ  \;
   \left( \cB \otimes  \cB \right)^{(2\fn)}
   _{u, \boldsymbol{\sigma}  }
   \right)_{\ba, \ba}\dd u 
   \right)^{p}
  \end{equation} 
  where $\mathcal{U}^{(\fn)}_{u, \tau,  \boldsymbol{\sigma}} \otimes    \mathcal{U}^{(\fn)}_{u, \tau,  \overline{\boldsymbol{\sigma}}}$ denotes the tensor product of the evolution kernel in \eqref{def_Ustz}: 
\begin{align*}
\left[\left(
   \mathcal{U}^{(\fn)}_{u, \tau,  \boldsymbol{\sigma}}
   \otimes 
   \mathcal{U}^{(\fn)}_{u, \tau,  \overline{\boldsymbol{\sigma}}}
   \right)\circ \cal A\right]_{\ba,\ba'}&=
   \sum_{\mathbf{b},\mathbf{b}'} \prod_{i=1}^\fn \left(\frac{1 - u m(\sig_i)m(\sig_{i+1}) S^\LK}{1 - t m(\sig_i)m(\sig_{i+1}) S^\LK}\right)_{[a_i] [b_i]} \\
  & \times \prod_{i=1}^\fn \left(\frac{1 - u m(-\sig_i)m(-\sig_{i+1}) S^\LK}{1 - t m(-\sig_i)m(-\sig_{i+1}) S^\LK}\right)_{[a'_i] [b'_i]} \mathcal{A}_{\mathbf{b},\mathbf{b}'}    
\end{align*}
for any $(2\fn)$-dimensional tensor ${\cal A}$ and $\mathbf b=([b_1],\ldots, [b_\fn])$, $\mathbf b'=([b_1'],\ldots, [b_\fn'])$.
\end{lemma}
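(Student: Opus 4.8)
The plan is a stopped Burkholder--Davis--Gundy (BDG) argument, the one computational input being an explicit It\^o evaluation of the quadratic covariation of the martingale term, which will be identified with the glued loop tensor $(\cB\otimes\cB)^{(2\fn)}$ of \Cref{def:CALE}.

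First I would set up the relevant martingale and reduce the evolution-kernel integral to it. The kernel in \eqref{def_Ustz} satisfies the semigroup identity $\mathcal{U}^{(\fn)}_{u,v,\boldsymbol{\sigma}}=\mathcal{U}^{(\fn)}_{w,v,\boldsymbol{\sigma}}\circ\mathcal{U}^{(\fn)}_{u,w,\boldsymbol{\sigma}}$ for $u\le w\le v$, which is immediate since the scalar factors $\frac{1-uM}{1-vM}=\frac{1-wM}{1-vM}\cdot\frac{1-uM}{1-wM}$ telescope (here $M=m(\sigma_i)m(\sigma_{i+1})S^{\LK}$). Hence $v\mapsto\int_s^v\mathcal{U}^{(\fn)}_{u,v,\boldsymbol{\sigma}}\circ\dd\mathcal{B}^{(\fn)}_{u,\boldsymbol{\sigma}}$ is exactly the Duhamel solution of $\dd Y_v=(\thn^{(\fn)}_{v,\boldsymbol{\sigma}}\circ Y_v)\,\dd v+\dd\mathcal{B}^{(\fn)}_{v,\boldsymbol{\sigma}}$, so its value at $\tau=T\wedge t$ is precisely the object on the left of \eqref{alu9_STime}. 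On the other hand, for the \emph{fixed} endpoint $t$ the process $v\mapsto M_v:=\int_s^v\mathcal{U}^{(\fn)}_{u,t,\boldsymbol{\sigma}}\circ\dd\mathcal{B}^{(\fn)}_{u,\boldsymbol{\sigma}}$ is a genuine continuous $L^2$-martingale (entrywise in $\ba$), because the integrand is $\mathcal F_u$-adapted and the kernel is deterministic. Discretizing the Duhamel recursion and expanding the $2p$-th moment step by step --- at each step the running partial sum is hit by the contraction $\mathcal{U}^{(\fn)}_{u_j,u_{j+1},\boldsymbol{\sigma}}$ and an $\mathcal F_{u_j}$-measurable-coefficient increment is added, so all odd cross terms in the increments vanish --- the even cross terms reassemble, via the semigroup identity, into the \emph{evolution-kernel weighted} quadratic variation; this is what yields $\mathcal{U}_{u,\tau}$ (rather than $\mathcal{U}_{u,t}$) on the right-hand side, with no spurious Gronwall factor in $\int_s^t\|\thn^{(\fn)}_{u,\boldsymbol{\sigma}}\|$. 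Equivalently, one may phrase this as BDG applied to $v\in[s,\tau]\mapsto\int_s^v\mathcal{U}^{(\fn)}_{u,\tau,\boldsymbol{\sigma}}\circ\dd\mathcal{B}_u$ after a standard conditioning on $\mathcal F_\tau$.

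The heart of the matter is then the covariation computation. From $\dd\mathcal{B}^{(\fn)}_{t,\boldsymbol{\sigma},\ba}=\sum_{x,y}(\partial_{xy}\mathcal{L}^{(\fn)}_{t,\boldsymbol{\sigma},\ba})\sqrt{S_{xy}}(\dd B_t)_{xy}$ and the Hermitian It\^o rule $(\dd B_t)_{xy}(\dd B_t)_{x'y'}=\mathbf 1((x',y')=(y,x))\,\dd t$, together with $\overline{m(\sigma)}=m(-\sigma)$ from \eqref{def_mtzk} and the reality of $S$ (so that conjugating a $G$-loop amounts to flipping all charges), the covariation density of $\mathcal{B}^{(\fn)}_{\cdot,\boldsymbol{\sigma},\ba}$ against the conjugate loop with charges $\overline{\boldsymbol{\sigma}}$ and indices $\ba'$ is $\sum_{x,y}(\partial_{xy}\mathcal{L}^{(\fn)}_{t,\boldsymbol{\sigma},\ba})(\partial_{yx}\mathcal{L}^{(\fn)}_{t,\overline{\boldsymbol{\sigma}},\ba'})S_{xy}$. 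Using $\partial_{xy}G_t(\sigma)=-G_t(\sigma)|_{\cdot x}G_t(\sigma)|_{y\cdot}$ one expands $\partial_{xy}\mathcal{L}^{(\fn)}_{t,\boldsymbol{\sigma},\ba}$ as $-\sum_{k=1}^\fn$ of chains obtained by cutting the $k$-th $G$-edge at the sites $x,y$; grouping $x,y$ into blocks $[b],[b']$ and invoking the block structure of $S_{xy}$, the $k$-th diagonal term becomes $W^d\sum_{[b],[b']}S^{\LK}_{[b][b']}\,\mathcal{L}^{(2\fn+2)}_{t,(\boldsymbol{\sigma}\times\overline{\boldsymbol{\sigma}})^{(k)},(\ba\times\ba')^{(k)}([b],[b'])}$, i.e.\ exactly $(\cB\times\cB)^{(k)}_{t,\boldsymbol{\sigma},\ba,\ba'}$, and the sum over $k$ gives $(\cB\otimes\cB)^{(2\fn)}_{t,\boldsymbol{\sigma},\ba,\ba'}$. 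Applying $\mathcal{U}^{(\fn)}_{u,\tau,\boldsymbol{\sigma}}$ to $M$ and using $\overline{\mathcal{U}^{(\fn)}_{u,\tau,\boldsymbol{\sigma}}}=\mathcal{U}^{(\fn)}_{u,\tau,\overline{\boldsymbol{\sigma}}}$, the covariation of the transformed martingale is $\big((\mathcal{U}^{(\fn)}_{u,\tau,\boldsymbol{\sigma}}\otimes\mathcal{U}^{(\fn)}_{u,\tau,\overline{\boldsymbol{\sigma}}})\circ(\cB\otimes\cB)^{(2\fn)}_{u,\boldsymbol{\sigma}}\big)_{\ba,\ba}$ integrated in $u$; BDG in the form $\E|M_\tau|^{2p}\lesssim_p\E[\langle M,\overline M\rangle_\tau]^p$ then gives \eqref{alu9_STime}.

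The step I expect to be the main obstacle is this covariation bookkeeping: matching the cut position $k$, the charge flips $\boldsymbol{\sigma}\leftrightarrow\overline{\boldsymbol{\sigma}}$, the index permutations $(\ba\times\ba')^{(k)}$ and $(\boldsymbol{\sigma}\times\overline{\boldsymbol{\sigma}})^{(k)}$, and the $W^d$ powers coming from passing between site sums and block sums, so that the covariation density is seen to be exactly the tensor $(\cB\otimes\cB)^{(2\fn)}$ of \Cref{def:CALE} (on the diagonal $\ba'=\ba$), rather than merely comparable to it. The semigroup/optional-stopping reduction in the first paragraph is a secondary technical point, routine once the Duhamel recursion is written out.
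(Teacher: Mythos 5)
Your overall route is the one the paper intends: it presents this lemma as an immediate consequence of the Burkholder--Davis--Gundy inequality (citing Lemma 5.5 of \cite{Band1D}), with the bracket of the martingale term identified, after loop surgery, with the glued $(2\fn+2)$-loop tensor of \Cref{def:CALE}. However, the step you yourself single out as the heart of the matter is stated incorrectly. Writing $\partial_{xy}\mathcal{L}^{(\fn)}_{u,\boldsymbol{\sigma},\ba}=-\sum_{k=1}^{\fn}C^{(k)}_{yx}$, where $C^{(k)}$ is the chain obtained by cutting the $k$-th edge, the covariation density $\sum_{x,y}S_{xy}\bigl|\bigl(\mathcal{U}^{(\fn)}_{u,\tau,\boldsymbol{\sigma}}\circ\partial_{xy}\mathcal{L}^{(\fn)}_{u,\boldsymbol{\sigma}}\bigr)_{\ba}\bigr|^{2}$ contains \emph{cross terms} pairing a cut at position $k$ in the loop with a cut at position $k'\ne k$ in its conjugate; the resulting glued loops have charges $\sigma_k$ and $-\sigma_{k'}$ adjacent to the new vertices $[b],[b']$ and are not among the terms of \eqref{defEOTE}, which records only the diagonal $k'=k$. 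So the bracket is \emph{not} exactly $(\cB\otimes\cB)^{(2\fn)}$, contrary to your claim that it is ``exactly \ldots rather than merely comparable.'' What is true, and all that \eqref{alu9_STime} requires since it asserts only $\lesssim$, is the Cauchy--Schwarz bound $\|\sum_k w_k\|^2\le \fn\sum_k\|w_k\|^2$ with $w_k:=\sum_{\mathbf b}(\mathcal{U}^{(\fn)}_{u,\tau,\boldsymbol{\sigma}})_{\ba\mathbf b}\,\sqrt{S_{\cdot\cdot}}\,C^{(k),\mathbf b}$, whose diagonal part is precisely $\bigl((\mathcal{U}^{(\fn)}_{u,\tau,\boldsymbol{\sigma}}\otimes\mathcal{U}^{(\fn)}_{u,\tau,\overline{\boldsymbol{\sigma}}})\circ(\cB\otimes\cB)^{(2\fn)}_{u,\boldsymbol{\sigma}}\bigr)_{\ba,\ba}$; this inequality (not identity) is exactly how the analogous step is used in \eqref{UEUEdif+Q}.

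A second, smaller point: your ``equivalently, BDG applied to $v\mapsto\int_s^v\mathcal{U}^{(\fn)}_{u,\tau,\boldsymbol{\sigma}}\circ\dd\mathcal{B}_u$ after a standard conditioning on $\mathcal{F}_\tau$'' is not a valid argument. The kernel $\mathcal{U}^{(\fn)}_{u,\tau,\boldsymbol{\sigma}}$ is $\mathcal{F}_\tau$-measurable but not $\mathcal{F}_u$-measurable, so the displayed process is not a stochastic integral with adapted integrand, and conditioning on $\mathcal{F}_\tau$ freezes the driving noise up to time $\tau$, leaving nothing to which BDG can be applied. Your discretized Duhamel expansion is the right idea and can be made rigorous (e.g.\ iterating the Duhamel formula over a deterministic grid and using the commutativity of the kernels, which are functions of the same matrices $m(\sigma_i)m(\sigma_{i+1})S^{\LK}$, together with optional stopping at each step), but as written it is a sketch, and the ``equivalent'' shortcut should be removed. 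Neither issue changes the conclusion, but both need to be repaired for the proof to stand on its own.
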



For the proofs in the subsequent steps, we need to control the terms in equations \eqref{int_K-LcalE} and \eqref{int_K-L_ST}, which requires some estimates on the $(\infty\to\infty)$-norm on the evolution kernel in \Cref{DefTHUST}. These estimates have been established in \cite{Band1D,Band2D,RBSO1D} in the setting of random band matrices (or Wegner orbital models) within the bulk regime. Similar estimates also hold in the edge regime. 

\begin{lemma}[Lemma 7.1 of \cite{Band1D}]
\label{lem:sum_Ndecay}
Let ${\cal A}: (\Zn)^{\fn}\to \mathbb C$ be an $\fn$-dimensional tensor for a fixed $\fn\in \N$ with $\fn\ge 2$. Then, for each $0\le s \le t < 1$, we have that  
\begin{align}\label{sum_res_Ndecay}
   \| {\cal U}_{s,t,\boldsymbol{\sigma}}\;\circ {\cal A}\|_{\infty} \prec \left(  \frac{1-s}{1-t}\right)^{\fn }\cdot \|{\cal A}\|_{\infty} , 
\end{align}
where the $L^\infty$-norm of ${\cal A}$ is defined as $\|{\cal A}\|_{\infty}=\max_{\ba}|\cal A_{\ba}|$.
\end{lemma}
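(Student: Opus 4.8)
The plan is to exploit the fact that the kernel of $\mathcal{U}^{(\fn)}_{s,t,\bsig}$ in \eqref{def_Ustz} \emph{factorizes} over the $\fn$ coordinates: it is the product of the $\fn$ single-index matrices
\[
K_i \;:=\; \frac{1 - s\, m(\sig_i)m(\sig_{i+1})\,S^{\LK}}{1 - t\, m(\sig_i)m(\sig_{i+1})\,S^{\LK}}\,,\qquad i\in\qqq{\fn}\,,
\]
with the cyclic convention $\sig_{\fn+1}=\sig_1$. Writing $\mathbf b=([b_1],\dots,[b_\fn])$, one then has $\big|(\mathcal{U}^{(\fn)}_{s,t,\bsig}\circ\mathcal{A})_{\ba}\big|\le \|\mathcal{A}\|_\infty\sum_{\mathbf b}\prod_{i=1}^\fn|(K_i)_{[a_i][b_i]}| = \|\mathcal{A}\|_\infty\prod_{i=1}^\fn\big(\sum_{[b_i]}|(K_i)_{[a_i][b_i]}|\big)\le \|\mathcal{A}\|_\infty\prod_{i=1}^\fn\|K_i\|_{\infty\to\infty}$, since the inner sum is the absolute $[a_i]$-row sum of $K_i$. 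Hence \eqref{sum_res_Ndecay} reduces to the single-matrix bound $\|K_i\|_{\infty\to\infty}\prec (1-s)/(1-t)$ for every $i$ (the degenerate case $t=0$ forces $s=0$ and $K_i=I$, which is trivial).

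\textbf{Reduction to a propagator bound.} For the single-matrix estimate the main device is the algebraic identity, with $w_i := m(\sig_i)m(\sig_{i+1})$ and $\Theta^{(\sig_i,\sig_{i+1})}_t=(1-t\,w_i S^{\LK})^{-1}$ as in \Cref{def_Theta},
\[
K_i \;=\; \frac{s}{t}\,I \;+\; \frac{t-s}{t}\,\big(1-t\,w_i S^{\LK}\big)^{-1} \;=\; \frac{s}{t}\,I + \frac{t-s}{t}\,\Theta^{(\sig_i,\sig_{i+1})}_t\,,
\]
obtained by clearing denominators. Thus $\|K_i\|_{\infty\to\infty}\le \frac{s}{t} + \frac{t-s}{t}\,\|\Theta^{(\sig_i,\sig_{i+1})}_t\|_{\infty\to\infty}$, and everything reduces to showing $\|\Theta^{(\sig_i,\sig_{i+1})}_t\|_{\infty\to\infty}\prec (1-t)^{-1}$; together with the elementary identity $\frac{s}{t}+\frac{t-s}{t(1-t)}=\frac{1-s}{1-t}$ this closes the argument after taking the product over $i$.

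\textbf{Bounding $\|\Theta^{(\sig_i,\sig_{i+1})}_t\|_{\infty\to\infty}$.} I would split into the ``long'' and ``short'' edge cases. If $\sig_i=-\sig_{i+1}$, then $w_i=|m(\sE)|^2=1$, so $\Theta^{(+,-)}_t=(1-tS^{\LK})^{-1}=\sum_{k\ge 0}t^k(S^{\LK})^k$ has nonnegative entries; since $S^{\LK}\mathbf 1=\mathbf 1$ we get $\Theta^{(+,-)}_t\mathbf 1=(1-t)^{-1}\mathbf 1$, hence every row sum equals $(1-t)^{-1}$ and $\|\Theta^{(+,-)}_t\|_{\infty\to\infty}=(1-t)^{-1}$ exactly (consistent with \eqref{eq:sumTheta}). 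If $\sig_i=\sig_{i+1}$ (a ``short edge''), the entries of $\Theta^{(\sig_i,\sig_i)}_t$ are complex, so instead I would sum the pointwise bound \eqref{prop:ThfadC_short} over $[x]\in\Zn$, using $\sum_{[x]}e^{-c|[x]|/\hell_t}\lesssim\hell_t^d$ and $\hell_t\gtrsim 1$, to obtain $\|\Theta^{(\sig_i,\sig_i)}_t\|_{\infty\to\infty}\prec \omega_t^{-1}+n^dW^{-D}\prec \omega_t^{-1}\le(1-t)^{-1}$, where the last inequality uses $\omega_t=|1-t|+\sqrt\kappa\ge|1-t|$. In either case $\|\Theta^{(\sig_i,\sig_{i+1})}_t\|_{\infty\to\infty}\prec(1-t)^{-1}$, so $\|K_i\|_{\infty\to\infty}\prec \frac{s}{t}+\frac{t-s}{t(1-t)}=\frac{1-s}{1-t}$, and multiplying over $i\in\qqq{\fn}$ yields $\|\mathcal{U}^{(\fn)}_{s,t,\bsig}\circ\mathcal{A}\|_\infty\prec\big((1-s)/(1-t)\big)^\fn\|\mathcal{A}\|_\infty$.

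\textbf{Main difficulty.} I do not expect a genuine obstacle: once the kernel is recognized as a product over coordinates, the lemma rests entirely on the $(\infty\to\infty)$-norm bounds for the $\Theta$-propagators, which are already contained in \Cref{lem_propTH}. The only points that need care are bookkeeping: in the short-edge case one cannot use the row-sum identity and must sum the pointwise decay estimate \eqref{prop:ThfadC_short}, tracking the additive $W^{-D}$ terms (harmless, since there are only $\mathrm{poly}(N)$ of them and $D$ is arbitrary) and the polylogarithmic factors absorbed into $\prec$; and one should note that the lemma is invoked within the flow framework with $t\le t_0$, so that \Cref{lem_propTH} is indeed applicable.
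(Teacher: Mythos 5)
Your proof is correct, and the approach — factorizing the tensor-product kernel into the single-index matrices $K_i$, writing $K_i=\frac{s}{t}I+\frac{t-s}{t}\Theta_t^{(\sigma_i,\sigma_{i+1})}$, and then reducing to the $(\infty\to\infty)$-norm bounds on the $\Theta$-propagators from \Cref{lem_propTH} (exact row-sum identity in the long-edge case, summing the pointwise decay bound \eqref{prop:ThfadC_short} in the short-edge case) — is the same route used in the cited source. The caveat you raise about applicability of \Cref{lem_propTH} for $t\le t_0$ is appropriate but harmless, since the lemma is only ever invoked in that regime.
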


If a tensor $\cal A$ exhibits faster-than-polynomial decay at scales larger than $\ell_s$, then we obtain a stronger bound than the $({\infty\to \infty})$-norm bound given by \eqref{sum_res_Ndecay}. This bound can be further improved if $\cal A$ satisfies certain sum zero property or symmetry.

\begin{lemma}\label{lem:sum_decay}
Let ${\cal A}: (\Zn)^{\fn}\to \mathbb C$ be an $\fn$-dimensional tensor for a fixed $\fn\in \N$ with $\fn\ge 2$. Suppose it satisfies the following property for some small constant $\e\in(0,1)$ and large constant $D>1$,  
\begin{equation}\label{deccA0}
\max_{i,j\in \Zn}|[a_i]-[a_j]|\ge W^{\e}\ell_s \ \ \text{for} \ \ \ba=([a_1],\ldots, [a_\fn])  \implies  |\cal A_{\ba}|\le W^{-D} .
\end{equation}
Fix any $0\le s \le t < 1$ such that $(1-t)/(1-s)\ge W^{-d}$. 
There exists a constant $C_\fn>0$ that does not depend on $\e$ or $D$ such that the following bound holds (note  $\eta_t\ell_t^d\lesssim\eta_s\ell_s^d$ by \eqref{eq:kappat} and the definition of $\ell_t$ in \eqref{eq:ellt}):
\begin{align}\label{sum_res_1}
    \left\|{\cal U}^{(\fn)}_{s,t,\boldsymbol{\sigma}} \circ {\cal A}\right\|_\infty \le W^{C_\fn\e}\frac{\ell_t^d }{\ell_s^d }\left(\frac{\ell_s^d |1-s|}{\ell_t^d |1-t|}\right)^{\fn} \|{\cal A}\|_\infty
    +W^{-D+C_\fn}.
\end{align}
In addition, a stronger bound holds if ${\cal A}$ also satisfies the following sum zero property: 
\begin{align}\label{sumAzero}
 \sum_{[a_2],\ldots,[a_\fn]\in \Zn}{\cal A}_{\ba}=0, \quad \forall [a_1]\in \Zn \, .
\end{align}
More precisely, under \eqref{sumAzero}, there exists a constant $C_\fn>0$ that does not depend on $\e$ or $D$ such that
\begin{align}\label{sum_res_2}
    \left\|{\cal U}^{(\fn)}_{s,t,\boldsymbol{\sigma}} \circ {\cal A}\right\|_\infty \le W^{C_\fn\e} \frac{\ell_t^{d-1}}{\ell_s^{d-1}} \left(\frac{\ell_s^d|1-s|}{\ell_t^d|1-t|}\right)^{\fn}  \|{\cal A}\|_{\infty} 
   +W^{-D+C_\fn}.
\end{align} 
This estimate can be further improved in the following cases:
\begin{itemize}
\item[(I)] If ${\cal A}$ satisfies \eqref{sumAzero} and the following symmetry condition:
\be\label{eq:A_zero_sym}
\qquad\ \ \cal A_{([a],[a]+[b_2],\ldots, [a]+[b_\fn])} = \cal A_{([a],[a]-[b_2],\ldots, [a]-[b_\fn])},\ \  \forall [a],[b_2],\ldots, [b_\fn]\in \Zn \, ,
\ee
then the bound \eqref{sum_res_2} can be improved to:
\begin{align}\label{sum_res_2_sym}
    \left\|{\cal U}^{(\fn)}_{s,t,\boldsymbol{\sigma}} \circ {\cal A}\right\|_\infty \le W^{C_\fn\e} \left(\frac{\ell_s^d|1-s|}{\ell_t^d|1-t|}\right)^{\fn}  \|{\cal A}\|_{\infty} 
   +W^{-D+C_\fn}.
\end{align}

\item[(II)] Given an even $\fn=2k\ge 2$, suppose ${\cal A}$ satisfies the following \emph{double sum zero property}:
\be\label{eq:doublesumzero}
\qquad \sum_{[a_2],\ldots,[a_k]} \cal A_{\ba,\mathbf b}=0,  \ \ \text{and}\ \ \sum_{[a_2],\ldots,[a_k]} \cal A_{\mathbf b, \ba}=0, \quad \forall [a_1]\in \Zn, \ \mathbf b\in (\Zn)^{k}, 
\ee
where we denote $\ba=([a_1],\ldots,[a_k])$ and $\mathbf b=([b_1],\ldots,[b_k])$. Then, the bound \eqref{sum_res_2} can be improved to:
\begin{align}\label{sum_res_2_doublezero}
	\left\|{\cal U}^{(\fn)}_{s,t,\boldsymbol{\sigma}} \circ {\cal A}\right\|_\infty \le W^{C_\fn\e} \left(\frac{\ell_s^d|1-s|}{\ell_t^d|1-t|}\right)^{\fn}  \|{\cal A}\|_{\infty} +W^{-D+C_\fn}.
\end{align} 
 \end{itemize}
\end{lemma}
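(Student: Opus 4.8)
The plan is to reduce the statement to estimates on the single‑leg kernels and then read off the various gains from a discrete Taylor (telescoping) expansion that exploits the sum‑zero and symmetry hypotheses. First I would record the factorization. Writing $M^{(\sigma,\sigma')}:=m(\sigma)m(\sigma')S^{\LK}$, so that $\Theta_t^{(\sigma,\sigma')}=(1-tM^{(\sigma,\sigma')})^{-1}$, the elementary identity $(1-sM)(1-tM)^{-1}=\tfrac{s}{t}I+\tfrac{t-s}{t}(1-tM)^{-1}$ shows that the evolution kernel \eqref{def_Ustz} is the $\fn$‑fold product, one factor per leg, of the operators $K^{(i)}_{s,t}:=\tfrac{s}{t}I+\tfrac{t-s}{t}\Theta_t^{(\sigma_i,\sigma_{i+1})}$. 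Thus every bound will follow from the properties of $\Theta_t^{(\sigma,\sigma')}$ in \Cref{lem_propTH}: the uniform bounds $\sup_{[x]}|\Theta_t^{(\sigma,\sigma')}(0,[x])|\lesssim(1-t)^{-1}\ell_t^{-d}$ and $\sum_{[x]}|\Theta_t^{(\sigma,\sigma')}(0,[x])|\lesssim(1-t)^{-1}$ (both valid for short edges as well, since $\omega_t\hell_t^d\gtrsim(1-t)\ell_t^d$ for $d\le2$), the exponential decay on scale $\ell_t$, and the finite‑difference estimates \eqref{prop:BD1} and \eqref{prop:BD2}. I will abbreviate $q:=|1-s|/|1-t|$ (so $q\le W^d$ by $(1-t)/(1-s)\ge W^{-d}$) and $r:=\ell_t/\ell_s\ge1$; the parenthetical remark $\eta_t\ell_t^d\lesssim\eta_s\ell_s^d$ gives $r^{-d}q\gtrsim1$. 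The $W^{-D}$ error tails are disposed of at the outset: by \eqref{deccA0} the part of $\cal A$ supported on configurations of diameter $>W^\e\ell_s$ is $\le W^{-D}$ entrywise, and applying $\mathcal U^{(\fn)}_{s,t,\bsig}$ to it costs at most $\prod_i\|K^{(i)}_{s,t}\|_{\infty\to\infty}\lesssim q^\fn\le W^{d\fn}$, absorbed into $W^{-D+C_\fn}$. Hence from now on $\cal A_{\mathbf b}$ may be taken supported on $\mathbf b$ of diameter $\le W^\e\ell_s$.

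For the base bound \eqref{sum_res_1} I would fix $\ba$, designate $[b_1]$ as a pivot, and sum the remaining legs over a ball $B_{[b_1]}(2W^\e\ell_s)$ containing the support of $\cal A_{\mathbf b}$. Using the pointwise bound $|\Theta_t^{(\sigma_i,\sigma_{i+1})}(\cdot,[b])|\lesssim(1-t)^{-1}\ell_t^{-d}$ on that ball together with $\tfrac{t-s}{t}\le1-s=q(1-t)$, one gets $\sum_{[b_i]\in B}|K^{(i)}_{s,t}([a_i],[b_i])|\lesssim W^{d\e}\,\ell_s^d\ell_t^{-d}q=W^{d\e}r^{-d}q$ for each $i\ge2$ (the $\delta$‑part contributing $O(1)\lesssim r^{-d}q$ since $r^{-d}q\gtrsim1$), while $\sum_{[b_1]}|K^{(1)}_{s,t}([a_1],[b_1])|\lesssim q$ by the total‑mass bound. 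Multiplying yields $q\,(W^{d\e}r^{-d}q)^{\fn-1}=W^{d(\fn-1)\e}\,r^d\,(r^{-d}q)^{\fn}$, which is \eqref{sum_res_1}.

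The improvements come from the telescoping. Assuming \eqref{sumAzero}, since $\sum_{[b_2],\ldots,[b_\fn]}\cal A_{\mathbf b}=0$ for every $[b_1]$, I may replace $\prod_{i=2}^\fn K^{(i)}([a_i],[b_i])$ in $(\mathcal U\circ\cal A)_{\ba}$ by $\prod_{i=2}^\fn K^{(i)}([a_i],[b_i])-\prod_{i=2}^\fn K^{(i)}([a_i],[b_1])$ and expand this difference telescopically, so that each resulting term carries exactly one ``difference leg'' $K^{(j)}([a_j],[b_j])-K^{(j)}([a_j],[b_1])$. Bounding the difference leg by \eqref{prop:BD1} — whose right‑hand side is smaller than the value $|\Theta_t|$ by a factor $\lesssim W^{O(\e)}\ell_s/\ell_t=W^{O(\e)}r^{-1}$ once the decay on scale $\ell_t$ and $(1-t)\ell_t^2\lesssim1$ are used — and reassembling the remaining (pivot, ball‑restricted, and frozen) legs together with the volume cost $(W^\e\ell_s)^{d(\fn-j)}$ of partially summing $\cal A$ over the frozen variables, one obtains \eqref{sum_res_2} with prefactor $r^{d-1}$ in place of $r^d$. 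Under the extra symmetry \eqref{eq:A_zero_sym} the first‑order term in the Taylor expansion of each leg around $[b_1]$ is odd in the relative variable and cancels against $\cal A$, so the leading surviving contribution is governed by the second‑order difference \eqref{prop:BD2}, again a factor $\sim W^{O(\e)}r^{-1}$ smaller, upgrading the prefactor to $1$ and giving \eqref{sum_res_2_sym}. For $\fn=2k$ under the double sum‑zero property \eqref{eq:doublesumzero} the telescoping is run on each of the two half‑chains $([a_1],\ldots,[a_k])$ and $([b_1],\ldots,[b_k])$ separately, each contributing one factor $r^{-1}$, which likewise produces the prefactor $1$ in \eqref{sum_res_2_doublezero}.

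The main obstacle is the bookkeeping in the last two steps: one must verify that the single factor $r^{-1}$ (resp.\ two, for \eqref{eq:doublesumzero}) extracted from the difference legs combines with the pivot leg, the legs frozen at $[b_1]$, and the partial summation of $\cal A$ to produce \emph{exactly} the claimed power of $\ell_t/\ell_s$ — neither more nor less — which requires carefully summing the bounds \eqref{prop:BD1}--\eqref{prop:BD2} against products of $\Theta_t$‑propagators sharing a common argument, while checking that degenerate geometries (several $[a_i]$ clustered together), the $\delta$‑parts of the $K^{(i)}$, and short edges do not erode the gain; keeping all the $W^{O(\e)}$ losses uniform in $\e$ (so $C_\fn$ is $\e$‑independent) rests on the exponential decay of \Cref{lem_propTH} beating every polynomial weight. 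If a direct accounting proves unwieldy, an alternative is to first prove the bounds for a single short step $\tfrac12(1-s)\le1-t<1-s$ with explicit constants and then compose them $O(\log W)$ times along a geometric time grid, using the semigroup property of $\mathcal U$ and the fact that the hypotheses \eqref{deccA0}, \eqref{sumAzero}, \eqref{eq:A_zero_sym}, and \eqref{eq:doublesumzero} are each preserved by a short‑step evolution.
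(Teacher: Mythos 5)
The paper does not prove \Cref{lem:sum_decay} in this text; it cites \cite[Lemma 7.3]{Band1D}, \cite[Lemma 7.3]{Band2D}, and \cite[Lemmas 6.8 and 7.1]{RBSO1D} and notes that the argument utilizes \eqref{prop:ThfadC}--\eqref{prop:BD2}. Your proposal reproduces exactly the strategy used in those references: factor $\mathcal U^{(\fn)}_{s,t,\bsig}$ leg by leg as $K^{(i)}_{s,t}=\tfrac{s}{t}I+\tfrac{t-s}{t}\Theta_t^{(\sigma_i,\sigma_{i+1})}$, dispose of the far tail of $\cal A$ via \eqref{deccA0}, establish \eqref{sum_res_1} by ball-summing all legs against the pivot, then harvest one power of $\ell_s/\ell_t$ per order of cancellation by telescoping around the pivot $[b_1]$ and appealing to \eqref{prop:BD1} (resp.\ \eqref{prop:BD2} after symmetrizing via \eqref{eq:A_zero_sym}). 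So you have identified the right approach; the paper and the cited works take the same one.

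Two points deserve more care than your sketch gives them. First, the gain per difference order is \emph{not} uniformly $\ell_s/\ell_t$: the right-hand sides of \eqref{prop:BD1} and \eqref{prop:BD2} have denominators $\langle\cdot\rangle^{d-1}$ and $\langle\cdot\rangle^d$, so the ball sum of a differenced leg is of size $\lesssim|B|$ (not $\lesssim|B|\cdot W^\e\ell_s$) uniformly in the distance of $[a_j]$ to the ball, and the extracted gain degrades to a constant when $[a_j]$ lands in the pivot ball. One must check, by pairing these degenerate $[b_1]$'s against the exponential decay of the remaining $\Theta$ legs and the inequality $(1-s)\ell_s^d\le\ell_s^{d-2}\le1$ (for $d\le2$), that these configurations still respect the target; they do, but the verification is exactly the bookkeeping you flag as the obstacle. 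Second, the $\delta$-part of the telescoped difference $K^{(j)}([a_j],[b_j])-K^{(j)}([a_j],[b_1])$ contains $\tfrac{s}{t}\delta_{[a_j][b_1]}$, which is constant in $[b_j]$; if you place absolute values on it and then ball-sum over $[b_j]$, you manufacture a spurious factor $|B|\sim W^{d\e}\ell_s^d$ when $[b_1]=[a_j]$. This term must instead be dropped \emph{before} taking absolute values, because it is annihilated exactly by \eqref{sumAzero} when summed against $\cal A$; this is not merely the cosmetic telescoping trick but a genuine cancellation that the naive absolute-value bound forgets. You mention both issues as concerns, so the outline is honest, but the resolution of the second one is a specific structural observation rather than a matter of careful arithmetic. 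Your fallback via a geometric time grid and the semigroup property would also work and is a reasonable alternative, though it is not how the cited proofs proceed.
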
 
\begin{proof}
The proof of this lemma is analogous to those for \cite[Lemma 7.3]{Band1D}, \cite[Lemma 7.3]{Band2D}, and \cite[Lemmas 7.7 and A.5]{RBSO1D}, utilizing the estimates \eqref{prop:ThfadC}, \eqref{prop:BD1}, and \eqref{prop:BD2}. We omit the details.
\end{proof}

\subsection{Step 2: Sharp local law and a priori 2-$G$ loop estimate}

In this section, we focus on the $2$-$G$ loops with $\fn=2$ and $\boldsymbol{\sigma}=(+,-)$, while the case $\boldsymbol{\sigma}=(-,+)$ also follows by taking the matrix transposition. 

Given $u\in[s,t]$, $0\le \ell\le n$, and a sufficiently large constant $D>0$, we introduce the following functions to control the tail behavior of $(\cL-\cK)^{(2)}$:
\begin{align}
 {\cal T}^{(\cal L-K)}_u (\ell)&:= \max_{[a],[b] :  |[a]-[b]|\ge \ell} \left| ({\cal L-\cK})^{(2)}_{u,  (+,-),  ( [a], [b])}\right|, 
      \label{def_WTu} \\
{\cal T} _{u, D} (\ell) &:=  (W^d\ell_u^d\eta_u)^{-2}
\exp \Big(- \big( \ell /\ell_u \big)^{1/2} \Big)+W^{-D}.\label{def_WTuD}
\end{align} 
Moreover, we denote the ratio between them by  
\begin{equation}\label{def_Ju}
     {\cal J} _{u,D} (\ell):=   {\cal T}^{(\cal L-K)}_u (\ell) \big/{\cal T}_{u,D} (\ell) +1
\end{equation} 
For the proof, let ${\cal J}^* _{u,D}$ be a \emph{deterministic} control parameter such that 
\be\label{eq:def_new_J*}
\max_{0\le \ell \le n}   {\cal J} _{u,D} (\ell) \prec {\cal J}^* _{u,D}.
\ee
Moreover, we introduce an intermediate scale parameter \smash{$\ell^{*}_u := (\log W)^{3/2}\ell_u$}. 
Note that on the scale $\ell_u^*$, the propagators $\Theta_u^{\bsig}$ is exponentially small (i.e., faster than any polynomial decay) by \Cref{lem_propTH}, whereas ${\cal T}_{u,D}$ is not (i.e., slower than any polynomial decay). To show the estimate \eqref{Eq:Gdecay_w}, we bound the terms in \eqref{int_K-L_ST} with $\fn=2$ as follows.



\begin{lemma}\label{lem_dec_calE} 
In the setting of \Cref{lem:main_ind}, suppose that \eqref{lRB1} and \eqref{Gtmwc} hold. Then, for each $\bsig=(+,-)$, \smash{$\ba=([a_1],[a_2])\in (\Zn)^2$}, and $\ba'=([a_1'],[a_2'])$ satisfying 
\begin{align} 
\max_{i=1}^2 |[a_i]-[a_i']|\le \ell_t^*\, , \label{57}
\end{align}
the following estimates hold uniformly for all $u\in [s,t]$ and for any large constant $D\ge 10$: 
\begin{align}\label{res_deccalE_lk}
&\frac{{\cal E}^{(2)}_{u, \boldsymbol{\sigma},\ba}}{ 
  {\cal T}_{t,D}(|[a_1]-[a_2]|)} \prec  \frac{1}{1-u}  \frac{ \big({\cal J}^{*}_{u,D} \big)^{2} }{ W^d\ell_u^d \eta_u\sqrt{\kappa}}
\, ,\\
& \frac{ \cW_{u, \boldsymbol{\sigma},\ba}^{(2)}}{
 {\cal T}_{t,D}(|[a_1]-[a_2]|)}
  \prec \left(\frac{\ell^d_u}{\ell^d_s}\right)^2 \frac{{\bf 1}\left(|[a_1]-[a_2]|\le \ell_u^*\right)}{1-u}  + \frac{1}{1-u} \frac{\big({\cal J}^{*}_{u,D} \big)^{3}}{\left(W^d\ell^d_u \eta_u\sqrt{\kappa} \right)^{\frac13} }
       \, ,
 \label{res_deccalE_wG} \\
\label{res_deccalE_dif}
&\frac{\left(
   \mathcal{B}\otimes  \mathcal{B} \right)^{(4)}
_{u, \bsig, \ba ,\ba'}}{  
 {\cal T}_{t,D}(|[a_1]-[a_2]|)^2}
  \prec \left(\frac{\ell^d_u}{\ell^d_s}\right)^5 \frac{{\bf 1}(|[a_1]-[a_2]|\le 4\ell_t^*)}{1-u} +  \frac{1}{1-u}\frac{\big({\cal J}^{*}_{u,D} \big)^{3}}{(W^d \ell_u^d\eta_u\sqrt{\kappa})^{\frac13} }.   
\end{align} 
\end{lemma}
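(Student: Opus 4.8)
\textbf{Proof strategy for \Cref{lem_dec_calE}.} The plan is to estimate each of the three quantities $\mathcal{E}^{(2)}$, $\mathcal{W}^{(2)}$, and $(\mathcal{B}\otimes\mathcal{B})^{(4)}$ by inserting the definitions \eqref{def_ELKLK}, \eqref{def_EwtG}, and \eqref{defEOTE}, and then bounding the resulting $G$-loops of higher length using the a priori estimates from Step~1 (namely \eqref{lRB1}, \eqref{Gtmwc}) together with the tail control encoded in ${\cal J}^*_{u,D}$ and \Cref{lem_GbEXP}. Throughout, I would use the propagator decay estimates \eqref{prop:ThfadC}--\eqref{prop:ThfadC_short} and the basic properties in \Cref{lem_propTH}, as well as the elementary bookkeeping facts in \Cref{lem_dec_calE_0} (the exponential smallness of $\Theta_t^{\bsig}$ and of the ratio kernel beyond scale $\ell_t^*$, and the near-monotonicity \eqref{Tell*} of ${\cal T}_{t,D}$). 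The key structural point, exactly as in \cite[Section 5]{Band1D}, is that because $\bsig=(+,-)$ is a ``long'' (alternating) two-loop, the only non-negligible contributions come from configurations in which all the newly summed block-indices are within $O(\ell_t^*)$ of $\{[a_1],[a_2]\}$, so that the decay factor ${\cal T}_{t,D}(|[a_1]-[a_2]|)$ can be extracted and the remaining sum over an $O(\ell_u^*)$-neighborhood produces the stated prefactors.

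For \eqref{res_deccalE_lk}: in $\mathcal{E}^{(2)}_{u,\bsig,\ba}$ each summand is a product of two $(\cL-\cK)$-loops of length $2$ and $3$ (since $\fn=2$ forces $k=1,l=2$, giving lengths $\fn+k-l+1=2$ and $l-k+1=2$, but after the cut-and-glue with $S^{\LK}$ we pair a $2$-loop with a $2$-loop; more precisely one reads off from \eqref{def_ELKLK} that the two factors are $(\cL-\cK)^{(2)}$ each). I would bound one factor by its tail bound ${\cal J}^*_{u,D}{\cal T}_{u,D}(\cdot)$ and the other by the a priori $(\cL-\cK)^{(2)}$ estimate, using the $S^{\LK}$-kernel and the symmetry \eqref{symmetry} to localize the internal vertex; the sum over that vertex against the exponentially decaying $\Theta$-weights (via \Cref{lem_GbEXP} and \eqref{auiwii}) contributes at most $O(1)$, and collecting the explicit $(W^d\ell_u^d\eta_u)^{-2}$ factors from the two loops against one extracted ${\cal T}_{t,D}$ yields a leftover $(W^d\ell_u^d\eta_u)^{-2}$, which using $\eta_u\asymp(1-u)\sqrt\kappa$ and $\ell_u^d\eta_u\ge\ell_u^d\eta_u$ rearranges into $(1-u)^{-1}(W^d\ell_u^d\eta_u\sqrt\kappa)^{-1}$ up to the ${\cal J}^*$ powers. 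For \eqref{res_deccalE_wG}: from \eqref{def_EwtG} with $\fn=2$, $\mathcal{W}^{(2)}$ contains a factor $\langle(G_u(\sig_k)-M(\sig_k))E_{[a]}\rangle$ — controlled by the averaged local law \eqref{Gt_avgbound_flow} once Step~2 is underway, or, at this a priori stage, by \eqref{Gtmwc} — multiplied by a $\cut^{[b]}_k\circ\cL^{(2)}$, i.e.\ a $3$-$G$ loop; the first (``diagonal, all-close'') piece where $[a_1],[a_2]$ are within $\ell_u^*$ produces the indicator term with the $(\ell_u^d/\ell_s^d)^2$ factor via \eqref{lRB1}, while the off-scale piece is where one splits $\cL^{(3)}$ using Cauchy--Schwarz into $2$-loops, extracts ${\cal T}_{t,D}$, and loses the cube-root of a $G$-loop smallness parameter, yielding the $({\cal J}^*)^3(W^d\ell_u^d\eta_u\sqrt\kappa)^{-1/3}$ term. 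For \eqref{res_deccalE_dif}: from \Cref{def:CALE}, $(\mathcal{B}\otimes\mathcal{B})^{(4)}$ is a sum of $S^{\LK}$-weighted $6$-$G$ loops obtained by gluing a $2$-loop with its conjugate; I would again split into the ``all vertices close'' regime (giving the indicator term, now with a $(\ell_u^d/\ell_s^d)^5$ power because up to five loop-factors get bounded crudely by \eqref{lRB1}) and the ``separated'' regime (handled by Cauchy--Schwarz plus tail extraction, losing a cube root), mirroring the structure of \eqref{res_deccalE_wG}.

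I would organize the write-up so that the three bounds are proved in parallel, isolating a single auxiliary lemma: for an alternating $2$-loop with one edge ``cut and glued'' by $S^{\LK}$ to another loop, the whole object is, up to $W^{-D}$ errors, supported on block-configurations of diameter $\le C\ell_t^*$ around $\{[a_1],[a_2]\}$, and on that event every $G$-loop factor of length $\fn'$ is bounded by $(\ell_u^d/\ell_s^d)^{\fn'-1}\sqrt\kappa\,(W^d\ell_u^d\eta_u)^{-(\fn'-1)}$ by \eqref{lRB1}, every off-diagonal $G$-entry squared by the tail $\Phi$ via \Cref{lem_GbEXP}, and the average of $G-M$ by \eqref{Gtmwc}/\eqref{Gt_avgbound_flow}. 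Then each of \eqref{res_deccalE_lk}--\eqref{res_deccalE_dif} follows by counting powers of $W^d\ell_u^d\eta_u$, converting $\eta_u\asymp(1-u)\sqrt\kappa$, and absorbing the $O(\ell_u^*)$-volume of the residual vertex sums into the prefactors $({\cal J}^*_{u,D})^{\,p}$ with $p=2,3,3$.

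The main obstacle I anticipate is the second, ``separated-vertex'' regime in \eqref{res_deccalE_wG} and \eqref{res_deccalE_dif}: one must show that when the internal/cut vertices are far from $\{[a_1],[a_2]\}$ the contribution inherits the tail ${\cal T}_{t,D}(|[a_1]-[a_2]|)$ (resp.\ its square) rather than merely a polynomially-decaying bound, and this requires a careful Cauchy--Schwarz splitting of the longer $G$-loop into two shorter ones of the alternating type, an application of \Cref{TailtoTail}-style propagation of the stretched-exponential tail through the $\Theta$ and ratio kernels, and bookkeeping of exactly how many factors of $(W^d\ell_u^d\eta_u\sqrt\kappa)^{-1}$ survive — the balance that produces the characteristic $1/3$-power loss. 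The argument is structurally the same as \cite[proof of Lemma 5.7]{Band1D}, with the only genuinely new input being that in the edge regime the parameter $\kappa$ is small, so one must keep track of $\omega_u=|1-u|+\sqrt\kappa$ versus $\eta_u\asymp(1-u)\sqrt\kappa$ and verify, using \eqref{eq:small_para}--\eqref{eq:small_para2}, that all the $\kappa$-dependent factors combine to give precisely the bounds as stated; I expect no new difficulty beyond this careful accounting.
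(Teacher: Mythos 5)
Your high-level plan is the right one and, for $\mathcal{E}^{(2)}$ and $(\mathcal{B}\otimes\mathcal{B})^{(4)}$, it matches the paper's (the paper's Remark after the lemma confirms that the $G$-chain argument from \cite{Band1D} is replaced here by a direct Cauchy--Schwarz as you propose). However, there is one genuine gap in your treatment of $\mathcal{W}^{(2)}$ that would prevent the power counting from closing. You propose to control the averaged factor $\langle(G_u(\sigma_k)-M(\sigma_k))E_{[a]}\rangle$ ``by \eqref{Gtmwc}'' at this stage (or by \eqref{Gt_avgbound_flow}, which is not yet available since it is the output of Step~2). Using the entrywise bound \eqref{Gtmwc} only gives $\langle(G_u-M)E_{[a]}\rangle\prec\Psi_u$, but the proof needs the fluctuation-averaged bound of order $\Psi_u^2/\omega_u$, which is smaller by roughly a factor of $\omega_u/\Psi_u\gg 1$. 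The correct move is to invoke \Cref{lem G<T2}, specifically \eqref{GavLGEX}: this takes as input only a bound on the $2$-$G$ loop (available from \eqref{lRB1} and the tail control ${\cal J}^*_{u,D}$) and, via a fluctuation-averaging mechanism, upgrades it to the averaged estimate directly, without any circularity. Without this step the right-hand side of \eqref{res_deccalE_wG} would be off by a large multiplicative factor.

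Two smaller inaccuracies worth flagging: the exponent $1/3$ in the statement is not produced by a ``cube-root loss from Cauchy--Schwarz'' as you suggest; the actual estimates in the non-indicator regime come out as $({\cal J}^*)^{3/2}(W^d\ell_u^d\eta_u\sqrt\kappa)^{-1/2}$ or $({\cal J}^*)^{3/2}(W^d\ell_u^d\eta_u\sqrt\kappa)^{-1}$, and the statement simply weakens these to the common form $({\cal J}^*)^{3}(W^d\ell_u^d\eta_u\sqrt\kappa)^{-1/3}$ (valid since ${\cal J}^*\ge1$ and $W^d\ell_u^d\eta_u\sqrt\kappa\ge1$) for uniform bookkeeping downstream. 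Also, \Cref{TailtoTail} plays no role in this lemma; tail propagation through the evolution kernel is used later, whereas here the stretched-exponential tail is inherited directly from the $G$-entry decay supplied by \Cref{lem_GbEXP} and the near-monotonicity \eqref{Tell*}.
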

\begin{proof}
The proof of this lemma, while technical, is nearly identical to that of Lemma 5.7 in \cite{Band1D}, by utilizing the assumptions \eqref{lRB1} and \eqref{Gtmwc}, the $\cK$-loop bound \eqref{eq:bcal_k}, the conditions \eqref{con_st_ind} and \eqref{eq:small_para}, \Cref{lem G<T2,lem_GbEXP}, and the definitions \eqref{def_WTu}--\eqref{eq:def_new_J*}. 
Hence, we omit the details. 
\end{proof}


Using Lemma \ref{lem_dec_calE}, along with the evolution kernel estimate in \Cref{lem:sum_Ndecay}, we can complete Step 2 in the proof of \Cref{lem:main_ind}. Define the stopping time
\begin{align}\label{eq:def_TTT}
T:=\inf \Big\{u\ge s: \max_{0\le \ell \le n}   {\cal J} _{u,D} (\ell) \ge W^\e\left(|1-s| / |1-t|\right)^4\Big\} 
\end{align}
for a small constant $\e>0$. The estimates \eqref{Gt_bound_flow}---\eqref{Eq:Gdecay_w} are immediate consequences of the following lemma, whose proof is similar to those for \cite[equation (2.76)]{Band1D} and \cite[Lemma 7.11]{RBSO1D}. Therefore, the detailed proof will be deferred to \Cref{sec:pf_step2}. 

\begin{lemma}\label{lem:pf_step2}  
In the setting of \Cref{lem:main_ind}, suppose \Cref{lem_dec_calE} holds. Then, for sufficiently small constant $\e>0$, we have that $T\ge t$ with high probability. Furthermore, we have a slightly stronger bound that will be used in Step 5: 
\begin{align}\label{53}
\left({\cal L}-{\cal K}\right)^{(2)}_{t, \bsig, \ba} \prec  {\cal T}_{t, D}(|[a_1]-[a_2]|)\cdot  \bigg[ \p{\frac{1-s}{1-t}}^{5/2}  {\bf 1}(|a_1-a_2|\le 6\ell_t^*)+1 \bigg].
\end{align}
\end{lemma}

\begin{proof}[\bf Step 2: Proof of \eqref{Gt_bound_flow}--\eqref{Eq:Gdecay_w}]
By the definition \eqref{def_Ju}, the estimate \eqref{Eq:Gdecay_w} follows readily from the fact that $T\ge t$ with high probability and $\e$ can be arbitrarily small.  
Combining the estimate \eqref{Eq:Gdecay_w} with \eqref{eq:bcal_k}, we see that for any $u\in[s,t]$,
\begin{equation}
\label{lk2safyas}
 \max_{[a], [b]}{\cal L}^{(2)}_{u, (+,-), ([a],[b])}\prec \left(\frac{1-s}{1-t}\right)^4\frac{1}{(W^d\ell_u^d\eta_u)^{2}}+\frac{\sqrt{\kappa}}{W^d\ell_u^d\eta_u}\lesssim \frac{\sqrt{\kappa}}{W^d\ell_u^d\eta_u}  , 
\end{equation}
where we again used the conditions \eqref{eq:small_para} and \eqref{con_st_ind} in the second step. 
Then, applying \Cref{lem G<T2}, we conclude the local laws \eqref{Gt_bound_flow} and \eqref{Gt_avgbound_flow}. 
\end{proof}


\subsection{Step 3: Sharp $G$-loop bound}\label{sec:inductive_step}

In Step 2 of the proof of \Cref{lem:main_ind}, we have established an exponential decay of the 2-$G$ loops beyond the scale $\ell_u$ as shown in \eqref{Eq:Gdecay_w}. With \Cref{lem_GbEXP}, we can easily extend this decay to general $G$-loops. 

\begin{definition}[Fast decay property]\label{Def_decay}
Let ${\cal A}: (\Zn)^{\fn}\to \mathbb C$ be an $\fn$-dimensional tensor for a fixed $\fn\in \N$ with $\fn\ge 2$. Given $u\in[s,t]$ and constants $\e,D>0$, we say $\cal A$ satisfies the $(u, \e, D)$-decay property if 
\begin{equation}\label{deccA}
    \max_{i,j}|[a_i]-[a_j]|\ge W^{\e}\ell_u  \implies  {\cal A}_{\ba}=\OO(W^{-D}) \ \ \text{for}\ \ \ba=([a_1],[a_2],\ldots, [a_\fn]) .
\end{equation}
\end{definition}

\begin{claim}[Lemma 5.9 of \cite{Band1D}]
\label{lem_decayLoop} 
Assume that \eqref{Gt_bound_flow} and \eqref{Eq:Gdecay_w} hold. For 
any fixed $\fn\ge 2$, constants $\e,D>0$, and $\bsig\in \{+,-\}^\fn$, the $G$-loops \smash{${\cal L}^{(\fn)}_{u,\bsig,\ba}$} and primitive loops \smash{${\cal K}^{(\fn)}_{u,\bsig,\ba}$} satisfy the $(u, \e, D)$-decay property with probability $1-\OO(W^{-D'})$ for any large constant $D'>0$, that is,
\begin{align}\label{res_decayLK}
\mathbb P\left( \max_{\boldsymbol{\sigma}}  \Big(\left|{\cal L}^{(\fn)}_{u,\boldsymbol{\sigma},\ba}\right|+\left|{\cal K}^{(\fn)}_{u,\boldsymbol{\sigma},\ba}\right|\Big)\cdot {\bf 1}_{\max_{  i, j} |[a_i]-[a_j]|\ge W^{\e}\ell_u } \ge W^{-D}\right)\le W^{-D'}. 
\end{align}
 \end{claim}

Due to the fast decay property of the $G$ and primitive loops, when the evolution kernels act on them, we can apply \Cref{lem:sum_decay} to bound their $(\infty\to \infty)$-norms, which leads to a crucial improvement over \Cref{lem:sum_Ndecay}. 

For any $\fn\ge 2$, let \smash{$\Xi^{({\cal L})}_{t, \fn}\ge 1$} and \smash{$\Xi^{({\cal L}-{\cal K})}_{t, \fn}\ge 1$} be \emph{deterministic} control parameters for $G$-loops and $(\cL-\cK)$-loops of length $\fn$ such that the following bounds hold: 
\begin{align}\label{def:XiL}
\wh\Xi^{({\cal L})}_{t, \fn} &:= \max_{\boldsymbol{\sigma}\in \{+, -\}^\fn}\max_{\ba\in (\Zn)^\fn} \left|{\cal L}^{(\fn)}_{t, \boldsymbol{\sigma}, \ba} \right|\cdot \kappa^{-1/2}\left(W^d\ell_t^d \eta_t\right)^{\fn-1}\prec \Xi^{({\cal L})}_{t, \fn},\\
\wh\Xi^{({\cal L}-{\cal K})}_{t, \fn} &:=\max_{\boldsymbol{\sigma}\in \{+, -\}^\fn}\max_{\ba\in (\Zn)^\fn} \left|({\cal L}-{\cal K})^{(\fn)}_{t, \boldsymbol{\sigma}, \ba} \right|\cdot \left(W^d\ell_t^d \eta_t\right)^{\fn} \prec \Xi^{({\cal L}-{\cal K})}_{t, \fn}.\label{def:XIL-K}
\end{align}
We define \smash{$\wh\Xi^{({\cal L}-{\cal K})}_{t, 1}$} in the same way as \eqref{def:XIL-K}, which can be bounded by \smash{$\Xi^{({\cal L}-{\cal K})}_{t, 1}:=1$} due to \eqref{Gt_bound_loop1}.
We can control the terms on the RHS of equation \eqref{int_K-LcalE} using these parameters, the initial estimate \eqref{Eq:L-KGt+IND},  \Cref{lem:DIfREP} for the martingale term, and the evolution kernel estimates in \Cref{lem:sum_decay} due to the fast decay property shown in \Cref{lem_decayLoop}. This leads to the following result, whose proof will be deferred to \Cref{sec:pf_STOeq_NQ} since it is similar to those for \cite[Lemma 5.11]{Band1D} and \cite[Lemma 7.14]{RBSO1D}. 

\begin{lemma}\label{lem:STOeq_NQ} 
Under the assumptions of \Cref{lem:main_ind}, suppose the estimates \eqref{Gt_bound_flow} and \eqref{Eq:Gdecay_w} hold uniformly in $u\in[s,t]$. Then, for any fixed $\fn\ge 2$, the following estimate holds:
\begin{align}\label{am;asoiuw}
\wh\Xi^{({\cal L-\cal K})}_{t,\fn}
\prec&~ \p{\frac{\ell_t^d}{\ell_s^d}}^d \sup_{u\in [s,t]}
  \left(\max_{k\in \qqq{2,\fn-1} }\Xi^{({\cal L-\cal K})}_{u,k}+\Xi^{({\cal L})}_{u,\fn+1}+(\Xi^{(\cal L)}_{u, {2\fn+2}})^{1/2}\right)\nonumber\\
  &~+\p{\frac{\ell_t^d}{\ell_s^d}}^d \sup_{u\in [s,t]}\max_{k\in \qqq{2,\fn} }\frac{\Xi^{({\cal L-\cal K})}_{u,k} \Xi^{({\cal L-\cal K})}_{u,\fn-k+2}}{W^d\ell_u^d\eta_u\sqrt{\kappa}} \, .
\end{align} 
\end{lemma}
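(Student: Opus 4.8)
\textbf{Proof plan for \Cref{lem:STOeq_NQ}.}
The plan is to analyze the Duhamel representation \eqref{int_K-LcalE} for $(\mathcal L-\mathcal K)^{(\fn)}_{t,\boldsymbol\sigma,\ba}$ term by term, bound each of the five contributions in the right-hand side of \eqref{int_K-LcalE} using the control parameters $\Xi^{(\mathcal L)}$ and $\Xi^{(\mathcal L-\mathcal K)}$, and then combine everything. The starting observation is that by \Cref{lem_decayLoop}, under the hypotheses \eqref{Gt_bound_flow} and \eqref{Eq:Gdecay_w}, all $G$-loops and primitive loops entering the hierarchy satisfy the $(u,\e,D)$-fast-decay property of \Cref{Def_decay}. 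This is exactly the input needed to invoke \Cref{lem:sum_decay} (specifically the bound \eqref{sum_res_1}), which is strictly better than the naive $(\infty\to\infty)$-bound \Cref{lem:sum_Ndecay} by a factor $(\ell_t^d/\ell_s^d)$ times the ratio of evolution-kernel normalizations. First I would record the pointwise bounds \eqref{CalEbwXi1}, \eqref{CalEbwXi2}, \eqref{CalEbwXi3}, \eqref{CalEbwXi} on $\OK^{(\lenk)}(\mathcal L-\mathcal K)$, $\mathcal E^{(\fn)}$, $\mathcal W^{(\fn)}$, and $(\mathcal B\otimes\mathcal B)^{(2\fn)}$ respectively; these follow by feeding the definitions \eqref{DefKsimLK}, \eqref{def_ELKLK}, \eqref{def_EwtG}, \eqref{defEOTE} together with the $\mathcal K$-loop bound \eqref{eq:bcal_k}, the one-loop estimate \eqref{Gt_bound_loop1}, the relation $\eta_u\asymp(1-u)\sqrt\kappa$ from \eqref{eq:kappat}, and using fast decay to restrict each free block-summation to a region of volume $\OO(\ell_u^d)$.

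Next I would apply the evolution-kernel estimate. For the first (initial-condition) term $\mathcal U^{(\fn)}_{s,t,\boldsymbol\sigma}\circ(\mathcal L-\mathcal K)^{(\fn)}_{s,\boldsymbol\sigma}$, fast decay of $(\mathcal L-\mathcal K)^{(\fn)}_s$ lets us apply \eqref{sum_res_1} directly, producing a bound of order $(\ell_t^d/\ell_s^d)((\ell_s^d(1-s))/(\ell_t^d(1-t)))^{\fn}\cdot \Xi^{(\mathcal L-\mathcal K)}_{s,\fn}\cdot \sqrt\kappa (W^d\ell_s^d\eta_s)^{-\fn}$; converting $\ell_s,\eta_s$ to $\ell_t,\eta_t$ via \eqref{eq:kappat} and the definition \eqref{eq:ellt} of $\ell_t$ turns this into the claimed form with the $(\ell_t^d/\ell_s^d)^d$ prefactor. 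For the three time-integral terms involving $\mathcal E^{(\fn)}$, $\mathcal W^{(\fn)}$, and the lower-order $\OK^{(\lenk)}$ corrections, I would combine the pointwise bounds above (each of which carries a factor $(1-u)^{-1}(W^d\ell_u^d\eta_u)^{-\fn}$ times the relevant $\Xi$) with \eqref{sum_res_1}, then integrate $\int_s^t (1-u)^{-1}((1-s)/(1-u))^{\OO(1)}\,\dd u$, which is bounded since $(1-u)$ is monotone; this again yields the $\sup_{u\in[s,t]}$ over the appropriate $\Xi$-combinations. The martingale term $\int_s^t \mathcal U^{(\fn)}_{u,t,\boldsymbol\sigma}\circ \dd\mathcal B^{(\fn)}_{u,\boldsymbol\sigma}$ is handled through the Burkholder--Davis--Gundy bound \Cref{lem:DIfREP}: its $2p$-th moment is controlled by the $p$-th moment of $\int_s^t(\mathcal U\otimes\mathcal U)\circ(\mathcal B\otimes\mathcal B)^{(2\fn)}$, and bounding the latter via \eqref{CalEbwXi} plus \eqref{sum_res_1} (noting that the tensor-product kernel is still contractive on fast-decaying tensors) gives, after a square root, a contribution of order $(\Xi^{(\mathcal L)}_{u,2\fn+2})^{1/2}$, which is the source of that term in \eqref{am;asoiuw}.

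Assembling the five estimates and taking the maximum over $\boldsymbol\sigma,\ba$ yields precisely \eqref{am;asoiuw}; a standard $N^{-C}$-net and perturbation argument (as indicated in the paragraph preceding the statement) upgrades everything from fixed $t$ to uniformity, but that is routine. The main obstacle I anticipate is the careful bookkeeping of the numerous scale factors --- $\ell_s$ versus $\ell_t$, $\eta_s$ versus $\eta_t$, the powers of $(1-s)/(1-t)$, and the extra $(W^d\ell_u^d\eta_u\sqrt\kappa)^{-1}$ denominators in \eqref{CalEbwXi2} --- and in particular making sure that all these factors, after integration in $u$ over $[s,t]$ and after converting $u$-dependent scales to $t$-dependent ones via \eqref{eq:kappat}, collapse into the single clean prefactor $(\ell_t^d/\ell_s^d)^d$; this is exactly where the monotonicity of $\ell_u$ and $(1-u)^{-1}$ and the smallness bounds \eqref{eq:small_para}, \eqref{eq:small_para2}, \eqref{con_st_ind} are essential, since otherwise the $\mathcal E^{(\fn)}$ and $(\mathcal B\otimes\mathcal B)^{(2\fn)}$ terms would not be subleading. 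The self-referential appearance of $\Xi^{(\mathcal L-\mathcal K)}_{u,k}$ on the right (for $k<\fn$) is not a genuine circularity because it is strictly lower-order and will be closed by the induction in $\fn$ carried out in the subsequent steps of \Cref{sec:proof}.
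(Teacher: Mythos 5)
Your proposal is correct and takes essentially the same route as the paper: analyze the Duhamel formula \eqref{int_K-LcalE} term by term, use \Cref{lem_decayLoop} to establish the fast-decay property, invoke the pointwise bounds \eqref{CalEbwXi1}--\eqref{CalEbwXi}, apply the improved kernel estimate \eqref{sum_res_1}, handle the martingale term via \Cref{lem:DIfREP}, and integrate in $u$. One small slip: in your description of the initial-condition term you write $\Xi^{(\mathcal L-\mathcal K)}_{s,\fn}\cdot\sqrt\kappa(W^d\ell_s^d\eta_s)^{-\fn}$, but the $\sqrt\kappa$ belongs to the normalization of $\Xi^{(\mathcal L)}$ in \eqref{def:XiL}, not $\Xi^{(\mathcal L-\mathcal K)}$ in \eqref{def:XIL-K}; the correct unscaled bound is $\Xi^{(\mathcal L-\mathcal K)}_{s,\fn}\cdot(W^d\ell_s^d\eta_s)^{-\fn}$, and only with that does the scale-matching via $\eta_u\asymp(1-u)\sqrt\kappa$ collapse cleanly into the claimed $\ell_t^d/\ell_s^d$ prefactor.
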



We next introduce another tool---the sum zero operator ${\cal Q}_{t}$---which enables us to utilize the improved kernel estimates \eqref{sum_res_2} and \eqref{sum_res_2_sym} to achieve improvements over \eqref{am;asoiuw}.

\begin{definition}[Sum zero operator]\label{Def:QtPt} 
Let ${\cal A}: (\Zn)^{\fn}\to \mathbb C$ be an $\fn$-dimensional tensor for a fixed $\fn\in \N$ with $\fn\ge 2$. Define the partial sum operator ${\cal P}$ as 
$$
\left( {\cal P} \circ {\cal A}\right)_{[a_1]}:= \sum_{[a_i]: i\in\qqq{2,\fn}}  {\cal A}_{\ba}. 
$$
Note a tensor $\cal A$ satisfies the sum zero property in \eqref{sumAzero} if and only if $ {\cal P} \circ {\cal A}= 0$. Given $t\in[0,1]$, we define the \emph{sum zero operator}
\be\label{eq:sumzero_op}
 \left({\cal Q}_{t}\circ {\cal A}\right)_{\ba}:={\cal A}_{\ba}-\left({\cal P} \circ {\cal A}\right)_{[a_1]}\dthn^{(\fn)}_{t, \ba},\quad \text{where}\quad 
 \dthn^{(\fn)}_{t, \ba}:=(1-t)^{\fn-1}\prod_{i=2}^\fn \Theta^{(+,-)}_{t,[a_1][a_i]}.\ee
Since $\sum_{[a_i]}\Theta^{(+,-)}_{t,[a_1][a_i]}=(1-t)^{-1}$, we can check that $ {\cal P} \circ  \dthn^{(\fn)}_{t, \ba}=1$, ${\cal P} \circ {\cal Q}_t =0$, and 
\be\label{eq:sum0PA}
{\cal P} \circ {\cal A} = 0\ \ \implies\ \ {\cal P} \circ \left(\thn^{(\fn)}_{t, \boldsymbol{\sigma}}  \circ {\cal A}\right) = 0,
\ee
where we recall the operator \(\thn^{(\fn)}_{t, \boldsymbol{\sigma}} \) defined in \Cref{DefTHUST}. In other words, if \({\cal A}\) satisfies the sum zero property \eqref{sumAzero}, then so does \smash{$\thn^{(\fn)}_{t, \boldsymbol{\sigma}}  \circ {\cal A}$}. 
\end{definition}

By definition, it is easy to see the following bound on the $(\infty\to \infty)$-norm of the sum zero operator. 

\begin{claim}[Lemma 5.13 of \cite{Band1D}]\label{lem_+Q} 
Let ${\cal A}: (\Zn)^{\fn}\to \mathbb C$ be an $\fn$-dimensional tensor for a fixed $\fn\in \N$ with $\fn\ge 2$. If $\cal A$ satisfies the $(t, \e, D)$-decay property, then we have  
\begin{align}\label{normQA}
\left\|{\cal Q}_t\circ \cal A\right\|_{\infty} \le W^{C_\fn \e} \|\cal A\|_\infty +W^{-D+C_\fn}
\end{align}
for a constant $C_\fn$ that does not depend on $\e$ or $D$. Furthermore, if $\|\cal A\|_\infty\le W^C$ for a constant $C>0$, then $\cal A_\fa-({\cal Q}_t\circ \cal A)_{\ba}$ satisfies the $(t, \e', D')$-decay property for any constants $\e',D'>0$.  
\end{claim}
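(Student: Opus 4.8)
\textbf{Proof proposal for Claim \ref{lem_+Q}.}

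The plan is to reduce everything to the two elementary building blocks of the sum zero operator: the tensor $\cal A$ itself, and the product tensor $\dthn^{(\fn)}_{t,\ba}=(1-t)^{\fn-1}\prod_{i=2}^\fn\Theta^{(+,-)}_{t,[a_1][a_i]}$ that appears in \eqref{eq:sumzero_op}, weighted by the partial sum $({\cal P}\circ\cal A)_{[a_1]}$. First I would record the trivial bound $\|{\cal Q}_t\circ\cal A\|_\infty\le\|\cal A\|_\infty+\max_{[a_1]}|({\cal P}\circ\cal A)_{[a_1]}|\cdot\max_{\ba}|\dthn^{(\fn)}_{t,\ba}|$. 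For the second factor, the exponential decay estimate \eqref{prop:ThfadC} gives $\Theta^{(+,-)}_{t,[a_1][a_i]}\prec (|1-t|\ell_t^d)^{-1}e^{-c|[a_1]-[a_i]|/\ell_t}+W^{-D}$, so that $|\dthn^{(\fn)}_{t,\ba}|\prec \ell_t^{-d(\fn-1)}\prod_{i=2}^\fn e^{-c|[a_1]-[a_i]|/\ell_t}+W^{-D}$; in particular $\max_\ba|\dthn^{(\fn)}_{t,\ba}|\lesssim \ell_t^{-d(\fn-1)}\prec 1$ up to the negligible $W^{-D}$ term, but more importantly $\dthn^{(\fn)}_{t,\ba}$ is itself concentrated on $\max_{i,j}|[a_i]-[a_j]|\lesssim (\log W)^2\ell_t$.

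The key point is to bound $\max_{[a_1]}|({\cal P}\circ\cal A)_{[a_1]}|$ using the $(t,\e,D)$-decay property \eqref{deccA}. Since $({\cal P}\circ\cal A)_{[a_1]}=\sum_{[a_2],\dots,[a_\fn]}\cal A_{\ba}$, and $\cal A_\ba$ is $\OO(W^{-D})$ unless $|[a_i]-[a_j]|\le W^\e\ell_t$ for all $i,j$, the sum over $[a_2],\dots,[a_\fn]$ is effectively restricted to a box of side $\OO(W^\e\ell_t)$ around $[a_1]$, contributing at most $(W^\e\ell_t)^{d(\fn-1)}\lesssim W^{C_\fn\e}$ many terms (using $\ell_t\le n$ so $\ell_t^{d(\fn-1)}\lesssim W^{C_\fn}$ — actually one should be slightly careful and absorb this into the constant), each of size $\le\|\cal A\|_\infty$; the part of the sum outside the box is bounded by $n^{d(\fn-1)}W^{-D}\le W^{-D+C_\fn}$ for a suitable $C_\fn$. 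This yields $\max_{[a_1]}|({\cal P}\circ\cal A)_{[a_1]}|\le W^{C_\fn\e}\|\cal A\|_\infty+W^{-D+C_\fn}$, and combining with the trivial bound above gives \eqref{normQA} after adjusting constants. For the second assertion, note $\cal A_\ba-({\cal Q}_t\circ\cal A)_{\ba}=({\cal P}\circ\cal A)_{[a_1]}\,\dthn^{(\fn)}_{t,\ba}$; given $\|\cal A\|_\infty\le W^C$, the factor $({\cal P}\circ\cal A)_{[a_1]}$ is deterministically bounded by $n^{d(\fn-1)}W^C\lesssim W^{C'}$, while $\dthn^{(\fn)}_{t,\ba}$ decays faster than any power in $\max_{i}|[a_1]-[a_i]|/\ell_t$ by \eqref{prop:ThfadC} (hence, by the triangle inequality, in $\max_{i,j}|[a_i]-[a_j]|/\ell_t$); so the product is $\OO(W^{-D'})$ whenever $\max_{i,j}|[a_i]-[a_j]|\ge W^{\e'}\ell_t$, which is precisely the $(t,\e',D')$-decay property.

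The main obstacle is purely bookkeeping: one must be careful that the combinatorial factor from summing over a box of side $W^\e\ell_t$ is genuinely of the form $W^{C_\fn\e}$ with $C_\fn$ independent of $\e$ and $D$, which forces $\ell_t^{d(\fn-1)}$ (a polynomial-in-$W$ factor, bounded since $\ell_t\le n\le L$) to be absorbed into $C_\fn$ rather than into the exponent of $W^\e$ — so the cleaner statement is $(W^\e\ell_t)^{d(\fn-1)}\le W^{C_\fn\e}\cdot\ell_t^{d(\fn-1)}$ and then $\ell_t^{d(\fn-1)}\le W^{C_\fn}$; since $C_\fn$ is allowed to depend on $\fn$ and $d$, this is harmless, but it should be stated explicitly. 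Everything else follows from the definitions in \Cref{Def:QtPt} and the exponential decay of $\Theta^{(+,-)}_t$ recorded in \Cref{lem_propTH}; no probabilistic input is needed, so there is no loss of probability, consistent with the remark following the claim.
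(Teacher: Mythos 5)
Your decomposition and overall plan are correct, and the second part of the claim (the $(t,\e',D')$-decay of $({\cal P}\circ\cal A)_{[a_1]}\dthn^{(\fn)}_{t,\ba}$) you verify correctly. The problem is in the final accounting for \eqref{normQA}. You correctly observe that the decay hypothesis \eqref{deccA} localizes the sum in $({\cal P}\circ\cal A)_{[a_1]}$ to a box of side $\OO(W^\e\ell_t)$ around $[a_1]$, and that \eqref{prop:ThfadC} gives $\max_\ba|\dthn^{(\fn)}_{t,\ba}|\prec\ell_t^{-d(\fn-1)}$, but then you discard the $\ell_t^{-d(\fn-1)}$ in favour of the cruder $\prec 1$. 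The resulting claim $\max_{[a_1]}|({\cal P}\circ\cal A)_{[a_1]}|\le W^{C_\fn\e}\|\cal A\|_\infty+W^{-D+C_\fn}$ is false: the box contains $\asymp(W^\e\ell_t)^{d(\fn-1)}$ sites, and $\ell_t$ can be as large as $n$, a fixed positive power of $W$, so this factor is not $W^{C_\fn\e}$ for any $C_\fn$ independent of $\e$. Your proposed repair — bounding $\ell_t^{d(\fn-1)}\le W^{C_\fn}$ and absorbing it into the constant — would give $W^{C_\fn\e}\cdot W^{C_\fn}\|\cal A\|_\infty$, which exceeds the claimed coefficient $W^{C_\fn\e}$ of $\|\cal A\|_\infty$ by a fixed power of $W$; that extra $W^{C_\fn}$ cannot be pushed into the additive error $W^{-D+C_\fn}$ because it multiplies $\|\cal A\|_\infty$ rather than $W^{-D}$.

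The fix is one you already have in hand: keep $\max_\ba|\dthn^{(\fn)}_{t,\ba}|\prec\ell_t^{-d(\fn-1)}$ and multiply before simplifying. Then
$\max_{[a_1]}|({\cal P}\circ\cal A)_{[a_1]}|\cdot\max_\ba|\dthn^{(\fn)}_{t,\ba}|\prec(W^\e\ell_t)^{d(\fn-1)}\|\cal A\|_\infty\cdot\ell_t^{-d(\fn-1)}+n^{d(\fn-1)}W^{-D}\cdot\ell_t^{-d(\fn-1)}
=W^{d(\fn-1)\e}\|\cal A\|_\infty+(n/\ell_t)^{d(\fn-1)}W^{-D}$, so the $\ell_t^{d(\fn-1)}$ cancels entirely from the coefficient of $\|\cal A\|_\infty$, and the left-over $(n/\ell_t)^{d(\fn-1)}\le n^{d(\fn-1)}\le W^{C_\fn}$ (using $W\ge L^\delta$) sits harmlessly in front of $W^{-D}$. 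Adding $\|\cal A\|_\infty$ from the trivial term then gives \eqref{normQA} with $C_\fn$ depending only on $\fn$, $d$, $\delta$, exactly as required.
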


We now use the sum zero operator to get improved estimates for the terms on the RHS of \eqref{int_K-LcalE}.  
We first deal with the partial sum term \smash{$\big[\cal P\circ (\mathcal{L} - \mathcal{K})^{(\fn)}_{t, \boldsymbol{\sigma}}\big]_{[a_1]}\dthn^{(\fn)}_{t, \ba}$}. 

\begin{lemma}\label{lem:partialsumterm}
Under the assumptions of \Cref{lem:main_ind}, suppose the estimates \eqref{lRB1}, \eqref{Gt_bound_flow}, \eqref{Gt_bound_loop1}, and \eqref{Eq:Gdecay_w} hold uniformly in $u\in[s,t]$.
If there exists an $i\in \qqq{2,\fn}$ such that $\sig_i\ne \sig_{i+1}$ (with the convention $\sig_\fn=\sig_1$), then the following estimate holds uniformly for all $u\in[s,t]$:
\begin{align}\label{jywiiwsoks}
 \left[\cal P\circ (\mathcal{L} - \mathcal{K})^{(\fn)}_{u, \boldsymbol{\sigma}}\right]_{[a_1]}  \prec \ell_u^{-d}(W^d\eta_u)^{-\fn} \Xi^{(\mathcal{L}-\mathcal{K})}_{u, \fn-1}.
\end{align} 
If $\bsig$ is a pure loop with $\sig_1=\ldots=\sig_\fn=+$, then we have that 
\begin{align}\label{eq:pure-L-sum}
\left[\cal P\circ (\mathcal{L} - \mathcal{K})^{(\fn)}_{u, \boldsymbol{\sigma}}\right]_{[a_1]}  \prec  \ell_s^{-d}(W^d\eta_u)^{-\fn} \, .
\end{align}
\end{lemma}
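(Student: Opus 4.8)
## Proof proposal for Lemma \ref{lem:partialsumterm}

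The plan is to exploit the Ward identity for $G$-loops and $\cK$-loops (\Cref{lem_WI_K}) to convert the partial sum $\cal P\circ(\cL-\cK)^{(\fn)}$ into a single $(\fn-1)$-loop difference, and then invoke the already-established bounds $\Xi^{(\cL-\cK)}$ together with the estimates on $\omega_u$, $\eta_u$ and $\kappa$ from \Cref{zztE}. Concretely: in the non-pure case, since there is some $i\in\qqq{2,\fn}$ with $\sig_i\ne\sig_{i+1}$, we can cyclically relabel the loop so that the ``summed-out'' vertices $[a_2],\ldots,[a_\fn]$ include a vertex whose two adjacent charges differ; after relabeling we arrange matters so that $\sig_1=-\sig_\fn$, allowing us to sum over $[a_\fn]$ using the Ward identity \eqref{WI_calL}–\eqref{WI_calK}. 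This reduces $\sum_{[a_\fn]}(\cL-\cK)^{(\fn)}$ to $(2\ii W^d\eta_u)^{-1}$ times a difference of two $(\fn-1)$-loop differences $(\cL-\cK)^{(\fn-1)}$. We then iterate: each successive summation over $[a_j]$ with $j$ running down from $\fn$ either uses another Ward identity (gaining a factor $(W^d\eta_u)^{-1}$ and reducing length by one) or, once we can no longer apply Ward directly, uses the fast-decay property of \Cref{lem_decayLoop} to restrict the sum to a box of volume $\OO(\ell_u^d)$. Tracking the bookkeeping, summing out $\fn-1$ vertices from a length-$\fn$ loop costs a factor bounded by $(W^d\eta_u)^{-(\fn-1)}\cdot \ell_u^{-d}$ relative to the pointwise bound $\Xi^{(\cL-\cK)}_{u,\fn-1}(W^d\ell_u^d\eta_u)^{-(\fn-1)}$ on the reduced loop, and a short computation using $\eta_u\asymp(1-u)\sqrt\kappa$ and $\ell_u$'s definition collapses everything to the claimed $\ell_u^{-d}(W^d\eta_u)^{-\fn}\Xi^{(\cL-\cK)}_{u,\fn-1}$.

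For the pure-loop case $\sig_1=\cdots=\sig_\fn=+$, the Ward identity is unavailable because no two adjacent charges differ, so a different mechanism is needed: here we use \Cref{lem:pure_sum}, which identifies $W^{(\fn-1)d}\sum_{[a_2],\ldots,[a_\fn]}\cK^{(\fn)}_{u,\bsig,\ba}$ with the derivative $\tfrac{1}{(\fn-1)!}\tfrac{\dd^{\fn-1}}{\dd z_u^{\fn-1}}m_u(z_u)$, an explicit deterministic quantity of size $\OO((\kappa+\eta_u)^{-(\fn-1)/2}\cdot\sqrt{\kappa}\,)=\OO(\omega_u^{-(\fn-1)}\sqrt\kappa)$ by \eqref{square_root_density} and the identities for the derivatives of $m_t$. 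Thus $\cal P\circ \cK^{(\fn)}$ is controlled directly. For $\cal P\circ\cL^{(\fn)}$ we use the averaged local law machinery: summing a pure $G$-loop over all its non-initial block indices gives $W^{-(\fn-1)d}$ times an averaged trace of the form $\avg{(G_u E_{[a_1]})\prod\cdots}$, which by the fluctuation-averaging-type bound \eqref{Gt_avgbound_flow} (and its higher-loop analogues implicit in \eqref{lRB1} together with \Cref{lem G<T2}) is controlled by $\ell_u^d/\ell_s^d$ times $(W^d\ell_u^d\eta_u)^{-1}$ times $(W^d\eta_u)^{-(\fn-1)}$. Subtracting the two and again using $\eta_u\asymp(1-u)\sqrt\kappa$ yields \eqref{eq:pure-L-sum}; the extra $\ell_u^d/\ell_s^d$ factor is the price of the continuity estimate \eqref{lRB1} from Step 1, exactly as in \cite{Band1D}.

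The main obstacle I anticipate is the bookkeeping in the non-pure case: after one Ward-identity step the resulting $(\fn-1)$-loop may itself be pure (if $\bsig$ had exactly one charge flip among the summed vertices), in which case one cannot continue reducing via Ward and must switch to the averaged-local-law / \Cref{lem:pure_sum} route before all $\fn-1$ summations are exhausted. One has to verify that this hybrid strategy still produces the stated power $(W^d\eta_u)^{-\fn}$ and not a worse power, and that the $\ell_u^{-d}$ gain (rather than a $\ell_u^{d}/\ell_s^d$ loss) is genuinely available — this hinges on carefully choosing \emph{which} vertex to sum first so that the fast-decay localization is applied to a loop of the right length, and on the fact that $\Xi^{(\cL-\cK)}_{u,\fn-1}\ge 1$ absorbs lower-order contributions. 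A secondary, more routine point is checking uniformity in $u\in[s,t]$, which follows from the standard $N^{-C}$-net plus deterministic perturbation argument already used repeatedly (e.g.\ \Cref{claim:uniform}); I would state it and omit the details.
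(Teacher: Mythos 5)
Your high-level structure — Ward identity for the non-pure case, \Cref{lem:pure_sum} plus averaged local law for the pure case — is the right one, and you correctly identify \Cref{lem:pure_sum} as the essential ingredient. However, both halves have significant gaps relative to the paper's argument.

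\emph{Non-pure case.} The paper does not iterate Ward's identity, and in fact the iteration you propose is unnecessary and its bookkeeping is off. The actual argument is a single Ward step: since some $\sigma_i\ne\sigma_{i+1}$ with $i\in\qqq{2,\fn}$, after cyclically rotating the trace so that $[a_i]$ is the last vertex (which is allowed because $[a_i]$ is one of the summed-out indices), one application of \eqref{WI_calL}--\eqref{WI_calK} converts $\sum_{[a_i]}(\cL-\cK)^{(\fn)}$ into $(2\ii W^d\eta_u)^{-1}$ times a difference of two $(\fn-1)$-$(\cL-\cK)$ loops. Each of these is bounded pointwise by $\Xi^{(\cL-\cK)}_{u,\fn-1}(W^d\ell_u^d\eta_u)^{-(\fn-1)}$ via \eqref{def:XIL-K}, and the remaining $\fn-2$ summations are localized to boxes of volume $\OO_\prec(\ell_u^d)$ by the fast-decay property of \Cref{lem_decayLoop}, yielding the factor $\ell_u^{d(\fn-2)}$. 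Multiplying out gives $(W^d\eta_u)^{-1}\cdot\ell_u^{d(\fn-2)}\cdot\Xi^{(\cL-\cK)}_{u,\fn-1}(W^d\ell_u^d\eta_u)^{-(\fn-1)}=\ell_u^{-d}(W^d\eta_u)^{-\fn}\Xi^{(\cL-\cK)}_{u,\fn-1}$, which is \eqref{jywiiwsoks}. Your concern that ``the resulting $(\fn-1)$-loop may itself be pure'' therefore never arises; the reduced loop is bounded directly by the control parameter $\Xi^{(\cL-\cK)}_{u,\fn-1}$ regardless of its signature, and the bound is precisely why $\Xi^{(\cL-\cK)}_{u,\fn-1}$ appears in the conclusion. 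Your stated cost factor ``$(W^d\eta_u)^{-(\fn-1)}\ell_u^{-d}$ relative to the pointwise bound on the $(\fn-1)$-loop'' does not reproduce the target; the correct relative cost is $(W^d\eta_u)^{-1}\ell_u^{d(\fn-2)}$.

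\emph{Pure case.} Here there is a genuine gap: you propose to bound $\cal P\circ\cL^{(\fn)}$ and $\cal P\circ\cK^{(\fn)}$ separately and then subtract. This cannot work, because the deterministic term alone is already larger than the claimed bound. By \Cref{K-ward-bound}, $\cal P\circ\cK^{(\fn)}\prec\sqrt\kappa(W^d\eta_u)^{-\fn+1}$, whose ratio to the target $\tfrac{\ell_u^d}{\ell_s^d}(W^d\ell_u^d\eta_u)^{-1}(W^d\eta_u)^{-\fn+1}$ is $W^d\ell_s^d\eta_u\sqrt\kappa$. Under \eqref{eq:small_para} and \eqref{con_st_ind}, this ratio is $\gg W^{c'}$ for a constant $c'>0$, so the cancellation between $\cal P\circ\cL$ and $\cal P\circ\cK$ is essential and cannot be recovered by triangle inequality. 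The paper captures this cancellation by combining \Cref{lem:pure_sum} with Cauchy's integral formula: writing
\[
\left[\cal P\circ(\cL-\cK)^{(\fn)}_{u,\bsig_+}\right]_{[a_1]}
= W^{-(\fn-1)d}\,\frac{1}{2\pi\ii}\oint_\gamma\frac{\avg{(G_u(z)-M_u(z))E_{[a_1]}}}{(z-z_u)^\fn}\,\dd z ,
\]
with $\gamma$ a circle of radius $W^{-c}\eta_u$, and then bounding the integrand by the continuity estimate \eqref{eq:cont-est_pure2}, which requires running the comparison argument of \Cref{lem_ConArg} with the displaced spectral parameter $z\in\gamma$ and the a priori loop bounds \eqref{lRB1}, \eqref{Eq:Gdecay_w}. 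Your proposal never writes the contour integral for the \emph{difference}, and your description of the $\cal P\circ\cL$ bound is internally inconsistent (the quantity $W^{-(\fn-1)d}\avg{G_u^\fn E_{[a_1]}}$ is $\asymp W^{-\fn d}$, far larger than your stated bound). The $\ell_u^d/\ell_s^d$ factor in \eqref{eq:pure-L-sum} enters through \eqref{eq:cont-est_pure2}, not directly through \eqref{lRB1}; this is the correct high-level heuristic, but the precise mechanism (averaged local law along the contour $\gamma$, obtained by a deterministic perturbation of $z_u$) is the missing ingredient.
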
 
\begin{proof}
For \eqref{jywiiwsoks}, assume that $\sig_\fn\ne \sig_1$ without loss of generality. Applying \Cref{lem_WI_K} at the vertex $[a_\fn]$, we can write the LHS of \eqref{jywiiwsoks} as 
$$ \frac{1}{2\ii W^d \eta_u}\left[\cal P\circ (\mathcal{L} - \mathcal{K})^{(\fn-1)}_{u, \wh\bsig^{(+,\fn)}}-\cal P\circ (\mathcal{L} - \mathcal{K})^{(\fn-1)}_{u, \wh\bsig^{(-,\fn)}}\right]_{[a_1]}.$$
With \eqref{def:XIL-K}, the two $(\fn-1)$-$G$ loops are controlled by $$(\mathcal{L} - \mathcal{K})^{(\fn-1)}_{u, \wh\bsig^{(\pm,\fn)},\wh \ba^{(\fn)}} \prec (W^d\ell_u^d\eta_u)^{-(\fn-1)}\Xi^{(\mathcal{L}-\mathcal{K})}_{u, \fn-1}.$$ 
Moreover, due to the fast decay property of the $(\cal L-\cal K)$-loops, the partial sums over the remaining $(\fn-2)$ vertices lead to a \smash{$\ell_u^{d(\fn-2)}$} factor up to a negligible error $\OO(W^{-D})$. This leads to \eqref{jywiiwsoks}.

When $\bsig$ is a pure loops with all charges equal to $+$, we have the identity   
$$\left(\cal P\circ \mathcal{L}^{(\fn)}_{u, \boldsymbol{\sigma}}\right)_{[a_1]} = {W^{-(\fn-1)d}}\cdot \frac{1}{(\fn-1)!}\frac{\dd^{\fn-1}}{\dd z_u^{\fn-1}}\avg{G_u(z_u)E_{[a_1]}} .$$
Combining it with \eqref{eq:pure_sum} and using Cauchy's integral formula, we can express the LHS of \eqref{eq:pure-L-sum} as 
\begin{align}
 \left[\cal P\circ (\mathcal{L} - \mathcal{K})^{(\fn)}_{u, \boldsymbol{\sigma}}\right]_{[a_1]} &=  W^{-(\fn-1)d}  \cdot \frac{1}{(\fn-1)!}\frac{\dd^{\fn-1}}{\dd z_u^{\fn-1}} \avg{(G_u(z_u)-M_u(z_u))E_{[a_1]}} \nonumber\\
 &=W^{-(\fn-1)d}  \cdot \frac{1}{2\pi \ii}\oint_\gamma \frac{\avg{(G_u(z)-M_u(z))E_{[a_1]}}}{(z-z_u)^\fn}\dd z,\label{eq;Cauchy-int}
\end{align}
where $\gamma$ denotes a counterclockwise circle around $z_u$: $\gamma=\{z\in \C_+:|z-z_u|= W^{-c}\eta_u\}$ for an arbitrarily small constant $c\in (0, (\fd\wedge\fc)/10)$, and $M_u(z):=m_u(z)I_N$ with $m_u$ defined in \eqref{self_mt}. 

To estimate \eqref{eq;Cauchy-int}, we need to control $\avg{(G_u(z)-M_u(z))E_{[a_1]}}$ for $z\in \Gamma$. To this end, we need to bound the $2$-$G$ loops formed with $G_u(z)$ and apply \Cref{lem G<T2}. Our approach follows the argument used in the proof of \Cref{lem_ConArg}, specifically by applying the resolvent identity \eqref{GGZGG} with  
$$\wt G\equiv G_u(z_u),\quad G\equiv G_u(z)=(H_u-z)^{-1}.$$
Denoting $\Lambda(z):=\|G_u(z)-M_u(z)\|_{\max}$, and using \eqref{GGZGG}, we get that for any $z\in \gamma$, 
\begin{align}
\Lambda(z)&\le \|\wt G(z)-M_u(z_u)\|_{\max}+|m_u(z)-m_u(z_u)| + |z-z_u| \|G\wt G\|_{\max} \nonumber\\
&\lesssim \Psi_u(\sE)+ |z-z_u|/\omega_u + W^{-c} \kappa^{1/4}\Big(\max_{x}\im G_{xx}\Big)^{1/2} \nonumber\\
&\lesssim W^{-c}\sqrt{\kappa} + W^{-c}\kappa^{1/4}(\Lambda+\sqrt{\kappa})^{1/2}\label{eq:Lambdaz}
\end{align}
with high probability. Above, in the second step, we used the local law \eqref{Gt_bound_flow} for \smash{$\wt G$} and applied the Cauchy-Schwarz inequality along with Ward’s identity to control \smash{$\|G\wt G\|_{\max}$} as follows with high probability:
$$ \|G\wt G\|_{\max} \le \eta_u^{-1}\Big(\max_{x}\im \wt G_{xx} \Big)^{1/2}\Big(\max_{x}\im G_{xx} \Big)^{1/2} \lesssim \eta_u^{-1}\kappa^{1/4}\Big(\max_{x}\im G_{xx}\Big)^{1/2} .$$
Moreover, we have used the following fact to control $|m_u(z)-m_u(z_u)|$:  
\be\label{eq:mu'} m_u'(\xi)=\frac{m_u(\xi)^2}{1-tm_u(\xi)^2} \lesssim \omega_u^{-1}\quad \forall \xi \text{ such that } |\xi-z_u|\ll \eta_u \, .\ee 
As a consequence, it also implies that $\im m_u(z)\lesssim\sqrt{\kappa}$ for $z\in \gamma$.
In the third step of \eqref{eq:Lambdaz}, we applied \eqref{eq;smallpsi} and bounded $\im G_{xx}(z)$ by $\im m_u(z)+\Lambda=\OO(\Lambda+\sqrt{\kappa})$. From \eqref{eq:Lambdaz}, we derive that with high probability,
\be\label{eq:initialGT}
\Lambda(z)\lesssim W^{-c}\sqrt{\kappa} \implies \max_{[a]} \left|\avg{\p{\im G_u(z)E_{[a]}}}\right|\lesssim \sqrt{\kappa} \quad  \text{uniformly in } z\in \gamma \, .
\ee

Next, for notational convenience, we denote the $G$-loops formed with \smash{$\wt G$} and $G$ by $\wt{\cal L}$ and $\cal L$, respectively. By assumption, the \smash{$\wt{\cal L}$}-loops satisfy the a priori bounds in \eqref{lRB1}. 
Then, by repeating the arguments following \eqref{lem_L2Loc} with $\fn=2$ and using \eqref{eq:initialGT}, we obtain that 
$$ \max_{\bsig,\ba}\left|\mathcal{L}^{(2)}_{u, \boldsymbol{\sigma}, \textbf{a}}(z)\right| \prec  \max_{\bsig,\ba}\left|\wt{\mathcal{L}}^{(2)}_{u, \boldsymbol{\sigma}, \textbf{a}}\right| + W^{-2c} {\frac{\ell_u^{d}}{\ell_s^d}} \cdot\frac{\sqrt{\kappa}}{W^d \ell_{u}^d \eta_{u}}  \, . $$
Combining this estimate with the $2$-$G$ loop bound \eqref{lk2safyas} for $\wt{\mathcal{L}}^{(2)}$ (which is a consequence of \eqref{Eq:Gdecay_w}), we conclude that 
\be\label{eq:cont-est_pure}
\max_{\bsig,\ba}\left|\mathcal{L}^{(2)}_{u, \boldsymbol{\sigma}, \textbf{a}}(z)\right|\prec  {\frac{\ell_u^{d}}{\ell_s^d}} \cdot \frac{\sqrt{\kappa}}{W^d\ell_u^d\eta_u }\quad \text{uniformly in } z\in \gamma \, .
\ee 
Next, with \eqref{eq:initialGT} and \eqref{eq:cont-est_pure}, applying \Cref{lem G<T2}, we obtain that 
\be\label{eq:cont-est_pure2}\max_{[a]}\left|\avg{\p{G_u(z)-M_u(z)}E_{[a]}}\right|\prec {\frac{\ell_u^{d}}{\ell_s^d}}\cdot \frac{1}{W^d\ell_u^d\eta_u}\quad \text{uniformly in } z\in \gamma \, .
\ee
Plugging this result into \eqref{eq;Cauchy-int} and controlling the integral yields \eqref{eq:pure-L-sum}, since $c$ is arbitrary. 
\end{proof}

To control the sum zero term ${\cal Q}_t \circ (\mathcal{L} - \mathcal{K})^{(\fn)}_{t, \boldsymbol{\sigma}, \ba}$, we derive from  \eqref{eq_L-Keee} that  
\begin{align} 
&\dd{\cal Q}_t \circ (\mathcal{L} - \mathcal{K})^{(\fn)}_{t, \boldsymbol{\sigma}, \ba} 
= {\cal Q}_t \circ \left[\mathcal{O}_{\cK}^{(2)} (\mathcal{L} - \mathcal{K})\right]^{(\fn)}_{t, \boldsymbol{\sigma}, \ba} 
    + \sum_{l_\mathcal{K}=3}^\fn {\cal Q}_t \circ \Big[\mathcal{O}_{\cK}^{(\lenk)} (\mathcal{L} - \mathcal{K})\Big]^{(\fn)}_{t, \boldsymbol{\sigma}, \ba} + {\cal Q}_t \circ \mathcal{E}^{(\fn)}_{t, \boldsymbol{\sigma}, \ba}\dd t  \nonumber
    \\
    &\qquad  \qquad 
    + {\cal Q}_t \circ \mathcal{W}^{(\fn)}_{t, \boldsymbol{\sigma}, \ba} \dd t
    + {\cal Q}_t \circ \dd\mathcal{B}^{(\fn)}_{t, \boldsymbol{\sigma}, \ba}    - \left[{\cal P} \circ \left(\mathcal{L} - \mathcal{K}\right)^{(\fn)}_{t,\bsig}\right]_{[a_1]} \partial_t \dthn_{t,\ba}^{(\fn)} \dd t.\label{zjuii1}
\end{align} 
Recalling \Cref{DefTHUST}, we can rewrite the first term on the RHS as 
\begin{align}\label{zjuii2}
{\cal Q}_t \circ \Big[\mathcal{O}_{\cK}^{(2)}(\mathcal{L} - \mathcal{K})\Big]^{(\fn)}_{t, \boldsymbol{\sigma},\ba}
& = \thn^{(\fn)}_{t, \boldsymbol{\sigma}} \circ \left[{\cal Q}_t \circ (\mathcal{L} - \mathcal{K})^{(\fn)}_{t, \boldsymbol{\sigma}}\right]_{\ba}  \nonumber\\
&+ \left[{\cal Q}_t , \thn^{(\fn)}_{t, \boldsymbol{\sigma}} \right] \circ (\mathcal{L} - \mathcal{K})^{(\fn)}_{t, \boldsymbol{\sigma}, \ba} \, ,
\end{align}
where $[{\cal Q}_t , \thn^{(\fn)}_{t, \boldsymbol{\sigma}} ] = {\cal Q}_t \circ \thn^{(\fn)}_{t, \boldsymbol{\sigma}} - \thn^{(\fn)}_{t, \boldsymbol{\sigma}}\circ \cal Q_t$ denotes the commutator between ${\cal Q}_t $ and $\thn^{(\fn)}_{t, \boldsymbol{\sigma}}$. Since \({\cal P} \circ {\cal Q}_t = 0\), we notice that the first 5 terms on the RHS of \eqref{zjuii1} satisfy the sum zero property. Using 
$${\cal P}  \circ \dthn^{(\fn)}_{t,  \ba}= \sum_{\ba'}\dthn^{(\fn)}_{t,  \ba}\equiv 1,\quad \text{where}\quad \ba'=([a_2],\cdots, [a_\fn]),$$ 
we see that the last term on the RHS of \eqref{zjuii1} also satisfies the sum zero property:
\be\label{eq:sumzero666}
 {\cal P}  \circ \left\{\left[{\cal P} \circ \left(\mathcal{L} - \mathcal{K}\right)^{(\fn)}_{t,\bsig}\right]_{[a_1]} \partial_t \dthn_{t,\ba}^{(\fn)}\right\}
=\left[{\cal P} \circ \left(\mathcal{L} - \mathcal{K}\right)^{(\fn)}_{t,\bsig}\right]_{[a_1]} {\cal P} \circ \left(\partial_t \dthn_{t,\ba}^{(\fn)}\right)=0.
\ee
Next, due to \eqref{eq:sum0PA}, the first term on the RHS of \eqref{zjuii2} also satisfies the sum zero property. 
Finally, since the LHS of \eqref{zjuii2} has sum zero property, the second term on the RHS of \eqref{zjuii2} also satisfies the sum zero property: 
\begin{align}\label{pqthlk}
   {\cal P}\circ \big[{\cal Q}_t , \thn^{(\fn)}_{t,\boldsymbol{\sigma}} \big]\circ (\mathcal{L} - \mathcal{K})^{(\fn)}_{t, \boldsymbol{\sigma}, \ba}=0 .
\end{align} 
Applying Duhamel's principle to the equation \eqref{zjuii1}, and using the improved estimate \eqref{sum_res_2} to control the resulting expression, we can derive the following result. 
The detailed proof is similar to those for \cite[Lemma 5.14]{Band1D} and \cite[Lemma 7.16]{RBSO1D}, and hence will be deferred to \Cref{sec:pf_STOeq_weak}. 


\begin{lemma}\label{lem:STOeq_Qt_weak} 
Under the assumptions of \Cref{lem:main_ind}, suppose the estimates \eqref{lRB1}, \eqref{Gt_bound_flow}, and \eqref{Eq:Gdecay_w} hold uniformly in $u\in[s,t]$. For any fixed $\fn\ge 2$, we have that: 
\begin{align}\label{am;asoi222_weak}
 \wh\Xi^{({\cal L-\cal K})}_{t,\fn}
\prec &~ \frac{\ell_t^{d-1}}{\ell_s^{d-1}}\sup_{u\in [s,t]}
  \left(\Xi^{+}_{u} +\max_{k\in \qqq{1,\fn-1} }\Xi^{({\cal L-\cal K})}_{u,k}+\Xi^{({\cal L})}_{u,\fn+1}+ (\Xi^{(\cal L)}_{u, {2\fn+2}})^{1/2}\right) \nonumber\\
  &~+\frac{\ell_t^{d-1}}{\ell_s^{d-1}}\sup_{u\in [s,t]}\max_{k\in \qqq{2, \fn} }\frac{ \Xi^{({\cal L-\cal K})}_{u,k}\Xi^{({\cal L-\cal K})}_{u,\fn-k+2}}{W^d\ell_u^d\eta_u\sqrt{\kappa}} \, .
\end{align}
Here, $\Xi^+_{u}\ge 1$ is a deterministic control parameter satisfying that 
\be\label{eq;Xip}
(W^d\eta_u)^{\fn}\ell_u^d \max_{[a_1]}\left|\left[\cal P\circ (\mathcal{L} - \mathcal{K})^{(\fn)}_{u, \boldsymbol{\sigma}_+}\right]_{[a_1]} \right|\prec \Xi^+_{u}, \ee
where $\bsig_+$ denotes a pure loop with all charges equal to $+$. 
\end{lemma}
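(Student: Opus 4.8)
The goal is to bound $\wh\Xi^{(\cL-\cK)}_{t,\fn}$ by analyzing the integral equation \eqref{int_K-L+Q} for ${\cal Q}_t\circ(\cL-\cK)^{(\fn)}_{t,\bsig,\ba}$, rather than the cruder equation \eqref{int_K-LcalE}. The key gain comes from the fact that, by construction, \emph{every} term on the right-hand side of \eqref{int_K-L+Q} satisfies the sum-zero property (this is exactly what \eqref{eq:sum0PA}, \eqref{eq:sumzero666}, and \eqref{pqthlk} establish). Therefore, when the evolution kernel $\mathcal{U}^{(\fn)}_{u,t,\bsig}$ acts on each of these terms, we may invoke the improved estimate \eqref{sum_res_2} from \Cref{lem:sum_decay} in place of the weaker bound \eqref{sum_res_1} from \Cref{lem:sum_Ndecay}; this replaces a factor $\ell_t^d/\ell_s^d$ by $\ell_t^{d-1}/\ell_s^{d-1}$ in the final estimate, precisely the improvement recorded in \eqref{am;asoi222_weak} versus \eqref{am;asoiuw}. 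To apply \eqref{sum_res_2} we first need the fast-decay property of all the tensors appearing: this is guaranteed by \Cref{lem_decayLoop} (for the $\cL$- and $\cK$-loops) together with \Cref{lem_+Q} (for the action of ${\cal Q}_u$ and of the commutator $[{\cal Q}_u,\thn^{(\fn)}_{u,\bsig}]$).

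\textbf{Main steps.} First I would recover $(\cL-\cK)^{(\fn)}_{t,\bsig,\ba}$ from ${\cal Q}_t\circ(\cL-\cK)^{(\fn)}_{t,\bsig,\ba}$ via the decomposition $(\cL-\cK)^{(\fn)}_{t,\bsig,\ba}={\cal Q}_t\circ(\cL-\cK)^{(\fn)}_{t,\bsig,\ba}+[{\cal P}\circ(\cL-\cK)^{(\fn)}_{t,\bsig}]_{[a_1]}\,\dthn^{(\fn)}_{t,\ba}$, using $\|\dthn^{(\fn)}_{t,\ba}\|\lesssim (1-t)^{\fn-1}\|\Theta^{(+,-)}_t\|^{\fn-1}\lesssim \ell_t^{d(\fn-1)}/(W^d\ell_t^d\eta_t)^{0}$ (estimated via \eqref{prop:ThfadC}) to convert the partial-sum bound into a contribution to $\wh\Xi^{(\cL-\cK)}_{t,\fn}$; the partial-sum term is controlled by $\Xi^+_u$ through \eqref{eq;Xip} when $\bsig$ is pure and, when $\bsig$ is not pure, by \Cref{lem:partialsumterm} (estimate \eqref{jywiiwsoks}, giving a contribution of size $\Xi^{(\cL-\cK)}_{u,\fn-1}$ which is already accounted for). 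Next I would estimate each of the six terms on the right-hand side of \eqref{int_K-L+Q}: the initial term is handled by the induction hypothesis \eqref{Eq:L-KGt+IND} combined with \Cref{lem_+Q} and \eqref{sum_res_2}; the $\mathcal{O}_{\cK}^{(\lenk)}$-terms, the $\mathcal{E}^{(\fn)}$-term, and the $\mathcal{W}^{(\fn)}$-term are bounded by the a-priori loop estimates \eqref{CalEbwXi1}, \eqref{CalEbwXi2}, \eqref{CalEbwXi3} feeding into \eqref{sum_res_2}; the martingale term is handled by \Cref{lem:DIfREP} together with \eqref{CalEbwXi} and \eqref{sum_res_2} (taking the square root after the quadratic-variation bound, exactly as in \eqref{sahwNQ2}); the commutator term $[{\cal Q}_u,\thn^{(\fn)}_{u,\bsig}]\circ(\cL-\cK)$ is bounded using \Cref{lem_+Q} (which controls the commutator in $\infty\to\infty$ norm with only a $W^{C\e}$ loss) and the fact that the commutator preserves the sum-zero property by \eqref{pqthlk}; finally the $\partial_u\dthn^{(\fn)}_u$-term is controlled using $\partial_u\dthn^{(\fn)}_u=\OO((1-u)^{-1}\dthn^{(\fn)}_u)$-type bounds together with the partial-sum estimates. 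Performing the $u$-integral (which converges because each integrand carries a factor $(1-u)^{-1}$ times a power of $\ell_u^d/\ell_s^d$, integrating to a bounded multiple of $\ell_t^{d-1}/\ell_s^{d-1}$ under the condition \eqref{con_st_ind}) then yields \eqref{am;asoi222_weak}.

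\textbf{Main obstacle.} The delicate point is the commutator term $\int_s^t(\mathcal{U}^{(\fn)}_{u,t,\bsig}\circ([{\cal Q}_u,\thn^{(\fn)}_{u,\bsig}]\circ(\cL-\cK)^{(\fn)}_{u,\bsig}))_{\ba}\,\dd u$. One must show that although ${\cal Q}_u$ and $\thn^{(\fn)}_{u,\bsig}$ individually have operator norms that would be too large, their commutator gains an extra smallness --- morally because ${\cal Q}_u$ subtracts off the ``near-constant'' direction $\dthn^{(\fn)}_u$ and $\thn^{(\fn)}_{u,\bsig}$ nearly commutes with rank-one corrections along the slowly-varying profile $\Theta^{(+,-)}_u$. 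Since the commutator still satisfies the sum-zero property \eqref{pqthlk}, one applies \eqref{sum_res_2} to the kernel, and then \Cref{lem_+Q} with the fast-decay property to bound $[{\cal Q}_u,\thn^{(\fn)}_{u,\bsig}]\circ(\cL-\cK)^{(\fn)}_{u,\bsig}$ in $\infty$-norm by $W^{C\e}$ times the bound on $\thn^{(\fn)}_{u,\bsig}\circ(\cL-\cK)^{(\fn)}_{u,\bsig}$, which in turn is controlled by $(1-u)^{-1}$ times the a-priori $(\cL-\cK)$-loop bound. I expect the bookkeeping of the many error terms --- keeping track of exactly which $\Xi$-parameters (of shorter or doubled loops) and which powers of $(W^d\ell_u^d\eta_u\sqrt{\kappa})^{-1}$ arise, and checking that all the exponents in \eqref{con_st_ind} and \eqref{eq:small_para} make each contribution subdominant --- to be the most laborious part, but the structure is identical to the arguments of \cite[Section 5]{Band1D} and \cite[Section 6]{RBSO1D}, so I would present the reductions and refer to those works for the routine estimates.
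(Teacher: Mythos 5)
Your high-level plan matches the paper's proof: analyze the sum-zero integral equation \eqref{int_K-L+Q}, verify that each term inherits the sum-zero property via \eqref{eq:sum0PA}, \eqref{eq:sumzero666}, \eqref{pqthlk}, and then invoke the improved kernel estimate \eqref{sum_res_2} to gain a factor of $\ell_s/\ell_t$ over \eqref{am;asoiuw}. You also correctly identify the role of $\Xi^+_u$ and \Cref{lem:partialsumterm} in controlling the partial-sum term. However, there are two genuine gaps.

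\textbf{The commutator term.} You propose to bound $[{\cal Q}_u,\thn^{(\fn)}_{u,\bsig}]\circ(\cL-\cK)^{(\fn)}_{u,\bsig}$ by ``$W^{C\e}$ times the bound on $\thn^{(\fn)}_{u,\bsig}\circ(\cL-\cK)^{(\fn)}_{u,\bsig}$'' via \Cref{lem_+Q}, i.e.\ by $(1-u)^{-1}W^{C\e}\,\|(\cL-\cK)^{(\fn)}_{u}\|_\infty$. This is \emph{not} what \Cref{lem_+Q} gives for the commutator: writing $[{\cal Q}_u,\thn]={\cal Q}_u\thn-\thn{\cal Q}_u$ and applying \Cref{lem_+Q} separately to the two terms yields $\|{\cal Q}_u\thn\circ(\cL-\cK)\|\le W^{C\e}\|\thn\circ(\cL-\cK)\|$ and $\|\thn{\cal Q}_u\circ(\cL-\cK)\|\lesssim(1-u)^{-1}W^{C\e}\|(\cL-\cK)\|$. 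The second bound is self-referential in $\|(\cL-\cK)^{(\fn)}_u\|_\infty$ (the very quantity one is trying to control), not in the shorter loops $\Xi^{(\cL-\cK)}_{u,k}$, $k<\fn$, or the partial sum. The paper instead derives the explicit identity (its equation \eqref{eq:boundcommutator})
\begin{align*}
\big[{\cal Q}_u , \thn^{(\fn)}_{u,\boldsymbol{\sigma}} \big]\circ (\mathcal{L} - \mathcal{K})^{(\fn)}_{u, \boldsymbol{\sigma}, \ba}
	&= \thn^{(\fn)}_{u, \boldsymbol{\sigma}} \circ \left[ \left({\cal P} \circ \left( \mathcal{L} - \mathcal{K} \right)^{(\fn)}_{u,\bsig}\right) \cdot \dthn^{(\fn)}_u \right]_{\ba} - \left[\left( {\cal P} \circ \thn^{(\fn)}_{u, \boldsymbol{\sigma}} \circ (\mathcal{L} - \mathcal{K})^{(\fn)}_{u,\bsig}\right)  \cdot \dthn^{(\fn)}_u \right]_{\ba} ,
\end{align*}
which shows that the commutator is controlled by the \emph{partial sum} ${\cal P}\circ(\cL-\cK)^{(\fn)}_{u,\bsig}$ rather than by the full loop, so that \eqref{jywiiwsoks} and $\Xi^+_u$ can be brought in. This identity, not a heuristic ``extra smallness'', is the mechanism; your ``main obstacle'' paragraph speculates about the mechanism but would not close the argument as written.

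\textbf{The martingale term.} You propose to handle the quadratic variation by applying \eqref{sum_res_2} ``exactly as in \eqref{sahwNQ2}''. But \eqref{sum_res_2} is stated for the single operator $\mathcal{U}^{(\fn)}$ acting on an $\fn$-dimensional tensor with the single sum-zero property \eqref{sumAzero}, whereas the quadratic variation in \Cref{lem:DIfREP} involves the tensor-product kernel $\mathcal{U}^{(\fn)}\otimes\mathcal{U}^{(\fn)}$ acting on a $(2\fn)$-dimensional tensor. The paper inserts the operator ${\cal Q}_u\otimes{\cal Q}_u$ into the quadratic variation (see \eqref{UEUEdif+Q}--\eqref{eq:QQA}), verifies the \emph{double} sum-zero property \eqref{eq:doublesumzero}, and applies the stronger estimate \eqref{sum_res_2_doublezero} (case (II) of \Cref{lem:sum_decay}); the norm compatibility \eqref{norm_doubleQA} replaces your intended use of \Cref{lem_+Q}. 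You would need to either prove a tensor-product analogue of \eqref{sum_res_2} or follow the paper's ${\cal Q}_u\otimes{\cal Q}_u$ route; as written the step has a gap. (As a minor slip, your inline bound $\|\dthn^{(\fn)}_{t,\ba}\|\lesssim \ell_t^{d(\fn-1)}$ has the wrong sign in the exponent; the correct bound, \eqref{eq:derv_Theta}, is $\|\dthn_{t}^{(\fn)}\|_\infty\prec (\ell_t^d)^{-(\fn-1)}$.)
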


To complete the proof of Step 3, we still need to eliminate the factor $\ell_t^{d-1}/\ell_s^{d-1}$ in \eqref{am;asoi222_weak} in dimension 2, as shown in the following lemma. Its proof explores a CLT-type cancellation mechanism developed in \cite[Section 7]{Band2D} and \cite[Lemma 7.17]{RBSO1D}, which yields an additional $\ell_s/\ell_t$ factor that cancels the $\ell_t/\ell_s$ prefactor in \eqref{am;asoi222_weak}. We will briefly describe the proof in \Cref{subsec:STOeq_Qt_pf}. 

\begin{lemma}\label{lem:STOeq_Qt} 
In the setting of \Cref{lem:STOeq_Qt_weak}, when $d=2$, we have that
\begin{align}\label{am;asoi222}
 \wh\Xi^{({\cal L-\cal K})}_{t,\fn}
 \prec &~\sup_{u\in [s,t]}
  	\left(\Xi^{+}_{u} +\max_{k\in \qqq{1,\fn-1} }\Xi^{({\cal L-\cal K})}_{u,k}+\Xi^{({\cal L})}_{u,\fn+1}+ (\Xi^{(\cal L)}_{u, {2\fn+2}})^{1/2}\right) \nonumber\\
    &~+\sup_{u\in [s,t]}\max_{k\in \qqq{2, \fn} }\frac{ \Xi^{({\cal L-\cal K})}_{u,k}\Xi^{({\cal L-\cal K})}_{u,\fn-k+2}}{W^d\ell_u^d\eta_u\sqrt{\kappa}} .
\end{align}
\end{lemma}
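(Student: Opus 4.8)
\textbf{Proof strategy for \Cref{lem:STOeq_Qt}.}
The plan is to bootstrap from the weaker estimate \eqref{am;asoi222_weak} of \Cref{lem:STOeq_Qt_weak} by exploiting the symmetry improvement \eqref{sum_res_2_sym} in \Cref{lem:sum_decay}, in place of the plain sum-zero bound \eqref{sum_res_2}. The factor $\ell_t^{d-1}/\ell_s^{d-1}$ in \eqref{am;asoi222_weak} comes from the derivative of a single $\Theta$-edge living at the vertex $[a_1]$ in the sum-zero tensor $\dthn^{(\fn)}_{t,\ba}$; to remove it we need the tensor being evolved to satisfy, in addition to the sum-zero property, the reflection symmetry \eqref{eq:A_zero_sym} (or the double sum-zero property \eqref{eq:doublesumzero} for the martingale term, which was already handled this way in the proof of \Cref{lem:STOeq_Qt_weak}). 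So the first step is to go term by term through \eqref{int_K-L+Q} and verify which of the driving tensors enjoy the extra symmetry, and for those that do not, to split them into a symmetric part (to which \eqref{sum_res_2_sym} applies) and a remainder that is already controlled with room to spare by the weak bound \eqref{am;asoi222_weak}.

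Concretely, I would first record that, by the symmetry of $S^{\LK}$ and of the $\Theta$-propagators established in \Cref{lem_propTH}(1)--(2), together with the transposition symmetry of the $G$-loops and $\cK$-loops, the tensors ${\cal Q}_u \circ [\mathcal{O}_{\cK}^{(\lenk)}(\mathcal{L}-\mathcal{K})]^{(\fn)}_{u,\boldsymbol{\sigma}}$, ${\cal Q}_u\circ \mathcal{E}^{(\fn)}_{u,\boldsymbol{\sigma}}$ and ${\cal Q}_u\circ \mathcal{W}^{(\fn)}_{u,\boldsymbol{\sigma}}$, after an appropriate symmetrization over the external index at which the sum-zero operator acts, satisfy \eqref{eq:A_zero_sym} up to a term supported where $\max_{i,j}|[a_i]-[a_j]|$ is large, which is negligible by the fast decay property of \Cref{lem_decayLoop} and \Cref{lem_+Q}. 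For the commutator term $[{\cal Q}_u,\thn^{(\fn)}_{u,\boldsymbol{\sigma}}]\circ(\mathcal{L}-\mathcal{K})^{(\fn)}_{u,\boldsymbol{\sigma}}$ and the last term $[{\cal P}\circ(\mathcal{L}-\mathcal{K})^{(\fn)}_{u,\boldsymbol{\sigma}}]\partial_u\dthn^{(\fn)}_u$, I would use the explicit structure of $\dthn^{(\fn)}_u$ as a product of $\Theta^{(+,-)}$-edges hanging off $[a_1]$: the symmetry $\Theta^{(+,-)}_{t}(0,[x])=\Theta^{(+,-)}_{t}(0,-[x])$ from \eqref{symmetry} makes $\partial_u\dthn^{(\fn)}_u$ itself reflection-symmetric in the $[a_i]-[a_1]$ variables, so the product with the (already sum-zero) partial sum ${\cal P}\circ(\mathcal{L}-\mathcal{K})$ still satisfies the sum-zero property, and after symmetrizing the partial-sum factor we obtain \eqref{eq:A_zero_sym}; applying \eqref{sum_res_2_sym} then saves exactly the factor $\ell_t^{d-1}/\ell_s^{d-1}$. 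The bounds \eqref{jywiiwsoks}, \eqref{eq:pure-L-sum} from \Cref{lem:partialsumterm}, together with \eqref{eq:derv_Theta}, quantify these contributions as before in terms of $\Xi^{(\mathcal{L}-\mathcal{K})}_{u,\fn-1}$ and $\Xi^{+}_u$; the only change is the replacement of \eqref{sum_res_2} by \eqref{sum_res_2_sym} when bounding the evolution kernel $\mathcal{U}^{(\fn)}_{u,t,\boldsymbol{\sigma}}$ acting on them.

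Having done this, one runs the same Duhamel argument as in the proof of \Cref{lem:STOeq_Qt_weak} on \eqref{int_K-L+Q}: the first term $\mathcal{U}^{(\fn)}_{s,t,\boldsymbol{\sigma}}\circ{\cal Q}_s\circ(\mathcal{L}-\mathcal{K})^{(\fn)}_{s,\boldsymbol{\sigma}}$ is controlled by the induction hypothesis \eqref{Eq:L-KGt+IND} and \eqref{sum_res_2_sym} (note ${\cal Q}_s$ applied to the sum-zero-free loop at time $s$ still needs the symmetrization trick, which is available since $\mathcal{L}^{(\fn)}_{s,\boldsymbol{\sigma},\ba}=\cM^{(\fn)}_{\boldsymbol{\sigma},\ba}$ is symmetric); the $\lenk\ge 3$, $\mathcal{E}$, $\mathcal{W}$ and commutator/partial-sum integrals contribute, after integrating $(1-u)^{-1}$ against the saved kernel factor, the bracketed terms on the right of \eqref{am;asoi222} without the $\ell^{d-1}$ prefactor; and the martingale term, via \Cref{lem:DIfREP}, \eqref{UEUEdif+Q}, the double-sum-zero property \eqref{eq:doublesumzero} of $({\cal Q}_u\otimes{\cal Q}_u)\circ(\mathcal{B}\otimes\mathcal{B})^{(2\fn)}_{u,\boldsymbol{\sigma}}$, and the estimate \eqref{sum_res_2_doublezero}, is already handled exactly as in \Cref{lem:STOeq_Qt_weak} and contributes $(\Xi^{(\cal L)}_{u,2\fn+2})^{1/2}$ with no lost prefactor. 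Summing these and taking suprema over $u\in[s,t]$ yields \eqref{am;asoi222}. The main obstacle, and the step requiring the most care, is the symmetrization bookkeeping: one must check that each driving tensor can genuinely be decomposed into a piece satisfying the exact reflection symmetry \eqref{eq:A_zero_sym} plus a fast-decaying remainder, uniformly in the combinatorial structure ($\lenk$, the cut positions, the choice of charges $\boldsymbol{\sigma}$), since a spurious asymmetric term that is \emph{not} negligible would force one back to the weaker \eqref{sum_res_2} and reintroduce the unwanted $\ell_t^{d-1}/\ell_s^{d-1}$ factor. This is where the specific structure of $\dthn^{(\fn)}_u$ (a star of $\Theta^{(+,-)}$-edges, hence manifestly reflection-symmetric by \eqref{symmetry}) and the transposition invariance of the loops are used essentially.
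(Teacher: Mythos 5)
Your plan hinges on the claim that each driving tensor ${\cal Q}_u\circ\mathcal{B}(u)$ on the RHS of \eqref{int_K-L+Q} can be written as a piece obeying the exact reflection symmetry \eqref{eq:A_zero_sym} plus a remainder that is negligible by fast decay. That decomposition does not exist for the random tensors. The symmetry \eqref{eq:A_zero_sym} is only available for the \emph{expectations} $\E\,{\cal Q}_s\circ\mathcal{A}(s)$ and $\E\,{\cal Q}_u\circ\mathcal{B}(u)$, because it comes from the translation invariance and parity symmetry of the model at the block level — it is a statement about the law of $H_u$, not a pathwise identity for the resolvent. If you symmetrize a particular realization of ${\cal Q}_u\circ\mathcal{B}(u)$ over $[a_i]\mapsto 2[a_1]-[a_i]$, the anti-symmetric remainder is a random fluctuation of the \emph{same order} as the symmetric part, not something you can absorb by fast decay; \Cref{lem_decayLoop} and \Cref{lem_+Q} control long-range tails, not the failure of parity. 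So applying \eqref{sum_res_2_sym} to the symmetric part would still leave a term of the same size as the original, and you would be stuck with the $\ell_t^{d-1}/\ell_s^{d-1}$ prefactor on the anti-symmetric remainder.

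The paper's proof instead splits each driving tensor as expectation plus fluctuation: the expectation satisfies both sum-zero and \eqref{eq:A_zero_sym}, so \eqref{sum_res_2_sym} gives the improved kernel bound on that piece; the fluctuation does not satisfy the symmetry, but it is controlled by a CLT-type concentration argument (relying on the decay of $\Theta$-propagators and the fast decay of resolvent entries, as in Section~7 of \cite{Band2D} and \cite{RBSO1D}) that independently produces the needed extra factor of $\ell_s/\ell_t$. That CLT step is the actual content of the lemma beyond \Cref{lem:STOeq_Qt_weak}, and your proposal has nothing in its place. (Your reading of the martingale term, and of $\partial_u\dthn^{(\fn)}_u$ being manifestly reflection-symmetric by \eqref{symmetry}, is fine; the gap is specifically in how the non-martingale integrands are handled.)
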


We are now ready to complete the proof of Step 3. In this step, besides the assumptions in Theorem \ref{lem:main_ind}, we have proved the a priori $G$-loop bound \eqref{lRB1} in Step 1, the sharp local laws \eqref{Gt_bound_flow} and \eqref{Gt_avgbound_flow}, the sharp $1$-$G$ loop estimate \eqref{Gt_bound_loop1} (so we can choose \smash{$\Xi_{u,1}^{(\cL-\cK)}=1$}), and the a priori $2$-$G$ loop estimate \eqref{Eq:Gdecay_w} in Step 2. 
Furthermore, by \eqref{eq:pure-L-sum}, we can choose the parameter \smash{$\Xi_u^+$} in \eqref{eq;Xip} as  $\Xi_u^+=\ell_u^d/\ell_s^d$. 
With these inputs, applying \Cref{lem:STOeq_Qt}, we obtain the following estimate uniformly in $u\in [s,t]$: 
\begin{align}\label{am;asoi333}
\sup_{v\in[s,u]} \wh \Xi^{({\cal L-\cal K})}_{v,\fn}
\prec &~\ell_u^d/\ell_s^d+\sup_{v\in [s,u]}
  \left(\max_{r\in \qqq{2,\fn-1} }\Xi^{({\cal L-\cal K})}_{v,r}+ \Xi^{({\cal L})}_{v,\fn+1}+(\Xi^{(\cal L)}_{v, {2\fn+2}})^{1/2}\right)\nonumber\\
&~+\sup_{v\in [s,u]}\max_{r\in \qqq{2, \fn} } \frac{\Xi^{({\cal L-\cal K})}_{v,r}\Xi^{({\cal L-\cal K})}_{v,\fn-r+2}}{W^d\ell_v^d\eta_v \sqrt{\kappa}}.
\end{align}
(By Lemma \ref{lem:STOeq_Qt}, the estimate \eqref{am;asoi333} holds for each fixed $u\in[s,t]$. Again, using the $N^{-C}$-net and perturbation argument as in \Cref{claim:uniform} extends it uniformly to all $u\in [s,t]$.)
As in Section 5.6 of \cite{Band1D}, we will iterate the estimate \eqref{am;asoi333} to derive the sharp bound \eqref{Eq:LGxb} on $G$-loops, that is, for each fixed $\fn\in \N$, 
\be\label{eq:Xiiter}
\sup_{u\in [s,t]} \wh\Xi^{(\cL)}_{u,\fn} \prec 1.
\ee

By the $\cK$-loop bound \eqref{eq:bcal_k}, $\wh\Xi^{(\cal L)}_{u,\fn}$ and $\wh\Xi^{(\cal L-\cal K)}_{u,\fn}$ bound each other as follows:
\begin{align}\label{rela_XILXILK}
    \wh\Xi^{(\cal L)}_{u,\fn}\lesssim 1+\left(W^d\ell_u^d\eta_u\sqrt{\kappa}\right)^{-1}\cdot \wh\Xi^{({\cal L-\cal K})}_{u, \fn},\ \  \wh\Xi^{({\cal L-\cal K})}_{u, \fn}\lesssim \left(W^d\ell_u^d\eta_u\sqrt{\kappa}\right) \left(\wh\Xi^{(\cal L)}_{u,\fn}+1\right).
\end{align}
Moreover, by the a priori $G$-loop bounds in \eqref{lRB1} and \eqref{Eq:Gdecay_w}, we have the following initial estimate uniformly in $u\in[s,t]$ for any fixed $\fn\ge 2$:
\begin{align}\label{sef8w483r324}
  \wh\Xi^{(\cal L)}_{u,\fn}\prec \p{\frac{\ell_u^d}{\ell_s^d}}^{\fn-1}, \ \ 
 \wh\Xi^{({\cal L-K})}_{u, \fn}\prec \p{\frac{\ell_u^d}{\ell_s^d}}^{\fn-1}(W^d\ell_u^d\eta_u \sqrt{\kappa}),\ \ \wh\Xi^{({\cal L-K})}_{u, 2}\prec \frac{|1-s|^4}{|1-u|^4}.
\end{align}
Then, for $u\in [s,t]$, we define the control parameter
\begin{equation}\label{adsyzz0s8d6}
      \Psi_{u}(\fn,k;[s,t]):=  (W^d\ell_s^d\eta_s\sqrt{\kappa})^{\frac 1 2}+\p{\frac{\ell_t^d}{\ell_s^d}}^{\fn-1}\times
    \begin{cases}
        W^d\ell_u^d\eta_u\sqrt{\kappa} , &   \text{if}\ k=0\\
        (W^d\ell_s^d\eta_s\sqrt{\kappa})^{1-\frac k 4}, &    \text{if}\  k\ge 1 
    \end{cases}.
\end{equation} 
(Note that when $k\ge 1$, $\Psi_{u}(\fn,k;[s,t])$ does not depend on $u$.) The iterations will be performed in both $\fn$ and $k$ at the same time. We summarize the result of each iteration in the next lemma. 
\begin{lemma}\label{lem:iterations}
In the setting of \Cref{lem:main_ind}, suppose the estimates \eqref{am;asoi333} and \eqref{sef8w483r324} hold uniformly in $u\in[s,t]$. Moreover, for any fixed $(\fn,k)\in \N^2$ with $\fn\ge 2$ and $k\ge 1$, suppose the following estimate holds uniformly in $u\in[s,t]$: 
    \be\label{eq:iteration_induc}
    \sup_{v\in[s,u]}\wh \Xi^{({\cal L-K})}_{v,r}\prec   \sup_{v\in[s,u]}\Psi_v(r,l;[s,u])\ee
    for all $(r,l)$ satisfying one of the following conditions: (1) $l=k$ and $2\le r \le \fn-1$; (2) $l=k-1$ and $2\le r\le \fn+2$. Then, we have the following estimate uniformly in $u\in[s,t]$:
    \be\label{eq:iteration_improve}
    \sup_{v\in[s,u]}\wh \Xi^{({\cal L-K})}_{v,\fn}\prec   \Psi_u(\fn,k;[s,u]).\ee
\end{lemma}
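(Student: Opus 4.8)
\textbf{Proof proposal for \Cref{lem:iterations}.}

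The plan is to analyze the integral equation \eqref{int_K-L+Q} for ${\cal Q}_u\circ(\mathcal L-\mathcal K)^{(\fn)}_{u,\bsig}$ using the improved kernel estimate \eqref{sum_res_2} (together with its CLT refinement yielding \eqref{am;asoi222}), and then feed into the right-hand side of \eqref{am;asoi333} the sharper bounds supplied by the induction hypothesis \eqref{eq:iteration_induc}. Concretely, I would start from \eqref{am;asoi333} specialized to length $\fn$ and track carefully how each of the four types of terms on its right-hand side is controlled. The term $\max_{r\in\qqq{2,\fn-1}}\Xi^{(\cL-\cK)}_{v,r}$ is handled by hypothesis (1) (with $l=k$, $2\le r\le\fn-1$); the term $\Xi^{(\cL)}_{v,\fn+1}$ and $(\Xi^{(\cL)}_{v,2\fn+2})^{1/2}$ must be re-expressed through $\Xi^{(\cL-\cK)}$-parameters via the first relation in \eqref{rela_XILXILK}, which produces a gain factor $(W^d\ell_v^d\eta_v\sqrt\kappa)^{-1}$; and the product term $\Xi^{(\cL-\cK)}_{v,r}\Xi^{(\cL-\cK)}_{v,\fn-r+2}/(W^d\ell_v^d\eta_v\sqrt\kappa)$ also comes with such a gain. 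The key bookkeeping point is that each such gain factor $(W^d\ell_v^d\eta_v\sqrt\kappa)^{-1}$ is, by \eqref{eq:kappat} and monotonicity of $\ell_v^d\eta_v$ in $v$, comparable up to constants to $(W^d\ell_s^d\eta_s\sqrt\kappa)^{-1}$ along the whole interval $[s,u]\subseteq[s,t]$; this is precisely what lets one trade one unit of $k$ (i.e.\ one factor $(W^d\ell_s^d\eta_s\sqrt\kappa)^{1/4}$, squared into the exponent appropriately) for the improvement recorded in \eqref{adsyzz0s8d6}.

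The next step is the arithmetic of the exponents in \eqref{adsyzz0s8d6}. Substituting the hypothesis bounds $\wh\Xi^{(\cL-\cK)}_{v,r}\prec\Psi_v(r,k;[s,u])$ for $2\le r\le\fn-1$ and $\wh\Xi^{(\cL-\cK)}_{v,r}\prec\Psi_v(r,k-1;[s,u])$ for the length-$(\fn+1)$ and length-$(2\fn+2)$ loops (after converting $\Xi^{(\cL)}$ to $\Xi^{(\cL-\cK)}$ via \eqref{rela_XILXILK}), one must check that the resulting bound on $\wh\Xi^{(\cL-\cK)}_{v,\fn}$ is at most $\Psi_u(\fn,k;[s,u])$. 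For the product term $\Xi^{(\cL-\cK)}_{v,r}\Xi^{(\cL-\cK)}_{v,\fn-r+2}$ one uses that $r$ and $\fn-r+2$ each lie in $\qqq{2,\fn}$, but at least one of them is $\le\fn-1$ unless $\{r,\fn-r+2\}=\{2,\fn\}$ or similar boundary configurations, which must be treated separately using the $l=k-1$ hypothesis on the longer factor; the division by $(W^d\ell_v^d\eta_v\sqrt\kappa)$ supplies exactly the factor $(W^d\ell_s^d\eta_s\sqrt\kappa)^{-1}\ge(W^d\ell_s^d\eta_s\sqrt\kappa)^{-1/4\cdot ?}$ needed to close. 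The additive $(W^d\ell_s^d\eta_s\sqrt\kappa)^{1/2}$ floor in \eqref{adsyzz0s8d6} is there to absorb the $\Xi^+_v\prec\ell_v^d/\ell_s^d$ contribution (from \eqref{eq:pure-L-sum}, which in Step 3 gives $\Xi^+_u=\ell_u^d/\ell_s^d$) and the martingale term $(\Xi^{(\cL)}_{v,2\fn+2})^{1/2}$ when $k$ is small; verifying that $\ell_u^d/\ell_s^d\prec(W^d\ell_s^d\eta_s\sqrt\kappa)^{1/2}$ whenever the iteration is being run follows from \eqref{eq:small_para}, \eqref{eq:small_para2}, and the step condition \eqref{con_st_ind}.

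The base cases require some care but are supplied by the inputs already available: for $k$ minimal relative to the interval, \eqref{sef8w483r324} gives $\wh\Xi^{(\cL-\cK)}_{v,\fn}\prec(\ell_u^d/\ell_s^d)^{\fn-1}(W^d\ell_u^d\eta_u\sqrt\kappa)$, which is exactly $\Psi_v(\fn,0;[s,u])$ up to the floor term, and the $\fn=2$ row is pinned by the a priori bound $\wh\Xi^{(\cL-\cK)}_{v,2}\prec((1-s)/(1-v))^4$ from \eqref{Eq:Gdecay_w}, which one checks is $\prec\Psi_v(2,k;[s,u])$ for all relevant $k$. The final conclusion \eqref{eq:Xiiter}, $\sup_{u\in[s,t]}\wh\Xi^{(\cL)}_{u,\fn}\prec1$, then follows by iterating \Cref{lem:iterations} in $k$ until $(W^d\ell_s^d\eta_s\sqrt\kappa)^{1-k/4}\le(W^d\ell_s^d\eta_s\sqrt\kappa)^{1/2}$ (which by \eqref{eq:small_para} happens after $\OO(1)$ steps, since $W^d\ell_s^d\eta_s\sqrt\kappa\gtrsim W^{(\fd/2)\wedge\fc}$ is a genuine positive power of $W$), at which point $\Psi_u(\fn,k;[s,u])\prec(W^d\ell_s^d\eta_s\sqrt\kappa)^{1/2}(\ell_t^d/\ell_s^d)^{\fn-1}$, and then one more application of the second relation in \eqref{rela_XILXILK} converts this into $\wh\Xi^{(\cL)}_{u,\fn}\prec1$ after the $\ell$-ratios are absorbed using the continuity already established. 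The main obstacle I anticipate is not any single estimate but the simultaneous double induction on $(\fn,k)$: one must set up the ordering of the pairs $(\fn,k)$ so that every term on the right-hand side of \eqref{am;asoi333}—in particular the length-$(2\fn+2)$ martingale contribution, which raises the loop length substantially—refers only to pairs strictly earlier in the ordering, and checking that the exponent arithmetic in \eqref{adsyzz0s8d6} is monotone in the right direction under this ordering. This is exactly the bookkeeping carried out in \cite[Section 5.6]{Band1D}, and I would follow that scheme, indicating only the modifications forced by the edge-regime factors $\sqrt\kappa$ and $\omega_u$.
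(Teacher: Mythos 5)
Your high-level strategy is the right one---start from \eqref{am;asoi333}, substitute the induction-hypothesis bounds for each of the four terms on its right-hand side, and check the exponent arithmetic in \eqref{adsyzz0s8d6}---and you correctly identify the role of \eqref{rela_XILXILK} and of the floor term $(W^d\ell_s^d\eta_s\sqrt\kappa)^{1/2}$. However, there is a real gap in how you handle the martingale contribution $(\Xi^{(\cL)}_{v,2\fn+2})^{1/2}$.

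You write that you would bound it by substituting ``$\wh\Xi^{(\cL-\cK)}_{v,r}\prec\Psi_v(r,k-1;[s,u])$ for the length-$(\fn+1)$ and length-$(2\fn+2)$ loops (after converting $\Xi^{(\cL)}$ to $\Xi^{(\cL-\cK)}$ via \eqref{rela_XILXILK})." But the induction hypothesis \eqref{eq:iteration_induc} with $l=k-1$ is only available for $2\le r\le\fn+2$, and $2\fn+2>\fn+2$ for all $\fn\ge 2$. So the hypothesis simply does not cover a loop of length $2\fn+2$, and the first relation of \eqref{rela_XILXILK} cannot make it do so: converting $\Xi^{(\cL)}_{v,2\fn+2}$ to $\Xi^{(\cL-\cK)}_{v,2\fn+2}$ leaves you needing a bound on a length-$(2\fn+2)$ $(\cL-\cK)$-loop that you do not possess. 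The argument as written circular-references a parameter whose control is precisely what must be established, and if one simply omits this term, nothing on the right-hand side of \eqref{am;asoi333} dominates it.

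The missing ingredient is a \emph{Cauchy--Schwarz reduction of the $(2\fn+2)$-loop to shorter loops}. Writing the $(2\fn+2)$-$G$ loop as a trace of two $G$-chains glued along two blocks, two successive applications of Cauchy--Schwarz factor it into a product of two $G$-loops of lengths $2l_1$ and $2l_2$ with $l_1=\lfloor(\fn+1)/2\rfloor$, $l_2=\fn+1-l_1$, so that $\max\{2l_1,2l_2\}\le\fn+2$; concretely, this yields
$\wh\Xi^{(\cL)}_{v,2\fn+2}\prec(W^d\ell_v^d\eta_v\sqrt\kappa)\,\Xi^{(\cL)}_{v,2l_1}\Xi^{(\cL)}_{v,2l_2}$.
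Only after this step are the lengths small enough for \eqref{eq:iteration_induc} (combined with \eqref{rela_XILXILK}) to apply; indeed, this reduction is precisely the reason the hypothesis is stated with the cutoff $r\le\fn+2$ and at level $l=k-1$. You would then verify that the resulting bound is $\prec\Psi_u(\fn,k;[s,u])^2$, so its square root contributes $\Psi_u(\fn,k;[s,u])$ as required. Flagging that one should ``follow the scheme of \cite[Section 5.6]{Band1D}'' does not substitute for this step, since the referenced scheme relies on a $G$-chain estimate that the present paper replaces by exactly this Cauchy--Schwarz argument. Once this decomposition is supplied, the rest of your exponent bookkeeping goes through and you recover $\Psi_u(\fn,k;[s,u])$.
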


\begin{proof}
The proof of this lemma follows a similar approach to that of equation (5.109) in \cite{Band1D}, utilizing the inductive estimate \eqref{am;asoi333}. 
Given the close similarity to the argument provided there, we omit the details.
\end{proof}

With \Cref{lem:iterations}, the iterative argument leading to the proof of \eqref{Eq:LGxb} in Step 3 proceeds as follows: 

\begin{proof}[\bf Step 3: Proof of \eqref{Eq:LGxb}]
By \eqref{sef8w483r324}, we initially have a weak bound for $G$-loops of arbitrarily large lengths, which shows that \eqref{eq:iteration_induc} holds with $l=0$ for every $r\in \N$. Applying \Cref{lem:iterations} once, we obtain a slightly improved bound \eqref{eq:iteration_improve} with $k=1$ and $\fn=2$. Then, continuing the iteration in $\fn$ while keeping $k=1$ fixed, we can establish the bound \eqref{eq:iteration_improve} for arbitrarily large $\fn\in \N$ with $k=1$. 
Next, applying the iteration in \Cref{lem:iterations} again yields an even stronger bound \eqref{eq:iteration_improve} with $\fn=2$ and $k=2$. Repeating the iteration in $\fn$ while keeping $k=2$ fixed, we can establish the bound \eqref{eq:iteration_improve} for arbitrarily large $\fn\in \N$ with $k=2$. This process continues, progressively improving the bound to \eqref{eq:iteration_improve} for any $\fn\in \N$ with $k=3$, and so forth.

Given any $(\fn,k)\in \N^2$, by repeating the above argument a finite number of times, we can show that the estimate \eqref{eq:iteration_improve} holds. As a special case, it yields that  
\smash{\(\sup_{u\in[s,t]}\wh \Xi^{({\cal L-K})}_{u,\fn}\prec   \Psi_t(\fn,k;[s,t]). \)}
In particular, if we choose $k$ large enough depending on $\fn$ such that $(\ell_t^d/\ell_s^d)^{\fn-1}\cdot  (W^d\ell_s^d\eta_s\sqrt{\kappa})^{1-k/4} \le (W^d\ell_s^d\eta_s\sqrt{\kappa})^{1/2}$, we get
$$\sup_{u\in[s,t]}\wh\Xi^{({\cal L-K})}_{u,\fn}\prec \Psi_t(\fn,k;[s,t]) \lesssim (W^d\ell_s^d\eta_s\sqrt{\kappa})^{1/2}.$$
Together with \eqref{rela_XILXILK}, it implies the estimate \eqref{eq:Xiiter} under condition \eqref{con_st_ind}, thereby completing the proof of the $G$-loop bound \eqref{Eq:LGxb}.
\end{proof}

\subsection{Step 4: Sharp $(\cL-\cK)$-loop estimate}\label{sec:step4}

To prove \eqref{Eq:L-KGt-flow}, we can once again apply the estimate \eqref{am;asoi222} established in Step 3. Then, we can invoke the $G$-loop bound \eqref{Eq:LGxb}, which allows us to choose the parameters \smash{$\Xi^{(\cal L)}_{v,\fn+1}=1$ and $\Xi^{(\cal L)}_{v,2\fn+2}=1$}. 
Furthermore, to control \smash{$\Xi^{+}_{v}$}, we apply the same argument as the one used below \eqref{eq;Cauchy-int}. More precisely, by employing the sharp $G$-loop bounds from \eqref{Eq:LGxb}, and repeating the argument below \eqref{eq;Cauchy-int}, we obtain the following improvement of \eqref{eq:cont-est_pure2}: 
\be\label{eq:cont-est_pure3}\max_{[a]}\left|\avg{\p{G_u(z)-M_u(z)}E_{[a]}}\right|\prec (W^d\ell_u^d\eta_u)^{-1} \quad \text{uniformly in $z\in \gamma$.}
\ee
Plugging this estimate into \eqref{eq;Cauchy-int} and controlling the integral yields
$$\left[\cal P\circ (\mathcal{L} - \mathcal{K})^{(\fn)}_{u, \boldsymbol{\sigma}_+}\right]_{[a_1]}  \prec \ell_u^{-d} \big(W^d\eta_u\big)^{-\fn}\, ,$$
since $c$ is arbitrary. Thus, recalling \eqref{eq;Xip}, we can choose $\Xi^{+}_{v}=1$. So far, we have seen from \eqref{am;asoi222} that for any fixed $\fn\ge 2$: 
 \begin{align} \label{saww02}
\sup_{v\in[s,u]}\wh\Xi_{v,\fn}^{(\mathcal{L}-\mathcal{K})}  \prec  1+\sup_{v\in [s,u]}
  \left(\max_{r\in\qqq{2,\fn-1}}\Xi^{({\cal L-\cal K})}_{v,r}+\max_{r\in\qqq{2,\fn} }\frac{\Xi^{({\cal L-\cal K})}_{v,r}\Xi^{({\cal L-\cal K})}_{v,\fn-r+2}}{W^d\ell_v^d\eta_v\sqrt{\kappa}}\right).
\end{align}

On the other hand, by the rough bound \eqref{Eq:Gdecay_w} on $2$-$G$ loops, we have 
\be \label{saww02_ini}
\sup_{v\in [s,u]}\wh\Xi_{v,2 }^{(\mathcal{L}-\mathcal{K})}\prec \p{\frac{1-s}{1-u}}^{4}.
\ee
Hence, we can choose $\Xi_{v,2 }^{(\mathcal{L}-\mathcal{K})}\equiv |1-s|^4/|1-u|^{4}$ for $v\in[s,u]$. 
Then, taking $\fn=2$ in \eqref{saww02}, by using this parameter and the condition \eqref{con_st_ind}, we derive the following self-improving estimate for $2$-$G$ loops:  
$$\sup_{v\in[s,u]}\wh\Xi_{v,2}^{(\mathcal{L}-\mathcal{K})}  \prec  1+ \frac{1}{W^d\ell_u^d\eta_u\sqrt{\kappa}} \sup_{v\in [s,u]}
  \left(\Xi^{({\cal L-\cal K})}_{v,2}\right)^2 \prec 1+ \frac{\sup_{v\in [s,u]}
\Xi^{({\cal L-\cal K})}_{v,2}}{(W^d\ell_u^d\eta_u\sqrt{\kappa})^{1/2}} .$$ 
Iterating this estimate for $\OO(1)$ many times gives
\smash{$\sup_{v\in[s,u]}\wh\Xi_{v,2}^{(\mathcal{L}-\mathcal{K})}\prec 1=:\Xi_{v,2}^{(\mathcal{L}-\mathcal{K})}.$}
Starting from this initial bound, by applying  \eqref{saww02} inductively in $\fn$, we obtain that 
$$\sup_{v\in[s,u]}\wh\Xi_{v,\fn}^{(\mathcal{L}-\mathcal{K})}\prec 1$$
for any fixed $\fn\in\N$, which concludes the estimate \eqref{Eq:L-KGt-flow}.

\subsection{Step 5: Sharp 2-$G$ loop estimate}
 
To show that \eqref{Eq:Gdecay_flow} holds uniformly in $u\in[s,t]$, note we have established in Step 4 that 
$$
\max_{\bsig\in\{+,-\}^2}\max_{\ba\in(\Zn)^2}\left({\cal L}-{\cal K}\right)^{(2)}_{u,\bsig,\ba}\prec (W^d\ell_u^d\eta_u)^{-2} 
$$
uniformly in $u\in[s,t]$. It already implies that 
$$
\left({\cal L}-{\cal K}\right)^{(2)}_{u,\bsig,\ba} \prec {\cal T}_{u, D}(|[a_1]-[a_2]|) \quad \text{for}\quad |[a_1]-[a_2]|=\OO(\ell_u^*).
$$
On the other hand, when $|[a_1]-[a_2]|\ge 6 \ell_u^*$, \eqref{Eq:Gdecay_flow} is an easy consequence of \eqref{53}.


\newpage
\appendix

\section{Proof of \Cref{ML:Kbound}}\label{subsec:estimates}

In this appendix, we prove \Cref{ML:Kbound} using the tree representation formula from \Cref{tree-representation}.  
We first introduce additional structural notions associated with canonical tree partitions, namely the concepts of the \emph{core} and \emph{molecules}. These notions enable us to decompose a canonical tree partition into smaller tree partitions, which in turn allows us to apply an inductive argument (based on estimates for these smaller components) effectively. 

\begin{definition}[Tree decomposition]\label{recursive-decomposition-lemma}
  Let $\Gamma \in \TSP\p{\mathcal{P}_{\ba}}$ and let $1 \leq k < l \leq \fn$ be such that $R_k$ and $ R_l$ are non-trivial neighbors. 
  Let $\p{[c], [c']} = R_k \cap R_l$ denote the shared edge. Then, we have the decomposition
  \begin{equation}\label{recursive-decomposition}
    \Gamma_{t,\bm{\sigma},\ba}^{\p{\fn}}
      = \frac{\p{\Gamma_c}^{\p{\fn+k-l+1}}_{t,\cutL^{[a]}_{kl}\p{\bm{\sigma}, \ba}}}{\Theta_{t,[a][c]}^{\p{\sigma_k, \sigma_l}}}
        \frac{\big(\Theta_t^{\p{\sigma_k, \sigma_l}}-1\big)_{[c][c']}}{m(\sig_k)m(\sig_l)}
        \frac{\p{\Gamma_{c'}}^{\p{l-k+1}}_{t,\cutR^{[a']}_{kl}\p{\bm{\sigma}, \ba}}}{\Theta_{t,[a'][c']}^{\p{\sigma_k, \sigma_l}}},
  \end{equation}
  where $[a]$ and $[a']$ are two auxiliary vertices, and the $\TSP$ graphs $\Gamma_c$ and $ \Gamma_{c'}$ are obtained as follows.
  \begin{enumerate}
    \item Since $\Gamma$ is a tree, removing the edge $\p{[c], [c']}$ results in two connected subtrees: one connected to $[c]$ and the other connected to $[c']$. We denote these subtrees as $\mathcal{T}_c$ and $\mathcal{T}_{c'}$, respectively.
    \item Then, we define $\Gamma_c \coloneqq \mathcal{T}_c \cup \p{[c], [a]} \in \TSP\big(\mathcal{P}_{\cutL^{[a]}_{k\ell}\p{\ba}}\big)$ and $\Gamma_{c'}$ analogously.
  \end{enumerate}
  We refer readers to Figure \ref{recursive-decomposition-figure} for an illustration of this decomposition. 
  \end{definition}
  \begin{figure}[h]
    \centering
    \scalebox{0.9}{
    \begin{tikzpicture}
      \coordinate (ak) at (-2, 2);
      \coordinate (ak-1) at (2, 3);
      \coordinate (al-1) at (-2, -2);
      \coordinate (al) at (2, -3);
      
      \coordinate (b11) at (1, 0);
      \coordinate (b10) at (2, 1);
      \coordinate (b12) at (2, -1);
      
      \coordinate (b21) at (-1, 0);
      \coordinate (b20) at (-2, -1);
      \coordinate (b22) at (-2, 1);
      
      \fill (ak-1) circle (1pt) node[above]{$[a_{k-1}]$};
      \fill (ak) circle (1pt) node[above]{$[a_k]$};
      \fill (al) circle (1pt) node[below]{$[a_l]$};
      \fill (al-1) circle (1pt) node[below]{$[a_{l-1}]$};
      
      \fill (b11) circle (1pt) node[below, xshift=-4pt]{$[c']$};
      \fill (b12) circle (1pt) node[right]{$[c_1]$};
      \fill (b10) circle (1pt) node[right]{$[c_2]$};
      
      \fill (b21) circle (1pt) node[below, xshift=2pt]{$[c]$};
      \fill (b22) circle (1pt) node[left]{$[c_3]$};
      \fill (b20) circle (1pt) node[left]{$[c_4]$};
      
      \draw[dashed, arrows={->[scale=1.5]}, shorten >= 2pt] (ak-1) -- (ak) node[pos=0.5, above]{$\sigma_k$};
      \draw[dashed, arrows={->[scale=1.5]}, shorten >= 2pt] (al-1) -- (al)  node[pos=0.5, below]{$\sigma_l$};
      
      \draw[purple] (b11) -- (b21);
      
      \draw (b10) -- (b11) -- (b12);
      \draw (b22) -- (b21) -- (b20);
      
      \draw[dotted] (b12) -- (al);
      \draw[dotted] (b20) -- (al-1);
      \draw[dotted] (b10) -- (ak-1);
      \draw[dotted] (b22) -- (ak);
      
    \end{tikzpicture}
    \begin{tikzpicture}
      \coordinate (ak) at (-2, 2);
      \coordinate (ak-1) at (2, 3);
      \coordinate (al-1) at (-2, -2);
      \coordinate (al) at (2, -3);
      
      \coordinate (b11) at (1, 0);
      \coordinate (b10) at (2, 1);
      \coordinate (b12) at (2, -1);
      
      \coordinate (b21) at (-1, 0);
      \coordinate (b20) at (-2, -1);
      \coordinate (b22) at (-2, 1);
      
      \coordinate (shift) at (0, -4);
      \coordinate (ak-1) at ($(ak-1) + (shift)$);
      \coordinate (al) at ($(al) + (shift)$);
      \coordinate (b12) at ($(b12) + (shift)$);
      \coordinate (b10) at ($(b10) + (shift)$);
      \coordinate (d') at ($(b21) + (shift)$);
      \coordinate (c') at ($(b11) + (shift)$);
      
      \fill (ak-1) circle (1pt) node[above]{$[a_{k-1}]$};
      \fill (ak) circle (1pt) node[above]{$[a_k]$};
      \fill (al) circle (1pt) node[below]{$[a_l]$};
      \fill (al-1) circle (1pt) node[below]{$[a_{l-1}]$};
      \fill (c') circle (1pt) node[below, xshift=-4pt]{$[c']$};
      \fill (d') circle (1pt) node[left]{$[a']$};
      
      \fill (b11) circle (1pt) node[right]{$[a]$};
      \fill (b12) circle (1pt) node[right]{$[c_1]$};
      \fill (b10) circle (1pt) node[right]{$[c_2]$};
      
      \fill (b21) circle (1pt) node[below, xshift=2pt]{$[c]$};
      \fill (b22) circle (1pt) node[left]{$[c_3]$};
      \fill (b20) circle (1pt) node[left]{$[c_4]$};
      
      \draw[dashed, arrows={->[scale=1.5]}, shorten >= 2pt] (b11) -- (ak) node[pos=0.5, above right]{$\sigma_k$};
      \draw[dashed, arrows={->[scale=1.5]}, shorten >= 2pt] (al-1) -- (b11)  node[pos=0.5, below right]{$\sigma_l$};
      
      \draw[dashed, arrows={->[scale=1.5]}, shorten >= 2pt] (ak-1) -- (d') node[pos=0.5, above left]{$\sigma_k$};
      \draw[dashed, arrows={->[scale=1.5]}, shorten >= 2pt] (d') -- (al)  node[pos=0.5, below left]{$\sigma_l$};
      
      \draw[purple] (b11) -- (b21);
      \draw[purple] (c') -- (d');

      \draw (b10) -- (c') -- (b12);
      \draw (b22) -- (b21) -- (b20);
      
      \draw[dotted] (b12) -- (al);
      \draw[dotted] (b20) -- (al-1);
      \draw[dotted] (b10) -- (ak-1);
      \draw[dotted] (b22) -- (ak);
      
      \node[anchor=east] at (-3.0, -4.0) {$\to$};
    \end{tikzpicture}
    }
    \caption{On the left-hand side is $\Gamma$ with $\protect\p{[c], [c']} = R_k \cap R_l$. The purple edges represent the $\Theta_t$ and $\Theta_t-1$ edges written explicitly in (\ref{recursive-decomposition}). The top and bottom graphs on the right-hand side are $\Gamma_c$ and $\Gamma_{c'}$, respectively.}\label{recursive-decomposition-figure}
  \end{figure}

Given a canonical tree partition, we define its core as the tree subgraph obtained by trimming all of its external and boundary edges. 
\begin{definition}[Core]\label{core-definition}
  Let $\Gamma \in \TSP\p{\mathcal{P}_{\ba}}$ and let $\bfb = \p{[b_1], \ldots, [b_\fm]}$ be its internal indices. Denote by $\bm{d}=([d_1], \ldots, [d_{\mathfrak r}])$ the internal vertices connected to $[a_1], \ldots, [a_\fn]$, where each $[a_i]$ connects to $[d_{\fa(i)}]$ for some function $\fa:\qqq{\fn}\to \qqq{\mathfrak r}$. Then, we define the \textbf{core} of $\Gamma$ by
  \[
    \Sigmagen^{\p{\Gamma}}_{t, \bm{\sigma}, \bm{d}}
      \coloneqq \sum_{i=1}^\fm \mathbf 1_{[b_{i}] \not\in \bm{d}} \sum_{[b_{i}] \in \Zn} \frac{\Gamma^{\p{\fn}}_{t,\bm{\sigma},\ba}}{\prod_{i=1}^\fn \Theta_{t,[a_i][d_{\fa(i)}]}^{(\sigma_i,\sig_{i+1})}},
  \]
  where we have slightly abused notation: in the indicator function $\mathbf 1$, $[b_{i}]\in \bm{d}$ means that $[b_i]\notin\{[d_1],\ldots, [d_{\mathfrak r}]\}$. 
  Simply speaking, the core is constructed graphically by removing all the external edges (i.e., those corresponding to \smash{$\Theta_t$}) from \smash{$\Gamma^{\p{\fn}}_{t,\bm{\sigma},\ba}$}, and then summing over all the internal vertices, excluding the vertices $[d_1], \ldots, [d_{\mathfrak r}]$.
\end{definition}

Next, we will present a corresponding decomposition for the cores of canonical tree partitions. This decomposition is similar to \Cref{recursive-decomposition-lemma}, but we must also keep track of all pairs of non-trivial neighbors. Informally, when ``splitting" each graph through a ``long" internal edge $R_k \cap R_l$ between non-trivial neighbors, every other non-trivial neighbor pair will belong to either the ``left" graph or the ``right" graph, as these pairs are not permitted to cross. Hereafter, we refer to a $\Theta^{(\sig,\sig')}$-edge as a \emph{long edge} if $\sig\ne \sig'$; otherwise, it is called a \emph{short edge}. We adopt this terminology because, according to \eqref{prop:ThfadC} and \eqref{prop:ThfadC_short}, the typical decay scale of a \smash{$\Theta^{(\sig,\sig)}$} edge is \smash{$\hell_t$}, which is generally smaller than the decay scale $\ell_t$ associated with a \smash{$\Theta^{(+,-)}$}-edge. (However, they can be of the same order when $\sqrt{\kappa}\lesssim|1-t|$).

\begin{definition} \label{k-sigma-pi}
Let $\ba = \p{[a_1], \ldots, [a_\fn]}$ and $\Gamma \in \TSP\p{\mathcal{P}_{\ba}}$. 
Given any $\bm{\sigma} \in \set{+, -}^{\fn}$, define the subset of long internal edges (i.e., the boundary between two nontrivial neighbors of different charges) as 
\[
{\cal F}_{\text{long}}(\Gamma, \boldsymbol{\sigma}) := \left\{\{k,l\} \in \Z^{\mathrm{off}}_\fn: R_k\cap R_l\neq \emptyset ,\ \sigma_k\ne \sigma_l \right\},
\]
where we define the subset 
\[
\Z^{\mathrm{off}}_\fn \coloneqq \set{\set{k, \ell} \suchthat 1 \leq k < \ell \leq \fn,\, k - \ell \mod \fn \notin \{1,-1\}}.
\]
Given any subset \(\pi \subset \mathbb{Z}_\fn^{\mathrm{off}}\), we denote by
\[
\TSP(\mathcal{P}_{\ba}, \boldsymbol{\sigma}, \pi) := \big\{\Gamma \in \TSP(\mathcal{P}_{\ba}) : {\cal F}_{\text{long}}(\Gamma, \boldsymbol{\sigma}) = \pi\big\}
\]
the subset of \(\TSP(\mathcal{P}_{\ba})\) with \(\pi\) labeling the pairs of all non-trivial neighbors (note $\pi$ can be $\emptyset$). Then, we define $\cK^{\p{\pi}}$ and $\Sigma^{(\pi)}$ as
  \be\label{eq:defKpi}
    \cK^{\p{\pi}}_{t,\bm{\sigma},\ba}
      \coloneqq W^{-d(\fn-1)}\sum_{\Gamma \in \TSP\p{\mathcal{P}_{\ba}, \bm{\sigma}, \pi}} \Gamma^{\p{\fn}}_{t,\bm{\sigma},\ba} \, .
  \ee
  We analogously define the \emph{core with respect to $\pi$} as
  \[
    \Sigma^{\p{\pi}}_{t,\bm{\sigma},\bm{d}}
      \coloneqq \sum_{\Gamma \in \TSP\p{\mathcal{P}_{\ba}, \bm{\sigma}, \pi}} \Sigmagen^{\p{\Gamma}}_{t, \bm{\sigma}, \bm{d}} \, ,
  \]
  where $\Sigmagen^{\p{\Gamma}}_{t, \bm{\sigma}, \bm{d}}$ is defined in Definition \ref{core-definition}. When $\pi = \emptyset$, we call  $\Sigma^{\p{\emptyset}}$ a \textbf{molecule}.
\end{definition}

\begin{lemma}[Core decomposition, Lemma 4.25 of \cite{RBSO1D}]\label{core-decomposition}
Without loss of generality, suppose $\set{1, r} \in \pi$ and $\p{[c], [c']} = R_1 \cap R_r$. Then, we have the decomposition
  \begin{equation}\label{recursive-decomposition-Sigma}
\Sigma^{\p{\pi}}_{t,\bm{\sigma},\bm{d}} = \sum_{[c], [c']} \Sigma^{(\pi^{[c]})}_{t,\bm{\sigma}^{[c]},\bm{d}^{[c]}} 
\big(\Theta_t^{\p{+,-}}-1\big)_{[c][c']}
\Sigma^{(\pi^{[c']})}_{t,\bm{\sigma}^{[c']},\bm{d}^{[c']}}\, ,
  \end{equation}
  where $\pi_0 \coloneqq \pi \setminus \set{\set{1, r}}$, $\pi^{[c']} \coloneqq \set{\set{k, \ell} \in \pi_0 \mid 1 \leq k < \ell \leq r}$, and 
  \begin{align*}
 &\pi^{[c]} \coloneqq \set{\set{k - r + 1, \ell - r + 1} \mid \set{k, \ell} \in \pi_0,\ r \leq k < \ell \leq \fn}.\end{align*}
 Moreover, \smash{$\bm{\sigma}^{[c]}$ and $\bm{\sigma}^{[c']}$} are defined through
  \begin{align*}
    \big(\bm{\sigma}^{[c]}, \ba^{[c]}\big)
        \coloneqq \cutL^{[c]}_{1r}\p{\bm{\sigma}, \ba}, \quad
    &\big(\bm{\sigma}^{[c']}, \ba^{[c']}\big)
        \coloneqq \cutR^{[c']}_{1r}\p{\bm{\sigma}, \ba},
  \end{align*}
and $\bm{d}^{[c]}$ and $\bm{d}^{[c']}$ denote the subsets of internal vertices connected to $\ba^{[c]}$ and $\ba^{[c']}$, respectively. Note $\pi^{[c]}$ and $\pi^{[c']}$ are defined so that we have the partition
  \(
    \pi = (\pi^{[c]}+r-1) \cup \pi^{[c']} \cup \set{\set{1, r}}
  \), and $[c]$ (resp.~$[c']$) belongs to $\bm{d}^{[c]}$ (resp.~$\bm{d}^{[c']}$) as a vertex connected to $[a]$ (resp.~$[a']$).
\end{lemma}

Using the estimate (\ref{prop:ThfadC_short}), we can derive the following bound on pure loops where all charges are identical.

\begin{claim}[Pure loop estimate]\label{claim:pure}
  Let $\bm{\sigma} \in \set{+, -}^\fn$ be such that $\sigma_1 = \sigma_2 = \cdots = \sigma_\fn$. Then, we have
  \[
    \cal{K}^{(\fn)}_{t,\bm{\sigma},\ba} \prec \big(W^d\hell_t^d\omega_t\big)^{-(\fn-1)}\omega_t^{-(\fn-2)}.
  \]    
\end{claim}
\begin{proof}
It suffices to prove that for any $\Gamma\in \TSP(\cal P_{\ba})$, we have 
\[
\Gamma^{(\fn)}_{t,\bm{\sigma},\ba} \prec \big(\hell_t^d\omega_t\big)^{-(\fn-1)}\omega_t^{-(\fn-2)}.
\]
This in turn follows from the following estimate on the core of $\Gamma:$
\be\label{eq:est-core}
\sum_{[d_2],\ldots,[d_{\mathfrak r}]} \left|\Sigmagen^{\p{\Gamma}}_{t, \bm{\sigma}, \bm{d}}\right| \prec \omega_t^{-(\fn-3)}.
\ee
In fact, suppose \eqref{eq:est-core} holds, and assume that $[d_1]$ is connected with $[a_1]$ without loss of generality. Then, using \eqref{prop:ThfadC_short}, we obtain that for any constants $\tau, D>0$,
\begin{align*}
\Gamma^{(\fn)}_{t,\bm{\sigma},\ba} \prec \frac{\omega_t^{-(\fn-3)}}{(\hell_t^d\omega_t)^{\fn-1}} \sum_{|[d_1]-[a_1]|\le W^\tau\hell_t} \frac{1}{\omega_t \hell_t^d}+ W^{-D}\prec \frac{W^{d\tau}}{(\hell_t^d\omega_t)^{\fn-1}\omega_t^{\fn-2}}.
\end{align*}

To show the estimate \eqref{eq:est-core}, we note from \eqref{prop:ThfadC_short} that 
every edge in \smash{$\Sigmagen^{\p{\Gamma}}_{t, \bm{\sigma}, \bm{d}}$} contributes a factor of \smash{$(\hell_t^d\omega_t)^{-1}$}, and all vertices (including both external vertices in $\bm{d}$ and internal ones) are within a neighborhood of distance \smash{$\le W^\tau\hell_t$} from $[d_1]$, up to a small error $W^{-D}$. 
Let $k_1$ denote the total number of vertices and $k_2$ the total number of edges in the core \smash{$\Sigmagen^{\p{\Gamma}}_{t, \bm{\sigma}, \bm{d}}$}. By the tree structure of $\Gamma$, we have $k_1=k_2+1\le \fn-2$\footnote{To see the inequality, we first observe that $k_1=\fn-2$ in the case $\fn=3$. Next, we note that a canonical tree partition \smash{$\Gamma\in \TSP(\cal P_{([a_1],\ldots,[a_{\fn}])})$} can be obtained by adding a new edge to a canonical tree partition \smash{$\Gamma'\in \TSP(\cal P_{([a_1],\ldots,[a_{\fn-1}])})$}, connecting the newly added vertex $[a_{\fn}]$ with either a vertex or a midpoint of an edge in $\Gamma'$ (see \Cref{Def:slice} and \Cref{slice-decomposition}). Each such process increases the number of external vertices by 1 from $\Gamma'$ to $\Gamma$, while the number of internal vertices either remains unchanged or increases by 1.}.
Thus, we can bound the left-hand side (LHS) of \eqref{eq:est-core} by  
$$\sum_{[d_2],\ldots,[d_{\mathfrak r}]} \left|\Sigmagen^{\p{\Gamma}}_{t, \bm{\sigma}, \bm{d}}\right| \prec (\hell_t^d\omega_t)^{-k_2}\big(W^\tau\hell_t\big)^{d(k_1-1)} \lesssim W^{d(k_1-1)\tau} \omega_t^{-k_2} \lesssim W^{d(k_1-1)\tau} \omega_t^{-(\fn-3)} .$$
This concludes the proof since $\tau$ is arbitrary.
\end{proof}

Next, by repeatedly applying Ward's identity \eqref{WI_calK}, we obtain the following bound from the pure loop estimate.
\begin{lemma}\label{K-ward-bound}
For any $\bm{\sigma} \in \set{+, -}^\fn$, we have that 
  \be\label{eq:sum_of_K}
    \sum_{\br{a_2},\ldots,\br{a_\fn}} \mathcal{K}^{(\fn)}_{t,\bm{\sigma},\ba}= \sum_\pi \sum_{\br{a_2},\ldots,\br{a_\fn}} \mathcal{K}^{(\pi)}_{t,\bm{\sigma},\ba}
    \prec \sqrt{\kappa}\big(W^d\eta_t\big)^{-\fn+1}.
  \ee
\end{lemma}
\begin{proof}
We apply Ward's identity \eqref{WI_calK} repeatedly. In each step, we gain a factor of $(W^d\eta_t)^{-1}$ while reducing the number of external vertices by 1. We continue this process until each resulting loop is a pure loop. 
If a pure loop contains only one vertex $[a_1]$, the last application of Ward's identity provides an additional factor of $\sqrt{\kappa}$, alongside the $(W^d\eta_t)^{-1}$ factor:
$$\frac{1}{2\ii}\left(\cK^{(1)}_{t,+,[a]}-\cK^{(1)}_{t,-,[a]}\right)=\im m\asymp \sqrt{\kappa},$$  
where we used \eqref{eq:1loop} in the first step and \eqref{square_root_density} for $m=m(\sE)$ in the second step. This leads to \eqref{eq:sum_of_K}.

If a pure loop contains $k\ge 2$ external vertices, then applying \Cref{claim:pure} and \eqref{prop:ThfadC_short} yields a bound 
$$\OO_\prec \left[\big(W^d\hell_t^d\omega_t\big)^{-(k-1)}\omega_t^{-(k-2)}\cdot (W^\tau\hell_t)^{d(k-1)}\right]=\OO_\prec \left[W^{d(k-1)\tau}\big(W^d\omega_t\big)^{-(k-1)}\omega_t^{-(k-2)}\right]$$ 
on its summation over the $(k-1)$ vertices that are not $[a_1]$. Combined with the \smash{$(W^d\eta_t)^{-(\fn-k)}$} factor obtained from the previous $(\fn-k)$ applications of Ward's identity, we have the bound
\begin{align*}
   W^{d(k-1)\tau} \cdot \OO_\prec \bigg[\frac{1}{(W^d\eta_t)^{\fn-k}}\frac{1}{(W^d\omega_t)^{k-1}\omega_t^{k-2}}\bigg]&=  W^{d(k-1)\tau} \cdot \OO_\prec \left[\frac{1}{(W^d\eta_t)^{\fn-1}} \frac{\eta_t}{\omega_t} \right]\\
   &= W^{d(k-1)\tau} \cdot \OO_\prec \left[\sqrt{\kappa}(W^d\eta_t)^{-(\fn-1)} \right],  
\end{align*}
where we used $\eta_t\lesssim \sqrt{\kappa}\omega_t \le \omega_t^2$ by \eqref{eq:kappat} in the derivation. This concludes the proof since $\tau$ is arbitrary.
\end{proof}

We now state and prove a \emph{sum-zero property for $\Sigma^{\p{\pi}}$}, which will be the main tool for the proof of \Cref{ML:Kbound}.

\begin{lemma}[Sum zero property]\label{sum-zero}
Let $\fn \in 2\N$ with $\fn \geq 4$, and consider an alternating loop $\bm{\sigma}$ such that $\sig_k\ne \sig_{k+1}$ for all $k\in\qqq{\fn}$, where we adopt the convention that $\sig_{\fn+1}=\sig_1$.
Then, the single-molecule graph with $\pi=\emptyset$ (recall \Cref{k-sigma-pi}) satisfies the \textbf{sum-zero property},
  \begin{equation}\label{sum-zero-property}
    \frac{1}{n^d} \sum_{\bm{d}} \Sigma^{\p{\emptyset}}_{t, \bm{\sigma}, \bm{d}} \prec {\frac{1-t}{\omega_t^{\fn-2}}},
  \end{equation}
where recall that $\bm{d}=([d_1], \ldots, [d_{\mathfrak r}])$ denote the internal vertices connected to the external vertices in $\ba$. 
\end{lemma}
\begin{proof}
When $1-t\ge \sqrt{\kappa}$, we have $1-t\asymp \omega_t$, and a similar proof to that for \eqref{eq:est-core} leads to \eqref{sum-zero-property}. It remains to consider the case $1-t<\sqrt{\kappa}$. In this case, we will prove the following slightly stronger statement for all $\fn \in 2\N$ and $\pi\subset \Z^{\mathrm{off}}_\fn$ by induction: 
  \begin{equation}\label{sum-zero-inductive-hypothesis}
    \frac{1}{n^d} \sum_{\bm{d}} \Sigma^{\p{\pi}}_{t,\bm{\sigma},\bm{d}}
      \prec {{\eta_t}\kappa^{-(\fn-1)/2}} \, .
  \end{equation}  
If only the scenario $\pi = \emptyset$ is possible, then by \Cref{K-ward-bound}, we have that  
\be\label{sumzero_derv}\frac{1}{(1-t)^\fn} \sum_{\bm{d}} \Sigma^{\p{\emptyset}}_{t, \bm{\sigma}, \bm{d}} = W^{d(\fn-1)}\sum_{\ba} \mathcal{K}^{(\emptyset)}_{t,\bm{\sigma},\ba}  \prec {n^d\sqrt{\kappa}\eta_t^{-\fn+1}},\ee
where, in the first step, we used the fact that for any $[y]\in \Zn$, \be\label{eq:sumTheta}
\sum_{[x]}\Theta_t^{(+,-)}([x],[y])=(1-t)^{-1}. 
\ee
Together with the estimate $\eta_t\asymp (1-t)\sqrt{\kappa}$ given by \eqref{eq:kappat}, the equation \eqref{sumzero_derv} implies \eqref{sum-zero-inductive-hypothesis}. As a special case, this concludes the base case with $\fn = 4$ where only $\pi = \emptyset$ is possible. For the general case with $\fn > 4$, suppose we have proved \eqref{sum-zero-inductive-hypothesis} for all alternating loops of length $k<\fn$ and $\pi\subset \Z^{\mathrm{off}}_k$. 
  
We first consider the $\pi\ne\emptyset$ case. Without loss of generality, suppose $\set{1, r} \in \pi$, and let $\p{[c], [c']} = R_1 \cap R_r$. Applying \Cref{core-decomposition}, we can write that
  \[
    \sum_{\bm{d}} \Sigma^{\p{\pi}}_{t,\bm{\sigma},\bm{d}}
      = \sum_{[c], [c']} \sum_{\bm{d}^{[c]} \setminus \set{[c]}}\Sigma^{(\pi^{[c]})}_{t,\bm{\sigma}^{[c]},\bm{d}^{[c]}} 
\cdot \big(\Theta_t^{\p{\sigma_1, \sigma_r}}-1\big)_{[c][c']}\cdot \sum_{\bm{d}^{[c']} \setminus \set{[c']}}
\Sigma^{(\pi^{[c']})}_{t,\bm{\sigma}^{[c']},\bm{d}^{[c']}} ,
  \]
where $\bm{d}^{[c]} \setminus \set{[c]}$ (resp.~$\bm{d}^{[c']} \setminus \set{[c']}$) denotes the vector of vertices obtained by removing $[c]$ (resp.~$[c']$) from $\bm{d}^{[c]} $ (resp.~$\bm{d}^{[c']}$). 
Since $\bm{\sigma}$ is alternating, $r$ must be be even. Then, both $\bm{\sigma}^{[c]}$ and $\bm{\sigma}^{[c']}$ are alternating loops of lengths $\fn - r + 2 \in 2\N$ and $r \in 2\N$, respectively. Thus, we can apply the induction hypothesis and derive that 
  \begin{align*}
    \frac{1}{n^d} \sum_{\bm{d}} \Sigma^{\p{\pi}}_{t,\bm{\sigma},\bm{d}}
      &= \bigg(\frac{1}{n^d} \sum_{\bm{d}^{[c]}}\Sigma^{(\pi^{[c]})}_{t,\bm{\sigma}^{[c]},\bm{d}^{[c]}} \bigg)
        \cdot \frac{1}{n^d} \sum_{[c], [c']}  \big(\Theta_t^{\p{+,-}}-1\big)_{[c][c']}\cdot   
        \bigg(\frac{1}{n^d} \sum_{\bm{d}^{[c']}}\Sigma^{(\pi^{[c']})}_{t,\bm{\sigma}^{[c']},\bm{d}^{[c']}}  \bigg) \\
      &\prec {\frac{\eta_t}{(\sqrt{\kappa})^{\fn-r+1}} \cdot \frac{t}{1 - t} \cdot \frac{\eta_t}{(\sqrt{\kappa})^{r-1}}} \lesssim {\eta_t \kappa^{-(\fn-1)/2}},
  \end{align*}
  where in the first step, we used that the first and third factors on the right-hand side (RHS) do not depend on $[c]$ or $[c']$ due to the block translation invariance,  in the second step, we applied \eqref{eq:sumTheta}, and in the last step, we used that $\eta_t\asymp(1-t)\sqrt{\kappa}$.

  To deal with the $\pi = \emptyset$ case, we adopt a similar argument as in \eqref{sumzero_derv} and decompose \smash{$\mathcal{K}^{(\fn)}_{t,\bm{\sigma},\ba}$} as 
  \begin{align}
    \frac{W^{d(\fn-1)}}{N} \sum_{\ba} \mathcal{K}^{(\fn)}_{t,\bm{\sigma},\ba}
      &= \p{\sum_{\ba} \prod_{i=1}^\fn \Theta_{t,[a_i][d_{\fa(i)}]}^{\p{+,-}}} \bigg(\frac{1}{n^d} \sum_{\bm{d}}\Sigma^{\p{\emptyset}}_{t,\bm{\sigma},\bm{d}}
        + \frac{1}{n^d}\sum_{\pi \neq \emptyset}  \sum_{\bm{d}}\Sigma^{\p{\pi}}_{t,\bm{\sigma},\bm{d}}\bigg) \nonumber\\
      &= \p{1 - t}^{-\fn} \bigg(\frac{1}{n^d} \sum_{\bm{d}}\Sigma^{\p{\emptyset}}_{t,\bm{\sigma},\bm{d}}
        + \frac{1}{n^d}\sum_{\pi \neq \emptyset}  \sum_{\bm{d}}\Sigma^{\p{\pi}}_{t,\bm{\sigma},\bm{d}}\bigg).\label{eq:piempty}
  \end{align}
  By \Cref{K-ward-bound}, we have that 
  $$\frac{W^{d(\fn-1)}}{n^d}\sum_{\ba} \mathcal{K}^{(\fn)}_{t,\bm{\sigma},\ba}   \prec \frac{W^{d(\fn-1)}}{n^d} \cdot n^d \sqrt{\kappa}\big(W^d\eta_t\big)^{-\fn+1}=\sqrt{\kappa}\eta_t^{-\fn+1}.$$
  Plugging this bound and the estimate (\ref{sum-zero-inductive-hypothesis}) established for the $\pi\ne \emptyset$ case into \eqref{eq:piempty}, we conclude that 
  \[
    \frac{1}{n^d} \sum_{\bm{d}}\Sigma^{\p{\emptyset}}_{t,\bm{\sigma},\bm{d}}
      = -\frac{1}{n^d} \sum_{\pi \neq \emptyset} \sum_{\bm{d}}\Sigma^{\p{\pi}}_{t,\bm{\sigma},\bm{d}} + \p{1 - t}^\fn \OO_\prec \p{\sqrt{\kappa}\eta_t^{-\fn+1}}
     \prec \eta_t\kappa^{-(\fn-1)/2}.
  \]
  This completes the induction step and hence concludes the proof of \Cref{sum-zero}.
\end{proof}


\begin{proof}[\bf Proof of Lemma \ref{ML:Kbound}]
Using the definitions \eqref{Kn2sol} and \eqref{Kn3sol} for $2$-$\cK$ and $3$-$\cK$ loops (noting that an alternating loop is not possible when $\fn=3$), we can easily show that \eqref{eq:bcal_k} holds for $\fn\in\{2,3\}$ by applying the estimates \eqref{prop:ThfadC} and \eqref{prop:ThfadC_short}. 

For $\fn\ge 4$, it suffices to establish the following estimate for any $\pi $ for any \(\pi \subset \mathbb{Z}_\fn^{\mathrm{off}}\): 
  \begin{equation}\label{Kbound-inductive-hypothesis}
\sum_{\bm{d}\setminus\{[d_{\fa(1)}]\}} \p{\prod_{i=2}^\fn \Theta^{\p{\sigma_i, \sigma_{i+1}}}_{t,\br{a_i}[d_{\fa(i)}]}}\Sigma^{\p{\pi}}_{t, \bm{\sigma}, \bm{d}}
      \prec \frac{(\ell_t^d|1-t|\omega_t)^{-(\fn-2)}}{\min_{2 \leq i \leq \fn} \langle\br{a_i} - [d_{\fa(1)}]\rangle^{d}} \, ,
  \end{equation}
where we recall the notation $\fa$ defined in \Cref{core-definition}. Without loss of generality, we can assume here that either $\bsig$ is a pure loop or $\sig_1\ne \sig_2$ (if not, we can relabel the vertices to achieve this). Hence, if $\bsig$ is not an alternating loop, there must exist $j\in\qqq{2,\fn}$ such that $\sig_j=\sig_{j+1}$. 
Now, summing the estimate \eqref{Kbound-inductive-hypothesis} over $[d_1]$ and applying \eqref{prop:ThfadC} to \smash{$\Theta_{t,[a_1][d_{\fa(1)}]}^{(\sig_1,\sig_2)}$} implies that 
$$ \sum_{\Gamma \in \TSP\p{\mathcal{P}_{\ba}, \bm{\sigma}, \pi}} \Gamma^{\p{\fn}}_{t,\bm{\sigma},\ba} \prec \big({\ell_t^d|1-t|}\big)^{-\fn+1}\omega_t^{-\fn+2}.$$
Then, using \eqref{eq:defKpi} and summing over $\pi$, we can conclude \eqref{eq:bcal_k}. 

We will prove \eqref{Kbound-inductive-hypothesis} by induction in $\fn$.  
Suppose that \eqref{Kbound-inductive-hypothesis} holds for all $\cK$-loops of length $k<\fn$ and $\pi\subset \Z^{\mathrm{off}}_k$. 
We begin with the case $\pi = \emptyset$ and further divide the problem into two cases based on $\bm{\sigma}$: (i) $\bsig$ is an alternating loop; (ii) there exists $j\in\qqq{2,\fn}$ such that $\sig_j=\sig_{j+1}$. 
In case (ii), 
applying \eqref{prop:ThfadC} and \eqref{prop:ThfadC_short}, we obtain that for any constants $\tau,D>0$,
  \begin{align}
    \prod_{i=2}^\fn \Theta_{t,\br{a_i}[d_{\fa(i)}]}^{\p{\sigma_i, \sigma_{i+1}}} \prec \big(\ell_t^d|1-t|\big)^{-\fn+2}\big(\hell_t^d\omega_t\big)^{-1} \mathbf 1\big(|[a_j]-[d_{\fa(j)}]|\le W^\tau \hell_t\big) + W^{-D} .\nonumber 
  \end{align}
 Using the fact that the core \smash{$\Sigma^{\p{\emptyset}}_{t, \bm{\sigma}, \bm{d}}$} consists of short edges only and the estimate \eqref{prop:ThfadC_short}, we get that for any constants $\tau,D>0$,  
\begin{align*}
   \Sigma^{\p{\emptyset}}_{t, \bm{\sigma}, \bm{d}}  \prec \Big|\Sigma^{\p{\emptyset}}_{t, \bm{\sigma}, \bm{d}}\Big| \mathbf 1\left(\big|[d_{\fa(j)}]-[d_{\fa(1)}]\big|\le W^\tau \hell_t\right)+ W^{-D}.
\end{align*}
Combining the above two estimates, we obtain that 
  \begin{align*}
    &\sum_{\bm{d}\setminus\{[d_{\fa(1)}]\}}  \p{\prod_{i=2}^\fn \Theta^{\p{\sigma_i, \sigma_{i+1}}}_{t,\br{a_i}[d_{\fa(i)}]}}\Sigma^{\p{\emptyset}}_{t, \bm{\sigma}, \bm{d}}\\
    &\prec \frac{1}{(\ell_t^d|1-t|)^{\fn-2}} \frac{\mathbf 1\big(|[a_j]-[d_{\fa(1)}]|\le 2W^\tau \hell_t\big)}{\hell_t^d\omega_t}  \sum_{\bm{d}\setminus\{[d_{\fa(1)}]\}}  \abs{\Sigma^{\p{\emptyset}}_{t, \bm{\sigma}, \bm{d}}} + W^{-D}   \nonumber\\
    &\prec W^{d\tau}(\ell_t^d|1-t|\omega_t)^{-(\fn-2)} \avg{[a_j]-[d_{\fa(1)}]|}^{-d},
  \end{align*}
 where we used the estimate \eqref{eq:est-core} in the second step. This concludes \eqref{Kbound-inductive-hypothesis} in case (ii) since $\tau$ is arbitrary. 
  
  
It remains to consider the most challenging case (i), to deal with which we will need to use the sum zero property established in \Cref{sum-zero}. For brevity, we introduce the following notations: 
  \[
    [s_{\fa(i)}] \coloneqq [d_{\fa(i)}] - [d_{\fa(1)}],
    \quad f\p{\br{a_i}, [s_{\fa(i)}] } \coloneqq \Theta^{\p{\sigma_i, \sigma_{i+1}}}_{t,\br{a_i}([d_{\fa(1)}]+[s_{\fa(i)}])}=\Theta^{\p{+,-}}_{t,\br{a_i}([d_{\fa(1)}]+[s_{\fa(i)}])}.
  \]
  Then, the LHS of (\ref{Kbound-inductive-hypothesis}) can be written as
  \[
    \sum_{\bm{d}\setminus\{[d_{\fa(1)}]\}} \p{\prod_{i=2}^\fn \Theta^{\p{+,-}}_{t,\br{a_i}[d_{\fa(i)}]}}\Sigma^{\p{\pi}}_{t, \bm{\sigma}, \bm{d}}
      = \sum_{\bm{s}\setminus\{[s_{\fa(1)}]\}} \p{\prod_{i=2}^\fn f\p{\br{a_i}, [s_{\fa(i)}]}} \Sigma^{\p{\emptyset}}_{t, \bm{\sigma}, \bm{d}},
  \]
  where $\bm{s}$ denotes $\bm{s}:=([s_{\fa(1)}],\ldots, [s_{\fa(\mathfrak r)}]). $ 
  We further decompose
  \[
    f\p{\br{a_i}, [s_{\fa(i)}]}
      = f_0\p{\br{a_i}, [s_{\fa(i)}]}
        + f_1\p{\br{a_i}, [s_{\fa(i)}]}
        + f_2\p{\br{a_i}, [s_{\fa(i)}]},
  \]
  where we have defined
  \begin{align*}
    f_0\p{\br{a_i}, [s_{\fa(i)}]}
      &\coloneqq f\p{\br{a_i}, \br{0}}, \ \
    f_1\p{\br{a_i}, [s_{\fa(i)}]}
      \coloneqq \frac{1}{2} \p{f\p{\br{a_i}, [s_{\fa(i)}]} - f\p{\br{a_i}, -[s_{\fa(i)}]}}, \\
    f_2\p{\br{a_i}, [s_{\fa(i)}]}
      &\coloneqq \frac{1}{2} \p{f\p{\br{a_i}, [s_{\fa(i)}]} + f\p{\br{a_i}, -[s_{\fa(i)}]}}
        - f\p{\br{a_i}, \br{0}} .
  \end{align*}
  By (\ref{prop:ThfadC}), (\ref{prop:BD1}), and (\ref{prop:BD2}), they satisfy the following estimates:
  \be\label{f012}
  \begin{aligned}
    f\p{\br{a_i}, [0]}
      \prec &\big(\ell_t^d |1-t|\big)^{-1},\quad
    f_1\p{\br{a_i}, [s_{\fa(i)}]}
      \prec {|[s_{\fa(i)}]|}\big/{\avg{\br{a_i} - [d_{\fa(1)}]}^{d-1}}, \\
    &f_2\p{\br{a_i}, [s_{\fa(i)}]}
      \prec {|[s_{\fa(i)}]|^2}\big/{\avg{\br{a_i} - [d_{\fa(1)}]}^{d}}.
  \end{aligned}\ee
  Then, the LHS of (\ref{Kbound-inductive-hypothesis}) can be decomposed as
  \[
      \sum_{ \xi_2, \ldots, \xi_\fn \in \{0,1,2\}} \sum_{\bm{s}\setminus\{[s_{\fa(1)}]\}} \p{\prod_{i=2}^\fn f_{\xi_i}\p{\br{a_i}, [s_{\fa(i)}]}} \Sigma^{\p{\emptyset}}_{t, \bm{\sigma}, \bm{d}}.
  \]
  We will prove (\ref{Kbound-inductive-hypothesis}) for each fixed sequence $\p{\xi_2, \ldots, \xi_\fn}\in \{0,1,2\}^\fn$. We divide the proof into the following cases. 
    
    \medskip
\noindent(1)   If $\xi_i = 2$ for at least one $i$, then we get that for any constants $\tau,D>0$,
      \begin{align*}
        &\sum_{\bm{s}\setminus\{[s_{\fa(1)}]\}} \p{\prod_{i=2}^\fn f_{\xi_i}\p{\br{a_i}, [s_{\fa(i)}]}} \Sigma^{\p{\emptyset}}_{t, \bm{\sigma}, \bm{d}}\\
        &\prec  {\big({\ell_t^d |1-t|}\big)^{-\fn+2}} \cdot (W^\tau\hell_t)^2 \avg{\br{a_i} - [d_{\fa(1)}]}^{-d} \cdot \sum_{\bm{d}\setminus\{[d_{\fa(1)}]\}} \abs{\Sigma^{\p{\emptyset}}_{t, \bm{\sigma}, \bm{d}}} + W^{-D}\\
         & \prec {\big({\ell_t^d |1-t|}\big)^{-\fn+2}} \cdot (W^\tau\hell_t)^2  \avg{\br{a_i} - [d_{\fa(1)}]}^{-d} \cdot \omega_t^{-\fn+3} \\
         &\lesssim W^{2\tau}{\big({\ell_t^d |1-t|\omega_t}\big)^{-\fn+2}}   \avg{\br{a_i} - [d_{\fa(1)}]}^{-d}.
      \end{align*}
      In the first step, we applied \eqref{f012}; in the second step, we used the estimate \eqref{eq:est-core} along with the fact that the sum over the region with \smash{$|[s_i]|\ge W^\tau\hell_t $} is at most $W^{-D}$; in the third step, we used that $\hell_t^2\omega_t\lesssim 1$.  
      
\medskip
\noindent(2) Suppose $\xi_i = 1$ for at least two indices $i$. Without loss of generality, suppose $\xi_2=\xi_3=1$. Similar to case (1), applying \eqref{f012} and \eqref{eq:est-core}, we find that for any constants $\tau,D>0$,
      \begin{align}\label{eq:twoone}
    & \sum_{\bm{s}\setminus\{[s_{\fa(1)}]\}} \p{\prod_{i=2}^\fn f_{\xi_i}\p{\br{a_i}, [s_{\fa(i)}]}} \Sigma^{\p{\emptyset}}_{t, \bm{\sigma}, \bm{d}} \nonumber\\
     &\prec  \frac{\p{\ell_t^d |1-t| \omega_t}^{-\fn+3}(W^\tau\hell_t)^2 \mathbf 1(|[a_2]-[d_{\fa(1)}]|\le (\log W)^2\ell_t)}{\avg{[a_2]-[d_{\fa(1)}]}^{d-1}\avg{[a_3]-[d_{\fa(1)}]}^{d-1}} + W^{-D} ,
      \end{align}
      where we also used the exponential decay of the $\Theta_t$-propagators beyond the scale $\ell_t$ due to \eqref{prop:ThfadC}. By using $ \hell_t^2\omega_t\lesssim 1$ and $\ell_t^2|1-t|\lesssim 1$, we can derive \eqref{Kbound-inductive-hypothesis} from \eqref{eq:twoone} for both dimensions $d=1$ and $d=2$. 
      
\medskip
\noindent(3) If $\xi_k=1$ for some $2\le k\le \fn$ and all other $\xi_i$'s are equal to zero, then using $f_1([a_k],[s_{\fa(k)}])=-f_1([a_k],-[s_{\fa(k)}])$, we see that the corresponding term vanishes.

\medskip
\noindent(4) Finally, if $\xi_2 = \cdots = \xi_\fn = 0$, then using \eqref{f012}, the sum zero property \eqref{sum-zero-property}, and the exponential decay of the $\Theta_t$-propagators beyond the scale $\ell_t$, we get that for any constant $D>0$, 
      \begin{align*}
         &\sum_{\bm{s}\setminus\{[s_{\fa(1)}]\}} \p{\prod_{i=2}^\fn f_{\xi_i}\p{\br{a_i}, [s_{\fa(i)}]}} \Sigma^{\p{\emptyset}}_{t, \bm{\sigma}, \bm{d}}
          = \p{\prod_{i=2}^\fn f\p{\br{a_i}, \br{0}}} \sum_{\bm{d}\setminus\{[d_{\fa(1)}]\}} \Sigma^{\p{\emptyset}}_{t, \bm{\sigma}, \bm{d}} \\
          &\prec |1-t|\omega_t^{-\fn+2}\big({\ell_t^d |1-t|}\big)^{-\fn+1}\mathbf 1\p{|[a_2]-[d_{\fa(1)}]|\le (\log W)^2\ell_t} +W^{-D}\\
          &\prec  \big({\ell_t^d |1-t| \omega_t}\big)^{-\fn+2}\avg{[a_2]-[d_{\fa(1)}]}^{-d}.
      \end{align*}
  
Combining the above four cases, we see that (\ref{Kbound-inductive-hypothesis}) holds when $\pi = \emptyset$, which also establishes the base case $\fn=4$ for the induction argument.   
  For a general $\fn\ge 4$ and $\pi\ne \emptyset$, we apply \Cref{core-decomposition} and use the induction hypothesis. Specifically, suppose we have the decomposition \eqref{recursive-decomposition-Sigma}. By the induction hypothesis \eqref{Kbound-inductive-hypothesis}, we have 
  \begin{align*}
      \sum_{\bm{d}^{[c]}\setminus \{[c]\}} \p{\prod_{i=r}^{\fn} \Theta^{\p{\sigma_i, \sigma_{i+1}}}_{t,\br{a_i}[d_{\fa(i)}]}}\Sigma^{(\pi^{[c]})}_{t,\bm{\sigma}^{[c]},\bm{d}^{[c]}}
      &\prec \frac{\p{\ell_t^d|1-t|\omega_t}^{-\fn+r}}{\big(\min_{r \leq i \leq \fn} \avg{\br{a_i} - \br{c}}\big)^{d}} \, ,\\
      \sum_{\bm{d}^{[c']}\setminus \{[d_{\fa(1)}]\}} \p{\Theta^{\p{\sigma_r, \sigma_{1}}}_{t,[c][c']}\prod_{i=2}^{r-1} \Theta^{\p{\sigma_i, \sigma_{i+1}}}_{t,\br{a_i}[d_{\fa(i)}]}}    \Sigma^{(\pi^{[c']})}_{t,\bm{\sigma}^{[c']},\bm{d}^{[c']}}
      &\prec \frac{\p{\ell_t^d|1-t|\omega_t}^{-r+2}}{\big(\avg{[c] - [d_{\fa(1)}]} \wedge\min_{2 \leq i \leq r-1} \avg{[a_i] - [d_{\fa(1)}]}\big)^{d}} \, .
  \end{align*}
Then, with the decomposition \eqref{recursive-decomposition-Sigma}, the above two estimates, and the identity 
$$\Big(\Theta_t^{(\sig_1,\sig_r)}-1\Big)_{[c][c']}=\Big(tS^{\LK}\Theta_t^{(+,-)}\Big)_{[c][c']},$$ 
we can write the LHS of \eqref{Kbound-inductive-hypothesis} as 
\begin{align*}
& \sum_{\bm{d}^{[c]}\setminus \{[c]\}} \sum_{\bm{d}^{[c']}\setminus \{[d_{\fa(1)}]\}} \sum_{[b],[c]} \p{\prod_{i=r}^{\fn} \Theta^{\p{\sigma_i, \sigma_{i+1}}}_{t,\br{a_i}[d_{\fa(i)}]}}\Sigma^{(\pi^{[c]})}_{t,\bm{\sigma}^{[c]},\bm{d}^{[c]}} \cdot tS^{\LK}_{[c][b]}
\p{\Theta^{\p{\sigma_r, \sigma_{1}}}_{t,[b][c']}\prod_{i=2}^{r-1} \Theta^{\p{\sigma_i, \sigma_{i+1}}}_{t,\br{a_i}[d_{\fa(i)}]}}    \Sigma^{(\pi^{[c']})}_{t,\bm{\sigma}^{[c']},\bm{d}^{[c']}}\\
& \prec \sum_{[b],[c]:|[b]-[c]|\le 1}\frac{(\ell_t^d|1-t|\omega_t)^{-(\fn-2)}}{\big(\min_{r \leq i \leq \fn} \avg{\br{a_i} - \br{c}}\big)^{d}}
\frac{1}{\big(\langle{[b] - [d_{\fa(1)}]}\rangle \wedge\min_{2 \leq i \leq r-1} \langle{[a_i] - [d_{\fa(1)}]}\rangle\big)^{d}}\\
&\prec \big({\ell_t^d|1-t|\omega_t}\big)^{-\fn+2} \Big(\min_{2 \leq i \leq \fn} \avg{\br{a_i} - [d_{\fa(1)}]}\Big)^{-d},
\end{align*}  
which gives the inductive assumption \eqref{Kbound-inductive-hypothesis}. This completes the proof of \eqref{eq:bcal_k}. 
\end{proof}

\section{Proof of some results in \Cref{Sec:Stoflo}}\label{sec:main_appd}

\subsection{Proof of \Cref{lem:pf_step2}}\label{sec:pf_step2}

For the proof of \Cref{lem:pf_step2}, in addition to \Cref{lem:sum_Ndecay}, we will also need the following evolution kernel estimate in \Cref{TailtoTail}. It shows that given $s<t\le 1$, if a two-dimensional tensor $\cal A$ decays exponentially on the scale $\ell_s$, then \smash{${\cal U}^{(2)}_{s,t,\bsig}\circ \cal A$} decays on the scale $\ell_t$. 

\begin{lemma}[Lemma 7.2 of \cite{Band1D}]
\label{TailtoTail}
For any $t\in[0,1]$ and constant $D>0$, recall the notation in \eqref{def_WTuD}. 
For $\bsig\in\{+,-\}^2$ and $s\in [0,1]$, suppose ${\cal A}_{\ba}$ satisfies that 
$$
|{\cal A}_{\ba}|\le {\cal T}_{s,D}(|[a_1]-[a_2]|) ,\quad \forall \ba=([a_1],[a_2])\in(\Zn)^2.
$$
Then, for any $t\in[s,1)$, we have that 
\begin{align}
\label{neiwuj} 
\left({\cal U}^{(2)}_{s,t,\boldsymbol{\sigma}} \circ 
{\cal A}\right)_{\ba} & \prec
 {\cal T}_{t,D}( |[a_1]-[a_2]|)+ \p{\frac{1-s}{1-t}}^2W^{-D}, \quad {\rm if}\quad  |[a_1]-[a_2]|\ge \ell_t^*. 
\end{align}
\end{lemma}

It is also straightforward to check the following simple properties.  

\begin{claim}[Lemma 5.6 of \cite{Band1D}]\label{lem_dec_calE_0}
For any large constant $D>0$ and small constant $\delta>0$, the following estimates hold for all $u\in [0,t]$ if $|[b] -[a]|\ge \delta \ell_t^*$:
 \be \label{Kell*}
\big|{\Theta}_{t,[a][b]}^{\bsig}\big|\le W^{-D} ,\ \    \left(\frac{1 - u m(\sig_1)m(\sig_2)S^\LK }{1 - t m(\sig_1)m(\sig_2)S^\LK }\right)_{[a] [b]} \le W^{-D},\ \ \forall \bsig\in \{+,-\}^2;
\ee
\be
{\cal L}^{(2)}_{t
, \bsig,([a], [b])} \prec {\cal J}^*_{t
,D}\cdot{\cal T}_{t,D}(|[a]-[b]|) ,\quad \text{for}\ \ \bsig\in\{(+,-),(-,+)\}. \label{auiwii}
\ee
Furthermore, for any constant $C>0$, we have 
\begin{equation}\label{Tell*}
    {\cal T}_{t,D} \left(\ell-C\ell_t^*\right)\prec   {\cal T}_{t,D} \left(\ell \right),\quad \forall \ell\ge 0.
\end{equation} 
\end{claim}


When $\fn=2$, the second term on the RHS of \eqref{int_K-LcalE} vanishes and we have
\begin{align}
(\mathcal{L} - \mathcal{K})^{(2)}_{t, \boldsymbol{\sigma}, \ba} & =
    \left(\mathcal{U}^{(2)}_{s, t, \boldsymbol{\sigma}} \circ (\mathcal{L} - \mathcal{K})^{(2)}_{s, \boldsymbol{\sigma}}\right)_{\ba} + \int_{s}^t \left(\mathcal{U}^{(2)}_{u, t, \boldsymbol{\sigma}} \circ \mathcal{E}^{(2)}_{u, \boldsymbol{\sigma}}\right)_{\ba} \dd u   \nonumber\\
    &+ \int_{s}^t \left(\mathcal{U}^{(2)}_{u, t, \boldsymbol{\sigma}} \circ \cW^{(2)}_{u, \boldsymbol{\sigma}}\right)_{\ba} \dd u + \int_{s}^t \left(\mathcal{U}^{(2)}_{u, t, \boldsymbol{\sigma}} \circ \dd \cB^{(2)}_{u, \boldsymbol{\sigma}}\right)_{\ba} . \label{int_K-L_ST_n2}
\end{align}
By the induction hypothesis \eqref{Eq:Gdecay+IND} for $({ \cal L-\cal K})_{s,\bsig,\ba}^{(2)}$, using the evolution kernel estimates in \Cref{lem:sum_Ndecay,TailtoTail}, we can control the first term on the RHS of \eqref{int_K-L_ST_n2} by 
\begin{equation} \label{res_deccalE_0}
\left(\mathcal{U}^{(2)}_{s, t, \boldsymbol{\sigma}} \circ (\mathcal{L} - \mathcal{K})^{(2)}_{s, \boldsymbol{\sigma}}\right)_{\ba}\Big/ 
  {\cal T}_{t, D}(|[a_1]-[a_2]|) 
  \prec (\ell_t^d/\ell_s^d)^2\cdot {\bf 1}(|[a_1]-[a_2]|\le \ell_t^*) + 1,
\end{equation}
where the second term arises from applying \eqref{neiwuj}, while the first term results from applying \Cref{lem:sum_Ndecay}. 
For the remaining three terms on the RHS of \eqref{int_K-L_ST_n2}, we will use tacitly the following fact: for any fixed $\fn\ge 2$, \smash{$\ba\in (\Zn)^\fn$}, and function \smash{$f:(\Zn)^\fn\to \C$} of order $\OO(W^C)$ for a constant $C>0$, we have that for any large constant $D'>D$,
$$ \left| \left({\cal U}^{(\fn)}_{u,t, \boldsymbol{\sigma}} \circ f' \right)_\ba\right| \le W^{-D'},\quad  \text{where}\ \ f'(\mathbf b):=f(\mathbf b)\mathbf 1(\|\mathbf b-\mathbf a\|_\infty\ge \ell_t^*)\, .$$
This follows from the exponential decay of \smash{${\cal U}^{(\fn)}_{u,t, \boldsymbol{\sigma}}$} when $\|\mathbf b-\mathbf a\|_\infty = \max_i |[a_i]-[b_i]|\ge \ell_t^*$, by the estimate \eqref{prop:ThfadC}. 
Combining \Cref{lem:sum_Ndecay} with \eqref{res_deccalE_lk}, we can bound the second term on the RHS of \eqref{int_K-L_ST_n2} as 
\begin{align}
 \frac{\big({\cal U}^{(2)}_{u,t, \boldsymbol{\sigma}}\circ  {\cal E}^{(2)}_{u, \boldsymbol{\sigma}}\big)_\ba}{\cal T_{t,D}(|[a_1]-[a_2]|)}
 &\prec \p{\frac{1-u}{1-t}}^2 \frac{\max_{\| \textbf{b}-\ba\|_\infty\le \ell_t^*}{\cal E}^{(2)}_{u, \boldsymbol{\sigma}, \mathbf{b}}+W^{-D'}}{{\cal T_{t,D}(|[a_1]-[a_2]|)}} 
\prec \frac{1-u}{(1-t)^2} \frac{\big({\cal J}^{*}_{u,D} \big)^{2}}{W^d\ell_u^d \eta_u\sqrt{\kappa}} ,\label{res_deccalE_1}
\end{align}
where in the derivation, we have used \eqref{Tell*} under the condition $\| \textbf{b}-\ba\|_\infty\le \ell_t^*$ and chosen $D'$ sufficiently large depending on $D$.  
Similarly, combining \Cref{lem:sum_Ndecay} with the estimates \eqref{res_deccalE_wG} and \eqref{res_deccalE_dif} yields that   
\begin{align}  \label{res_deccalE_2}
& \frac{\big(\mathcal{U}^{(2)}_{u,t, \boldsymbol{\sigma}} \circ  \cW^{(2)}_{u, \boldsymbol{\sigma}}\big)_{\mathbf{a}}}{
{\cal T}_{t, D}(|[a_1]-[a_2]|)} \prec   \frac{1-u}{(1-t)^2} \bigg[\left(\frac{\ell_u^d}{\ell_s^d}\right)^2 {\bf1}(|[a_1]-[a_2]|\le 3\ell_t^*) + \frac{\big({\cal J}^{*}_{u,D}\big)^{3 }}{\left(W^d \ell_u^d \eta_u\sqrt{\kappa}\right)^{1/3}}   \bigg],\\
&\frac{\Big(\big(
   \mathcal{U}^{(2)}_{u, t,  \boldsymbol{\sigma}}
   \otimes 
   \mathcal{U}^{(2)}_{u, t,  \overline{\boldsymbol{\sigma}}}
   \big)\circ
   \big( \mathcal{B} \otimes  \mathcal{B} \big)^{(4)}
   _{u, \bsig}
   \Big)_{\ba, \ba}}{
 {\cal T}_{t, D}^2(|[a_1]-[a_2]|) }\prec \frac{(1-u)^3}{(1-t)^4}\bigg[\left(\frac{\ell^d_u}{\ell^d_s}\right)^5{\bf1}(|[a_1]-[a_2]|\le 6\ell_t^*) +\frac{\big({\cal J}^{*}_{u,D} \big)^{3 }}{\left(W^d \ell_u^d \eta_u\sqrt{\kappa}\right)^{1/3}}  \bigg]. \label{res_deccalE_3}
\end{align}


Now, we define the stopping time $\tau = T \wedge t$ with $T$ defined in \eqref{eq:def_TTT}. 
By the monotonically increasing property of $\ell_t^*$ and $(1-t)^{-1}$ with respect to $t$, it is clear that the estimates \eqref{res_deccalE_0}--\eqref{res_deccalE_3} still hold if we replace \smash{$\cal U_{u,t,\bsig}^{(2)}$} and ${\cal T}_{t, D}(|[a_1]-[a_2]|)$ on the LHS with \smash{$\cal U_{u,\tau,\bsig}^{(2)}$} and ${\cal T}_{\tau, D}(|[a_1]-[a_2]|)$, respectively, while keeping the $(1-t)^{-1}$ and $\ell_t^*$ factors on the RHS unchanged.  Then, using (the $\tau$-version of) \eqref{res_deccalE_3}, we can bound the quadratic variation of the martingale term in \eqref{int_K-L_ST} as:
\begin{align*}
& \int_{s}^\tau 
   \left(\left(
   \mathcal{U}^{(\fn)}_{u, \tau,  \boldsymbol{\sigma}}
   \otimes 
   \mathcal{U}^{(\fn)}_{u, \tau,  \overline{\boldsymbol{\sigma}}}
   \right) \;\circ  \;
   \left( \cB \otimes  \cB \right)^{(2\fn)}
   _{u, \boldsymbol{\sigma}  }
   \right)_{\ba, \ba}\dd u    
  \nonumber \\
\prec~ &  {\cal T}_{\tau, D}^2(|[a_1]-[a_2]|)\cdot  \int_s^\tau   \frac{(1-u)^3}{(1-t)^4}\bigg[\left(\frac{\ell^d_u}{\ell^d_s}\right)^5{\bf1}(|[a_1]-[a_2]|\le 6\ell_t^*) +\frac{\big({\cal J}^{*}_{u,D} \big)^{3 }}{\left(W^d \ell_u^d \eta_u\sqrt{\kappa}\right)^{1/3}}  \bigg]\dd u   \nonumber \\
\lesssim ~ &  {\cal T}_{\tau, D}^2(|[a_1]-[a_2]|)\cdot   \bigg[\p{\frac{1-s}{1-t}}^{5}{\bf 1}(|[a_1]-[a_2]|\le 6\ell_t^*)+1 \bigg],
 \end{align*}
where we have chosen ${\cal J}^{*}_{u,D}\le W^\e \left(|1-s| / |1-t|\right)^4$ for a small enough  constant $\e>0$ depending on $\fd\wedge\fc$ such that (recall the conditions \eqref{eq:small_para} and \eqref{con_st_ind}): \[({\cal J}^{*}_{u,D})^3\p{|1-s|/|1-t|}^4\le (W^d \ell_u^d \eta_u\sqrt{\kappa})^{1/3}.\] 
Combining this bound with \Cref{lem:DIfREP}, we get 
 \begin{align}\label{51}
& \int_{s}^\tau \left(\mathcal{U}^{(\fn)}_{u, \tau, \boldsymbol{\sigma}} \circ \dd \cB^{(\fn)}_{u, \boldsymbol{\sigma}}\right)_{\ba}  \prec  {\cal T}_{\tau, D}(|[a_1]-[a_2]|) \bigg [ \p{\frac{1-s}{1-t}}^{5/2}{\bf 1}(|[a_1]-[a_2]|\le 6\ell_t^*)+1 \bigg ].
 \end{align}
Similarly, using \eqref{res_deccalE_1} and \eqref{res_deccalE_2}, we can bound the third and fourth terms on the RHS of \eqref{int_K-L_ST} as  
\begin{align}
&\int_{s}^\tau \left(\mathcal{U}^{(\fn)}_{u, \tau, \boldsymbol{\sigma}} \circ \mathcal{E}^{(\fn)}_{u, \boldsymbol{\sigma}}\right)_{\ba} \dd u  + \int_{s}^\tau \left(\mathcal{U}^{(\fn)}_{u, \tau, \boldsymbol{\sigma}} \circ \cW^{(\fn)}_{u, \boldsymbol{\sigma}}\right)_{\ba} \dd u \nonumber\\
&\prec \cal T_{\tau,D}(|[a_1]-[a_2]|)\cdot\bigg [ \p{\frac{1-s}{1-t}}^{2} {\bf 1}(|a_1-a_2|\le 3\ell_t^*)+1 \bigg ]\label{eq:bddE+W}\, .
\end{align}
Plugging the estimates \eqref{res_deccalE_0}, \eqref{51}, and \eqref{eq:bddE+W} into \eqref{int_K-L_ST} yields that 
\be\label{eq:est2KuptoT}
(\mathcal{L} - \mathcal{K})^{(\fn)}_{\tau, \boldsymbol{\sigma}, \ba}/ \cal T_{\tau,D}(|[a_1]-[a_2]|) \prec (|1-s|/|1-t|)^{5/2} {\bf 1}(|[a_1]-[a_2]|\le 6\ell_t^*) + 1.
\ee

By the induction hypothesis \eqref{Eq:Gdecay+IND} for $({ \cal L-\cal K})_{s,\bsig,\ba}^{(2)}$, we know that $\max_{0\le \ell \le n}   {\cal J} _{s,D} (\ell) \prec 1$, i.e., $T\ge s$ with high probability. Combining this with the estimate \eqref{eq:est2KuptoT} and applying a standard continuity argument (see e.g., the argument in \cite[Appendix A.5]{RBSO1D}), we can show that $T\ge t$ with high probability. Together with \eqref{eq:est2KuptoT}, it also concludes the bound \eqref{53}.

\subsection{Proof of \Cref{lem:STOeq_NQ}}\label{sec:pf_STOeq_NQ}
With the fast decay property shown in \Cref{lem_decayLoop}, we can use \eqref{def:XiL} and \eqref{def:XIL-K} to bound the quantities on the RHS of \eqref{int_K-LcalE} as follows. For the expression in \eqref{DefKsimLK}, we can use \eqref{eq:bcal_k} to bound it as 
\begin{align}\label{CalEbwXi1}
\Big[\OK^{(\lenk)} (\mathcal{L} - \mathcal{K})\Big]^{(\fn)}_{u, \boldsymbol{\sigma},\ba} &\prec 
 W^d\ell_u^d\cdot \frac{\sqrt{\kappa}}{(W^d\ell_u^d\eta_u)^{\lenk-1}}\cdot  \Xi^{(\cal L-\cal K)}_{u,n-l_{\mathcal{K}}+2}(W^d\ell_u^d\eta_u)^{-(n-l_{\mathcal{K}}+2)} \nonumber\\
&\lesssim (1-u)^{-1}(W^d\ell_u^d\eta_u)^{-\fn }\cdot  \max_{k\in\qqq{2,\fn-1}}\Xi^{(\cal L-\cal K)}_{u,k} 
\end{align}
for any $\lenk \in \qqq{3,\fn}$, where the factor $\ell_u^d$ comes from the summation over $[a]$, whose range is restricted by the fast decay property, and we also used  $\eta_u\asymp(1-u)\sqrt{\kappa}$ by \eqref{eq:kappat}. Similarly, for the expression \eqref{def_ELKLK}, we have 
\begin{align}
    \mathcal{E}_{u, \boldsymbol{\sigma},\fa}^{(\fn)}&\prec W^d\ell_u^d \sum_{k=2}^{\fn} \Xi^{(\cal L-\cal K)}_{u,k} (W^d\ell_u^d\eta_u)^{-k}
 \cdot \Xi^{(\cal L-\cal K)}_{u,\fn-k+2}(W^d\ell_u^d\eta_u)^{-(\fn-k+2)} \nonumber\\
  & \lesssim (1-u)^{-1}(W^d\ell_u^d\eta_u)^{-\fn }\cdot \max_{k\in \qqq{2, \fn} }\left(\Xi^{(\cal L-\cal K)}_{u,k} \Xi^{(\cal L-\cal K)}_{u,\fn-k+2}\right)\cdot(W^d\ell_u^d\eta_u\sqrt{\kappa})^{-1} ;\label{CalEbwXi2}
  \end{align}  
for the expression \eqref{def_EwtG}, we have that  
  \begin{align}
  \cW_{u, \boldsymbol{\sigma},\ba}^{(\fn)}&\prec W^d\ell_u^d \cdot  (W^d\ell_u^d\eta_u)^{-1}
 \cdot \left[\Xi^{\cal L }_{u,\fn+1}\cdot  \sqrt{\kappa}(W^d\ell_u^d\eta_u)^{-\fn}\right]\nonumber\\
 &\prec 
  (1-u)^{-1} (W^d\ell_u^d\eta_u)^{-\fn }\cdot  \Xi^{(\cal L)}_{u,\fn+1} , \label{CalEbwXi3} 
    \end{align}  
where we applied \eqref{Gt_bound_loop1} to $ \langle (G_u(\sigma_k)-M(\sig_k)) E_{[a]} \rangle$;  
for any \smash{$\ba,\ba'\in (\Zn)^n$}, by \Cref{def:CALE}, we have   
    \begin{align}
(\mathcal{B} \otimes \mathcal{B})^{(2\fn)}_{u, \boldsymbol{\sigma},\ba,\ba'}&\prec W^d\ell_u^d\cdot \Xi^{(\cal L)}_{u, {2\fn+2}} \cdot  \sqrt{\kappa}(W^d\ell_u^d\eta_u)^{-2\fn-1 }  \nonumber\\
&\prec (1-u)^{-1}(W^d\ell_u^d\eta_u)^{-2\fn }\cdot \Xi^{(\cal L)}_{u, {2\fn+2}}   .\label{CalEbwXi}
\end{align}

We now analyze the equation \eqref{int_K-LcalE} with the above estimates \eqref{CalEbwXi1}--\eqref{CalEbwXi}. Combining them with \eqref{sum_res_1} and the estimate \eqref{Eq:L-KGt+IND} on \smash{$(\mathcal{L} - \mathcal{K})^{(\fn)}_{s, \boldsymbol{\sigma}, \ba}$}, we obtain  
\begin{align}\label{sahwNQ}
& (W^d\ell_t^d\eta_t)^{\fn}\cdot  (\mathcal{L} - \mathcal{K})^{(\fn)}_{t, \boldsymbol{\sigma}, \ba} \prec 
\frac{\ell_t^d}{\ell_s^d} +   \int_{s}^t  \frac{\ell_u^d}{\ell_s^d}\frac{\max_{k\in \qqq{2,\fn-1}}\Xi^{({\cal L-\cal K})}_{u,k}}{1-u} \dd u + \int_{s}^t\frac{\ell_u^d}{\ell_s^d} \frac{\Xi^{(\cal L)}_{u,\fn+1}}{1-u} \dd u 
    \nonumber \\
    &+  \int_{s}^t \frac{\ell_u^d}{\ell_s^d} \frac{1}{1-u}  \max_{ k\in\qqq{2,\fn} } \frac{\Xi^{({\cal L-\cal K})}_{u,k} \Xi^{({\cal L-\cal K})}_{u,\fn-k+2}}{W^d\ell_u^d\eta_u\sqrt{\kappa}} \dd u  
   + (W^d\ell_t^d\eta_t)^\fn \int_{s}^t \left(\mathcal{U}^{(\fn)}_{u, t, \boldsymbol{\sigma}} \circ  \dd  \mathcal{B}^{(\fn)}_{u, \boldsymbol{\sigma}}\right)_{\ba}.
\end{align}
By Lemma \ref{lem:DIfREP}, using \eqref{CalEbwXi} and \eqref{sum_res_1}, we obtain that  
\begin{align} \label{sahwNQ2}
(W^d\ell_t^d\eta_t)^\fn\int_{s}^t \left(\mathcal{U}^{(\fn)}_{u, t, \boldsymbol{\sigma}} \circ  \dd  \mathcal{B}^{(\fn)}_{u, \boldsymbol{\sigma}}\right)_{\ba} & \prec (W^d\ell_t^d\eta_t)^\fn \left\{\int_{s}^t 
   \left(\left(
   \mathcal{U}^{(\fn)}_{u,t,  \boldsymbol{\sigma}}
   \otimes 
   \mathcal{U}^{(\fn)}_{u,t,  \overline{\boldsymbol{\sigma}}}
   \right) \;\circ  \;
   \left( \cB \otimes  \cB \right)^{(2\fn)}
   _{u, \boldsymbol{\sigma}  }
   \right)_{\ba, \ba}\dd u\right\}^{1/2} \nonumber\\
   &\prec \left\{\int_{s}^t 
   \frac{\ell_u^d}{\ell_s^d} (1-u)^{-1}\Xi^{(\cal L)}_{u, {2\fn+2}}\dd u\right\}^{1/2} .
  \end{align} 
Plugging it into \eqref{sahwNQ} and performing the integral over $u$, we conclude \eqref{am;asoiuw}.

\subsection{Proof of \Cref{lem:STOeq_Qt_weak}}\label{sec:pf_STOeq_weak}
With the definition of \smash{$\dthn_u^{(\fn)}$} in \eqref{eq:sumzero_op} and the estimate \eqref{prop:ThfadC}, we can check directly that 
\be\label{eq:derv_Theta} \|\dthn_{t}^{(\fn)}\|_\infty \prec (\ell_t^d)^{-(\fn-1)},\quad 
\|\partial_t\dthn_{t}^{(\fn)}\|_\infty \prec (1-t)^{-1}(\ell_t^d)^{-(\fn-1)}. \ee
If $\bsig$ is not a pure loop, relabeling the indices in $\ba$ if necessary, we can assume that $\sig_i\ne \sig_{i+1}$ for some $i\in \qqq{2,\fn}$. Then, using the first estimate in \eqref{eq:derv_Theta} and the bound \eqref{jywiiwsoks}, we can derive that 
\be\label{eq:nonPureXi}(W^d\ell_t^d\eta_t)^\fn\left[\cal P\circ (\mathcal{L} - \mathcal{K})^{(\fn)}_{t, \boldsymbol{\sigma}}\right]_{[a_1]}\dthn_{t}^{(\fn)}\prec \Xi^{(\mathcal{L}-\mathcal{K})}_{u, \fn-1}. \ee
When $\bsig$ is a pure loop, with the first bound in \eqref{eq:derv_Theta} and the definition  \eqref{eq;Xip}, we can bound the LHS of \eqref{eq:nonPureXi} by $\Xi^+_{t}$. 

For the sum zero term \smash{${\cal Q}_t \circ (\mathcal{L} - \mathcal{K})^{(\fn)}_{t, \boldsymbol{\sigma}, \ba}$}, with Duhamel's principle, we can derive from \eqref{zjuii1} and \eqref{zjuii2} the following counterpart of \eqref{int_K-LcalE}:
 \begin{align}\label{int_K-L+Q}
 & {\cal Q}_t\circ  (\mathcal{L} - \mathcal{K})^{(\fn)}_{t, \boldsymbol{\sigma}, \ba}   = 
\left(\mathcal{U}^{(\fn)}_{s, t, \boldsymbol{\sigma}} \circ  {\cal Q}_s\circ (\mathcal{L} - \mathcal{K})^{(\fn)}_{s, \boldsymbol{\sigma}}\right)_{\ba}  + \sum_{l_\mathcal{K} = 3}^\fn \int_{s}^t \left(\mathcal{U}^{(\fn)}_{u, t, \boldsymbol{\sigma}} \circ  {\cal Q}_u\circ \Big[\mathcal{O}_{\cK}^{(\lenk)} (\mathcal{L} - \mathcal{K})\Big]^{(\fn)}_{u, \boldsymbol{\sigma}}\right)_{\ba} \dd u \nonumber \\
    &+ \int_{s}^t \left(\mathcal{U}^{(\fn)}_{u, t, \boldsymbol{\sigma}} \circ  {\cal Q}_u\circ \mathcal{E}^{(\fn)}_{u, \boldsymbol{\sigma}}\right)_{\ba} \dd u + \int_{s}^t \left(\mathcal{U}^{(\fn)}_{u, t, \boldsymbol{\sigma}} \circ  {\cal Q}_u\circ \mathcal{W}^{(\fn)}_{u, \boldsymbol{\sigma}}\right)_{\ba} \dd u + \int_{s}^t \left(\mathcal{U}^{(\fn)}_{u, t, \boldsymbol{\sigma}} \circ  {\cal Q}_u\circ \dd\mathcal{B}^{(\fn)}_{u, \boldsymbol{\sigma}}\right)_{\ba} 
    \nonumber \\
     &+ \int_{s}^t 
     \left(\mathcal{U}^{(\fn)}_{u, t, \boldsymbol{\sigma}} 
     \circ \left( \left[{\cal Q}_u , \thn^{(\fn)}_{u,\boldsymbol{\sigma}} \right]\circ (\mathcal{L} - \mathcal{K})^{(\fn)}_{u, \boldsymbol{\sigma} }\right) \right)_{\ba} \dd u
   - \int_{s}^t \left(\mathcal{U}^{(\fn)}_{u, t, \boldsymbol{\sigma}} 
     \circ \left\{\left[ {\cal P} \circ
      \left(\mathcal{L} - \mathcal{K}\right)^{(\fn)}_{u, \boldsymbol{\sigma}}\right] \partial_u\dthn_{u}^{(\fn)} \right\}\right)_{\ba} \dd u.
\end{align}
First, with the induction hypothesis \eqref{Eq:L-KGt+IND} at time $s$, the estimates established in \eqref{CalEbwXi1}--\eqref{CalEbwXi3}, \Cref{lem_+Q}, and the evolution kernel estimate \eqref{sum_res_2}, we can control the first 4 terms on the RHS of \eqref{int_K-L+Q} in a manner similar to that in \Cref{lem:STOeq_NQ}. 
It remains to control the last three terms on the RHS of \eqref{int_K-L+Q}. 

Using the definition of $\cal Q_t$ in \eqref{eq:sumzero_op} and the definition of \smash{$\thn^{(2)}_{u,\bsig}$} in \eqref{def:op_thn}, we can bound that
\begin{align}
	\big[{\cal Q}_u , \thn^{(\fn)}_{u,\boldsymbol{\sigma}} \big]\circ (\mathcal{L} - \mathcal{K})^{(\fn)}_{u, \boldsymbol{\sigma}, \ba}
	&= \thn^{(\fn)}_{u, \boldsymbol{\sigma}} \circ \left[ \left({\cal P} \circ \left( \mathcal{L} - \mathcal{K} \right)^{(\fn)}_{u,\bsig}\right) \cdot \dthn^{(\fn)}_u \right]_{\ba} - \left[\left( {\cal P} \circ \thn^{(\fn)}_{u, \boldsymbol{\sigma}} \circ (\mathcal{L} - \mathcal{K})^{(\fn)}_{u,\bsig}\right)  \cdot \dthn^{(\fn)}_u \right]_{\ba} \nonumber
	\\ 
	& \prec (1-u)^{-1}(\ell_u^d)^{-(\fn-1)}   \left\| {\cal P} \circ \left(  \mathcal{L} - \mathcal{K} \right)^{(\fn)}_{u,\boldsymbol{\sigma}} \right\|_{\infty},\label{eq:boundcommutator}
\end{align} 
where in the second step, we used that 
$ \|\thn_{u,\bsig}^{(\fn)}\|_{\infty\to \infty} \prec (1-u)^{-1}$ by \eqref{prop:ThfadC} and the first bound in \eqref{eq:derv_Theta}. Using the second bound in \eqref{eq:derv_Theta}, we get that
\be\label{eq:boundcommutator2}
\left\|\left[ {\cal P} \circ \left(\mathcal{L} - \mathcal{K}\right)^{(\fn)}_{u, \boldsymbol{\sigma}}\right] \partial_u\dthn_{u}^{(\fn)}\right\|_\infty \prec (1-u)^{-1} (\ell_u^d)^{-(\fn-1)}   \left\| {\cal P} \circ \left(  \mathcal{L} - \mathcal{K} \right)^{(\fn)}_{u,\boldsymbol{\sigma}} \right\|_{\infty}. 
\ee
We can bound $ \big\| {\cal P} \circ \left(  \mathcal{L} - \mathcal{K} \right)^{(\fn)}_{u,\boldsymbol{\sigma}} \big\|_{\infty}$ with a similar argument as above, which yields:
\[
\left\|\big[{\cal Q}_u , \thn^{(\fn)}_{u,\boldsymbol{\sigma}} \big]\circ (\mathcal{L} - \mathcal{K})^{(\fn)}_{u, \boldsymbol{\sigma}}\right\|_\infty+\left\|\left[ {\cal P} \circ \left(\mathcal{L} - \mathcal{K}\right)^{(\fn)}_{u, \boldsymbol{\sigma}}\right] \partial_u\dthn_{u}^{(\fn)}\right\|_\infty \prec (1-u)^{-1}(W^d\ell_u^d\eta_u)^{-\fn} \p{ \Xi^{(\mathcal{L}-\mathcal{K})}_{u, \fn-1} + \Xi^{+}_{u}}.\]
Plugging it into \eqref{int_K-L+Q}, applying \eqref{sum_res_2}, and performing the integral over $u$,  we obtain the desired bound.

Finally, we bound the martingale term in \eqref{int_K-L+Q}. Through a direct calculation, we see that its quadratic variation $[\cdot]_t$ takes the form
\begin{align} \label{UEUEdif+Q}
	& \left[\int_{s}^t \left(\mathcal{U}^{(\fn)}_{u, t, \boldsymbol{\sigma}} \circ  {\cal Q}_u\circ \dd\mathcal{B}^{(\fn)}_{u, \boldsymbol{\sigma}}\right)_{\ba}\right]_t
	 =  \int_s^t \bigg\{\left(\mathcal{U}^{(\fn)}_{u, t,  \boldsymbol{\sigma}}\otimes \mathcal{U}^{(\fn)}_{u, t,  \overline{\boldsymbol{\sigma}}}\right)\circ \sum_{x,y\in \ZL} S_{xy} \left|\cal Q_u\left( \partial_{xy}\cL^{(\fn)}_{u,\bsig}\right) \right|^2\bigg\}_{\ba,\ba}\dd u  \nonumber\\
	&\qquad \lesssim \int_{s}^t
	\left( \left(
	\mathcal{U}^{(\fn)}_{u, t,  \boldsymbol{\sigma}}
	\otimes 
	\mathcal{U}^{(\fn)}_{u, t,  \overline{\boldsymbol{\sigma}}}
	\right)\circ 
	\left( \mathcal{Q}_u\otimes \mathcal{Q}_u\right) \circ \left( \mathcal{B}\otimes  \mathcal{B} \right)_{u,  \boldsymbol{\sigma}}^{(2\fn)}\right)_{\ba, \ba}\dd u,
\end{align}
where $(\mathcal{B}\otimes  \mathcal{B})_{u,  \boldsymbol{\sigma}}^{(2\fn)}$ and $\mathcal{U}^{(\fn)}_{u, t,  \boldsymbol{\sigma}}\otimes \mathcal{U}^{(\fn)}_{u, t,  \overline{\boldsymbol{\sigma}}}$ are defined in \Cref{def:CALE} and \Cref{lem:DIfREP}, respectively, and the operator $\mathcal{Q} _u\otimes \mathcal{Q}_u$ is defined as follows: given any $(2\fn)$-dimensional tensor $\cal A$,
\begin{align}
&	\left(\left( \mathcal{Q} _u\otimes \mathcal{Q}_u \right)\circ {\cal A}\right)_{\ba, \textbf{b}} 
	:=  
	{\cal A }_{\ba, \textbf{b} }-
	\delta_{[a_1'][a_1]}\sum_{[a_2'],\ldots, [a_\fn']}{\cal A }_{ \ba' ,  \textbf{b}}\cdot \dthn^{(\fn)}_{u, \ba}
	-\delta_{[b_1'] [b_1]}
	\sum_{[b_2'],\ldots, [b_\fn']}{\cal A }_{\ba, \textbf{b}'}\cdot \dthn^{(\fn)}_{u, \textbf{b} }
	\nonumber \\  
&\qquad	+\delta_{[a_1'][a_1]}\delta_{[b_1'] [b_1]}
	\sum_{[a_2'],\ldots, [a_\fn']}\sum_{[b_2'],\ldots, [b_\fn']} {\cal A }_{\ba', \textbf{b}'}\cdot \left(\dthn^{(\fn)}_{u, \ba}  \dthn^{(\fn)}_{u, \textbf{b}}\right)  ,\label{eq:QQA} 
\end{align}
where we denote $\ba=([a_1],\ldots,[a_\fn])$, $\ba'=([a_1'],\ldots,[a_\fn'])$, $\mathbf b=([b_1],\ldots,[b_\fn])$, and $\mathbf b'=([b_1'],\ldots,[b_\fn'])$. By definition, we can verify that the tensor in \eqref{eq:QQA} satisfies the double sum zero property in \eqref{eq:doublesumzero}. 
Furthermore, similar to \Cref{lem_+Q}, we can check that if $\cal A$ satisfies the $(u, \e, D)$-decay property, then 
\begin{align}\label{norm_doubleQA}
	\left\|\left({\cal Q}_u\otimes {\cal Q}_u\right) \circ \cal A\right\|_{\infty} \le W^{C_\fn \e} \|\cal A\|_\infty +W^{-D+C_\fn}
\end{align}
for a constant $C_\fn$ that does not depend on $\e$ or $D$. Hence, we can apply the improved estimate \eqref{sum_res_2_doublezero} to the RHS of \eqref{UEUEdif+Q}.  
Recall that \smash{$(\mathcal{B}\otimes  \mathcal{B})_{u,  \boldsymbol{\sigma}}^{(2\fn)}$} is bounded as in \eqref{CalEbwXi}. Using \eqref{norm_doubleQA} and \eqref{sum_res_2_doublezero}, we can bound that 
\begin{align*}
	\left\|  \left(
	\mathcal{U}^{(\fn)}_{u, t,  \boldsymbol{\sigma}}
	\otimes 
	\mathcal{U}^{(\fn)}_{u, t,  \overline{\boldsymbol{\sigma}}}
	\right)\circ 
	\left( \mathcal{Q}_u\otimes \mathcal{Q}_u\right) \circ \left( \mathcal{B}\otimes  \mathcal{B} \right)_{u,  \boldsymbol{\sigma}}^{(2\fn)}\right\|_\infty \prec  (1-u)^{-1} \Xi^{(\cal L)}_{u, {2\fn+2}}\cdot   (W^d\ell_t^d\eta_t)^{-2\fn }.
\end{align*}
Plugging it into \eqref{UEUEdif+Q} and performing the integral over $u$,  we  obtain that  
$$\int_{s}^t \left(\mathcal{U}^{(\fn)}_{u, t, \boldsymbol{\sigma}} \circ  {\cal Q}_u\circ \dd\mathcal{B}^{(\fn)}_{u, \boldsymbol{\sigma}}\right)_{\ba} \prec (W^d\ell_t^d\eta_t)^{-\fn} \sup_{u\in[s,t]} \left(\Xi^{(\cal L)}_{u, {2\fn+2}}\right)^{1/2}
$$
by using the Burkholder-Davis-Gundy inequality. This concludes the proof.  

\subsection{Proof of \Cref{lem:STOeq_Qt}}\label{subsec:STOeq_Qt_pf}
From the above proof of \Cref{lem:STOeq_Qt_weak}, we see that the partial sum term \smash{$\big[\cal P\circ (\mathcal{L} - \mathcal{K})^{(\fn)}_{u, \boldsymbol{\sigma}}\big]_{[a_1]}\dthn^{(\fn)}_{t, \ba}$} and the martingale term in \eqref{int_K-L+Q} are already bounded. We still need to control the remaining 6 terms on the RHS of \eqref{int_K-L+Q}:  
\begin{align}\label{int_K-L+Q_pf}
\left(\mathcal{U}^{(\fn)}_{s, t, \boldsymbol{\sigma}} \circ  {\cal Q}_s\circ \cal A(s)\right)_{\ba}  +  \int_{s}^t \left(\mathcal{U}^{(\fn)}_{u, t, \boldsymbol{\sigma}} \circ  {\cal Q}_u\circ \cal B(u)\right)_{\ba} \dd u\, ,
\end{align}
where we abbreviate that $\cal A(s):=(\mathcal{L} - \mathcal{K})^{(\fn)}_{s, \boldsymbol{\sigma}}$ and 
\begin{align*}
&  \cal B(u):= \sum_{\lenk=3}^\fn \Big[\mathcal{O}_{\cK}^{(\lenk)} (\mathcal{L} - \mathcal{K})\Big]^{(\fn)}_{u, \boldsymbol{\sigma}}+\mathcal{E}^{(\fn)}_{u, \boldsymbol{\sigma}}+\mathcal{W}^{(\fn)}_{u, \boldsymbol{\sigma}}+\left[{\cal Q}_u , \thn^{(\fn)}_{u,\boldsymbol{\sigma}} \right]\circ (\mathcal{L} - \mathcal{K})^{(\fn)}_{u, \boldsymbol{\sigma} }+\left[ {\cal P} \circ
\left(\mathcal{L} - \mathcal{K}\right)^{(\fn)}_{u, \boldsymbol{\sigma}}\right] \partial_u\dthn_{u}^{(\fn)}.
\end{align*}
Note that $\E\cal A(s)$ and $\E\cal B(u)$ satisfy both the sum zero property and the symmetry \eqref{eq:A_zero_sym} by the translation invariance and parity symmetry of our model on the block level. Hence, applying the bound \eqref{sum_res_2_sym} instead of \eqref{sum_res_2} leads to the desired improvement for the expectation of the expression \eqref{int_K-L+Q_pf}. It remains to control the fluctuation of \eqref{int_K-L+Q_pf} after removing the expectation. This follows from a CLT-type cancellation mechanism, which yields an additional $\ell_s/\ell_t$ factor that cancels the $\ell_t/\ell_s$
prefactor in \eqref{am;asoi222_weak}. Since the argument is almost identical to those in \cite[Section 7]{Band2D} and \cite[Appendix A.10]{RBSO1D}---relying on the estimates in \Cref{lem_propTH} and the fast decay properties of the resolvent entries---we omit the details.

\subsection{Step 6 of the Proof of \Cref{lem:main_ind}}\label{sec:pf_step6}

At this stage, we have the initial estimate \eqref{Eq:Gtlp_exp+IND} at time $s$, sharp local laws \eqref{Gt_bound_flow} and \eqref{Gt_avgbound_flow}, and sharp $G$-loop estimates \eqref{Eq:LGxb}--\eqref{Eq:Gdecay_flow}. We now use them to establish \eqref{Eq:Gtlp_exp_flow}.  
First, we establish an improved averaged local law for the expectation $\mathbb E\langle (G_u-M) E_{[a]}\rangle$: 
\begin{align}\label{res_ELK_n=1} 
  \max_{[a]}  \left|\mathbb E\langle (G_u-M) E_{[a]}\rangle\right|
  \prec \kappa^{-1/2}(W^d\ell_u^d\eta_u)^{-2}  .
  \end{align} 
Under Definitions \ref{def_flow} and  \ref{Def:stoch_flow}, we can write $G_u$ and $M=mI_N$ as
$$ G_u = (H_u - z_u)^{-1},\quad m(\sE)= -( \sE + m(\sE))^{-1} = -( z_u(\sE) + u m(\sE))^{-1},$$
which gives the following relation:
\be\label{G-M}
G_u - M = - m(um + H_u) G_u.
\ee 
Plugging it into $\mathbb{E} \langle (G_u - M) E_{[a]} \rangle$ gives
\begin{align}
    \mathbb{E} \langle (G_u - M) E_{[a]} \rangle = - \mathbb{E} \langle m(um+H_u) G_u E_{[a]} \rangle 
    =um \sum_{[b]} \mathbb{E}\langle G_u E_{[a]} \rangle S^{\LK}_{[a][b]}\langle (G_u - M) E_{[b]} \rangle  \nonumber\\
    = um^2 \sum_{[b]} S^{\LK}_{[a][b]} \E\langle (G_u - M) E_{[b]} \rangle + u m \sum_{[b]} S^{\LK}_{[a][b]}\mathbb{E}\left[\langle (G_u-M) E_{[a]}\rangle \langle (G_u - M) E_{[b]} \rangle\right], \label{eq:G-MG-M}
\end{align}
where in the second step, we applied Gaussian integration by parts to the entries of $H_u$. 
Applying \eqref{Gt_avgbound_flow}, we can bound the second term on the RHS of \eqref{eq:G-MG-M} by $\OO_\prec((W^d\ell_u^d\eta_u)^{-2})$. 
Hence, we can rewrite equation \eqref{eq:G-MG-M} as 
\begin{align*}
\sum_{[b]} \p{1-um^2S^{\LK}}_{[a][b]} \mathbb{E} \langle (G_u - m) E_{[b]} \rangle = \OO_\prec \left((W^d\ell_u^d\eta_u)^{-2}\right).
\end{align*}
Solving this equation and using that \smash{$\|\Theta_{u}^{(+,+)}\|_{\infty\to \infty}\prec \omega_t^{-1}\le \kappa^{-1/2}$} by \eqref{prop:ThfadC_short}, we conclude \eqref{res_ELK_n=1}.

\begin{proof}[\bf Step 6: Proof of \eqref{Eq:Gtlp_exp_flow}]
Taking the expectation of both sides of equation \eqref{int_K-LcalE} when $\fn=2$, we get that  
\begin{align}\label{Eexpint_K-L}
 \mathbb E (\mathcal{L} - \mathcal{K})^{(2)}_{t, \boldsymbol{\sigma}, \ba}  =
 &\left(\mathcal{U}^{(2)}_{s, t, \boldsymbol{\sigma}} \circ \E (\mathcal{L} - \mathcal{K})^{(2)}_{s, \boldsymbol{\sigma}}\right)_{\ba} + \int_{s}^t \left(\mathcal{U}^{(2)}_{u, t, \boldsymbol{\sigma}} \circ \E\mathcal{E}^{(2)}_{u, \boldsymbol{\sigma}}\right)_{\ba} \dd u  + \int_{s}^t \left(\mathcal{U}^{(2)}_{u, t, \boldsymbol{\sigma}} \circ \E\cW^{(2)}_{u, \boldsymbol{\sigma}}\right)_{\ba} \dd u  \, .
\end{align} 
At time $s$, by \eqref{Eq:Gtlp_exp+IND}, we have 
\begin{align}\label{jdasfuao01}
  \mathbb E\;  (\mathcal{L} - \mathcal{K})^{(2)}_{s, \boldsymbol{\sigma},\ba}\prec \kappa^{-1/2}(W^d\ell_s^d\eta_s)^{-3} \, .
\end{align}
For $\cal E^{(2)}_{u,\bsig,\ba}$ defined in \eqref{def_ELKLK}, using the 2-$G$ loop estimate in \eqref{Eq:Gdecay_flow}, we can bound it by 
\begin{align}\label{jdasfuao02}
 \cal E^{(2)}_{u,\bsig,\ba} \prec W^d\ell_u^d (W^d\ell_u^d\eta_u)^{-4}= \eta_u^{-1} (W^d\ell_u^d\eta_u)^{-3} \lesssim (1-u)^{-1}\cdot \kappa^{-1/2}(W^d\ell_u^d\eta_u)^{-3} \, . 
\end{align}
For $\E\cW^{(2)}_{u,\bsig,\ba}$ defined in \eqref{def_EwtG}, we can express it as 
\begin{align}\label{eq:calW}
\mathcal{W}^{(2)}_{t, \boldsymbol{\sigma}, \ba} = W^d \sum_{[a],[b]} \langle (G_u-M) E_{[a]}\rangle S^{\LK}_{[a][b]}{\cal L}^{(3)}_{u, (+,+,-),([b],[a_1], [a_2])}+c.c., 
 \end{align}
where $c.c.$ denotes the complex conjugate of the preceding term.
Using \eqref{Gt_bound_loop1}, \eqref{res_ELK_n=1}, and the estimates  \eqref{eq:bcal_k} and \eqref{Eq:L-KGt-flow} with $\fn=3$, we can bound \eqref{eq:calW} as   
\begin{align}
&\E\mathcal{W}^{(2)}_{t, \boldsymbol{\sigma}, \ba}= W^d \sum_{[a],[b]} \left(\E\langle (G_u-M) E_{[a]}\rangle S^{\LK}_{[a][b]} {\cal K}^{(3)}_{u, \bsig_3,\ba_3} +\E\langle (G_u-M) E_{[a]}\rangle S^{\LK}_{[a][b]} ({\cal L}-\cal K)^{(3)}_{u, \bsig_3,\ba_3}\right)+ c.c. \nonumber\\
&\prec W^d\ell_u^d (W^d\ell_u^d\eta_u)^{-4}= \eta_u^{-1} (W^d\ell_u^d\eta_u)^{-3} \lesssim (1-u)^{-1}\cdot \kappa^{-1/2}(W^d\ell_u^d\eta_u)^{-3}, \label{jdasfuao03}
\end{align}
where we denote $\bsig_3=(+,+,-)$ and $\ba_3=([b],[a_1],[a_2])$.

To control the RHS of \eqref{Eexpint_K-L}, we adopt a similar idea as in \Cref{sec:inductive_step}. First, if $\sig_1\ne \sig_2$, then using Ward's identity and \eqref{res_ELK_n=1}, we obtain that 
\be\label{eq:ELK2}
\br{\cal P\circ \mathbb E (\mathcal{L} - \mathcal{K})^{(2)}_{u, \boldsymbol{\sigma}}}_{[a_1]} = \frac{\im \E \qq{(G_u-M)E_{[a_1]}}}{W^d\eta_u}  \prec \kappa^{-1/2}(W^d\ell_u^d\eta_u)^{-2}(W^d\eta_u)^{-1}
\ee
uniformly in $u\in[s,t]$. 
If $\sig_1=\sig_2$, taking the expectation of \eqref{eq;Cauchy-int}, we get 
\begin{align}
 \E\left[\cal P\circ (\mathcal{L} - \mathcal{K})^{(2)}_{u, \boldsymbol{\sigma}}\right]_{[a_1]}  &=W^{-d}  \cdot \frac{1}{2\pi \ii}\oint_\gamma \frac{\E\avg{(G_u(z)-M_u(z))E_{[a_1]}}}{(z-z_u)^2}\dd z \, .\label{eq;Cauchy-int2}
\end{align}
With the estimate \eqref{eq:cont-est_pure3} established in Step 3, applying the argument below \eqref{G-M}, the same estimate \eqref{res_ELK_n=1} holds uniformly for $z\in\Gamma$. Plugging this into \eqref{eq;Cauchy-int2}, we see that the estimate \eqref{eq:ELK2} also holds in the case $\sig_1=\sig_2$.
Combining \eqref{eq:ELK2} with the estimate \eqref{prop:ThfadC}, we obtain that
\be\label{eq:EPL-K}
\br{\cal P\circ \mathbb E (\mathcal{L} - \mathcal{K})^{(2)}_{u, \boldsymbol{\sigma}}}_{[a_1]} \cdot (1-u)\Theta_{u,[a_1][a_2]}^{(+,-)}\prec \kappa^{-1/2}(W^d\ell_u^d\eta_u)^{-3}.
\ee

When $u=t$, the above argument yields the desired estimate for the partial sum term. It remains to deal with the sum zero term \smash{$\cal Q_t\circ \mathbb E (\mathcal{L} - \mathcal{K})^{(2)}_{t, \boldsymbol{\sigma}, \ba}$}. 
Taking the expectation of the equation \eqref{int_K-L+Q} with $\fn=2$ yields
\begin{align}\label{int_K-L+QE}
& {\cal Q}_t\circ  \E(\mathcal{L} - \mathcal{K})^{(2)}_{t, \boldsymbol{\sigma}, \ba}   = 
\left(\mathcal{U}^{(2)}_{s, t, \boldsymbol{\sigma}} \circ  {\cal Q}_s\circ \E(\mathcal{L} - \mathcal{K})^{(2)}_{s, \boldsymbol{\sigma}}\right)_{\ba} + \int_{s}^t \left(\mathcal{U}^{(2)}_{u, t, \boldsymbol{\sigma}} \circ  {\cal Q}_u\circ \E \mathcal{E}^{(2)}_{u, \boldsymbol{\sigma}}\right)_{\ba} \dd u \nonumber\\
& + \int_{s}^t \left(\mathcal{U}^{(2)}_{u, t, \boldsymbol{\sigma}} \circ  {\cal Q}_u\circ \E\mathcal{W}^{(2)}_{u, \boldsymbol{\sigma}}\right)_{\ba} \dd u  + \int_{s}^t 
\left(\mathcal{U}^{(2)}_{u, t, \boldsymbol{\sigma}} 
\circ \left( \left[{\cal Q}_u , \thn^{(2)}_{u,\boldsymbol{\sigma}} \right]\circ \E(\mathcal{L} - \mathcal{K})^{(2)}_{u, \boldsymbol{\sigma} }\right) \right)_{\ba} \dd u \nonumber\\
& - \int_{s}^t \left(\mathcal{U}^{(2)}_{u, t, \boldsymbol{\sigma}} \circ \left\{\left[ {\cal P} \circ \E\left(\mathcal{L} - \mathcal{K}\right)^{(2)}_{u, \boldsymbol{\sigma}}\right] \partial_u\dthn_{u}^{(2)} \right\}\right)_{\ba} \dd u.
\end{align}
As explained above \eqref{pqthlk}, all the tensors on the RHS satisfy the sum zero property \eqref{sumAzero} and the symmetry \eqref{eq:A_zero_sym}. Thus,  using the evolution kernel estimate \eqref{sum_res_2_sym}, \Cref{lem_+Q}, and the estimates \eqref{jdasfuao01}, \eqref{jdasfuao02}, and \eqref{jdasfuao03}, we can bound the first three terms on the RHS of \eqref{int_K-L+QE} by 
\begin{align}\label{eq:EKL_added}
 \left(\frac{\ell_s^d|1-s|}{\ell_t^d|1-t|}\right)^2 \frac{1}{\kappa^{1/2} (W^d\ell_s^d\eta_s)^{3}}+
 \int_{s}^t  \left(\frac{\ell_u^d|1-u|}{\ell_t^d|1-t|}\right)^2 \frac{(1-u)^{-1}}{\kappa^{1/2}(W^d\ell_u^d\eta_u)^{3}}
 \dd u 
 \prec  \kappa^{-1/2}(W^d\ell_t^d\eta_t)^{-3} \, .
 \end{align} 
It remains to control the last two terms on the RHS of \eqref{int_K-L+QE}. Combining the second bound in \eqref{eq:derv_Theta} (for $\fn=2$) with the estimate \eqref{eq:ELK2}, we get that
\be\label{eq:boundELKQ1}
\left\|\left[ {\cal P} \circ \E\left(\mathcal{L} - \mathcal{K}\right)^{(2)}_{u, \boldsymbol{\sigma}}\right] \partial_u\dthn_{u}^{(2)}\right\|_\infty \prec (1-u)^{-1}\cdot \kappa^{-1/2} (W^d\ell_u^d\eta_u)^{-3} \, . 
\ee
Second, using \eqref{eq:boundcommutator} and the estimate \eqref{eq:ELK2}, we can bound that
\begin{align}
   \big[{\cal Q}_u , \thn^{(2)}_{u,\boldsymbol{\sigma}} \big]\circ \E(\mathcal{L} - \mathcal{K})^{(2)}_{u, \boldsymbol{\sigma}, \ba} \prec (1-u)^{-1}\ell_u^{-d} \left\| {\cal P} \circ \E\left(  \mathcal{L} - \mathcal{K} \right)^{(2)}_{u,\boldsymbol{\sigma}} \right\|_{\infty}\prec (1-u)^{-1}\cdot \kappa^{-1/2} (W^d\ell_u^d\eta_u)^{-3}  .\label{eq:boundELKQ2}
\end{align} 
Combining the bound \eqref{sum_res_2_sym} with the estimates   \eqref{eq:boundELKQ1} and \eqref{eq:boundELKQ2},  
we can also get the above bound \eqref{eq:EKL_added} for the last two terms on the RHS of \eqref{int_K-L+QE}.
This concludes the proof of \eqref{Eq:Gtlp_exp_flow}.
 \end{proof}
 
\end{document}